\setlist[enumerate,1]{label = (\roman*)}
\newcommand{\hair}{\ifmmode\mskip1mu\else\kern0.08em\fi}
\renewcommand{\P}{\mathbb{P}}
\newcommand{\PF}{\mathbb{P}_{\mathcal F}}
\newcommand{\EF}{\mathbb{E}_{\mathcal F}}
\newcommand{\Var}{\mathrm{Var}}
\newcommand{\E}{\mathbb{E}}
\newcommand{\cP}{\mathcal{P}}
\newcommand{\F}{\mathcal{F}}
\newcommand{\R}{\mathbb{R}}
\newcommand{\N}{\mathbb{N}}
\newcommand{\Z}{\mathbb{Z}}
\newcommand{\one}{\mathbbm{1}}
\newcommand{\intint}[1]{\llbracket #1 \rrbracket}
\newcommand{\mc}{\mathcal}
\newcommand{\mf}{\mathfrak}
\newcommand{\A}{\mathcal A}
\renewcommand{\epsilon}{\varepsilon}
\renewcommand{\emptyset}{\varnothing}
\newcommand{\cL}{\mathcal{L}}
\newcommand{\tri}{\mathsf{Tri}_\theta}
\newcommand{\nonint}{\mathsf{NonInt}_{\theta, M}}
\newcommand{\xtan}{x^{\mathrm{tan}}}
\newcommand{\fav}{\mathsf{Fav}}
\newcommand{\Pfree}{\mathbb{P}_{\mathrm{free}}}
\newcommand{\Efree}{\mathbb{E}_{\mathrm{free}}}
\newcommand{\bpara}{B^{\mathrm{para}}}
\newcommand{\bparat}{B^{\mathrm{para},t}}
\newcommand{\ellt}{\ell^{\mathrm{tan}}}
\newcommand{\Ilin}{I_{\mathrm{lin}}}
\newcommand{\pin}{\mathsf{Pin}^\theta}
\newcommand{\linhull}{\mathsf{ConHull}_{a,b}}
\newcommand{\tent}{\mathsf{Tent}_{a,b}}
\newcommand{\dev}{\mathsf{Dev}}
\newcommand{\rmleft}{\mathrm{left}}
\newcommand{\rmcent}{\mathrm{cent}}
\newcommand{\rmright}{\mathrm{right}}
\newcommand{\h}{\mf h^t}
\newcommand{\hf}{\mf h^{t,f}}
\renewcommand{\tan}{\mathrm{tan}}
\newcommand{\Fext}{\mathcal F_\mathrm{ext}}
\newcommand{\msf}{\mathsf}
\newcommand{\mrm}{\mathrm}
\newcommand{\Hyp}{\mathrm{Hyp}}
\newcommand{\dif}{\mathrm{d}}
\newcommand{\midd}{\ \Big|\ }
\newcommand*\bigcdot{\mathpalette\bigcdot@{.45}}
\newcommand*\bigcdot@[2]{\mathbin{\vcenter{\hbox{\scalebox{#2}{$\m@th#1\bullet$}}}}}
\newtheorem{maintheorem}{Theorem}
\newtheorem{theorem}{Theorem}[section]
\newtheorem*{theorem*}{Theorem}
\newtheorem*{proposition*}{Proposition}
\newtheorem{proposition}[theorem]{Proposition}
\newtheorem*{corollary*}{Corollary}
\newtheorem{corollary}[theorem]{Corollary}
\newtheorem{lemma}[theorem]{Lemma}
\theoremstyle{definition}
\newtheorem{definition}[theorem]{Definition}
\newtheorem*{notation}{Notation}
\newtheorem{remark}[theorem]{Remark}
\renewenvironment{proof}[1][\proofname]{%
  \par\pushQED{\qed}%
  \normalfont
  \topsep6\p@\@plus6\p@\relax
  \trivlist
  \item[] 
    {\itshape #1.\ }
  \ignorespaces
}{%
  \popQED\endtrivlist\@endpefalse
}
\title[Sharp upper tail behavior of line ensembles via the tangent method]{Sharp upper tail behavior of line ensembles\\ via the tangent method}
\author{Shirshendu Ganguly}
\address{Shirshendu Ganguly, Department of Statistics, U.C. Berkeley, Berkeley, CA, USA}
\email{sganguly@berkeley.edu}
\author{Milind Hegde}
\address{Milind Hegde, Division of Mathematical Sciences, School of Physical and Mathematical Sciences, Nanyang Technological University, Singapore.}
\email{milind.hegde@ntu.edu.sg}
\begin{document}

\begin{abstract}
We develop a new probabilistic and geometric method to obtain several sharp results pertaining to the upper tail behavior of continuum Gibbs measures on infinite ensembles of random continuous curves, also known as line ensembles, satisfying some natural assumptions. The arguments make crucial use of Brownian resampling invariance properties and correlation inequalities admitted by such Gibbs measures. We obtain sharp one-point upper tail estimates showing that the probability of the value at zero being larger than $\theta$ is $\exp(-\frac{4}{3}\theta^{3/2}(1+o(1)))$. A key intermediate step is developing a precise understanding of the profile when conditioned on the value at zero equaling $\theta$. Our method further allows one to obtain multi-point asymptotics which were out of reach of previous approaches. As an example, we prove sharp explicit two-point upper tail estimates. This framework is then used to establish the corresponding results for the KPZ equation, which are all new. Even for the zero-temperature case of the Airy$_2$ process, our arguments yield new proofs for one-point estimates previously known due to its connections to random matrix theory, as well as new two-point asymptotics. To showcase the reach of the method, we obtain the same results in a purely non-integrable setting under only assumptions of stationarity and extremality in the class of Gibbs measures. 
 Our method bears resemblance to the tangent method introduced by Colomo-Sportiello and mathematically realized by Aggarwal in the context of the six-vertex model.
\end{abstract}

\maketitle
\thispagestyle{empty}

\vspace{-0.3cm}

\begin{center}
\includegraphics[width=0.95\textwidth]{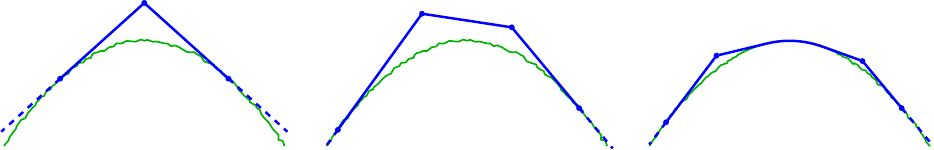}
\end{center}

\vspace*{-0.4cm}

\setcounter{tocdepth}{1}
\tableofcontents

\section{Introduction, results, and proof ideas}\label{s.intro}

Line ensembles are random objects which have been found to play central roles in a number of areas of probability theory over the last decade, including the Kardar-Parisi-Zhang (KPZ) universality class that this paper focuses on. They are collections of random curves with an explicit interaction captured via a Gibbs or spatial Markov property. In the KPZ universality class, the line ensembles that arise have the property that the lowest indexed curve is an observable, often called a height function, which is of independent interest. Two prominent examples of continuum line ensembles in KPZ, which will also be the main cases addressed in this paper, are the parabolic Airy line ensemble and the KPZ line ensemble; their lowest indexed curves are respectively the parabolic Airy$_2$ process and the KPZ equation started from the so-called narrow-wedge initial condition.

Of much interest is to understand the correlation structure of line ensembles. This paper provides a robust framework to understand the effect of raising one or multiple points of the lowest indexed curve to high values on the rest of the process, assuming certain reasonable hypotheses on the line ensemble, which are expected to hold for all natural examples (we will later give a detailed discussion on the validity of the assumptions for the concrete examples of the parabolic Airy and KPZ line ensembles). Such events of raised point values are often referred to as upper tail events and their probabilities have been a topic of active research in recent years. As a consequence, the framework also provides sharp probability estimates for these events.

The framework and the ideas underlying it have found a number of applications already since the first posting of this article, including the limiting behavior of geodesics and polymer measures (in the random metric-like object, the directed landscape, associated to the parabolic Airy line ensemble and the continuum directed random polymer, associated to the KPZ line ensemble, respectively) in the upper tail \cite{ganguly2023brownian} and models of area-tilted line ensembles coming from idealizations of the level lines of the low temperature 3D Ising model with a floor \cite{caputo2023uniqueness,chowdhury2023characterizing}. Related ideas have also played a role in recent work characterizing the Radon-Nikodym derivative of the increment of the Airy$_2$ process with respect to Brownian motion \cite{dauvergne2023wiener}. Broadly, we expect that the techniques would be applicable to many questions which concern the upper tail of the lowest-indexed curve of line ensembles with natural Gibbs properties. This robustness can be interpreted as a facet of universality.

Before reviewing the literature and the relevant background as well as introducing the objects of study formally, we start with a quick glimpse of our main results and techniques.

Most previous work obtaining quantitative probability bounds for processes such as the narrow-wedge-solution to the KPZ equation or the parabolic Airy$_2$ process has relied on exactly solvable structure, manifested in the form of explicit formulas for quantities like finite dimensional distributions or Laplace transforms, which are amenable to analytic techniques to obtain asymptotics. Still, it has proven difficult to obtain the sharp behaviour in many cases (including for multi-point upper tails). We emphasize that the methods here are probabilistic and geometric, thus allowing a unified treatment for all line ensembles satisfying our assumptions.

We start with matching (up to first order in the exponent) upper and lower bounds on the one-point upper tail of the first curve at depth $\theta$, obtaining the decay of $\exp(-\frac{4}{3}\theta^{3/2}(1+o(1)))$, with explicit lower order error bounds. 
Our arguments also yield sharp density estimates. 
 Further, the methods allow access to multi-point upper tails, which had been out of reach of previous approaches even for the example of the parabolic Airy$_2$ process. To showcase this, we obtain sharp estimates on two-point upper tails, and the method generalizes in a straightforward way to multiple points.

As mentioned, our arguments apply if the line ensemble in question satisfies certain assumptions. Informally, we assume the following (given in their precise forms in Section~\ref{s.assumptions}): (i) the line ensemble is stationary and possesses a certain explicit resampling or Gibbs property in terms of Brownian bridges; (ii)(a) the line ensemble is positively associated (satisfies the FKG inequality) and (b) satisfies an inequality we refer to as the van den Berg-Kesten (BK) inequality, which allows one to control the second curve conditional on the first curve; (iii) a certain conditional stochastic monotonicity property, given a finite number of values of the lowest indexed curve; and (iv) certain weak one-point tail bounds on the lowest indexed curve.

The objective of this article is to develop a method which applies to an axiomatic framework, but we pause here to discuss its applicability to particular examples. The assumptions all hold for the parabolic Airy line ensemble, either by earlier results or arguments given here.  For the KPZ line ensemble, Assumptions (i), (iii), and (iv) hold, as well as the positive association in (ii)(a). However, we believe the form of the BK inequality (Assumption~(ii)(b)) which holds for the parabolic Airy line ensemble is in fact too strong to hold in the KPZ case.\footnote{An earlier version of this article gave an incorrect proof that the KPZ line ensemble satisfies the BK inequality, which was brought to our attention by Xuan Wu.} For the purposes of the results here, a weaker form of the BK inequality actually suffices, which is proved for the KPZ line ensemble as a straightforward consequence of a form of the inequality established in the recent work \cite{ganguly2025van}. A more detailed discussion of the exact form of the inequality and the difficulties surrounding the strong form's proof is given in Section~\ref{s.weaker bk}.

The techniques bear resemblance to the tangent method proposed by Colomo-Sportiello \cite{colomo2016arctic} and mathematically realized by Aggarwal \cite{aggarwal2020arctic} to determine limit shapes in the context of the six-vertex model at the ice point.

To showcase the power and reach of our method, we also obtain the same sharp asymptotics in a purely non-integrable zero-temperature setting where we work with Brownian Gibbs ensembles whose laws are extremal in the space of such Gibbs measures and which enjoy a certain stationarity property. In this setting no integrable formulas (which would be needed to verify some of the assumptions for the parabolic Airy and KPZ line ensembles) are available and the arguments rely on purely qualitative assumptions; in particular, the quantitative assumption of a priori weak tail bounds mentioned above is derived as a consequence of the remaining assumptions. In this way, solely probabilistic considerations lend evidence to a conjecture suggested by Scott Sheffield (and formulated in \cite{corwin2014brownian}) which characterizes all ensembles with the mentioned properties;  this conjecture (indeed, a stronger form) was proven after the original arXiv posting of this article by Aggarwal-Huang \cite{aggarwal2023strong}.

\subsection{The KPZ and parabolic Airy line ensembles}\label{s.intro.kpz airy}
The KPZ universality class refers to a broad class of models of one-dimensional stochastic growth. These models include those of last passage percolation, exclusion processes, and polymer models, among others. Models in the class feature an observable called a \emph{height} function. It is often possible, through mappings such as the Robinson-Schensted-Knuth (RSK) correspondence (e.g., \cite{prahofer2002PNG}) and its generalizations (e.g., \cite{o2014geometric}) or the Yang-Baxter equation \cite{aggarwal2024colored}, to embed these height functions as the lowest indexed curve in a larger family of random curves, termed a \emph{line ensemble},  which possesses some sort of resampling or spatial Markov property, often called a Gibbs property. The laws of these structures can be viewed as infinite volume Gibbs measures on the space of collections of continuous curves.

For the cases of the KPZ equation (with the narrow wedge initial condition) and the parabolic Airy$_2$ processes, these line ensembles are the KPZ and parabolic Airy line ensembles, respectively. First constructed in \cite{corwin2016kpz}, the KPZ line ensemble (associated to $t>0$) is a collection of $\N$-indexed random continuous curves which interact with one another through an explicit resampling property known as the \emph{$H_t$-Brownian Gibbs property}. The narrow-wedge solution to the KPZ equation is then the lowest indexed curve in this collection. 

The parabolic Airy$_2$ process embeds as the lowest indexed curve in an analogous collection of random continuous non-intersecting curves known as the parabolic Airy line ensemble, first constructed in \cite{corwin2014brownian}, which enjoys an explicit resampling property known as simply the Brownian Gibbs property. An alternate perspective on the Airy line ensemble is that it is the scaling limit of the edge of Dyson Brownian motion. We will often refer to the KPZ equation as the \emph{positive temperature} case and the parabolic Airy$_2$ process as the \emph{zero temperature} case.

The interaction between curves can be understood as a hard or soft form of nonintersection. Very roughly speaking, this causes lower indexed curves to be pushed up by higher indexed curves. In the case of the parabolic Airy line ensemble, the parabolic Airy$_2$ process must avoid the second curve, while in the KPZ line ensemble, the narrow wedge solution suffers an exponential energetic penalty for staying below the second curve on an interval. These are both encoded via the respective Gibbs properties, which are more precisely introduced in Section~\ref{s.gibbs and line ensembles}.

We will denote the parabolic Airy$_2$ line ensemble by $\cP = (\cP_1,\cP_2, \ldots)$ and the (scaled) KPZ line ensemble at time $t>0$ by $\h=(\h_1,\h_2, \ldots)$. For the latter, the first curve $\h_1$ is a scaled version of the narrow wedge solution $\mc H$ to the KPZ equation $\partial_t \mc{H} = \frac{1}{4}\partial_x^2 \mc{H} + \frac{1}{4}(\partial_x \mc{H})^2 + \xi$:
\begin{align}\label{e.h_1 definition}
\h_1(x) = \frac{\mc H(t, t^{2/3}x) + \frac{t}{12}}{t^{1/3}};
\end{align}
see Section~\ref{s.kpz background} for a brief discussion of the solution theory for the KPZ equation and the definition of the narrow-wedge initial condition. 

Next we turn to giving our precise assumptions.

\subsection{Assumptions on the line ensembles} \label{s.assumptions}

Before stating our assumptions formally, we give some more detailed motivation for their content. 
The first assumption is that of stationarity and the possession of a Gibbs resampling property (see Section~\ref{s.gibbs and line ensembles} for precise definitions). The latter is fundamental to our arguments and perspective, and the former is a well-known property of our primary examples of interest, $\cP$ and $\h$. The remaining assumptions are all aimed towards proving qualitative or a priori control on the curves of the line ensembles. For instance, the first part of the second assumption is that the ensembles are positively associated, so that conditioning on increasing events (such as upper tail events) stochastically raises the entire ensemble. 

One of the key difficulties in working with non-intersecting curves and their upper tails is that lower curves can push up the top curve. As such, an important ingredient will be some control on the second curve of the ensemble conditional on the first curve; in particular, that it does not rise up too much even if the first curve is raised. There are a number of ways to encode this, and perhaps the cleanest (but a somewhat strong form) is given below in the second half of the second assumption; we will shortly thereafter, in Section~\ref{s.weaker bk}, introduce a weaker but more technical form which will suffice for our arguments. In spirit, the strong form is the same as the statement that if one conditions the largest eigenvalue of a random matrix to be large (e.g., in the large deviation regime), then the second eigenvalue behaves approximately like the unconditioned first eigenvalue (e.g., it lies at the unconditioned macroscopic location). An example of this in the cases of GOE and GUE can be found in \cite[Theorem 2.1]{biroli2020large}.

Stochastic monotonicity statements for line ensembles, which are closely related to the positive association property just mentioned, have played a crucial role in many previous studies, and will do so in our arguments as well. These statements usually concern the increasing nature of the law of curves on an interval with given boundary conditions under a Gibbs property as a function of the boundary data. Here, we need something slightly stronger. Namely, we assume that monotonicity holds also as a function of the values of the top curve at a finite number of intermediate points that have been conditioned upon (as in the case of one- or two-point upper tail conditionings), and this forms the third assumption.

Finally, an important ingredient in our proof is a finite range-of-effect phenomenon of the upper tail; more precisely, that conditional on an upper tail event of the first curve, there exists a point, perhaps quite far away, where the first curve has not gone up too far with high probability. By the parabolic decay of the first curve and the stationarity after shifting by the parabola, this is implied by weak one-point tail bounds at the origin, which form the fourth and final assumption.

Now we may turn to introducing some terminology and then stating the assumptions. We denote the line ensemble by $\cL = (\cL_1, \cL_2, \ldots)$. By the $t=\infty$ case of the $H_t$-Brownian Gibbs property we mean the usual (non-intersecting) Brownian Gibbs property; see Section~\ref{s.gibbs and line ensembles} for precise definitions. A subset $A\subseteq \mc C([a,b], \R)$ is said to be increasing if $f\in A$ and $g\in \mc C([a,b], \R)$ is such that $g(x)\geq f(x)$ for all $x\in[a,b]$, then $g\in A$. By stochastic domination, we mean that for any  finite interval $[a,b]$ and increasing function $F:\mc C([a,b],\R)\to\R$ (where the order on $\mc C([a,b],\R)$ is the usual point-wise order of functions), the expectation of $F$ under the dominating law is lower bounded by the same expectation under the dominated law.

\begin{enumerate}[itemsep=5pt]
	\item \label{as.bg} \textbf{Stationarity and Brownian Gibbs:} For some $t\in (0,\infty]$, $\cL$ possesses the $H_t$-Brownian Gibbs property, and $x\mapsto \cL_1(x) + x^2$ is stationary.

	\item \label{as.corr} \textbf{Correlation inequalities:} The first curve $\cL_1$ is positively associated and $\cL$ satisfies the van den Berg-Kesten (BK) inequality,\footnote{The original BK inequality comes from percolation theory, see for example \cite{grimmett1999percolation}, to bound the probability of two events occurring ``disjointly'' by the product of the probabilities of the events. We use the same terminology here because, by the RSK bijection, the top two curves of the parabolic Airy line ensemble can be related to weights of pairs of disjoint paths in limiting last passage percolation models, and in this context many cases of the inequality we describe are applications of the classical BK inequality.} i.e., for increasing events $A$ and $B$, and any event $C$ (all Borel subsets of $\mc C([a,b],\R)$ for some finite interval $[a,b]$),
	\begin{align*}
	&\text{(a)}\quad \P\left(\cL_1 \in A, \cL_1\in B\right) \geq \P\left(\cL_1 \in A\right)\cdot\P\left(\cL_1 \in B\right) \text{ and }\\
	&\text{(b)}\quad \P\left(\cL_2\in A, \cL_1\in C\right) \leq \P(\cL_1 \in A)\cdot\P(\cL_1\in C).
	\end{align*}

	\item \label{as.mono in cond} \textbf{Monotonicity in conditioning:} Let $m\in\N$, $x_1, \ldots, x_m\in\R$, and  $y^{(i)}_1, \ldots, y^{(i)}_m \in\R$ for $i=1,2$. If $\smash{y_j^{(1)} \geq y_j^{(2)}}$ for $j=1, \ldots, m$, then the conditional law of $\cL$ given $\cL_1(x_j) = \smash{y^{(1)}_j}$ for $j=1, \ldots, m$ stochastically dominates that of the same given $\cL_1(x_j) = \smash{y^{(2)}_j}$.

	\item \label{as.tails}\textbf{Uniform bounds on the one-point upper tail, and one-point tightness:} 
	There exist $\alpha, \beta>0$, $\theta_0$, and $c_1,c_2>0$ such that, for $\theta>\theta_0$,
	$$\exp(-c_1\theta^{\alpha}) \leq \P\left(\cL_1(0) > \theta\right) \leq \exp(-c_2\theta^{\beta}).$$

\end{enumerate}

These assumptions bear a thematic resemblance to those in an earlier paper of the authors \cite{ganguly2020optimal}, where upper and lower tail bounds with the correct exponents of $3/2$ and $3$ are derived in general last passage percolation models which satisfy the assumptions. Similar to here, the main things assumed or used in \cite{ganguly2020optimal} are parabolic curvature of the profile (Assumption~\ref{as.bg} here), the FKG and BK inequalities, and a priori tail bounds.

It turns out that Assumption~\ref{as.bg} implies that $\cL_1$ is absolutely continuous to Brownian motion on compact intervals (as shown in \cite{corwin2014brownian,corwin2016kpz}), and so the law of $\cL_1(0)$ is absolutely continuous with respect to Lebesgue measure, i.e., has a density. We also mention that in the case of $t=\infty$, Assumption~\ref{as.tails} is implied by the other three, as will be shown in Sections~\ref{s.one point.lower bound} and \ref{s.extremal} (this is also true in the case of finite $t$ if one does not demand the uniformity over $t>t_0$ for any fixed $t_0>0$ that is known for the KPZ equation \cite{corwin2018kpz}).

As mentioned in Section~\ref{s.intro.kpz airy}, it is shown that $\cP$ satisfies the assumptions (see Theorem~\ref{mt.extremal} and the discussion following) and $\h$ satisfies all but Assumption~\ref{as.corr}(b). As also already indicated, we in fact expect Assumption~\ref{as.corr}(b) to be too strong to hold for $\h$, and as a consequence of the recent work \cite{ganguly2025van} we establish a weaker version, recorded in Section~\ref{s.weaker bk} along with further discussion, which suffices for our argument. Thus the below results apply to both $\cP$ and $\h$.

\subsection{Main results on tail asymptotics}\label{s.intro.results.tails}
Here we give our main results on tail bounds. Results on limit shapes are given in Section~\ref{s.intro.results.shapes} and on one-point tail bounds for general initial data in Section~\ref{s.intro.general data}. 

\begin{remark}\label{r.uniformity}
We point out that all the constants in our results can in principle depend on $t$ from Assumption~\ref{as.bg}, as well as $c_1$, $c_2$, $\theta_0$, $\alpha$, and $\beta$ from Assumption~\ref{as.tails}. However, though we will not mention this explicitly in the statements, there is no dependence on $t$: if a class of line ensembles satisfy Assumption~\ref{as.tails} with the same values of the constants therein with possibly varying values of $t$, then our results hold uniformly over that class.
\end{remark}

In all of the below results, unless mentioned otherwise, Assumptions~\ref{as.bg}--\ref{as.tails} will be in force. As mentioned, Assumption~\ref{as.corr}(b) can also be replaced by a weaker form, which will be the assumption actually used in the arguments. This is formally recorded in Proposition~\ref{p.weak bk suffices}. Also as mentioned, the below results all hold for both the parabolic and KPZ line ensembles, as recorded in Theorem~\ref{t.assumptions hold} ahead.

\subsubsection{One-point tail and density asymptotics}

Our first result concerns the asymptotics of the density of $\cL_1(0)$ as the argument goes to $+\infty$.
We denote the density of $\cL_1(0)$ at $\theta$ by $\frac{1}{\dif \theta}\P(\cL_1(0)\in[\theta,\theta+\dif\theta])$. 
As noted above, Assumption~\ref{as.bg} guarantees that $\cL_1(0)$ has a density.

\begin{maintheorem}[One-point density asymptotics]\label{mt.one point density asymptotics}

There exist constants $C$ and $\theta_0$ such that, for $\theta>\theta_0$,
$$\exp\left(-\frac{4}{3}\theta^{3/2}-C\theta^{3/4}\right) \leq \frac{1}{\dif \theta}\P\Bigl(\cL_1(0)\in[\theta,\theta+\dif\theta]\Bigr) \leq \exp\left(-\frac{4}{3}\theta^{3/2}+C\theta^{3/4}\right).$$
\end{maintheorem}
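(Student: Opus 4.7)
The plan is to combine the Brownian Gibbs property of the top line of the KPZ/Airy line ensemble with the sharp one- and two-point upper tail estimates established earlier in the paper. For any scale $L > 0$, conditional on $\h(\pm L)$ and on the second curve of the line ensemble, the restriction $\h|_{[-L, L]}$ is a Brownian bridge (under the ensemble's diffusion normalization) conditioned to stay above the curve below. Writing this out, the density of $\h(0)$ at $\theta$ equals the expectation of the corresponding bridge-midpoint Gaussian density against the joint law of $(\h(-L), \h(L))$, multiplied by an acceptance factor encoding the non-crossing constraint.

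For the upper bound, the natural choice is the saddle scale $L = L^{*} = \Theta(\sqrt{\theta})$. Writing $Y = (\h(-L) + \h(L))/2 + L^2$ for the endpoint average measured above the parabola, the bridge kernel becomes $(\mrm{const}\cdot L)^{-1/2}\exp(-(\theta + L^2 - Y)^2/(c L))$ for a line-ensemble-dependent constant $c$, and the sharp two-point upper tail yields $\P(Y \geq y) \leq \exp(-\frac{8}{3}y^{3/2}(1+o(1)))$ for $y$ large, the rate $\frac{8}{3}$ reflecting the near-independence of the two endpoints at scale $L^{*} \gg 1$. The ensuing saddle-point optimization
\begin{equation*}
\min_{L,\, y \geq 0} \left[\frac{(\theta + L^2 - y)^2}{c L} + \frac{8}{3}y^{3/2}\right]
\end{equation*}
is tuned by the specific line-ensemble normalization to the value $\frac{4}{3}\theta^{3/2}$, attained at $L^{*} \asymp \sqrt{\theta}$ and $y^{*} \asymp \theta$. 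Laplace-type fluctuations of width $\theta^{1/4}$ around this saddle generate the $\exp(O(\theta^{3/4}))$ slack.

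For the lower bound, I would construct an explicit event on which $\h(0) \in [\theta, \theta + 1]$ modelled on the \emph{tangent profile} rising linearly from the saddle endpoint heights $(\pm L^{*}, y^{*} - L^{*2})$ to $(0, \theta)$ and following $-x^2$ outside. First, force $\h(\pm L^{*})$ into windows of size $O(\theta^{1/4})$ around the target values; by the matching two-point lower tail (derivable from the same tangent-method arguments used elsewhere in the paper, together with near-independence at scale $L^{*}$), this has probability at least $\exp(-\frac{8}{3}(y^{*})^{3/2} - O(\theta^{3/4}))$. Conditional on this, Brownian Gibbs yields a bridge whose midpoint lies in $[\theta, \theta+1]$ with probability at least $\exp(-\theta^{3/2} - O(\theta^{3/4}))$ by the explicit Gaussian density. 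Multiplying gives $\P(\h(0) \in [\theta, \theta + 1]) \geq \exp(-\frac{4}{3}\theta^{3/2} - O(\theta^{3/4}))$, and the density lower bound then follows from a short-scale monotonicity of the density on unit intervals, itself a consequence of Brownian Gibbs.

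The principal technical obstacle in both directions is control of the acceptance factor arising from the non-crossing constraint with the second curve, which requires uniform estimates on the second curve over a window of size $\sqrt{\theta}$. In the lower bound this is favourable because the tangent profile lies well above the typical height $-x^2 - O(1)$ of the second curve and standard non-intersection estimates apply. In the upper bound it is handled via monotone coupling replacing the Gibbs measure by a free Brownian bridge, at a multiplicative cost that is absorbed into the $\exp(O(\theta^{3/4}))$ error. Two-sided sharpness of the two-point joint tail at the chosen saddle scale is the key input driving the coefficient $\frac{4}{3}$, and is what makes the approach go through in the non-integrable setting as well.
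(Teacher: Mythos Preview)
Your saddle heuristic is attractive and the optimization does land on $\tfrac43\theta^{3/2}$ (at $L^*=\tfrac12\theta^{1/2}$, $y^*=\theta/4$, precisely the tangent configuration), but the paper's route is quite different and your outline has real gaps. The paper never uses two-point estimates for this theorem. Instead it (a) first proves the one-point \emph{tail} bounds $\P(\h_1(0)\geq\theta)=\exp(-\tfrac43\theta^{3/2}+O(\theta^{3/4}))$; (b) proves a density-comparison estimate (Proposition~5.5),
\[
f(\theta+s)\ \geq\ \tfrac12\, f(\theta)\,\exp\!\bigl(-2s\theta^{1/2}-O(1)\bigr),
\]
by conditioning not only on $\h_1(\pm\theta^{1/2})$ and $\h_2$ but also on the \emph{bridges} of $\h_1$ on $[-\theta^{1/2},0]$ and $[0,\theta^{1/2}]$, which leaves $\h_1(0)$ as the sole remaining scalar with conditional law a Gaussian reweighted by a factor \emph{monotone} in $\h_1(0)$, so that the unknown normalizer $Z$ cancels in the ratio; (c) the density upper bound then follows from the tail bound plus one application of (b), and the lower bound from iterating (b) from $\theta$ down to a fixed $\theta_0$ in steps of size $\tfrac12\theta^{1/4}$, the accumulated exponent being a Riemann sum evaluating to $\tfrac43\theta^{3/2}$.

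In your upper bound, ``monotone coupling replacing the Gibbs measure by a free bridge at cost $\exp(O(\theta^{3/4}))$'' does not go through for densities. Stochastic domination controls increasing events, not pointwise densities; what actually appears after conditioning only on endpoints is $f_{\mathrm{free}}(\theta)\cdot\P(\text{avoid}\mid B(0)=\theta)/Z$, and the quantity to bound is $1/Z$. At your saddle the endpoints sit on a tangent line and indeed $Z$ is of constant order, but away from it (for instance at the \emph{typical} endpoint location near the parabola) one has $Z\sim\exp(-\tfrac{1}{12}(2L)^3)=\exp(-\Theta(\theta^{3/2}))$, so $1/Z$ contributes at leading order and must be carried through a genuine Laplace analysis of $\E[f_{\mathrm{free}}(\theta)/Z]$ over all $(Y,\h_2)$; moreover on the bad event for $\h_2$ there is no uniform bound on the conditional density at all. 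In your lower bound, the ``short-scale monotonicity of the density'' you invoke to pass from $\P(\h(0)\in[\theta,\theta+1])$ to the pointwise density is exactly Proposition~5.5 --- the bridge-conditioning argument above --- which is the nontrivial heart of the paper's proof rather than a routine consequence of Brownian Gibbs.
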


Next we move to one-point tail asymptotics, which the previous result's proof also relies on.
Note that for the lower bound on the tail we have a better error term than in the density bound.

\begin{maintheorem}[One-point upper tail bounds]\label{mt.one point tail asymptotics}

There exist $\theta_0 > 0$ and $C<\infty$ such that, for $\theta > \theta_0$,
$$\exp\left(-\frac{4}{3}\theta^{3/2} - \theta^{1/2}\log \theta\right) \leq \P\Bigl(\cL_1(0) \geq \theta\Bigr) \leq \exp\left(-\frac{4}{3}\theta^{3/2} + C\theta^{3/4}\right).$$
\end{maintheorem}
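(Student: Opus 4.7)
The plan is to implement a tangent-method argument using the Brownian Gibbs resampling property of the KPZ line ensemble, exploiting a variational problem whose minimum yields the constant $4/3$. A heuristic computation identifies the optimal scale $L = \theta^{1/2}/2$: conditioning on $\h(\pm L) = a$, Brownian Gibbs makes $\h$ on $[-L, L]$ a rate-$2$ Brownian bridge conditioned to lie above the next curve, with midpoint density near $\theta$ at most $\exp(-(\theta - a)^2/(2L))$, while the joint density of $\h(\pm L)$ at height $a$ behaves like $\exp(-(8/3)(a + L^2)^{3/2})$ by stationarity of $\h(x) + x^2$ (combining the two boundary contributions after a decoupling argument). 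Minimizing $(8/3)(a+L^2)^{3/2} + (\theta - a)^2/(2L)$ over $(a, L)$ yields the critical point $a = 0$, $L = \theta^{1/2}/2$, with minimum value $(4/3)\theta^{3/2}$.

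For the \textbf{upper bound}, I would run this as a bootstrap. Starting from the Corwin--Ghosal inequality \eqref{e.cg bounds}, $\P(\h(0) > \eta) \leq \exp(-c\eta^{3/2})$ for some $c > 0$, I decompose $\{\h(0) \geq \theta\}$ over grid values $s$ for $\h(\pm L)$. The joint window probability $\P(\h(\pm L) \in [s, s+1])$ is bounded using the tail input together with a difference-of-tails trick, giving roughly $O(L)\exp(-2c(s+L^2)^{3/2})$; the conditional $\P(\h(0) \geq \theta \mid \h(\pm L) \leq s+1)$ is bounded by $\exp(-(\theta - s - 1)^2/(2L))$ via Brownian Gibbs plus stochastic dominance by a free rate-$2$ bridge (on a typical event for the lower curves). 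Summing and optimizing over $s$ yields the improved bound $\exp(-(c/4 + 1)\theta^{3/2})$. The map $c \mapsto c/4 + 1$ is a contraction with fixed point $4/3$; iterating $O(\log\theta)$ times (each iteration invoking the input at the smaller scale $\theta/4$ via stationarity) drives $c$ to within $O(\theta^{-3/4})$ of $4/3$, and with accumulated polynomial prefactors this yields the upper bound with error $C\theta^{3/4}$.

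For the \textbf{lower bound}, I would construct a favourable event $\fav_\theta \subseteq \{\h(0) \geq \theta\}$ and directly lower bound its probability. The event requires that (i) the second and subsequent curves of the underlying line ensemble stay well below $-x^2$ throughout $[-L, L]$, a typical event; (ii) $\h(\pm L) \in [-1, 1]$; and (iii) the resampled Brownian bridge on $[-L, L]$ exceeds $\theta$ at $0$. Conditional on (i)--(ii), the Brownian Gibbs property makes (iii) a standard rate-$2$ bridge estimate contributing at least $L^{-1/2}\exp(-\theta^2/(2L)) \gtrsim \theta^{-1/4}\exp(-\theta^{3/2})$. The probability of (ii) reduces, by stationarity of $\h(x)+x^2$, to a lower bound on the density of $\h(0)$ at height $L^2 = \theta/4$, supplied by the same construction at the smaller scale; the base of the induction is provided by the CG lower bound in \eqref{e.cg bounds}. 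Combining these ingredients yields the claimed $\exp(-(4/3)\theta^{3/2} - \theta^{1/2}\log\theta)$, the slightly better error term (compared to the upper bound) reflecting tighter control of the explicit Brownian bridge contribution than in the upper bound's summation over $s$.

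The \textbf{main obstacle} is controlling the effect of the lower curves: Brownian Gibbs produces bridges conditioned to lie above the next curve, not free bridges, so one must identify a typical event for the lower curves on which this conditioning does not significantly distort the relevant probabilities, both for the free-bridge majorization in the upper bound and the lower-bound excursion estimate. A secondary technical difficulty is the decoupling of the joint probability $\P(\h(\pm L) \in [s, s+1])$ into a product-of-marginals-like bound (needed to get the $8/3$ coefficient in the boundary cost, hence the $4/3$ fixed point), which appears to require a Gibbs-resampling argument at a larger scale. Finally, one must carefully track the multiplicative lower-order errors across the $O(\log\theta)$ iterations of the bootstrap to realize the advertised error terms $C\theta^{3/4}$ and $\theta^{1/2}\log\theta$.
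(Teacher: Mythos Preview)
Your lower-bound argument --- FKG at $\pm\tfrac{1}{2}\theta^{1/2}$, a free rate-$2$ bridge on $[-\tfrac{1}{2}\theta^{1/2},\tfrac{1}{2}\theta^{1/2}]$ after sending the lower boundary to $-\infty$, and iterating the map $c\mapsto 1+c/4$ to its fixed point $4/3$ --- is exactly the paper's, down to the $\theta^{1/2}\log\theta$ error term. (Note that no ``typical event for the lower curves'' is needed here: monotonicity lets you discard $\h_2$ outright when proving a lower bound.)

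For the upper bound, however, the decoupling you label a ``secondary technical difficulty'' is in fact the central obstruction, and the paper does \emph{not} bootstrap. To run $c\mapsto c/4+1$ you need $\P\bigl(\h(-L)\geq s,\,\h(L)\geq s\bigr)\lesssim\exp\bigl(-2c(s+L^2)^{3/2}\bigr)$, a product-of-marginals \emph{upper} bound on an intersection of two increasing events; FKG points the other way, and there is no Gibbs-resampling device that reverses it. (The paper's own two-point asymptotics confirm that for $s>0$ the joint probability is strictly larger than the product, to first order in the exponent.) Without that factor of~$2$ your recursion does not reach~$4/3$. A smaller slip: the free bridge is stochastically \emph{below} the Gibbs-conditioned curve, so it does not by itself upper-bound $\P(\h(0)\geq\theta\mid\h(\pm L)=s)$; one must divide by the partition function, which at your choice $L=\tfrac{1}{2}\theta^{1/2}$, $s=0$ is indeed $\Theta(1)$ (the chord $y=0$ is tangent to $-x^2$), but this needs to be argued.

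The paper's upper bound instead works in one shot on the \emph{full} tangent interval $[-\theta^{1/2},\theta^{1/2}]$. A limit-shape theorem (Theorem~\ref{t.limit shape}, proved using the Corwin--Ghosal input only to locate far-out pinning points, not to improve constants) shows that, conditional on $\h_1(0)=\theta$, one has $\h_1(\pm\theta^{1/2})\leq-\theta+M\theta^{1/4}$ with probability $\geq\tfrac{1}{2}$; the BK inequality pins $\h_2$ below $-x^2+o(\theta^{1/4})$. Resampling then gives $\P(\h_1(0)\in[\theta-1,\theta])\lesssim\P(B(0)\geq\theta)/Z$ with $B$ a rate-$2$ bridge from $(\pm\theta^{1/2},-\theta+M\theta^{1/4})$: the numerator is $\exp(-2\theta^{3/2}+C\theta^{3/4})$. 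The ingredient replacing your bootstrap is a sharp lower bound on the parabolic partition function (Proposition~\ref{p.parabola avoidance probability}): a rate-$2$ bridge on $[z_1,z_2]$ with endpoints near the parabola stays above $-x^2$ with probability at least $\exp\bigl(-\tfrac{1}{12}(z_2-z_1)^3\bigr)$. With $z_2-z_1=2\theta^{1/2}$ this gives $Z\geq\exp(-\tfrac{2}{3}\theta^{3/2})$, and $2-\tfrac{2}{3}=\tfrac{4}{3}$. The $\theta^{3/4}$ error is inherited directly from the $M\theta^{1/4}$ limit-shape slack; an iterative scheme like yours, even granting the decoupling, would naturally produce $\theta^{1/2}\log\theta$ as in the lower bound, not $\theta^{3/4}$.
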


Further, the lower bound in Theorem~\ref{mt.one point tail asymptotics} requires only Assumptions~\ref{as.bg} and \ref{as.corr}(a); see Theorem~\ref{t.upper tail lower bound}.

\subsubsection{Two-point tail asymptotics}\label{s.intro.two-point asymptotics}
We will consider the probability that $\cL_1$ is greater than $a\theta$ at $-\theta^{1/2}$ and greater than $b\theta$ at $\theta^{1/2}$ for $a\geq b>-1$ (so that $a\theta, b\theta > -\theta$, the value of the parabola $-x^2$ at $x=\pm\theta^{1/2}$). 

We note that by stationarity of $x\mapsto \cL_1(x)+x^2$ and if it holds that $x \mapsto \cL_1(x)$ is equal in distribution to $x\mapsto \cL_1(-x)$ (which holds for the narrow wedge KPZ equation and the parabolic Airy$_2$ process), the probability we analyze is equivalent to  the probability of any two-point event of the form $\{\cL_1(x_1) > -x_1^2 + t_1, \cL_1(x_2) > -x_2^2 + t_2\}$ with $x_1,x_2\in\R$ and $t_1,t_2$ large by an appropriate horizontal translation and choice of $a,b$. We consider $a\theta$ and $b\theta$ as this simplifies some expressions which, nonetheless, are still somewhat technical to look at. However, the main point that should be taken is that they are rather explicit.

 Let $\linhull:\R\to\R$ be the convex hull of $x\mapsto -x^2$ and the points $(-\theta^{1/2}, a\theta)$ and $(\theta^{1/2}, b\theta)$. The two-point asymptotics depends on the number of extreme points $\linhull$ has inside $\smash{[-\theta^{1/2},\theta^{1/2}]}$; see Figure~\ref{f.three cases of two-point}. Moreover, we will assert later in Theorem~\ref{mt.two-point limit shape} that $\linhull$ (or a minor variant) is the approximate shape that $\cL_1$ adopts under the conditioning of the two-point tail event we are considering here.

\begin{figure}[h!]

\begin{tikzpicture}[scale=0.75]


\draw[green!70!black, semithick]  plot[smooth, domain=-2.5:2.5] (\x, -0.3*\x * \x);

\newcommand{\ltan}{2.3}
\newcommand{\rtan}{2}
\newcommand{\loc}{0.95} 

\draw[blue, thick]  (-\ltan, -0.3*\ltan*\ltan) -- (-\loc, -0.3*\ltan*\ltan + 2*0.3*\ltan*\ltan - 2*0.3*\ltan*\loc) -- (\loc, -0.3*\rtan*\rtan + 2*0.3*\rtan*\rtan - 2*0.3*\rtan*\loc) -- (\rtan, -0.3*\rtan*\rtan);

\node[circle, fill, blue, inner sep = 1pt] at (-\loc, -0.3*\ltan*\ltan + 2*0.3*\ltan*\ltan - 2*0.3*\ltan*\loc) {};
\node[anchor = south, scale=0.8] at (-\loc-0.05, -0.3*\ltan*\ltan + 2*0.3*\ltan*\ltan - 2*0.3*\ltan*\loc) {$(-\theta^{1/2}, a\theta)$};

\node[circle, fill, blue, inner sep = 1pt] at (\loc, -0.3*\rtan*\rtan + 2*0.3*\rtan*\rtan - 2*0.3*\rtan*\loc) {};
\node[anchor = south, scale=0.8] at (\loc+0.05, -0.3*\rtan*\rtan + 2*0.3*\rtan*\rtan - 2*0.3*\rtan*\loc) {$(\theta^{1/2}, b\theta)$};

\node[circle, fill, blue, inner sep = 1pt] at (-\ltan, -0.3*\ltan*\ltan) {};

\node[circle, fill, blue, inner sep = 1pt] at (\rtan, -0.3*\rtan*\rtan) {};


\begin{scope}[shift={(6.5,0)}]
\renewcommand{\ltan}{2.2}
\renewcommand{\rtan}{2}
\renewcommand{\loc}{1.3}

\draw[green!70!black, semithick]  plot[smooth, domain=-2.5:(\ltan-2*\loc)] (\x, -0.3*\x * \x);
\draw[blue, thick]  plot[smooth, domain=(\ltan-2*\loc):(2*\loc-\rtan)] (\x, -0.3*\x * \x);
\draw[green!70!black, semithick]  plot[smooth, domain=(2*\loc-\rtan):2.5] (\x, -0.3*\x * \x);

\draw[blue, thick]  (-\ltan, -0.3*\ltan*\ltan) -- (-\loc, -0.3*\ltan*\ltan + 2*0.3*\ltan*\ltan - 2*0.3*\ltan*\loc);

\draw[blue,thick] (\loc, -0.3*\rtan*\rtan + 2*0.3*\rtan*\rtan - 2*0.3*\rtan*\loc) -- (\rtan, -0.3*\rtan*\rtan);

\draw[blue,thick] (-\loc, -0.3*\ltan*\ltan + 2*0.3*\ltan*\ltan - 2*0.3*\ltan*\loc) -- (${(\ltan-2*\loc)}*(1, 0) - {0.3*(\ltan-2*\loc)^2}*(0, 1)$);

\draw[blue,thick] (\loc, -0.3*\rtan*\rtan + 2*0.3*\rtan*\rtan - 2*0.3*\rtan*\loc) -- (${(2*\loc-\rtan)}*(1, 0) - {0.3*(2*\loc-\rtan)^2}*(0, 1)$);

\node[circle, fill, blue, inner sep = 1pt] at (-\loc, -0.3*\ltan*\ltan + 2*0.3*\ltan*\ltan - 2*0.3*\ltan*\loc) {};
\node[anchor = south, scale=0.8] at (-\loc, -0.3*\ltan*\ltan + 2*0.3*\ltan*\ltan - 2*0.3*\ltan*\loc+0.1) {$(-\theta^{1/2}, a\theta)$};

\node[circle, fill, blue, inner sep = 1pt] at (\loc, -0.3*\rtan*\rtan + 2*0.3*\rtan*\rtan - 2*0.3*\rtan*\loc) {};
\node[anchor = south, scale=0.8] at (\loc, -0.3*\rtan*\rtan + 2*0.3*\rtan*\rtan - 2*0.3*\rtan*\loc+0.1) {$(\theta^{1/2}, b\theta)$};

\node[circle, fill, blue, inner sep = 1pt] at (-\ltan, -0.3*\ltan*\ltan) {};

\node[circle, fill, blue, inner sep = 1pt] at (\rtan, -0.3*\rtan*\rtan) {};

\end{scope}


\begin{scope}[shift={(13,0)}]
\renewcommand{\ltan}{3.2}
\renewcommand{\rtan}{1.4}
\renewcommand{\loc}{0.7}

\node[circle, fill, blue, inner sep = 1pt] at (-\loc, -0.3*\ltan*\ltan + 2*0.3*\ltan*\ltan - 2*0.3*\ltan*\loc) {};
\node[anchor = east, scale=0.8] at (-\loc-0.05, -0.3*\ltan*\ltan + 2*0.3*\ltan*\ltan - 2*0.3*\ltan*\loc) {$(-\theta^{1/2}, a\theta)$};

\clip (-2.55, -0.3*2.5*2.5) rectangle (2.5, -0.3*\ltan*\ltan + 2*0.3*\ltan*\ltan - 2*0.3*\ltan*\loc+0.2);

\draw[green!70!black, semithick]  plot[smooth, domain=-2.5:2.5] (\x, -0.3*\x * \x);

\draw[blue, thick]  (-\ltan, -0.3*\ltan*\ltan) -- (-\loc, -0.3*\ltan*\ltan + 2*0.3*\ltan*\ltan - 2*0.3*\ltan*\loc);


\draw[blue,thick] (-\loc, -0.3*\ltan*\ltan + 2*0.3*\ltan*\ltan - 2*0.3*\ltan*\loc) -- (${(\ltan-2*\loc)}*(1, 0) - {0.3*(\ltan-2*\loc)^2}*(0, 1)$);


\node[circle, fill, blue, inner sep = 1pt] at (\loc, -0.3*\rtan*\rtan + 2*0.3*\rtan*\rtan - 2*0.3*\rtan*\loc) {};
\node[anchor = south west, scale=0.8] at (\loc+0.05, -0.3*\rtan*\rtan + 2*0.3*\rtan*\rtan - 2*0.3*\rtan*\loc) {$(\theta^{1/2}, b\theta)$};

\node[circle, fill, blue, inner sep = 1pt] at (-\ltan, -0.3*\ltan*\ltan) {};


\end{scope}
\end{tikzpicture}
\caption{The three cases of Theorem~\ref{mt.two point tail}: from left to right, $\linhull$ (in blue) has two, infinitely many, and one extreme point inside $\smash{[-\theta^{1/2},\theta^{1/2}]}$. (The distance of $\pm\theta^{1/2}$ from the center of the parabola at 0 have been made to differ in the three figures just to visually better emphasize the geometric features of the three cases.)}
\label{f.three cases of two-point}
\end{figure}
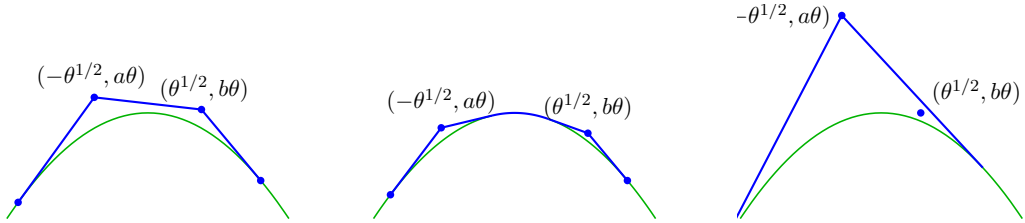

\begin{maintheorem}[Two-point upper tail bounds]\label{mt.two point tail}
There exist constants $\theta_0$ and $a_0=b_0$ such that the following holds. If (i) $\theta > \theta_0$ and $a\geq b>-1$ or (ii) $\theta>0$ and $a\geq a_0$, $b\geq b_0$, $a\geq b$, then, if $\linhull$ has two extreme points inside $[-\theta^{1/2}, \theta^{1/2}]$ (see Figure~\ref{f.three cases of two-point}),
\begin{align*}
\MoveEqLeft[4]
\P\Bigl(\cL_1(-\theta^{1/2}) \geq a \theta, \cL_1(\theta^{1/2}) \geq b \theta\Bigr)\\
&= \exp\left(-\frac{\theta^{3/2}}{24}\left[3(a-b)^2 + 24(a+b) + 16\left((1+a)^{3/2} + (1+b)^{3/2}\right) + 32\right] +\mathrm{error}\right);
\end{align*}
while if $\linhull$ has infinitely many extreme points inside $[-\theta^{1/2}, \theta^{1/2}]$,
\begin{align*}
\P\Bigl(\cL_1(-\theta^{1/2}) \geq a \theta, \cL_1(\theta^{1/2}) \geq b \theta\Bigr)
&= \exp\left(-\frac{4}{3}\theta^{3/2}\left[(1+a)^{3/2} + (1+b)^{3/2}\right] +\mathrm{error}\right);
\end{align*}
and finally if $\linhull$ has one extreme point inside $[-\theta^{1/2}, \theta^{1/2}]$,
\begin{align*}
\P\Bigl(\cL_1(-\theta^{1/2}) \geq a \theta, \cL_1(\theta^{1/2}) \geq b \theta\Bigr)
&= \exp\left(-\frac{4}{3}\theta^{3/2}(1+a)^{3/2} +\mathrm{error}\right).
\end{align*}
Further, the error terms have explicit bounds.
\end{maintheorem}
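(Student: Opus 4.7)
The plan is to extend the ``tangent method'' arguments of Theorem~\ref{mt.one point tail asymptotics} to the two-point setting, with the convex-hull profile $\linhull$ playing the role of the limit shape conditional on the event in question (as asserted in Theorem~\ref{mt.two-point limit shape}). The key tools are the Brownian Gibbs property of the KPZ line ensemble (of which $\h$ is the top curve), the one-point density estimates of Theorem~\ref{mt.one point density asymptotics}, and bivariate Brownian bridge computations. The guiding observation is that $\linhull$ decomposes into a tangent piece on each side (from an outer tangent point on the parabola up to the corresponding $P_i = (\pm \theta^{1/2}, \{a,b\}\theta)$), plus a middle connector which is linear in case~1, parabolic in case~2, or tangent-continuation in case~3; the exponent displayed in the theorem turns out to be exactly the cost of this three-piece decomposition.

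For the lower bound on the probability, I would plant $\h$ along $\linhull$. In case~1, choose outer control points $x_L = -\theta^{1/2}(1+\sqrt{1+a})$ and $x_R = \theta^{1/2}(1+\sqrt{1+b})$, where $\linhull$ meets the parabola. At these points $\linhull(x_{L/R}) = -x_{L/R}^2$ is the typical value of $\h$, so the event $\{\h(x_L)\approx -x_L^2,\ \h(x_R)\approx -x_R^2\}$ has $\Theta(1)$ probability by the one-point density bounds. Conditional on this, Brownian Gibbs describes the law of $\h$ on $[x_L, x_R]$ as a Brownian bridge between these endpoints, subject to non-intersection with the second curve of the ensemble (which contributes only lower-order terms). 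The two-point event then becomes a joint bridge event at $\pm\theta^{1/2} \in (x_L, x_R)$, whose bivariate Gaussian tail, after rearrangement, gives the case~1 cost. Case~2 uses the inner tangent points $x_1 = \theta^{1/2}(\sqrt{1+a}-1)$ and $x_2 = \theta^{1/2}(1-\sqrt{1+b})$ as control points; here the parabolic piece of $\linhull$ between $x_1$ and $x_2$ acts as a buffer making the two constraints essentially independent, which yields two additive one-point tent costs. Case~3 reduces to the one-point lower bound at $-\theta^{1/2}$, since $\linhull$'s tangent extension absorbs the $\theta^{1/2}$ constraint at no extra cost.

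For the upper bound on the probability, I would proceed in reverse: decompose the joint probability by integrating against the density of $(\h(x_L), \h(x_R))$, controlling the latter by Theorem~\ref{mt.one point density asymptotics} and the conditional probability of the two-point event by the Gaussian bridge tail. Typical parabolic values of $(\h(x_L), \h(x_R))$ dominate, since atypically large boundary values have exponentially small density that more than compensates for the reduced bridge-tail cost; monotonicity in the Brownian Gibbs ensemble then lets one drop the non-intersection constraint at only subdominant cost.

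The main obstacle is case~1, where the cross-term $3(a-b)^2/24$ in the cost arises from the off-diagonal entry of the Brownian bridge covariance matrix at $\pm\theta^{1/2}$ inside $[x_L, x_R]$; extracting it sharply in both directions requires careful tracking of the bivariate Gaussian form together with explicit error control for the non-intersection conditioning with the second curve. A secondary subtlety is ensuring continuity of the three cost formulas at their common boundaries (e.g.\ $\sqrt{1+a}+\sqrt{1+b}=2$ between cases~1 and~2), so that the bounds are uniform in $(a, b)$ and in $t \in [t_0, \infty]$.
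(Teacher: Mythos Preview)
Your three-case decomposition, the identification of the outer tangent points $x_L=-\theta^{1/2}(1+\sqrt{1+a})$, $x_R=\theta^{1/2}(1+\sqrt{1+b})$, and the use of a bivariate Brownian-bridge Gaussian computation on $[x_L,x_R]$ all match the paper's architecture (Section~\ref{s.two point asymptotics}). However, there is a genuine gap in your treatment of the non-intersection conditioning, and it affects both directions.

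For the lower bound in case~1 you pin $\h$ at $(x_L,-x_L^2)$ and $(x_R,-x_R^2)$, then drop the lower boundary to get a free bridge, asserting that non-intersection ``contributes only lower-order terms.'' This is false at first order. The free bridge from $(x_L,-x_L^2)$ to $(x_R,-x_R^2)$ has its mean along the chord between these parabola points, which lies \emph{far below} the points $(\pm\theta^{1/2},\{a,b\}\theta)$; the conditioning to stay above the second curve (essentially above $-x^2$) is what lifts the bridge. Dropping it loses a factor $\exp(-\tfrac{1}{12}(x_R-x_L)^3)$, which is a first-order term in $\theta^{3/2}$. You can see this already in the one-point analogue: your scheme gives $\P(B(0)\geq\theta)=\exp(-2\theta^{3/2})$ for a free bridge from $(\pm\theta^{1/2},-\theta)$, not the correct $\exp(-\tfrac{4}{3}\theta^{3/2})$. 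The paper avoids this by pinning not at the outer tangent points but at the \emph{midpoints} $-x_\ell^0=-\tfrac{1}{2}\xtan_\ell$, $x_r^0=\tfrac{1}{2}\xtan_r$ where the tangent lines hit height zero. The cost of forcing $\h(-x_\ell^0),\h(x_r^0)\geq 0$ is paid via FKG and the one-point lower bound (Theorem~\ref{t.upper tail lower bound}), and then the free bridge from $(-x_\ell^0,0)$ to $(x_r^0,0)$ is efficient because the line between these endpoints is tangent to the parabola at $0$ (so there is nothing lost by ignoring the lower boundary). The paper's three pieces --- two one-point costs plus one bivariate bridge tail --- then sum to the displayed exponent.

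For the upper bound you make the mirror error: monotonicity does let you replace $W_{H_t}$ by $1$ in the numerator, but you must still divide by the partition function $Z_{H_t}$ on $[x_L,x_R]$, and $Z_{H_t}\approx\exp(-\tfrac{1}{12}(x_R-x_L)^3)$ is again a first-order contribution, not subdominant. The paper handles this explicitly via Corollary~\ref{c.pos temp parabolic avoidance lower bound}, and the displayed case-1 exponent is precisely (free bivariate Gaussian tail exponent) $-\,\tfrac{1}{12}(\xtan_\ell+\xtan_r)^3$. Your upper-bound sketch also omits the use of the two-point limit shape (Theorem~\ref{t.two-point limit shape}) to pin $\h(x_L),\h(x_R)$ near the parabola with controlled error, which replaces your density-integration step.
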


The mentioned explicit error bound is written out in the more technical version of the theorem stated in Section~\ref{s.two point asymptotics} as Theorem~\ref{t.two point asymptotics}.

It will be clear from the geometric picture developed in the course of the proof of Theorem~\ref{mt.two point tail} that a procedure for obtaining $k$-point asymptotics is also available. See Remark~\ref{r.k-point}.

Note that the result is not stated for two-point densities. In fact, the expression in the exponent for the density of $(\cL_1(-\theta^{1/2}), \cL_1(\theta^{1/2}))$ at $(a\theta, b\theta)$ will be different in the final case of Theorem~\ref{mt.two point tail}; it will be the same expression as the first case. Our methods should also yield density bounds and we indicate this briefly in Remark~\ref{r.density bounds}, though we do not pursue this.

We next point out that the last two cases of the theorem have nice geometric interpretations, which we explain now.

In the final case where there is a single extreme point in the interval, as is apparent by looking at Figure~\ref{f.three cases of two-point} and our earlier remark that $\linhull$ is the shape of $\cL_1$ on the two-point tail event (see Theorem~\ref{mt.two-point limit shape} ahead), the event is essentially caused by $\smash{\cL_1(-\theta^{1/2}) \geq a\theta}$ alone; thus the probability bound is the same as that of the latter event from Theorem~\ref{mt.one point tail asymptotics}.

The second case's geometric interpretation is more interesting. Essentially, the interaction or dependence of the events $\{\cL_1(-\theta^{1/2}) \geq a\theta\}$ and $\{\cL_1(\theta^{1/2})\geq b\theta\}$ is through the line connecting the points $(-\smash{\theta^{1/2}},a\theta)$ and $(\smash{\theta^{1/2}}, b\theta)$. In the second case where $\linhull$ has infinitely many extreme points in $\smash{[-\theta^{1/2},\theta^{1/2}]}$, the parabola can be thought of as being a \emph{barrier} to this line, thus preventing the interaction of the two events, and so the probability is the product of the individual probabilities (at least up to first order in the exponent).

This is closely related to the positive association of these line ensembles from Assumption~\ref{as.corr}(a) (also known as the Fortuin-Kastelyn-Ginibre (FKG) inequality  \cite{fortuin1971correlation}, and we will refer to it by this name) and it is of interest to understand when it is approximately sharp. 
As already alluded to above, the second case of Theorem~\ref{mt.two point tail} provides one condition and interpretation for approximate sharpness. As can be seen from Figure~\ref{f.three cases of two-point}, this condition is the same as when the line joining $(-\theta^{1/2}, a\theta)$ and $(\theta^{1/2}, b\theta)$ intersects the parabola at two points inside $\smash{[-\theta^{1/2}, \theta^{1/2}]}$ (as then $\linhull$ will not be piecewise-linear inside $\smash{[-\theta^{1/2}, \theta^{1/2}]}$, and hence have infinitely many extreme points there). Our next theorem extends this observation by also addressing the case where this line is \emph{tangent} to the parabola. Thus we obtain a criterion for the sharpness of the FKG inequality.

\begin{maintheorem}[Sharpness of the FKG inequality]\label{mt.fkg sharpness}

Let $a, b> -1$. Suppose the line connecting $(-\theta^{1/2}, a\theta)$ and $(\theta^{1/2}, b\theta)$ intersects the parabola $-x^2$ inside $[-\theta^{1/2}, \theta^{1/2}]$. Then, as $\theta\to\infty$ or as $a,b\to\infty$,
$$\P\left(\cL_1(-\theta^{1/2}) \geq a\theta, \cL_1(\theta^{1/2}) \geq b\theta\right) = \exp\left(-\frac{4}{3}\theta^{3/2}\left[(1+a)^{3/2} + (1+b)^{3/2}\right](1+o(1))\right),$$
i.e., the probability on the LHS is equal to $\P(\cL_1(-\theta^{1/2}) > a\theta)\cdot\P(\cL_1(\theta^{1/2}) > b\theta)$ up to first order in the exponent.
\end{maintheorem}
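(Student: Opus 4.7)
The plan is to sandwich the two-point probability between an upper bound obtained from the FKG inequality and a lower bound extracted from Theorem~\ref{mt.two point tail}. For the upper bound, I would apply FKG---available for the KPZ line ensemble via the monotonicity property emphasized in the introduction---to factor
\[
\P\bigl(\h(-\theta^{1/2})\geq a\theta,\ \h(\theta^{1/2})\geq b\theta\bigr) \leq \P\bigl(\h(-\theta^{1/2})\geq a\theta\bigr)\cdot\P\bigl(\h(\theta^{1/2})\geq b\theta\bigr).
\]
Parabolic stationarity of $x\mapsto\h(x)+x^2$ rewrites each marginal as a one-point tail of the form $\P(\h(0)\geq(1+c)\theta)$ with $c\in\{a,b\}$, and Theorem~\ref{mt.one point tail asymptotics} bounds each by $\exp(-\tfrac{4}{3}(1+c)^{3/2}\theta^{3/2}(1+o(1)))$; the $\theta^{3/4}$-type error there is $o(1)$ relative to the $\theta^{3/2}$-order main term in either of the two allowed limit regimes, so the product of the two bounds has the claimed form.

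For the lower bound I would split the hypothesis ``the chord through $(-\theta^{1/2},a\theta)$ and $(\theta^{1/2},b\theta)$ meets $-x^2$ inside $[-\theta^{1/2},\theta^{1/2}]$'' into two geometric sub-cases. Since $a,b>-1$ the two corner points lie strictly above the parabola at the interval endpoints, which forces the chord to meet the parabola either at two transversal points both inside the interval or tangentially at a single interior point---a ``one inside, one outside'' configuration is excluded. In the two-transversal sub-case, $\linhull$ has infinitely many extreme points inside $[-\theta^{1/2},\theta^{1/2}]$ (the two corners together with the parabolic arc between the two tangent points), placing us in the middle case of Theorem~\ref{mt.two point tail} whose exponent is exactly the desired $\tfrac{4}{3}\theta^{3/2}[(1+a)^{3/2}+(1+b)^{3/2}]$, so the lower bound follows immediately.

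The tangent sub-case is the step I expect to demand the most care. Here the tangent point lies on the chord between the two corners and is thus a non-extreme convex combination of them, so $\linhull$ has only those two corners as extreme points inside the interval and we fall instead under the first case of Theorem~\ref{mt.two point tail}, whose exponent has the more complicated form $\tfrac{\theta^{3/2}}{24}[3(a-b)^2+24(a+b)+16((1+a)^{3/2}+(1+b)^{3/2})+32]$. The missing ingredient is then a short algebraic identity showing this reduces to the product exponent in the tangent regime. Writing $u=\sqrt{1+a}$ and $v=\sqrt{1+b}$, the tangent condition $(b-a)^2=8(a+b)$ factors in the variables $s=u+v$, $p=uv$ as $(s^2-4)(s^2-4-4p)=0$; combining this with the requirement that the tangent point $-(b-a)\theta^{1/2}/4$ lie in $[-\theta^{1/2},\theta^{1/2}]$ (equivalently $|v-u|s\leq 4$) forces $s=u+v=2$. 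Substituting $a+b=2-2uv$, $(a-b)^2=16(1-uv)$ and $u^3+v^3=(u+v)^3-3uv(u+v)=8-6uv$ collapses the case-one exponent identically to $\tfrac{4}{3}\theta^{3/2}(u^3+v^3)=\tfrac{4}{3}\theta^{3/2}[(1+a)^{3/2}+(1+b)^{3/2}]$, matching the product and completing the proof. Apart from this identity and the geometric case split, the argument is a direct corollary of Theorems~\ref{mt.one point tail asymptotics} and~\ref{mt.two point tail} together with FKG.
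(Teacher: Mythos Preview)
Your proposal is correct and follows essentially the same route as the paper: reduce to Theorem~\ref{mt.two point tail}, read off the product form directly in the transversal (infinitely-many-extreme-points) case, and in the tangent case verify algebraically that the first-case exponent collapses to $\tfrac{4}{3}\theta^{3/2}[(1+a)^{3/2}+(1+b)^{3/2}]$. The only cosmetic differences are that the paper uses both directions of Theorem~\ref{mt.two point tail} rather than invoking FKG separately for the upper bound, and it parametrizes the tangent case by $z=(a-b)/4$ (so $a=z^2+2z$, $b=z^2-2z$), which is your $(u,v)$-parametrization under the substitution $u=1+z$, $v=1-z$ once you have established $u+v=2$.
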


Theorem~\ref{mt.fkg sharpness} essentially follows by formulating the tangency condition in an algebraic form and applying the first case of Theorem~\ref{mt.two point tail}, where $\linhull$ has two extreme points inside $[-\theta^{1/2}, \theta^{1/2}]$. Simplifying the expression from that case of Theorem~\ref{mt.two point tail} will yield Theorem~\ref{mt.fkg sharpness}.

\subsubsection{Extremal ensembles}

Recall that our examples of interest like the parabolic Airy$_2$ process and KPZ equation embed as the first curve in a line ensemble. While $\cP$ and $\h$ are examples of such line ensembles in the continuum, there are many prelimiting discrete models which also possess an associated line ensemble (often with finitely many curves) that enjoys an explicit Gibbs property, and we anticipate that the ideas underlying our method will also be applicable in such contexts. Now, recall from Assumption~\ref{as.tails} that we a priori require upper and lower bounds on the one-point upper tail of the first curve. These are available for $\cP$ and $\h$, but may not be in prelimiting models of interest. Nevertheless, to showcase the reach of our method, we show that the sharp tail bounds can also be obtained in a context where the input tail bounds are not a priori available.

To introduce the context, recall that the laws of these infinite ensembles are examples from a class of Gibbs measures on the space of infinite collections of continuous curves. 
In such settings it is an important and natural objective to gain an understanding of the structure of the set of Gibbs measures, which is a convex set. Indeed, this has been an important field of research in areas such as tiling and dimer models \cite{sheffield2005random,kenyon2006dimers,aggarwal2019universality}. For such classifications, the \emph{extremal} Gibbs measures play a special role; a Gibbs measure is extremal if it cannot be written as a non-trivial convex combination of two other Gibbs measures. $\cP$ is an extremal ensemble in this sense; see the discussion in Section~\ref{s.notions of extremality}.

The following result regarding the structure of the extremal Gibbs measures first appeared as \cite[Conjecture 3.2]{corwin2014brownian}, following a suggestion of Scott Sheffield, and was very recently established by Aggarwal-Huang \cite{aggarwal2023strong}: 

\begin{theorem}[Corollary 2.12 of \cite{aggarwal2023strong}]\label{conj.extremality}
Let $\cL = (\cL_1, \cL_2, \ldots)$ be a line ensemble such that (i) $\cL(x)+x^2$ is  stationary under deterministic horizontal shifts, (ii) $\cL$ has the Brownian Gibbs property, and (iii) the law of $\cL$ is extremal in the set of such Gibbs measures. Then $\cL$ is the parabolic Airy line ensemble, up to a trivial deterministic vertical shift of the entire ensemble.
\end{theorem}

We will call an ensemble which satisfies the hypotheses of the conjecture an \emph{extremal stationary ensemble}. 

Note that despite the hypotheses on the ensemble in Theorem~\ref{conj.extremality} being purely qualitative---in particular, integrable inputs are not available---the conclusion is essentially that the ensemble must be the parabolic Airy line ensemble, which has very precise tail decay and integrable structure. 
Thus, while the next result follows as an immediate consequence of Theorem~\ref{conj.extremality}, we include its proof as it may be applicable to other models where a priori tail bounds are not known. We also mention that our arguments may be regarded as softer in comparison to those of \cite{aggarwal2023strong}.

\begin{maintheorem}[Tail asymptotics for extremal stationary ensembles]\label{mt.extremal}
Extremal stationary line ensembles satisfy Assumptions~\ref{as.bg}--\ref{as.tails}.

In particular, the probability bounds of Theorems~\ref{mt.one point density asymptotics}, \ref{mt.one point tail asymptotics}, \ref{mt.two point tail}, and \ref{mt.fkg sharpness} (as well as Theorems~\ref{mt.one point limit shape}--\ref{mt.two-point limit shape} ahead) all hold for the top curve of an extremal stationary ensemble and, for instance, if $\cL$ is such an ensemble,
\begin{align*}
\frac{1}{\dif\theta}\P\Bigl(\cL_1(0)\in [\theta,\theta+\dif\theta]\Bigr) = \exp\left(-\frac{4}{3}\theta^{3/2}(1+o(1))\right).
\end{align*}
(Note that explicit and implicit constants such as $\theta_0$ or $C$ in Theorems~\ref{mt.one point density asymptotics}--\ref{mt.fkg sharpness} and $o(1)$ above will depend on the extremal stationary ensemble measure in question.)

Thus, since $\cP$ is extremal stationary, Theorems~\ref{mt.one point density asymptotics}--\ref{mt.fkg sharpness} and \ref{mt.one point limit shape}--\ref{mt.two-point limit shape} hold unconditionally for $\cP$.
\end{maintheorem}

In essence, the main task in proving Theorem~\ref{mt.extremal} is establishing that Assumption~\ref{as.tails} can be derived from Assumptions~\ref{as.bg}--\ref{as.mono in cond}; proving that extremal ensembles satisfy the latter three assumptions is more straightforward and done in Appendix~\ref{app.monotonicity proofs}.

In fact, one does not need to verify that $\cP$ is an extremal stationary ensemble to conclude that Theorems~\ref{mt.one point density asymptotics}--\ref{mt.fkg sharpness} and \ref{mt.one point limit shape}--\ref{mt.two-point limit shape} apply to it unconditionally; one can verify that Assumptions~\ref{as.bg}--\ref{as.tails} hold for it directly, as we also do in Appendix~\ref{app.monotonicity proofs}.

\subsection{Main results on limit shapes}\label{s.intro.results.shapes}
We now move on to more geometric statements, i.e., about the form of $\cL_1$ on the respective upper tail events, and which, as it turns out, serve as crucial ingredients in the proofs of the results on asymptotics of upper tail event probabilities stated in the previous subsection.

\subsubsection{One-point limit shape}

Define $\tri:[-\theta^{1/2}, \theta^{1/2}]\to \R$ ($\tri$ is short for triangle) by
$$\tri(x) = -2\theta^{1/2}|x|+\theta.$$
This is the function obtained by considering the two tangents to the parabola $-x^2$ which pass through $(0,\theta)$; see Figure~\ref{f.one-point limit shape}. It is an easy computation that the tangents touch the parabola at $(\pm \theta^{1/2}, -\theta)$, which gives $\tri$.

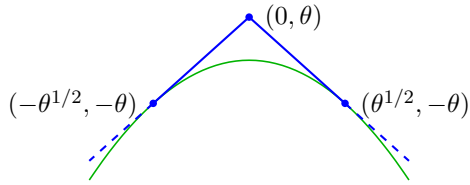
\begin{figure}[h!]
\begin{tikzpicture}[scale=0.85]
\draw[green!70!black, semithick]  plot[smooth, domain=-2.5:2.5] (\x, -0.3*\x * \x);

\draw[blue, thick]  (-1.5, -0.3*1.5*1.5) -- (0, -0.3*1.5*1.5 + 2*0.3*1.5*1.5) -- (1.5, -0.3*1.5*1.5);

\draw[blue, thick, dashed] (-1.5, -0.3*1.5*1.5) -- ++(-1, -2*0.3*1.5*1);
\draw[blue, thick, dashed] (1.5, -0.3*1.5*1.5) -- ++(1, -2*0.3*1.5*1);

\node[circle, fill, blue, inner sep = 1pt] at (0, -0.3*1.5*1.5 + 2*0.3*1.5*1.5) {};
\node[anchor = west, scale=0.9] at (0.1, -0.3*1.5*1.5 + 2*0.3*1.5*1.5) {$(0, \theta)$};

\node[circle, fill, blue, inner sep = 1pt] at (1.5, -0.3*1.5*1.5) {};
\node[anchor = west, scale=0.9] at (1.6, -0.3*1.5*1.5) {$(\theta^{1/2}, -\theta)$};

\node[circle, fill, blue, inner sep = 1pt] at (-1.5, -0.3*1.5*1.5) {};
\node[anchor = east, scale=0.9] at (-1.6, -0.3*1.5*1.5) {$(-\theta^{1/2}, -\theta)$};

\end{tikzpicture}
\caption{The blue solid curve is $\tri:[-\theta^{1/2}, \theta^{1/2}]\to\R$.}\label{f.one-point limit shape}
\end{figure}

\begin{maintheorem}[One-point limit shape]\label{mt.one point limit shape}
There exist $C<\infty$, $c>0$, and $\theta_0$ such that, for $\theta>\theta_0$ and $C< M < C^{-1}\theta^{3/4}$, 
\begin{align*}
\P\left(\sup_{|x|\leq\theta^{1/2}} \left(\cL_1(x) - \tri(x)\right) \geq M\theta^{1/4} \ \Big|\  \cL_1(0) = \theta\right) &\leq \exp(-cM^{2}) \qquad\text{and}\\
\P\left(\inf_{|x| \leq \theta^{1/2}}\left(\cL_1(x) - \tri(x)\right) \leq -M\theta^{1/4} \ \Big|\  \cL_1(0) = \theta\right) &\leq \exp(-cM^{2}) + 4\cdot\P(\cL_1(0)\leq -\tfrac{1}{2}M\theta^{1/4}).
\end{align*}
\end{maintheorem}

The second bound has a term involving the one-point lower tail of $\cL_1$ as we have made no assumption on such a quantity in Section~\ref{s.assumptions}. For examples like the parabolic Airy$_2$ process and the KPZ equation, lower tail bounds are available \cite{tracy1994level,corwin2020lower} which show that $\exp(-cM^2)$ is the dominant term.

While it may seem slightly odd that we impose an upper bound on $M$, this is sufficient for our purposes. Moreover, for all $M$ large enough, both probabilities should behave as $\exp(-c(M\theta^{1/4})^{3/2})$, and it is easy to check that the transition from the Gaussian tail to the former happens when $M$ is of order $O(\theta^{3/4})$. We will in fact impose similar upper bounds on $M$ in many places in the paper so as to ensure that we work only with Gaussian bounds.

The scale of fluctuations, $\theta^{1/4}$, is sharp, at least in the bulk of $[-\theta^{1/2}, 0]$ and $[0, \theta^{1/2}]$ owing to the diffusive nature of Brownian fluctuations. However, the fluctuations are expected to decrease significantly at $\pm\theta^{1/2}$ and beyond, where they should be $O(1)$. While more work using  bootstrapping ideas relying on the asymptotics from Theorem~\ref{mt.one point tail asymptotics} should yield some improvement, we don't pursue them to keep things somewhat less technical.

A natural question is what the profile looks like beyond $[-\theta^{1/2}, \theta^{1/2}]$. From the heuristic described later in Section \ref{s.intro.proof ideas}  one expects it to be close to the parabola $-x^2$, and this is indeed the case; see Proposition~\ref{p.para fluctuation h_t} for details.

Our last result concerns the limit shape under two-point upper tail events.

\subsubsection{Two-point limit shape}

Recall the definition and illustration of $\linhull$ from Section~\ref{s.intro.two-point asymptotics} and Figure~\ref{f.three cases of two-point}. 
Define $\tent:\R\to\R$ by the following. If $\linhull$ has two or infinitely many extreme points in $[-\theta^{1/2}, \theta^{1/2}]$, $\tent=\linhull$. If $\linhull$ has only one extreme point in $[-\theta^{1/2}, \theta^{1/2}]$, define $\tent$ to be the function which equals $\linhull$ on $(-\infty, -\theta^{1/2}]\cup [\theta^{1/2}, \infty)$ and which equals the line connecting $(-\theta^{1/2}, a\theta)$ and $(\theta^{1/2}, b\theta)$ on $[-\theta^{1/2}, \theta^{1/2}]$. Note that in the case that $\mrm{Tent}_{a,b}$ is convex, it equals $\linhull$. 

Also define $\Ilin$ to be the biggest closed set on which $\tent$ is piecewise linear; since this set is the same even if we replace $\tent$ by $\linhull$, looking at Figure~\ref{f.three cases of two-point} shows that $\Ilin$ is either one or the union of two closed intervals. Our shape theorem will be restricted to $\Ilin$. While it is possible to replace $\Ilin$ by the entire domain of $\tent$, it is technically slightly more cumbersome and not needed to obtain the two-point estimates, and so we do not do it. See Remark~\ref{r.why not two-point limit shape on whole domain} for a more detailed discussion.

\begin{restatable}[Two-point limit shape]{maintheorem}{twopointlimit}\label{mt.two-point limit shape}
Let $\theta > 0$ and $a \geq b > -1$. For $M>0$, let $M_{a,b} = \smash{M[(1+a)^{1/4}+(1+b)^{1/4}]}$. There exist $c>0$, $C<\infty$, $\theta_0$, and $a_0=b_0$ such that, if $\theta > \theta_0$ or $a,b\geq a_0, b_0$, and for $C<M\leq C^{-1}[(1+a)^{3/4}+(1+b)^{3/4}]\theta^{3/4}$,
\begin{align*}
\P\left(\sup_{x \in \Ilin} \left(\cL_1(x) - \tent(x)\right) \geq M_{a,b}\theta^{1/4} \ \Big|\  \cL_1(-\theta^{1/2}) = a\theta, \cL_1(\theta^{1/2}) = b\theta \right)
\leq \exp(-cM^2)
\end{align*}
and
\begin{align*}
\MoveEqLeft[20]
\P\left(\inf_{x \in \Ilin} \left(\cL_1(x) - \tent(x)\right) \leq -M_{a,b}\theta^{1/4} \ \Big|\  \cL_1(-\theta^{1/2}) = a\theta, \cL_1(\theta^{1/2}) = b\theta \right)\\
&\leq \exp(-cM^2) + 8\cdot\P\left(\cL_1(0)\leq -\tfrac{1}{2}M_{a,b}\theta^{1/4}\right).
\end{align*}
\end{restatable}

This concludes our main results on limit shapes and tail asymptotics of the top curve of line ensembles. In the next two sections we give some brief background on the KPZ equation and the parabolic Airy$_2$ process and previous results and predictions on their upper tail behavior. Then in Section~\ref{s.weaker bk} we present a weakened form of Assumption~\ref{as.corr}(b) (the BK inequality) which will suffice in place of Assumption~\ref{as.corr}(b) to derive all the previous results, and which can be verified for the KPZ line ensemble.

\subsection{Background on the KPZ equation}\label{s.kpz background}

The KPZ equation is the stochastic PDE given by
\begin{equation}\label{e.KPZ}
\partial_t \mc{H} = \frac{1}{4}\partial_x^2 \mc{H} + \frac{1}{4}(\partial_x \mc{H})^2 + \xi,
\end{equation}
where $\mc H:(0,\infty)\times\R\to\R$ and $\xi:(0,\infty)\times\R\to\R$ is a space-time white noise. 
(The coefficients chosen in the equation above are slightly different from the usual ones; all choices are equivalent up to a scaling, and above choice has certain convenient features which we will point out shortly.)

The solution theory for \eqref{e.KPZ} is highly non-trivial because of the combination of the non-linear term $(\partial_x \mc H)^2$ and the space-time white noise $\xi$: the latter suggests that $\mc H$ will be rough, i.e., differentiable only in the sense of distributions, which renders $(\partial_x \mc H)^2$ ill-defined. There has been much recent work on handling these issues, including regularity structures \cite{hairer2013solving}, paracontrolled distributions \cite{gubinelli2015paracontrolled}, and energy methods \cite{gonccalves2014nonlinear,gubinelli2017kpz}. 
All these solution theories agree with a physically relevant notion known as the Cole-Hopf solution, which is the one most used in studies of probabilistic properties of \eqref{e.KPZ}. To describe it we need the multiplicative stochastic heat equation (SHE), given by
$$\partial_t \mc Z = \frac{1}{4}\partial_x^2 \mc Z + \xi\mc Z,$$
where $\mc Z:(0,\infty)\times\R\to\R$ and $\xi:(0,\infty)\times\R\to\R$ is a space-time white noise. The solution theory of the SHE is more straightforward as it is a linear SPDE. The Cole-Hopf solution is \emph{defined} by
\begin{align*}
\mc H(t,x) = \log \mc Z(t,x)
\end{align*}
for all $t>0$ and $x\in\R$. It can be checked via a purely formal change of variables computation that the SHE becomes the KPZ equation under this substitution.

That $\mc H$ is well-defined by the above relation requires that, almost surely, $\mc Z(t,x) > 0$ for all $t>0$ and $x\in\R$ simultaneously; this was established for the Dirac delta initial condition in \cite{flores2014strict} (see also \cite{mueller1991support,bertini1995stochastic} for earlier related work).
The case of the initial condition for the SHE being the Dirac mass at the origin and defining $\mc H = \log \mc Z$ as above is called the \emph{narrow-wedge} solution to the KPZ equation. We discuss it more next.

Recall the definition \eqref{e.h_1 definition} of the scaled KPZ equation, i.e., the first curve $\h_1$ of the KPZ line ensemble $\h$. Here we briefly explain the scaling. The spatial scaling of $t^{2/3}$ scaling of fluctuations around $-t/12$ by $t^{1/3}$ reflects the well-known KPZ 1:2:3 scaling relation; indeed the family $\{\h\}_{t>t_0}$ is tight for any $t_0>0$ \cite{amir2011probability,quastel2022convergence,virag2020heat}.

In view of Assumption~\ref{as.bg}, $\h_1(x) + x^2$ is a stationary process in $x$ for every $t>0$ (proved in \cite{amir2011probability}, see also \cite{nica2021intermediate}). The choice of coefficients of the KPZ equation \eqref{e.KPZ} is to have the coefficient of $x^2$ here be 1. The same stationarity property holds true for the parabolic Airy$_2$ process $\cP_1$.

The narrow-wedge solution to the KPZ equation as well as the parabolic Airy$_2$ process have been extensively studied in the literature, often making use of their exactly solvable or integrable structure, i.e., that there are explicit exact formulas for quantities such as finite dimensional distributions and Laplace transforms. We discuss some of the work on upper tail behaviour next.

\subsection{Upper tail behaviour predictions and previous work}
Recall that $\cP_1(x)+x^2$ has the same distribution for each $x\in\R$ by stationarity. It is well-known that this common distribution is the GUE Tracy-Widom distribution, first discovered in random matrix theory as the scaling limit of the largest eigenvalue of the Gaussian Unitary Ensemble \cite{tracy1994level}. The upper tail asymptotics of the GUE Tracy-Widom are also known to be
\begin{equation}\label{e.airy asymptotic}
\P\left(\cP_1(0) > \theta\right) = \exp\left(-\frac{4}{3}\theta^{3/2}(1+o(1))\right)  \text{ as } \theta\to\infty;
\end{equation}
see \cite{ramirez2011beta} for an explicit statement, but this was widely known much earlier and to greater precision using the determinantal structure of the Airy point process (of which $\cP_1(0)$ is the largest point), whose kernel can be written in terms of the Airy function $\mrm{Ai}$. Indeed, the above asymptotic essentially follows from the classical fact that $\mrm{Ai}(\theta) = \exp(-\frac{2}{3}\theta^{3/2}(1+o(1)))$ as $\theta\to\infty$.

It has been known since 2011 that $\h_1(0)$ converges in distribution to $\cP_1(0)$ as $t\to\infty$ \cite{amir2011probability} (some non-rigorous derivations from the mathematical physics \cite{sasamoto2010crossover,sasamoto2010exact,spohn2010one} and physics \cite{dotsenko2010bethe,calabrese2010free} communities were also given at around the same time; see \cite{corwin2012kardar} for a discussion of the history). This has recently been strengthened to process-level convergence independently in \cite{quastel2022convergence,virag2020heat}.

We emphasize that while the asymptotic \eqref{e.airy asymptotic} is not difficult to derive using the determinantal structure of $\cP$, the same structure does not quite hold for the KPZ equation.

\subsubsection{Previous work on upper tail asymptotics}

There has been a substantial amount of work on one-point upper tail asymptotics for $\h_1$ which we briefly summarize here. A number of works \cite{conus2013chaotic,chen2015moments,khoshnevisan2017intermittency} have studied upper tails of the SHE directly with various types of non-linearities; all the bounds are non-uniform in $t$, however.

\cite{corwin2013crossover} obtains a uniform-in-$t$ upper bound, but does not show that the expected $\smash{\exp(-c\theta^{3/2})}$ behaviour holds in the large deviation regime of $\theta$. 

The first uniform-in-$t$ upper tail estimate capturing the correct behaviour on the level of the $\frac{3}{2}$ tail exponent in the entire upper tail is \cite{corwin2018kpz}, who obtain estimates of the form
\begin{equation}\label{e.cg bounds}
\exp(-c_1\theta^{3/2}) \leq \P(\h_1(0) > \theta) \leq \exp(-c_2\theta^{3/2})
\end{equation}
for $t>t_0$, $\theta>\theta_0$, and some constants $c_1, c_2>0$, where $\theta_0$, $c_1$, $c_2$ depend only on $t_0>0$. They also obtain bounds on $c_1$ and $c_2$ depending on the regime of $\theta$ in question: for instance, for any $\varepsilon>0$, $c_1=\smash{\frac{4}{3}}(1+\varepsilon)$ and $c_2 = \smash{\frac{4}{3}}(1-\varepsilon)$ can be taken when $\theta < O(\varepsilon^2 t^{2/3})$, i.e., $\theta$ up till the start of the large deviation regime. Actually, they obtain that $c_2$ (though not $c_1$) can be taken similarly nearly optimal far in the large deviation regime as well, but the values of both $c_1$ and $c_2$ deteriorate in all other ranges of $\theta$.
\cite{corwin2018kpz} also obtain upper tail estimates for general initial data which capture the $\frac{3}{2}$ tail exponent uniformly in $t$ (though they do not obtain the expected coefficient of $\frac{4}{3}$ in the exponent).

Apart from this, sharp behavior has been obtained in the large deviation, $t\to\infty$ limit, with results of the form $\lim_{t\to\infty} t^{-1}\log \P(\h_1(0)> yt^{2/3}) = -\frac{4}{3}y^{3/2}$ ($y>0$) \cite{das2021fractional} and similar results for general initial data \cite{ghosal2020lyapunov}. Note however that these types of results do not yield finite-$t$ tail bounds like~\eqref{e.cg bounds}.

Our results establish these sharp finite tail coefficients for the KPZ line ensemble as well as what the profile of $\h_1$ looks like at finite $t$ when conditioned on the upper tail events,  in the full tail regime. This is recorded ahead in Theorem~\ref{t.assumptions hold}. The one-point upper tail has been investigated in the $t\to 0$ limit in \cite{lamarre2021kpz}, verifying predictions from the physics literature \cite{kamenev2016short}; in this regime of $t$ the KPZ equation lies in the Gaussian or Edwards-Wilkinson universality class, and does not exhibit the non-linear behaviour characteristic of the KPZ class. The physics literature does not seem to predict the behavior for $t>0$, i.e., when the KPZ equation lies in the KPZ class. Similarly, the probability literature does not seem to have studied the zero temperature case (i.e., of $\cP_1$) of the profile's shape conditioned on the one-point or multi-point upper tail. 

However, there have been some recent studies in zero-temperature prelimiting models (such as last passage percolation and TASEP) investigating such questions about limit shapes and related geometric observables in the large deviation regime \cite{olla2019exceedingly,quastel2021hydrodynamic,basu2019connecting,basu2019delocalization}.

We also remark that while our focus has been on upper tail asymptotics and limit shapes conditioned on upper tail events, there has been a considerable amount of fruitful research done in a number of other directions regarding the KPZ equation. These include integrable formulas \cite{borodin2016moments,ghosal2018moments}, lower tail estimates and large deviations \cite{corwin2020lower,tsai2018exact,charlier2021uniform,corwin2018kpz,cafasso2022riemann}, studies of correlation and fractal structure of $t\mapsto \h(0)$ \cite{corwin2021kpz,das2021law}, connections to the Kadomtsev–Petviashvili equation \cite{quastel2019kp,le2020large}, and investigations of Brownian regularity \cite{wu2021tightness,wu2021brownian}, to give a taste.

\subsection{A weaker form of Assumption~\ref{as.corr}(b)} \label{s.weaker bk}

As mentioned above in Section~\ref{s.intro}, Assumption~\ref{as.corr}(b) (the BK inequality) holds in the zero temperature case, including the parabolic Airy line ensemble and prelimiting models such as the line ensemble associated to exponential last passage percolation, but its validity is not clear and seems delicate to establish in the positive temperature case of the KPZ line ensemble. %

The exact form of Assumption~\ref{as.corr}(b) in fact seems too strong a statement to hold for the KPZ line ensemble, due to certain features of the object which seem to ultimately arise from entropy considerations underlying the polymer model whose free energies the KPZ line ensemble encodes. This is discussed in more detail in Section~\ref{s.bk difficulty for kpz}. %

Our arguments do not actually need the precise form of the BK inequality in Assumption~\ref{as.corr}(b), and here we formulate a weaker version which will suffice for our purposes, and which can be obtained using the results from \cite{ganguly2025van} (see Section~\ref{s.weaker bk.validity of weak bk}). This weaker form will in fact be the assumption used in the rest of the paper and suffices, as captured ahead in Proposition~\ref{p.weak bk suffices}. It is the following:

\begin{enumerate}[start=2]
	\item (b\ensuremath{'}) There exist $C$ and $\theta_0$ such that, for any $x_0\in\R$, $\theta>\theta_0$, $M>C(\theta+x_0^2)^{3/4}$, and interval $I$ satisfying $\log |I| \leq (\log M)^{2}$, almost surely, \label{as.weak bk}
	\begin{align}\label{e.weak bk}
	\P\left(\sup_{x\in I} \left(\cL_2(x)+x^2\right) > (\log M)^C \midd \cL_1(x_0) \geq \theta \right) \leq \frac{1}{2}.
	\end{align}
	
\end{enumerate}

In fact, as the proofs will indicate, it also suffices if one replaces $\frac{1}{2}$ with any fixed number smaller than $1$. 

To see that this is indeed a weakening of Assumption~\ref{as.corr}(b), note that Assumption~\ref{as.weak bk}(b\ensuremath{'}) has three main restrictions in comparison: first, instead of conditioning on the entirety of $\cL_1$ on an interval, we only condition on its value at a point being lower bounded; second, we only consider the event of the supremum of the second curve being large rather than an arbitrary increasing event; and third, we are content with having a probability bound of $\frac{1}{2}$ rather than comparing to the corresponding unconditional probability for the first curve.

To handle the weakening of the conditioning mentioned in the second point, it will be necessary to have a slightly stronger form of monotonicity in conditioning (Assumption~\ref{as.mono in cond}) which provides some additional uniformity. This is captured in the mildly stronger form of that assumption that we state next, where we allow ourselves to additionally condition on $\cL_1$ being larger than a given function $f$ at some or all points.

\begin{enumerate}
\item[{\crtcrossreflabel{(iii\ensuremath{'})}[as.mono in cond stronger]}] Let $m\in\N$, $x_1, \ldots, x_m\in\R$,  $y^{(i)}_1, \ldots, y^{(i)}_m \in\R$ for $i=1$ and $2$. Let $a<b\in\R$ and $f:[a,b]\to\R\cup\{-\infty\}$ be upper semicontinuous. Suppose $\smash{y_j^{(1)} \geq y_j^{(2)}}$ for $j=1, \ldots, m$. Then the conditional law of $\cL$ given $\cL_1(x_j) = \smash{y^{(1)}_j}$ for $j=1, \ldots, m$ and $\inf_{[a,b]} (\cL_1 -f) \geq 0$ stochastically dominates that of the same given $\cL_1(x_j) = \smash{y^{(2)}_j}$ and $\sup_{[a,b]} (\cL_1 -f) \geq 0$.
\end{enumerate}

We prove Assumption~\ref{as.mono in cond stronger} for the parabolic Airy, extremal, and KPZ line ensembles in Appendix~\ref{app.monotonicity proofs}.

The sufficiency of Assumptions~\ref{as.weak bk}(b\ensuremath{'}) and \ref{as.mono in cond stronger} is recorded next.

\begin{proposition}\label{p.weak bk suffices}
Theorems~\ref{mt.one point density asymptotics}--\ref{mt.fkg sharpness} and \ref{mt.one point limit shape}--\ref{mt.two-point limit shape} all hold on replacing Assumptions~\ref{as.corr}(b) and \ref{as.mono in cond} by Assumptions~\ref{as.weak bk}(b\ensuremath{'}) and \ref{as.mono in cond stronger}.
\end{proposition}

\subsubsection{Validity of Assumption~\ref{as.weak bk}(b\ensuremath{'}) for parabolic Airy and KPZ line ensembles}\label{s.weaker bk.validity of weak bk}

Assumptions~\ref{as.bg}, \ref{as.corr}(b), and \ref{as.tails} imply Assumption~\ref{as.weak bk}(b\ensuremath{'}), which will be presented in the next lemma.

\begin{lemma}\label{l.derive weak bk}
Suppose $\cL$ satisfies Assumptions~\ref{as.bg}, \ref{as.corr}(b), and \ref{as.tails}. Then $\cL$ satisfies Assumption~\ref{as.weak bk}(b\ensuremath{'}).
\end{lemma}

\begin{proof}
First, by Assumption~\ref{as.corr}(b), for any interval $I$,
\begin{align*}
\P\left(\sup_{x\in I} \left(\cL_2(x)+x^2\right) > (\log M)^C \midd \cL_1(0) \geq \theta \right) \leq \P\left(\sup_{x\in I} \left(\cL_1(x)+x^2\right) > (\log M)^C\right).
\end{align*}
It is not hard to show that a consequence of Assumptions~\ref{as.bg} and \ref{as.tails} is that the righthand side with the supremum over $[-1,1]$ instead of $I$ is upper bounded by $\exp(-c(\log M)^{C\beta})$, by, e.g., Proposition~\ref{p.sharp sup over interval tail} ahead (similar statements have been shown in the literature a number of times, e.g., \cite[Proposition 2.27]{hammond2016brownian}).
 Picking $C>\beta^{-1}$ large enough, using the stationarity from Assumption~\ref{as.bg} and doing a union bound over $O(|I|)$ many intervals of size 2 implies \eqref{e.weak bk} (using the bound we have assumed on $|I|$). 
\end{proof}

For the positive temperature KPZ line ensemble, the journey to establish the validity of Assumption~\ref{as.corr}(b) or \ref{as.weak bk}(b\ensuremath{'}) has had some stumbles, through which the subtle role entropy plays in this setting (as opposed to in zero temperature) became increasingly apparent. 
While (as also mentioned earlier) Assumption~\ref{as.corr}(b) is expected to be too strong to hold, in fact, a slightly stronger version of Assumption~\ref{as.weak bk}(b\ensuremath{'}) was conjectured to hold in an earlier arXiv version of this article, though at this point we also expect that to be too strong. However, a formulation of an inequality similar to Assumption~\ref{as.corr}(b) for the KPZ line ensemble established in the recent work \cite{ganguly2025van} allows us to deduce Assumption~\ref{as.weak bk}(b\ensuremath{'}) for it fairly straightforwardly, which we do as a part of the proof of Theorem~\ref{t.assumptions hold}.

\subsubsection{The difficulty in establishing Assumption~\ref{as.corr}(b) in positive temperature}\label{s.bk difficulty for kpz}

As promised, we conclude this section by briefly discussing the issues in establishing the BK inequality for the KPZ line ensemble, compared to the parabolic Airy line ensemble $\cP$ (for which it will be proved in Appendix~\ref{app.monotonicity proofs}). In the latter, the strategy is to show that a form of the inequality holds for a prelimiting model (Dyson Brownian motion), and then take an appropriate scaling limit to arrive at $\cP$. The argument in the prelimit relies on the curves being ordered at at least one point (indeed, the curves are ordered throughout), for instance, the origin, where the curves coincide.

The analogous prelimiting model for the KPZ line ensemble is the O'Connell-Yor diffusion \cite{OY01,o2012directed}, which consists of $N$ interacting random continuous functions on $(0,\infty)$. The issue is that, unlike Dyson Brownian motion, these curves are not guaranteed to be ordered or to coincide even at the origin. Instead, the entrance law can be described as the curves coming up from ``$-\infty$'' (in a certain precise sense), where they are in \emph{reverse} order \cite[discussion following Proposition 8.3]{o2012directed}. The reversal at the origin is a genuine distinction between the zero and positive temperature cases, arising from entropy considerations in the associated polymer model (namely the reduced entropy of a pair of polymers with adjacent starting points that are forced to be disjoint, whose free energy is what the second curve encodes, in comparison to the same without any disjointness condition).

Another manifestation of the distinction coming from entropy is the following. In the case of Dyson Brownian motion (DBM), the argument in the prelimit establishes that the second curve of $N$-DBM, conditional on the first curve of the same, is stochastically dominated by the first curve of $(N-1)$-DBM. Both $N$- and $(N-1)$-DBM converge to $\cP$ under essentially the same centering and scaling, which yields the BK inequality for $\cP$. For the KPZ line ensemble $\h$, it is possible to show an analogous statement for a different prelimiting ensemble, the log-gamma polymer; namely that the second curve in the $N$ ensemble, conditional on the first curve, is stochastically dominated by the first curve of the $(N-1)$ ensemble. However, the centering required to take the $N$-ensemble to $\h$ differs from that required to take the $(N-1)$-ensemble to the same by a term of $\log N$, and there is no scaling. Thus in the large $N$ limit, the stochastic domination becomes trivial and does not lead to the BK inequality for $\h$. In essence, \cite{ganguly2025van} avoids such issues by proving an analogous inequality in the log-gamma model for the case of disjoint polymers with \emph{far away} starting and ending points, for which a limit to the continuum can be taken, at which stage the points can be brought close enough together for the free energy to approximate the KPZ line ensemble.

\subsection{One-point tail asymptotics for general initial data}\label{s.intro.general data}

Our last result on asymptotics concerns fairly straightforward consequences of our upper tail asymptotics results to general initial data for the cases of the KPZ equation and the KPZ fixed point.

Here we exploit the well-known fact that the one-point distribution of the KPZ equation at a given time with general initial data can be expressed via a convolution formula involving the entire spatial process of the narrow-wedge solution at the same time. 
Using this it is possible to use asymptotics for the narrow-wedge solution  to obtain one-point asymptotics for general initial data.

To formulate this result, we need to define the scaled solution to the KPZ equation under general data, as well as a class of initial conditions under which the solution is well-defined and non-trivial. 

Let $\mc H(t,x)$ be the solution to the KPZ equation \eqref{e.KPZ} started from general initial data $\mc H(0,\cdot)$. To respect the KPZ scaling, we will allow the initial data to vary with the value of $t$ being considered, but, for brevity, we will still denote it by $\mc H(0,\cdot)$, omitting the $t$-dependence in the notation.

More precisely, for a family of functions $f^{(t)}:\R\to\R\cup\{-\infty\}$, we set $\mc H(0, \cdot)$ by
\begin{equation}\label{e.form of initial condition}
\mc H(0,y)=t^{1/3} f^{(t)}(t^{-2/3}y) \iff t^{-1/3} \mc H(0,t^{2/3}y) = f^{(t)}(y).
\end{equation}
Let $\mc H(t,x)$ be the solution to the KPZ equation at time $t$ with this $t$-dependent initial data, and define the scaled solution $\hf$ by
\begin{equation}\label{e.hf definition}
\hf(x) =\frac{\mc H(t,t^{2/3}x) + \frac{t}{12} - \frac{2}{3}\log t}{t^{1/3}}.
\end{equation}
This scaling of the solution, as well as the initial condition in \eqref{e.form of initial condition}, is convenient since, for example, if $f^{(t)} = f$ for a given function $f$ for all $t$, then $\hf$ converges in distribution as $t\to\infty$ to the KPZ fixed point at time 1 started from initial condition $f$ by \cite{quastel2022convergence,virag2020heat} (see the following page for a brief discussion of this object).

We next list the conditions we impose on the initial data. Essentially the conditions ensure that the solution does not grow too quickly and there is at least some amount of the domain where it is not too negative; otherwise, one runs into pathological settings such as the solution exploding immediately.
The form of the assumptions are somewhat standard in the literature by now, and, for example, are similar to but slightly weaker than those adopted in \cite{corwin2016kpz}. 

The first thing we require is that $f$ essentially grows at most like $x^2$. This comes from matching the decay of $\h_1(x)$, i.e., $-x^2$, since, heuristically, if $\mc H(0,x)$ grows quadratically in $x$ as $x\to\infty$, then $\mc H(t,\cdot)$ will blow up at some finite $t>0$, while a growth faster than quadratic will lead to immediate blow-up. However, because we vary $\mc H(0,\cdot)$ with $t$, we in fact need to also limit the coefficient of the quadratic growth to ensure blow up does not occur immediately. More precisely, the time of blow-up $T_0$ can be seen to be the smallest $t$ such that $\sup_{y\in\R} (\mc H(0,y) -y^2/t) = \infty$; using the relation \eqref{e.form of initial condition}, the latter condition is equivalent to $\smash{\sup_{y\in\R} (f^{(t)}(y) -y^2)= \infty}$.

The following is the precise form of the initial data conditions.

\begin{definition}\label{d.gen initial condition hypotheses}
For $K$, $L$, $M$ $\delta> 0$ we say that a function $f : \R \to \R \cup \{-\infty\}$ satisfies hypothesis $\Hyp(K, L, M, \delta)$ if
\begin{itemize}
	\item $f(x) \leq x^2 - L|x| + K$ for all $x \in \R$;
	\item $\mrm{Leb}\{x \in [-M, M] : f(x) \geq -K\} \geq \delta$ where $\mrm{Leb}$ denotes Lebesgue measure.
\end{itemize}
\end{definition}

Since the form of our initial condition is \eqref{e.form of initial condition}, $f^{(t)} \in \Hyp(K,L,M,\delta)$ for all $t<T_0$ for some $T_0>0$ will imply that $\hf$ is defined for all $t<T_0$, since the extra linear term in the first bullet point is enough to handle the random fluctuations beyond the parabola.

\begin{maintheorem}[One-point upper tail bounds for general initial data]\label{mt.general initial data}
Let $t_0>0$. Let $T_0 \in (t_0,\infty]$ and $f^{(t)}\in \Hyp(K, L, M, \delta)$ for some fixed $K$, $L$, $M$, $\delta>0$ for all $t\in [t_0, T_0)$. There exist $\theta_0>0$ and $C<\infty$ such that, for $t\in [t_0, T_0)$ and $\theta>\theta_0$,
$$\exp\left(-\frac{4}{3}\theta^{3/2} - C\theta^{3/4}\right)\leq \P\Bigl(\hf(0) \geq \theta\Bigr) \leq \exp\left(-\frac{4}{3}\theta^{3/2} + C\theta^{3/4}\right).$$
\end{maintheorem}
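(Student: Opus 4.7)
The plan is to reduce to the narrow-wedge case (Theorem~\ref{mt.one point tail asymptotics}) via the Cole--Hopf convolution formula for the stochastic heat equation. Linearity of the SHE and translation invariance of the space-time noise yield, at the process level and in distribution,
\begin{equation*}
\mc Z(t,0) \stackrel{d}{=} \int_{\R} \mc Z^{\mrm{nw}}(t,-y)\, e^{\mc H(0,y)}\, \dif y,
\end{equation*}
where $\mc Z^{\mrm{nw}}$ is the narrow-wedge SHE started from the origin. Plugging in the initial-data ansatz \eqref{e.form of initial condition}, making the change of variables $u = t^{-2/3}y$, and passing to scaled KPZ coordinates (the deterministic shifts $\tfrac{2}{3}\log t$ and $-t/12$ cancel) yields the key identity
\begin{equation*}
\hf(0) \stackrel{d}{=} \frac{1}{t^{1/3}} \log \int_{\R} \exp\Bigl(t^{1/3}\bigl[\h(-u) + f^{(t)}(u)\bigr]\Bigr)\, \dif u,
\end{equation*}
so that $\hf(0)$ is a Laplace-type soft-max of $u \mapsto \h(-u) + f^{(t)}(u)$ at inverse temperature $t^{1/3}$. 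Both bounds on $\hf(0)$ will be extracted from this identity.

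For the upper bound, I would decompose $\h(-u)+f^{(t)}(u) = [\h(-u)+u^2] + [f^{(t)}(u) - u^2]$ and invoke the growth estimate from $\Hyp(K,L,M,\delta)$ to get $f^{(t)}(u) - u^2 \leq -L|u|+K$. The process $u \mapsto \h(-u)+u^2$ is stationary, so on the inner window $|u| \leq R$ with $R \sim \theta^{1/2}$, the supremum is controlled by a net/chaining argument combining Theorem~\ref{mt.one point tail asymptotics} on a grid of spacing $\sim \theta^{-1/2}$ with a standard modulus-of-continuity estimate for $\h$, while on $|u|>R$ the exponential factor $e^{-t^{1/3}L|u|/2}$ together with stationarity and Theorem~\ref{mt.one point tail asymptotics} handles the integral by a geometric sum. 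For the lower bound, fix a Lebesgue density point $u_0$ of the set $\{u \in [-M,M]: f^{(t)}(u) \geq -K\}$ (nonempty by the second bullet of $\Hyp$). Restricting the integral to a short window $I \ni u_0$ reduces matters to showing that with probability at least $\exp(-\tfrac{4}{3}\theta^{3/2} - O(\theta^{3/4}))$ one has $\h(-u) \geq \theta + K + O(\theta^{3/4})$ uniformly over $u \in I$. The marginal at $u_0$ is supplied by the lower bound of Theorem~\ref{mt.one point tail asymptotics} after the stationarity shift $\h(-u_0)+u_0^2 \stackrel{d}{=} \h(0)$ (with $u_0^2 \leq M^2$ absorbed in the error), while uniformity is obtained by conditioning on $\h(-u_0)$ and invoking the Brownian Gibbs property to show that local fluctuations of $\h$ across an interval of length $\sim \theta^{-1/2}$ cost only $O(1)$.

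The main obstacle is establishing both bounds with the $O(\theta^{3/4})$ error uniformly in $t \in [t_0,T_0)$. For the upper bound, the combinatorial cost of chaining on $[-R,R]$ at spacing $\theta^{-1/2}$ contributes only $\log \theta$ to the exponent, which is comfortably $o(\theta^{3/4})$; the more delicate point is producing a uniform-in-$t$ modulus of continuity for $\h$ at the required scale. For the lower bound, the difficulty lies in showing that the Brownian Gibbs property supplies the claimed conditional regularity on a $\theta^{-1/2}$-window when $\h(-u_0)$ sits deep in the upper tail, which is the regime where the sharp estimates of Theorem~\ref{mt.one point tail asymptotics} must be combined with Brownian-bridge-type variance bounds to keep the cumulative error at the stated $O(\theta^{3/4})$ rather than worse.
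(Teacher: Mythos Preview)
Your reduction via the Cole--Hopf convolution formula is exactly the paper's starting point (Lemma~\ref{l.kpz convolution}), and the overall strategy of reading off both bounds from the soft-max representation is the same. The differences are in execution, and in both directions the paper finds a shorter path that sidesteps the obstacles you flag.

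For the \emph{upper bound}, rather than splitting into inner and outer windows and chaining on a $\theta^{-1/2}$ grid, the paper simply peels off half of the linear decay as an integrable weight and pulls the supremum out of the integral:
\[
\hf(0)\ \le\ \sup_{x\in\R}\bigl[\h_1(x)+x^2-\tfrac{L}{2}|x|\bigr]\ +\ t^{-1/3}\log\!\int_\R e^{-t^{1/3}L|y|/2}\,\dif y\ +\ K,
\]
which reduces everything to a single tail estimate for $\sup_{x\in\R}\bigl[\h_1(x)+x^2-\tfrac{L}{2}|x|\bigr]$ (Proposition~\ref{p.sup over real line tail}). That supremum is then handled by a union bound over unit intervals together with Proposition~\ref{p.sharp sup over interval tail}, which controls $\sup_{[-1,1]}(\h_1+x^2)$ directly in terms of the one-point tail via a Brownian--Gibbs resampling argument---no fine-scale modulus of continuity for $\h_1$ is ever needed. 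So the ``main obstacle'' you identify (uniform-in-$t$ continuity at scale $\theta^{-1/2}$) simply does not arise.

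For the \emph{lower bound}, your density-point-plus-short-window idea would work, but the paper again takes a more global route: restrict the integral to the full good set $\{u\in[-M,M]:f^{(t)}(u)\ge -K\}$ of measure $\ge\delta$, giving
\[
\hf(0)\ \ge\ \min_{x\in[-M,M]}\h_1(x)\ -\ K\ -\ t^{-1/3}\log\delta^{-1},
\]
and then proves a sharp lower bound on $\P\bigl(\min_{[-M,M]}\h_1\ge\theta\bigr)$ (Proposition~\ref{p.minimum over interval}). The latter is obtained not by conditioning on a single point value but by invoking the two-point asymptotics (Theorem~\ref{t.two point asymptotics}) to force $\h_1(\pm M)\ge\theta+R$ for a constant $R$, after which a single Brownian-bridge fluctuation bound on $[-M,M]$ finishes the job. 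This gives an error of order $\theta^{1/2}\log\theta$, better than the $\theta^{3/4}$ you aim for, and avoids any discussion of how the density point might vary with $t$.
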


In fact, as the proofs will show, the upper and lower bounds in Theorem~\ref{mt.general initial data} require different parts of the hypotheses in Definition~\ref{d.gen initial condition hypotheses}. For this reason, Theorems~\ref{t.general initial data upper bound} and \ref{t.general initial data lower bound}, appearing later in Section \ref{s.general data}, separate the two bounds and are more precise about which hypotheses are needed for each.

Unlike in the narrow-wedge case, we have not stated a bound on the one-point densities for general initial data. Such a result would require a more complicated argument, though it is possible that a suitable extension of the narrow-wedge argument would yield it; we discuss this more in Remark~\ref{r.general density}.

As we mentioned, Theorem~\ref{mt.general initial data} is proved using a convolution formula which relates $\hf$ with $\h_1$. This formula was also used in \cite{corwin2018kpz} to show uniform-in-$t$ upper tail bounds of the form $\exp(-c\theta^{3/2})$ for general initial data---i.e., the correct tail exponent, but not the sharp coefficient. The lack of sharpness is mainly because the procedure they adopt to go from narrow-wedge solution to the general solution was lossy.
We take a different and somewhat simpler approach which allows us to obtain sharp asymptotics.

Our arguments extend easily to the zero temperature analogue for general initial data mentioned above, the KPZ fixed point $\mf h^{\mrm{FP}, f}(t,\cdot)$ (where $f$ is the initial condition), which we spend a little time introducing now. This is a Markov process in $t$, in a certain function space, constructed in \cite{matetski2016kpz} and given a variational description in \cite{nica2020one} in terms of the directed landscape constructed in \cite{dauvergne2018directed}. It has recently been proven to be the $t\to\infty$ limit of the KPZ equation under general initial data \cite{quastel2022convergence,virag2020heat}; for example, the parabolic Airy$_2$ process is the KPZ fixed point started from narrow-wedge initial data, and so is the limit of the KPZ equation from the same data.

For our purposes, it will be easier to define the KPZ fixed point via the variational description than the mentioned limit; this is because the limits have not been proven for initial conditions that grow quadratically that we also want to include. Though the variational formula uses the directed landscape, we will only be interested in $\mf h^{\mrm{FP}, f}$ for a fixed time and space location, for which the formula can be given in terms of the parabolic Airy$_2$ process: for each fixed $x\in\R$, in distribution,
\begin{align}\label{e.KPZ fixed point variational formula}
\mf h^{\mrm{FP}, f}(1, x) = \sup_{y\in\R} \Bigl(\cP_1(y) + f(y+x)\Bigr).
\end{align}
One can get a formula for $\mf h^{\mrm{FP}, f}(t, x)$ from the above by a rescaling invariance (see \cite[Theorem~4.5]{matetski2016kpz}): $\smash{\mf h^{\mrm{FP}, f}(t, x) \stackrel{d}{=} t^{1/3}\mf h^{\mrm{FP}, f^{\{t\}}}(1, t^{-2/3}x)}$, where $\smash{f^{\{t\}}(x) = t^{-1/3}f(t^{2/3}x)}$.

The following result records the same upper tail bounds for $\mf h^{\mrm{FP}, f}$ as for the KPZ equation under general initial data; a similar upper bound without quantitative error terms was also given in \cite[Proposition~4.7]{matetski2016kpz} using integrable methods.

\begin{maintheorem}[One-point upper tail bounds for KPZ fixed point]\label{mt.fixed point}
Let $K$, $L$, $M$, $\delta>0$ be fixed, and suppose that $f^{\{t\}}\in \Hyp(K, L, M, \delta)$ for some $t>0$. There exist $\theta_0>0$ and $C<\infty$ (depending only on the fixed constants and not $t$) such that, for $\theta>\theta_0$,
$$\exp\left(-\frac{4}{3}\theta^{3/2} - C\theta^{3/4}\right)\leq \P\Bigl(\mf h^{\mrm{FP}, f}(t,0) \geq \theta t^{1/3}\Bigr) \leq \exp\left(-\frac{4}{3}\theta^{3/2} + C\theta^{3/4}\right).$$
\end{maintheorem}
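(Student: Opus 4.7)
The plan is to reduce to the $t = 1$ case via the scaling invariance, then exploit the variational formula \eqref{e.KPZ fixed point variational formula} to express the event in terms of the parabolic Airy$_2$ process $\cP$, and finally apply Theorem~\ref{mt.one point tail asymptotics} together with the stationarity of $\cP(y) + y^2$. Concretely, the scaling invariance gives
\begin{equation*}
\P\Bigl(\mf h^{\mrm{FP},f}(t,0) \geq \theta t^{1/3}\Bigr) \;=\; \P\Bigl(\mf h^{\mrm{FP},g}(1,0) \geq \theta\Bigr) \;=\; \P\left(\sup_{y\in\R}\bigl(\cP(y) + g(y)\bigr) \geq \theta\right),
\end{equation*}
where $g := f^{\{t\}} \in \Hyp(K, L, M, \delta)$ by the hypothesis. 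This formulation already makes the problem $t$-free and transfers all of the work to the zero-temperature object $\cP$, for which Theorem~\ref{mt.one point tail asymptotics} is available.

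For the \emph{lower bound} I would pick any deterministic $y_0 \in [-M, M]$ from the positive-measure set $\{y \in [-M, M]: g(y) \geq -K\}$ (which exists by the second bullet of Definition~\ref{d.gen initial condition hypotheses}). Then $\sup_y(\cP(y) + g(y)) \geq \cP(y_0) - K$, so by stationarity $\cP(y_0) + y_0^2 \stackrel{d}{=} \cP(0)$ and $|y_0| \le M$ yield
\begin{equation*}
\P\Bigl(\sup_y(\cP(y) + g(y)) \geq \theta\Bigr) \;\geq\; \P\Bigl(\cP(0) \geq \theta + K + M^2\Bigr),
\end{equation*}
and applying Theorem~\ref{mt.one point tail asymptotics} with $\theta' = \theta + K + M^2$, together with the Taylor expansion $(\theta+c)^{3/2} \le \theta^{3/2} + \tfrac{3}{2}c\,\theta^{1/2}$, gives the desired $\exp(-\tfrac{4}{3}\theta^{3/2} - C\theta^{3/4})$ bound for $\theta$ large.

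For the \emph{upper bound} I would apply the hypothesis $g(y) \leq y^2 - L|y| + K$ to obtain $\cP(y) + g(y) \leq (\cP(y) + y^2) - L|y| + K$ and decompose the real line into the unit intervals $I_n = [n, n+1]$, $n \in \Z$, to get
\begin{equation*}
\P\Bigl(\sup_y(\cP(y) + g(y)) \geq \theta\Bigr) \;\leq\; \sum_{n\in\Z} \P\left(\sup_{y\in I_n}\bigl(\cP(y) + y^2\bigr) \geq \theta + L(|n|-1) - K\right).
\end{equation*}
By stationarity of $y \mapsto \cP(y) + y^2$, each summand has the same distribution as $\sup_{y \in [0,1]}(\cP(y) + y^2)$ evaluated at the shifted threshold. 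A combination of Theorem~\ref{mt.one point tail asymptotics} with a standard modulus-of-continuity / net estimate for the parabolic Airy$_2$ process on a unit interval bounds this supremum by $\exp(-\tfrac{4}{3}\theta_n^{3/2} + C\theta_n^{3/4})$ with $\theta_n = \theta + L|n| - L - K$. Using $(\theta + L|n|)^{3/2} \geq \theta^{3/2} + \tfrac{3}{2} L|n|\,\theta^{1/2}$, the sum over $n$ converges geometrically and is dominated by the $n = 0$ term, yielding $\exp(-\tfrac{4}{3}\theta^{3/2} + C'\theta^{3/4})$.

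The main obstacle is the step where a pointwise tail bound for $\cP(y) + y^2$ must be upgraded to a supremum tail bound over a unit interval without degrading the leading exponent $\tfrac{4}{3}\theta^{3/2}$. This is handled by discretizing $[0,1]$ at a constant scale, applying a union bound using Theorem~\ref{mt.one point tail asymptotics} at each grid point, and controlling the modulus of continuity of $\cP$ on a unit interval (which is Brownian-like up to a Tracy-Widom-type global fluctuation). These modulus-of-continuity bounds are standard in the KPZ line-ensemble literature and are in fact used elsewhere in the paper; once in hand, the remaining computation is the geometric sum above, and the rest of the argument is bookkeeping.
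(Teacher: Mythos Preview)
Your proposal is correct and tracks the paper's intended approach closely. The paper does not write out a proof but remarks that one should mimic Section~\ref{s.general data} (the proof of Theorem~\ref{mt.general initial data}) with the variational formula \eqref{e.KPZ fixed point variational formula} replacing the convolution formula, together with the scaling reduction to $t=1$; your upper bound is precisely the zero-temperature version of Proposition~\ref{p.sup over real line tail}, and the ``modulus-of-continuity / net estimate'' you invoke is exactly Proposition~\ref{p.sharp sup over interval tail} (the no-big-max lemma).

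Your lower bound is genuinely simpler than the route the paper points to. Mimicking Section~\ref{s.general data} literally would lead through Proposition~\ref{p.minimum over interval}, which lower bounds $\P(\min_{x\in[-M,M]}\cP(x)\geq\theta)$ via the two-point estimate of Theorem~\ref{t.two point asymptotics}. You instead observe that, because the zero-temperature formula is a supremum rather than a log-integral, one may pick a single point $y_0$ in the good set and invoke only the one-point tail Theorem~\ref{mt.one point tail asymptotics}. This is legitimate and avoids the two-point machinery entirely; the $\delta>0$ hypothesis is then used only to ensure the good set is nonempty. (One small correction: your Taylor inequality $(\theta+c)^{3/2}\le\theta^{3/2}+\tfrac{3}{2}c\,\theta^{1/2}$ goes the wrong way since $x\mapsto x^{3/2}$ is convex; the correct bound $(\theta+c)^{3/2}\le\theta^{3/2}+\tfrac{3}{2}c\,(\theta+c)^{1/2}$ is still $\theta^{3/2}+O(\theta^{1/2})$, so your conclusion is unaffected.)
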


\begin{remark}
We do not explicitly prove Theorem~\ref{mt.fixed point}, but the proof can be done quickly: one just mimics the proof of Theorem~\ref{mt.general initial data} presented in Section~\ref{s.general data} but with the zero-temperature version of the convolution formula mentioned above (i.e., \eqref{e.KPZ fixed point variational formula}) with the parabolic Airy$_2$ process in place of the narrow-wedge KPZ equation solution. The result for $\mf h^{\mrm{FP},f}(0,t)$ for $t\neq 1$ is obtained by the mentioned scaling properties of the KPZ fixed point.
\end{remark}

This completes the statements of our main results.

\subsection{Proof outline}\label{s.intro.proof ideas}
The proofs involve several new ideas. Since the formal implementation of them takes a while, to facilitate readability, we include a reasonably detailed description of the key ideas in this section. The subsections can more or less be read independently.

As indicated earlier, the central tool driving our technique is the Brownian resampling properties we have assumed is enjoyed by line ensemble $\cL$. We start by discussing these properties in more detail.

\subsubsection{The Brownian Gibbs property}
\cite{corwin2016kpz} constructed the KPZ line ensemble  into which $\h$ embeds (one ensemble for each fixed $t>0$). The zero temperature case of $t=\infty$ is known as the parabolic Airy line ensemble and was constructed earlier in \cite{corwin2014brownian}.

We next explain the nature of the \emph{Brownian Gibbs properties} or simply, Gibbs properties, they enjoy. Let us first describe this in the zero-temperature case ($t=\infty$). We fix $k\in\N$ and an interval $[a,b]$, and condition upon everything in the line ensemble outside of the top $k$ curves on $[a,b]$; in other words, on $\{\cL_i(x): 1\leq i\leq k, x\not\in[a,b] \text{ or } i\geq k+1, x\in\R\}$. The Brownian Gibbs property says that the conditional distribution of $\cL_1, \ldots, \cL_k$ on $[a,b]$ is given by $k$ independent Brownian bridges of rate \emph{two}, the $i$\textsuperscript{th} one from $(a, \cL_i(a))$ to $(b, \cL_i(b))$, conditioned on non-intersection (with each other, and the lower boundary curve $\cL_{k+1}$). 
That the Brownian bridges are rate two is just due to a convention established when the parabolic Airy$_2$ process was defined in \cite{prahofer2002PNG}, but must be kept track of to obtain the correct coefficients in tail asymptotics.

In the positive temperature case ($t<\infty$), the conditional distribution after conditioning on the same data can be described similarly in terms of independent rate two Brownian bridges; however here, instead of the hard non-intersection constraint, intersections are energetically penalized in terms of an explicit Radon-Nikodym derivative (which, for brevity, will be termed henceforth as the soft constraint). The precise expression is somewhat technical and will not be particularly illuminating at this stage, so we defer it to Section~\ref{s.gibbs and line ensembles}.

To convey the main ideas, in the remainder of this section we will describe our arguments for a line ensemble $\cL$ which enjoys the zero temperature ($t=\infty$) Gibbs property, as its non-intersection resampling is easier to reason about; we will address the differences encountered when working with the positive temperature Gibbs property after outlining the one-point upper tail argument. 

We will primarily focus on the arguments for the one-point case, i.e., tail asymptotics and the shape of the corresponding conditioned profile of the top curve, and say a few words about obtaining the density estimates, as these already contain most of the new ideas in our work; the two-point analogue of these results serve to illustrate the power of the method but do not introduce any substantially new ingredients.

\subsubsection{The assumptions: monotonicity, positive association, and control on a conditioned second curve.} We start by briefly discussing Assumptions~\ref{as.corr} and \ref{as.mono in cond} on the ensembles from Section~\ref{s.assumptions}. 

Assumption~\ref{as.corr}(a) is that of \emph{positive association} and the FKG inequality for the first curve. That $\cP$, $\h$, and zero temperature extremal ensembles satisfy the FKG inequality is a consequence of the fact that the same holds for certain  Brownian bridge and Brownian motion ensembles under the hard or soft non-intersection constraints (which are prelimiting models for $\cP$ and $\h$). 

Let us say a few words on how to go from the positive association property for the Brownian bridge ensembles to the same for the extremal stationary ensembles. Essentially we apply the Gibbs property to a sequence of increasing domains, e.g., $[-k,k]\times\{1, \ldots, k\}$ (resampling the top $k$ curves on $[-k,k]$) and use that we know the required positive association for the resulting conditional laws. Extremality is equivalent to the corresponding tail $\sigma$-algebras being trivial (see Section~\ref{s.extremal}), and so we obtain the unconditioned statement we desire by taking $k\to\infty$. The extremality assumption is invoked essentially only in carrying out the {above and related correlation and monotonicity-in-conditioning arguments described next}, which are the final properties needed for our arguments. 

A crucial ingredient in the proof of positive association for the Brownian bridge ensemble is a certain monotonicity in conditioning property (a version of Assumption~\ref{as.mono in cond} for more than two points), which is a refinement of a monotonicity property that we describe next.

Recall that the Brownian bridge ensembles (under hard or soft constraints) are defined given some boundary data: an lower and/or upper boundary curve, and the values of the Brownian bridges at the boundaries of the interval. The monotonicity property is simply that the law of these Brownian ensembles stochastically increases if the boundary data increases; for example, if the boundary values are kept the same but the lower curve $f_1$ is increased to $f_2$ (i.e., $f_1(\cdot)\leq f_2(\cdot)$), there is a coupling of the bridge ensembles such that the bridges associated to the lower curve $f_1$ are lower than those associated to $f_2$. A similar statement holds for ordered boundary values. These monotonicity properties were first proven in \cite{corwin2014brownian} and \cite{corwin2016kpz} (see also \cite{dimitrov2021characterization,dimitrov2021characterizationH} for more detailed proofs) and have proven indispensable in studies of line ensembles (e.g., in \cite{hammond2016brownian,hammond2017modulus,brownianLPPtransversal,hammond2017patchwork,wu2021brownian,calvert2019brownian,dauvergne2018basic,dauvergne2018directed}); similar monotonicity properties in other models with nice resampling properties have likewise proved important (e.g., \cite{aggarwal2020arctic}).

Next, Assumption~\ref{as.corr}(b) is the (strong) BK inequality, which, recall, holds for the zero temperature cases but is unclear in positive temperature. A heuristic to understand why it should hold in zero temperature is that, under the Brownian Gibbs property, $\cL_1$ is essentially an upper boundary for $\cL_2$. By monotonicity properties of the Gibbs property, as the upper boundary increases (e.g., if $\theta$ gets larger in the one-point upper tail conditioning event), $\cL_2$ gets stochastically larger; so the law of $\cL_2$ conditional on $\{\cL_1(0)\geq \theta\}$ is stochastically increasing in $\theta$. But in the $\theta\to\infty$ limit, the upper boundary goes to $\infty$ and disappears, and in that case $\cL_2$ can be thought of as becoming an unconditioned $\cL_1$ as the latter also has no upper boundary. It is this basic reasoning that allows us to control the lower curve $\cL_2$ in all our arguments. As we saw in Section~\ref{s.weaker bk}, Assumption~\ref{as.weak bk}(b\ensuremath{'}) is a weaker version which suffices in the arguments, and which holds for the KPZ line ensemble.

Finally we turn to Assumption~\ref{as.mono in cond}. This will be needed, for example, to compare the conditional law given $\h_1(0) = \theta$ to the same given $\h_1(0)\geq \theta$; the usefulness of such comparisons is that the latter event is increasing, and so is amenable to applying the positive association (FKG) inequality or the above monotonicity properties. The proofs that the assumption holds in our examples again go via proving the same for Brownian bridge ensembles and taking appropriate limits. 

It is worth emphasizing that these conditional monotonicity statements are \emph{not} implied by the monotonicity statements mentioned above such as the FKG inequality; indeed, the FKG inequality can be derived from monotonicity in conditioning. 
We expect that, similar to the monotonicity in boundary data statements, these monotonicity in conditioning statements may prove to be useful in many contexts.

We now move on to the key ideas in the paper.

\subsubsection{Obtaining sharp one-point asymptotics}\label{s.intro.proof ideas.one-point}
We first sketch how to obtain the upper-tail one-point asymptotics of $\exp(-\frac{4}{3}\theta^{3/2}(1+o(1)))$. The key ideas appear mostly in the proof of  Theorem~\ref{mt.one point limit shape} on the shape of the top curve under the conditioning that $\cL_1(0)=\theta$, which we assume for the moment.

First we observe why the exponent of $\theta$ is $\smash{\frac{3}{2}}$. Indeed, by  Theorem~\ref{mt.one point limit shape}, conditionally on $\{\cL_1(0)=\theta\}$, $\smash{\cL_1(\pm\theta^{1/2})}$ is with high probability close to $-\theta$ and hence, on a horizontal interval of scale $\smash{\theta^{1/2}}$, $\cL_1$ oscillates upwards by order $\theta$. If we believe, based on the Gibbs property, that $\cL_1$ behaves roughly like a Brownian bridge on the interval $\smash{[-\theta^{1/2}, \theta^{1/2}]}$ (i.e., assuming the upward push it receives from  the lower boundary of the second curve is not too substantial), then the tail is predicted to have a Gaussian form: $\smash{\exp(-O(\frac{\theta^2}{\theta^{1/2}})) = \exp(-O(\theta^{3/2}))}$.

Following this logic more precisely and taking into account the effect of the second curve gives the correct coefficient of $\frac{4}{3}$ for $\theta^{3/2}$, which we explain next. We also mention an alternate equivalent perspective (which we however do not explicitly use) in terms of identifying the function maximizing the Dirichlet energy or Cameron-Martin functional subject to staying above a parabola in order to predict the limit shape and tail asymptotics; a further brief discussion is in Section~\ref{s.cameron-martin heuristic}.

\medskip
\emph{The upper bound for the one-point tail:} Let us pretend that we know that the shape from Theorem~\ref{mt.one point limit shape} holds even if we condition on $\{\cL_1(0)\geq \theta\}$ instead of $\{\cL_1(0) = \theta\}$ (this is indeed the case, but we do not explicitly prove it); this implies that $\smash{\cL_1(\pm\theta^{1/2}) \leq -\theta+M\theta^{1/4}}$ with probability at least $\smash{\frac{1}{2}}$, conditionally on $\{\cL_1(0)\geq\theta\}$, for large enough $M$ (independent of $\theta$). To simplify the quantities, in the rest of the discussion we will assume that $\cL_1(\pm\theta^{1/2})\leq-\theta$.

As already indicated, ultimately, we wish to apply the Brownian Gibbs property on $[-\theta^{1/2},\theta^{1/2}]$. This says that, conditional on $\cL_1$ on $[-\theta^{1/2}, \theta^{1/2}]^c$ and on $\cL_2,\cL_3, \ldots$ on $\R$, the distribution of $\cL_1$ on $[-\theta^{1/2}, \theta^{1/2}]$ is a Brownian bridge from $(-\theta^{1/2}, \cL_1(-\theta^{1/2}))$ to $(\theta^{1/2}, \cL_1(\theta^{1/2}))$, conditioned on it lying above $\cL_2$. So there are three pieces of boundary data: $\cL_1(-\theta^{1/2})$, $\cL_1(\theta^{1/2})$, and $\cL_2$. The previous paragraph controls the side values $\smash{\cL_1(\pm\theta^{1/2})}$. What remains is the lower boundary curve~$\cL_2$, and this control is provided by Assumption~\ref{as.weak bk}(b\ensuremath{'}) (or by Assumption~\ref{as.corr}(b) by the argument deriving \ref{as.weak bk}(b\ensuremath{'}) from it as in Section~\ref{s.weaker bk}).

With this setup, we may apply the Brownian Gibbs property on $[-\theta^{1/2},\theta^{1/2}]$ for the top curve. Now, we are trying to upper bound the probability of the increasing event that $\cL_1(0)\geq\theta$, and intuitively this probability increases if we raise the boundary data. Thus we may take the boundary data to be as large as possible, i.e., $(\pm\theta^{-1/2}, -\theta)$ at the sides and the function $x\mapsto-x^2$ below.

This yields, with $B$ a rate two Brownian bridge from $(-\theta^{1/2},-\theta)$ to $(\theta^{1/2}, -\theta)$, that
\begin{align}
\P\left(\cL_1(0)\geq \theta\right)
&\leq \P\left(B(0)\geq \theta \ \Big|\  B(x)\geq -x^2 \ \ \forall x\in[-\theta^{1/2},\theta^{1/2}]\right)\nonumber\\
&\leq \frac{\P\left(B(0)\geq \theta\right)}{\P\left(B(x)\geq -x^2 \ \ \forall x\in[-\theta^{1/2},\theta^{1/2}]\right)}.\label{e.intro brownian ratio}
\end{align}
Now $B(0)$ is a normal random variable with mean $-\theta$ and variance $2\times\frac{\theta^{1/2}\times\theta^{1/2}}{2\theta^{1/2}} = \theta^{1/2}$. So, using the form of the Gaussian tail, the numerator is at most 
$$\exp(-(\theta+\theta)^2/2\theta^{1/2}) = \exp(-2\theta^{3/2}).$$
The denominator turns out to be lower bounded by $\smash{\exp(-\frac{2}{3}\theta^{3/2}(1+o(1)))}$ which follows from a straightforward but tedious Brownian computation involving controlling the Brownian bridge at a fine mesh of points in Proposition~\ref{p.parabola avoidance probability}. Plugging the above estimates into \eqref{e.intro brownian ratio} yields the upper bound of $\exp(-\frac{4}{3}\theta^{3/2})$.
Interestingly, a matching upper bound on the denominator can be proved more straightforwardly using tail bounds of a Gaussian observable; see Remark~\ref{r.avoidance prob upper bound}.

The actual argument has a few more complications than the sketch above, mainly arising from working carefully with the monotonicity properties available to us, which we will refrain from explaining further at this point. 
To obtain the density estimates of Theorem~\ref{mt.one point density asymptotics} from these tail bounds, it is enough to prove a degree of regularity for the density. This is done in Proposition~\ref{p.density control} using resampling arguments whose detailed explanation we also omit here.

\medskip

\emph{Moving from zero to positive temperature:} As mentioned above, in the positive temperature case ($t<\infty$), the Gibbs property involves a softer constraint of Brownian bridges subject to a Radon-Nikodym derivative which energetically penalizes but does not prohibit intersection. 

While we postpone giving the precise form of the Radon-Nikodym derivative to Section~\ref{s.gibbs and line ensembles}, it can be written as $W_{H_t}/Z_{H_t}$, where $W_{H_t}\in[0,1]$ and $Z_{H_t}$ is a normalization constant so that the ratio is a probability density. $Z_{H_t}$ is called the \emph{partition function} and it is a deterministic function of the boundary data (i.e., as above, side values and lower curve); $H_{t}$ is called the Hamiltonian and is needed to specify the precise form of $W_{H_t}$ and $Z_{H_t}$. The analogue of $Z_{H_t}$ in the zero-temperature, $t=\infty$ case above is the probability of the Brownian bridge staying above the second curve, and $W_{H_t}$ is the corresponding indicator function.

We saw above that this probability appeared in the denominator when we were trying to estimate the probability that $\cL_1(0)\geq \theta$. Essentially the only difference when working with the positive temperature case is that we need to replace a non-intersection probability by the analogous partition function in all the estimates and this involves lower bounding $Z_{H_t}$ with a parabolic lower boundary.  For this, we have a simple lemma (Lemma~\ref{l.pos temp Z  lower bound via non-avoid prob}) that allows us to transfer lower bounds in the zero temperature case (such as the mentioned Proposition~\ref{p.parabola avoidance probability}) to the positive temperature case.

\medskip
\emph{The lower bound for the one-point tail:}
A lower bound could be argued on similar lines to the upper bound, but for one issue: this time, we would want to say that the lower boundary condition (i.e., $\cL_2$) does not go too low since, as already indicated, to get the sharp $\exp(-\frac{4}{3}\theta^{3/2})$ bound one has to consider the probability of avoiding the lower boundary. This in turn would require an estimate on the \emph{lower} tail of $\cL_2(0)$ and $\smash{\inf_{[-\theta^{1/2},\theta^{1/2}]}\cL_2(x)}$. While lower tail estimates are available for $\h_1(0)$ \cite{corwin2020lower,charlier2021uniform} and $\cP_1$ \cite{tracy1994level} via integrable arguments, and may be upgraded to lower tails for the infimum of the respective second curves over an interval by resampling arguments (see \cite[Proposition~A.2]{hammond2016brownian} and \cite[Proposition~B.1]{wu2021tightness}), we have not included lower tail estimates for $\cL_1(0)$ in our assumptions; further, these estimates would not be available for the extremal stationary ensembles that we also wish to make statements about. 

Observe, however, that the \emph{tightness} of $\cL_1(0)$ on the lower tail side is available. While on first glance this appears quite weak, our actual argument makes use of this one-point tightness along with a bootstrapping procedure. In fact, the argument initially appears to completely ignore the lower boundary of the second curve and requires no explicit control on it. For this reason it directly applies to both positive and zero temperature ensembles with no need to transfer non-intersection probability bounds to partition functions. Let us sketch the argument.

We consider the following method of obtaining that $\cL_1(0)\geq\theta$: we demand that $\cL_1(\pm\frac{1}{2}\theta^{1/2}) \geq 0$ and resample $\smash{\cL_1}$ on $\smash{[-\frac{1}{2}\theta^{1/2},\frac{1}{2}\theta^{1/2}]}$, i.e., we apply the Brownian Gibbs property to $\smash{[-\frac{1}{2}\theta^{1/2},\frac{1}{2}\theta^{1/2}]}$. Because we are lower bounding the probability of an increasing event, our bound is only worsened by lowering the boundary data: so we may take $\cL_1(\pm\frac{1}{2}\theta^{1/2}) = 0$ and take the second curve to $-\infty$ on $\smash{[-\frac{1}{2}\theta^{1/2},\frac{1}{2}\theta^{1/2}]}$, i.e., have no lower boundary. 

Observe that $\P(\cL_1(\frac{1}{2}\theta^{1/2})\geq0) = \P(\cL_1(0)\geq \theta/4)$ by stationarity, and similarly for $\cL_1(-\frac{1}{2}\theta^{1/2})$. Letting $P(\theta) = \P(\cL_1(0)\geq\theta)$, we see that, by positive association, $\P(\cL_1(\pm\frac{1}{2}\theta^{1/2})\geq 0) \geq P(\theta/4)^2$. Letting $B$ be a rate two Brownian bridge from $(-\frac{1}{2}\theta^{1/2},0)$ to $(\frac{1}{2}\theta^{1/2},0)$ and using the form of the Gaussian tail, the above discussion shows that
\begin{align*}
P(\theta)\geq P(\theta/4)^2\cdot\P(B(0)\geq\theta) = P(\theta/4)^2\cdot\exp\left(-\frac{\theta^2}{2\times\frac{1}{2}\theta^{1/2}}\right) = P(\theta/4)^2\cdot\exp\left(-\theta^{3/2}\right),
\end{align*}
the last factor since $B(0)$ has mean 0 and variance $2\times\frac{\frac{1}{2}\theta^{1/2}\times\frac{1}{2}\theta^{1/2}}{\theta^{1/2}} = \frac{1}{2}\theta^{1/2}$. 

Somewhat surprisingly, on iteration, this yields a lower bound of $\smash{\exp(-\frac{4}{3}\theta^{3/2}(1+o(1)))}$ (it is easy to check $\smash{P(\theta) = \exp(-\frac{4}{3}\theta^{3/2})}$ satisfies the recurrence) provided that the recurrence is non-trivial, i.e.,  if one knows that $P(\theta)>0$ for all $\theta>0$. This can shown by a very similar argument. 

\begin{figure}
\begin{tikzpicture}[scale=0.7]
\draw[green!70!black, semithick]  plot[smooth, domain=-2.5:2.5] (\x, -0.3*\x * \x);

\newcommand{\ltan}{1.9}
\newcommand{\rtan}{1.9}
\newcommand{\loc}{0.95} 

\draw[blue, thick, dashed]  (-\loc, -0.3*\ltan*\ltan + 2*0.3*\ltan*\ltan - 2*0.3*\ltan*\loc) -- (\loc, -0.3*\rtan*\rtan + 2*0.3*\rtan*\rtan - 2*0.3*\rtan*\loc);

\node[circle, fill, blue, inner sep = 1pt] at (-\loc, -0.3*\ltan*\ltan + 2*0.3*\ltan*\ltan - 2*0.3*\ltan*\loc) {};
\node[anchor = south east, scale=0.8] at (-\loc-0.05, -0.3*\ltan*\ltan + 2*0.3*\ltan*\ltan - 2*0.3*\ltan*\loc) {$(-\tfrac{1}{2}\theta^{1/2}, 0)$};

\node[circle, fill, blue, inner sep = 1pt] at (\loc, -0.3*\rtan*\rtan + 2*0.3*\rtan*\rtan - 2*0.3*\rtan*\loc) {};
\node[anchor = south west, scale=0.8] at (\loc+0.05, -0.3*\rtan*\rtan + 2*0.3*\rtan*\rtan - 2*0.3*\rtan*\loc) {$(\tfrac{1}{2}\theta^{1/2}, 0)$};

\node[circle, fill, blue, inner sep = 1pt] at (-\ltan, -0.3*\ltan*\ltan) {};
\node[anchor=east,scale=0.8] at (-\ltan, -0.3*\ltan*\ltan) {$(-\theta^{1/2}, -\theta)$};

\node[circle, fill, blue, inner sep = 1pt] at (\rtan, -0.3*\rtan*\rtan) {};
\node[anchor=west, scale=0.8] at (\rtan, -0.3*\rtan*\rtan) {$(\theta^{1/2}, -\theta)$};

\node[circle, fill, blue, inner sep = 1pt] at (0, 0.3*\ltan*\ltan) {};
\node[anchor=west, scale=0.8] at (0.1, 0.3*\ltan*\ltan) {$(0,\theta)$};

\draw[thick, blue] (-\ltan, -0.3*\ltan*\ltan) -- ++(\ltan, 2*0.3*\ltan*\ltan) -- (\ltan, -0.3*\ltan*\ltan);

\end{tikzpicture}
\caption{The setup for the argument for the lower bound on the upper tail. The interval on which we resample is now $[-\frac{1}{2}\theta^{1/2}, \frac{1}{2}\theta^{1/2}]$; note that the boundary points are such that $\tri$ equals zero at them, and so the line connecting $(x, \tri(x))$ and $(-x, \tri(-x))$ when $x=\frac{1}{2}\theta^{1/2}$ is tangent to $-x^2$. Thus the Brownian bridge defined between these points will avoid $-x^2$ with constant probability, and the FKG inequality will be essentially sharp in lower bounding $\P(\cL_1(-\frac{1}{2}\theta^{1/2}),\cL_1(\frac{1}{2}\theta^{1/2})\geq 0)$.}\label{f.fkg}
\end{figure}
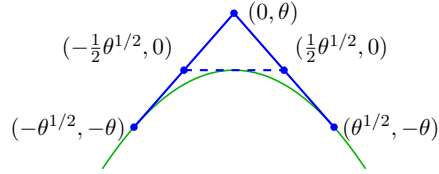

The reason this recursion works, and the choice of demanding that $\cL_1(x)\geq 0$ for $x=\pm\frac{1}{2}\theta^{1/2}$ may appear mysterious, but can be understood using the geometric picture being developed; see Figure~\ref{f.fkg}. Indeed, the choice is made by combining the information on the shape of the profile under the conditioning that $\cL_1(0)\geq\theta$ from Theorem~\ref{mt.one point limit shape} (note that $(\pm\smash{\frac{1}{2}\theta^{1/2}},0)$ lie on $\tri$), the information from Theorem~\ref{mt.fkg sharpness} on when the FKG inequality will be sharp (the line joining \smash{$(\pm\frac{1}{2}\theta^{1/2},0)$} is tangent to $-x^2$), and the intuition that a Brownian bridge starting from height zero will essentially not be affected by the lower boundary $x\mapsto -x^2$ and so will allow the boundary to be safely ignored. In other words, the choices made render the recursion essentially an equality up to first order in the exponent, explaining why the recursion converges to the sharp bound. Note however, as we saw above, that while we use these mentioned theorems to inform our choice of $x=\pm\frac{1}{2}\theta^{1/2}$, the argument does not actually formally apply these theorems (indeed, for instance, Theorem~\ref{mt.fkg sharpness} is not yet proven and will actually use the lower bound on the upper tail in its proof).

We next turn to sketching the proofs of the shapes of the top curve under the conditionings $\{\cL_1(0)=\theta\}$ (which, as already evident, is the main ingredient in the proof of the one point estimate) and $\{\cL_1(-\theta^{1/2})=a\theta,\cL_1(\theta^{1/2})=b\theta\}$. We will then outline a remaining step in the argument for the extremal stationary ensembles, which relies on an extension of such arguments.

\subsubsection{A heuristic for the shape of the top curve under conditionings}\label{s.convex heurstic}

The basic idea driving the proofs of the shapes of the profile under the two conditionings considered in Theorems~\ref{mt.one point limit shape} and \ref{mt.two-point limit shape} is the following: that Brownian bridges approximately follow the line connecting their endpoints implies that ensembles with a resampling property in terms of Brownian bridges must have shapes that are approximately convex. The argument is fleshed out in Figure~\ref{f.convexity from resampling}. 

\begin{figure}[h!]
\begin{center}
\begin{tikzpicture}[scale=0.65]

	\draw[green!70!black, semithick] (1,1) -- (3,3) to [curve through={ (4,2) }] (5,3) -- (7,1);

	\begin{scope}[shift={(0,-0.75)}]
	\draw[green!70!black, semithick] (1,1) -- (3,3) to [curve through={ (4,2) }] (5,3) -- (7,1);
	\end{scope}

  \begin{scope}[shift={(8,0)}]

  	\pgfmathsetseed{23655}
 	\draw[blue, semithick, decorate, decoration={random steps,segment length=2pt,amplitude=1pt}] (3,3) -- (5,3); 

   	\node[circle, fill, green!70!black, inner sep=1pt] at (3,3) {};
  	\node[circle, fill, green!70!black, inner sep=1pt] at (5,3) {};

  	\draw[green!70!black, semithick] (1,1) -- (3,3); 
 	\draw[green!70!black, semithick] (5,3) -- (7,1);

	\begin{scope}[shift={(0,-0.75)}]
		\draw[green!70!black, semithick] (1,1) -- (3,3) to [curve through={ (4,2) }] (5,3) -- (7,1);
	\end{scope}
  \end{scope}

    \begin{scope}[shift={(16,0)}]

   	\node[circle, fill, green!70!black, inner sep=1pt] at (3,3) {};
  	\node[circle, fill, green!70!black, inner sep=1pt] at (5,3) {};

  	\draw[green!70!black, semithick] (1,1) -- (3,3); 
 	\draw[green!70!black, semithick] (5,3) -- (7,1);

  	\draw[green!70!black, semithick] (3,3) -- (5,3); 

	\begin{scope}[shift={(0,-0.75)}]
		\draw[green!70!black, semithick] (1,1) -- (3,3) to [curve through={ (4,2) }] (5,3) -- (7,1);
	\end{scope}
  \end{scope}
\end{tikzpicture}
\end{center}
\caption{On the left panel we consider the situation where the limit shape for the top curve is non-convex on some interval; the non-intersection condition then pushes the second curve down on the same interval. In the second panel we resample the top curve on the interval of non-convexity. The Brownian bridge would typically approximately follow the straight line between its endpoints if it was unconditioned; here it is conditioned to avoid the second curve, but, since the second curve is already lower, the avoidance conditioning is met by the bridge's typical behaviour. Thus as we see in the third panel, the resampling removes the non-convexity from the top curve, thus contradicting the initial non-convexity, since resampling preserves the distribution of the ensemble.}
\label{f.convexity from resampling}
\end{figure}
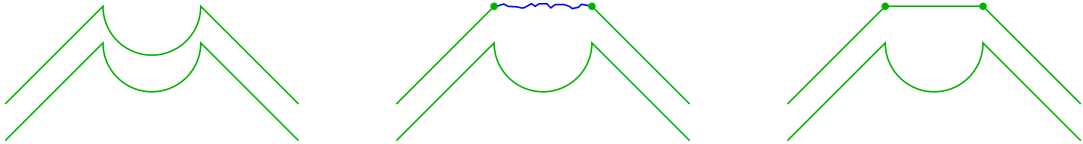

Similar reasoning leads to the statement that the shape of the top curve must be approximately the convex hull (i.e., minimal convex shape above) of the lower curve and constraints imposed by conditioning. This explains the shapes which arise in Theorems~\ref{mt.one point limit shape} and \ref{mt.two-point limit shape}, which are indeed respectively the convex hull of $x\mapsto -x^2$ (representing the lower curve) and $(0,\theta)$ (due to the conditioning that $\h_1(0)=\theta$) and the convex hull of $x\mapsto -x^2$, $(-\theta^{1/2},a\theta)$, and $(\theta^{1/2},b\theta)$.

Similar observations formed the basis of work  by Aggarwal in \cite{aggarwal2020arctic} to obtain the limit shape of the arctic boundary in the six-vertex model at the ice point which admits an encoding in terms of non-intersecting random walks, thus providing mathematical justification of the \emph{tangent method} heuristic introduced by Colomo-Sportiello \cite{colomo2016arctic}.

\subsubsection{An equivalent heuristic via Cameron-Martin}\label{s.cameron-martin heuristic}

Since $\cL$ satisfies a Brownian Gibbs property, and since the second curve is approximately the parabola $p(x):= -x^2$, one can guess that, conditional on $\cL_1(0)\geq \theta$, $\cL_1$ will approximate the maximizer of the energy functional (coming from the Cameron-Martin theorem)
\begin{align*}
\sup\left\{\int_{-z}^z \left(|f'(x)|^2 - |p'(x)|^2\right)\,\dif x : \parbox[c]{2.25in}{\centering $f\in\mc C([-z,z], \R), f(\pm z) = -z^2$, \\$f(x)\geq p(x) \ \forall x\in[-z,z], f(0)\geq \theta$}\right\},
\end{align*}
where $z$ is sufficiently large, and that the negative of the logarithm of the probability of $\cL_1(0)\geq \theta$ will be approximately the above supremum value. (Here the cost of avoiding $p$ is essentially the same as equaling it, which accounts for the second term in the integrand.) Indeed, the solution to this variational problem is precisely the limit shape from Theorem~\ref{mt.one point limit shape} (and the one from Theorem~\ref{mt.two-point limit shape} for the analogous variational problem with $f(-\theta^{1/2})\geq a\theta$, $f(\theta^{1/2})\geq b\theta$), though this is not immediate; perhaps the quickest way to see that the solution should be the convex hull is by the argument outlined in Section~\ref{s.convex heurstic}.

\subsubsection{Moving from heuristic to the proof of Theorem~\ref{mt.one point limit shape}}\label{s.intro.proof sketch.limit shape}

Now we sketch the actual arguments we develop to establish Theorem~\ref{mt.one point limit shape} (similar arguments, though with some technical complications, suffice for Theorem~\ref{mt.two-point limit shape} on the two-point limit shape as well but we do not discuss this). Again we stick to the zero temperature case.

Let us focus on proving the shape of the top curve is $\tri$ on the \emph{left} side of 0 only, i.e., in $[-\theta^{1/2},0]$; the argument for the right side will clearly be symmetric.

Consider an extension of $\tri$ to $(-\infty, 0]$ given by the same line, i.e., by $x\mapsto \theta+2\theta^{1/2}x$; see the dotted lines in Figure~\ref{f.one-point limit shape}. The crucial point is the following:  if we can find two $x$-coordinates at which $\cL_1$ is on the extended version of $\tri$, then the Brownian Gibbs property gives an approximate linear resampling showing that $\cL_1$ will be approximately equal to $\tri$ on the entire interval in between these two points. Then the resampling will be approximately linear in spite of the lower boundary condition (the second curve which must be avoided) because the latter is approximately the inverted parabola $x\mapsto-x^2$. So, as a consequence of the convex hull property of $\tri$, it is \emph{tangent} to $x\mapsto-x^2$ at the point $x=-\theta^{1/2}$; thus, intuitively, the unconditioned Brownian bridge between two points on $\tri$ should avoid $x\mapsto-x^2$ with uniformly positive probability (as can be checked), and so the conditioned and unconditioned processes can be treated as essentially the same.

So we have to find two points at which $\cL_1$ is equal to $\tri$. We will refer to these as \emph{pinning} points. We already have one pinning point, the one to the right, at $0$, since we have conditioned on $\cL_1(0)= \theta=\tri(0)$. Indeed, this is why we consider this conditioning instead of $\{\cL_1(0)\geq \theta\}$, as for the latter we a priori have very little control over the value of $\cL_1(0)$.

However, for convenience let us pretend we are actually conditioning on $\{\cL_1(0)\geq \theta\}$ (which as an increasing event is easier to work with) while we look for the left pinning point. The pinning point we find will differ based on whether we are trying to lower bound or upper bound the shape of $\cL_1$.

\smallskip

\paragraph*{\emph{Lower bounding the profile:}} Since we are conditioning on the increasing event $\{\cL_1(0)\geq\theta\}$, the FKG inequality says that the first curve under this conditioning is stochastically higher than the unconditioned first curve. The unconditioned first curve is with high probability close to $-\theta$ at $\pm\theta^{1/2}$ (using one-point tightness of $\cL_1(x)+x^2$ at $x=\pm\theta^{1/2}$), so $\cL_1(-\theta^{1/2})$ is at least $-\theta$ (ignoring any lower order terms for simplicity) under the conditioning as well. This suffices because, again by monotonicity, we can lower $\smash{\cL_1(-\theta^{1/2})}$ from its value to $-\theta$, and this can only lower the profile---not an issue when proving a lower bound. This is a form of pinning, as $\smash{(\pm\theta^{1/2}, -\theta)}$ lies on $\tri$.

Next, we apply the Brownian Gibbs property on $[-\theta^{1/2}, 0]$. This tells us that $\smash{\cL_1}$ is a Brownian bridge from $(-\theta^{1/2},-\theta)$ to $(0,\theta)$, conditioned to avoid $\cL_2$. But, again by monotonicity, $\cL_1$ must be larger than the \emph{unconditional} (i.e., with $\cL_2\equiv -\infty$) Brownian bridge between the two mentioned pinning points. This unconditioned Brownian bridge approximately follows $\tri$, up to Brownian fluctuations, which occur on scale $(\theta^{1/2})^{1/2}=\theta^{1/4}$. This establishes the lower bound on the profile.

\smallskip

\paragraph*{\emph{Upper bounding the profile:}} For simplicity, we assume in this discussion that Assumption~\ref{as.tails} holds with $\alpha=\beta=\frac{3}{2}$, i.e., $\exp(-c_1\theta^{3/2}) \leq \P(\cL_1(0)>\theta)\leq \exp(-c_2\theta^{3/2})$ for some $c_1,c_2>0$. This side is more delicate than the lower bound, as we cannot ignore the lower boundary $\cL_2$.

We first observe that, again, we do not actually need $\cL_1$ at our pinning point to be close to $\tri$; it is sufficient if the point lies \emph{below} $\tri$ by monotonicity, as we are proving a profile upper bound.

So we need to find a point $x_\theta$ such that $\P(\cL_1(x_\theta) \geq \tri(x_\theta) \mid \cL_1(0)\geq \theta)$ is small. It will be convenient to set $x_\theta = -\theta^{1/2}z$. Recalling that $\tri(x) = \theta+2\theta^{1/2}x$, we see using stationarity that the mentioned probability is upper bounded by
\begin{align*}
\frac{\P(\cL_1(x_\theta) \geq \tri(x_\theta))}{\P\left(\cL_1(0)\geq \theta\right)} = \frac{\P(\cL_1(0) \geq \theta (z^2 - 2 z+1))}{\P\left(\cL_1(0)\geq \theta\right)} \leq \frac{\exp\left(-c_2\theta^{3/2}(z-1)^{3}\right)}{\exp\left(-c_1\theta^{3/2}\right)}.
\end{align*} 
Clearly, there exists a large enough $z$ independent of $\theta$ such that $c_1(z-1)^3> c_2$.

Thus we have found a left pinning point at $x_\theta=-\theta^{1/2}z$. To obtain the upper bound on the profile, we wish to apply the Brownian Gibbs property on $[x_\theta, 0]$. However, to implement this we will need some control on the second curve on this interval which forms the lower boundary; if, for example, it has peaks rising much above $\tri$ inside $[x_\theta,0]$, then there is no way to upper bound $\cL_1$ by $\tri$.

It is here that it is crucial that $x_\theta$ is polynomial in $\theta$. This is because we can break up $[x_\theta,0]$ into polynomially many unit intervals, and, since one can upgrade one-point upper tail bounds to upper tail bounds on $\sup (\cL_2(x) + x^2)$ (where the supremum is over a unit interval) which decay like $\smash{\exp(-c\theta^{3/2})}$, a union bound suffices. A subtle point is that this bound has to hold conditional on $\cL_1(0)\geq \theta$, which we accomplish by the BK inequality. With its weakened form Assumption~\ref{as.weak bk}(b\ensuremath{'}), we do not need the fast decaying tail of $\sup (\cL_1(x)+x^2)$ or the same with $\cL_2$; but the polynomial size of $x_\theta$ is still important.

At this stage we have two pinning points at which $\cL_1$ is equal to $\tri$ and a lower boundary condition which is essentially a parabola on the interval between the pinning points. We know that $\cL_1$ is lower than a Brownian bridge conditioned on avoiding the parabola; but, since $\tri$ is tangent to $x\mapsto -x^2$, this conditioning has uniformly positive probability. Thus $\cL_1$ lies below essentially an unconditioned Brownian bridge on this interval; this yields the desired upper bound on the profile with a Brownian fluctuation scale of $\theta^{1/4}$, since $[x_\theta,0]$ is an interval of length $O(\theta^{1/2})$.

Similar arguments also yield bounds on the profile outside of $[-\theta^{1/2},\theta^{1/2}]$: this is captured in Proposition~\ref{p.para fluctuation h_t}, which says that the profile is close to $-x^2$ on an interval of scale $\theta^{1/2}$.

\subsubsection{The arguments for the extremal ensembles}\label{s.intro.proof ideas.extremal}

We will show that a line ensemble $\cL$ satisfying Assumptions~\ref{as.bg}--\ref{as.mono in cond} (with $t=\infty$) also satisfies Assumption~\ref{as.tails}, which suffices since extremal stationary ensembles will be shown to satisfy \ref{as.bg}--\ref{as.mono in cond} in Appendix~\ref{app.monotonicity proofs}. In fact, similar arguments also hold for $t<\infty$, but would not yield tail estimates uniform in $t$. Note that here we will work with Assumption~\ref{as.corr}(b) and not \ref{as.weak bk}(b\ensuremath{'}).
We focus on the upper bound on the one-point upper tail.

Let us highlight two parts of the proof of the upper bound on the tail which has been already sketched for $\cL_1$. Part 1 was the upper bound on the top curve's profile when conditioned on $\cL_1(0)=\theta$, and Part 2 was control on the fluctuations of the second curve on an interval on which we resampled the top curve. (In fact, Part 2 was also needed to prove Part 1, and we will return to this point.)

As we saw in the just concluded sketch, we obtained Part 1 by first finding point $\pm x_\theta$ at which $\cL_1$ was with high probability (conditioned on $\cL_1(0)=\theta$) below the extended version of $\tri$ (by monotonicity, we could then raise the point to lie on $\tri$). Then by resampling the top curve on $[-x_\theta, 0]$ and $[0,x_\theta]$, we could show that the top curve remained close to $\tri$, assuming control on the second curve on the same intervals (it is to obtain this control that Part 2 is needed).

The existence of pinning points was done using a priori bounds on the upper tail of $\cL_1(0)$ from Assumption~\ref{as.tails}, proved for the parabolic Airy and KPZ equation examples via integrable inputs. 
Integrable inputs were used for pinning also in the argument given by Aggarwal in \cite{aggarwal2020arctic}, where, as one part of the larger argument, he obtains a pinning of a curve in the six-vertex model (under a particular choice of parameters) using explicit combinatorial formulas for a certain correlation function, which in turn gives asymptotics for a pinning probability.

\medskip
\emph{A weak form of pinning:} We start by observing that it is possible, using only one-point tightness, stationarity, and parabolic decay of $\cL_1$, to obtain a weak form of pinning. By ``weak'' we mean that we do not attempt to find a point at which $\cL_1$ is below $\tri$, which has slope $-\theta^{1/2}$, but instead where it is below a line of $\theta$-independent slope $-1$. Indeed, for given $\varepsilon>0$, we can show that 
\begin{equation}\label{e.intro weak pinning}
\P\left(\cL_1(x) > -|x|\right) \leq \varepsilon\cdot\P(\cL_1(0)\geq\theta)
\end{equation}
(i.e., the RHS is $\varepsilon$ times the probability of the conditioning event) for some $x$:
\begin{align*}
 \P\left(\cL_1(x) > -|x|\right)  = \P\left(\cL_1(0) > x^2-|x|\right) \to 0
\end{align*}
as $x\to\infty$ since $x^2-|x|\to\infty$. But of course, the $x=x_\theta>0$ which achieves \eqref{e.intro weak pinning} depends on $\theta$ in some completely uncontrolled way, another sense in which the pinning is weak. (While the existence of such an $x$ is guaranteed even if the slope of the line was $-\theta^{1/2}$, this would cause a problem later, as we will soon see.)

This is in contrast to the situation when we had a priori inputs, as that yielded that $x_\theta=O(\theta^{1/2})$. This was very important because the exponentially decaying upper tail bound we had on the one-point distribution meant that we could easily control the supremum of $\cL_2(x) + x^2$ on an interval whose size was polynomial in $\theta$. (This is where Part 2 was used in Part 1, as mentioned above.) But even with the a priori estimates we cannot control $\cL_2$ on arbitrarily large intervals, and here we have mere one-point tightness. 

\begin{figure}[t]
\includegraphics{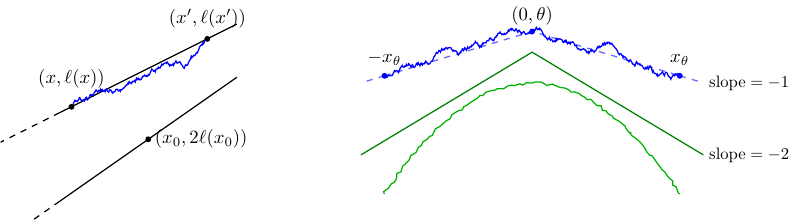}
\caption{\emph{Left panel}: A Brownian bridge whose endpoints lie on a line of given slope is unlikely to intersect another line of different slope if the endpoints of the bridge are well-separated from the second line. \emph{Right panel}: The lower green curve is what $\cL_2$ should look like, but which we cannot yet prove; the darker green tent-shaped function above it is the upper bound we are able to prove on $\cL_2$, lines of slope $\pm2$; and the blue curve is the upper bound we are able to prove on $\cL_1$ using resampling. In more detail, the blue curve is a Brownian bridge from $(-x_\theta, \theta-x_\theta)$ to $(0,\theta)$ to $(x_\theta,\theta-x_\theta)$ (which are endpoints of lines of slope $\pm 1$, drawn in dashed blue) conditioned to stay above the dark green tent; this latter conditioning has probability lower bounded by a uniformly positive constant by the argument from the left panel. Thus, at $\pm \theta$, $\cL_1$ is with high probability below $\theta/2$, as the Brownian bridge that bounds it has mean $0$ and standard deviation $O(\theta^{1/2})$ at those locations.}\label{f.stay below line}
\end{figure}

\medskip

\emph{Control on the second curve over all of $\R$:} To handle this issue, we weaken what we ask from the second curve: we have it remain below $-m|x| + K$ for a constant $m>0$ (that we choose) and a random $K$ almost surely. (One might try asking for $\cL_2$ to stay below $-\alpha x^2$ plus a random constant for any $0<\alpha<1$, as would hold for $\cP_2$. Our proof is not able to furnish this demand, but we are able to establish the linear bound. The difference is that a linear bound plays well with Brownian bridge's linear trajectory, as we will see.) 

We ask for the above conditionally on $\cL_1(0)\geq \theta$, so it may seem natural that $K$ would depend on $\theta$: but recall the BK inequality, which gives that the second curve, conditionally on $\cL_1(0)\geq \theta$, is stochastically smaller than an \emph{unconditional} $\cL_1$. The latter's law has no $\theta$ dependence, so $K$ can be taken to have no $\theta$-dependence. (It is here that we would not have been able to have $K$ be $\theta$-independent if we had taken the slope $m$ to depend on $\theta$.)

So we need to establish that there exists an almost surely finite $K$ such that $\cL_1(x) \leq -m|x| + K$ for all $x\in\R$; we will show that for each $m>0$ there exists such a random $K$. Establishing this is the main new argument in the extremal ensemble case compared to the arguments sketched above, and we outline it now.

The main tool we have available is the Brownian Gibbs property. The basic difficulty is that we have no control over the lower boundary $\cL_2$: in particular, if it has large peaks at an infinite sequence of random points $\tau_n$, that may force $\cL_1(\tau_n)$ to be larger than $-m|\tau_n|+K_n$ for a sequence $K_n\to\infty$.

If the above scenario happens, however, then $\cL_1$ is likely to remain high (in particular, higher than $-2m|x|$ say) in between these peaks as well (see the left panel of Figure~\ref{f.stay below line}): this is because unconditioned Brownian bridge approximately follows the linear path between its endpoints and so will with positive probability not hit a steeper line, and the lower boundary condition imposed by $\cL_2$ only pushes it further up (we need no control on $\cL_2$ for this statement!). Importantly, this holds true in a uniform way no matter how large the intervals $[\tau_n, \tau_{n+1}]$ are. But this leads to a contradiction because, by one-point tightness, stationarity, and parabolic decay, we can find a sequence of deterministic points $x_n$ such that $\cL_1(x_n) < -2|x|_n$ for all but finitely many $n$ almost surely; this is argued in a similar way to \eqref{e.intro weak pinning} and by invoking the Borel-Cantelli lemma.

\medskip

\emph{Combining the two parts:} With this control on the second curve in hand, combined with the weak pinning at $\pm x_\theta$, we can argue with similar ideas as the earlier cases that, conditionally on $\cL_1(0)\geq \theta$, the top curve is not too high at a pair of points which are not too far from zero with probability at least $\frac{1}{2}$; we do not take the points to be $\pm\theta^{1/2}$ as earlier since we do not have such strong control yet. Instead, we show that $\cL_1(\pm\theta)$ is at most $\theta/2$ with high probability. The argument for this involves resampling on the interval $[-x_\theta, 0]$ and $[0,x_\theta]$,  using that the second curve is below $K-2|x|$, and that a Brownian bridge $B$ is unlikely to hit a line of steeper slope than the slope between its endpoints (which here is $-1$); 
 see the right panel of Figure~\ref{f.stay below line}.
 Also note the important point that while $x_\theta$ can be arbitrarily large, the fluctuations of $B$ at $\pm\theta$ will only be of order $\theta^{1/2}$.

 With this control on $\cL_1(\pm\theta)$, we can do a resampling on $[-\theta,\theta]$ to say that $\P(\cL_1(0) \geq \theta)$ is at most the probability a Brownian bridge $B'$ from $(-\theta,\theta/2)$ to $(\theta,\theta/2)$ which is conditioned to stay above $K-2|x|$ satisfies $B'(0)\geq \theta$, which is clearly $\exp(-c\theta)$, as again the probability of the conditioning event is uniformly positive.

Now with an exponential tail on the one-point distribution available, the earlier arguments can be applied to yield all the main results for the extremal ensembles as well.

\subsection*{Organization of the paper}
In Section~\ref{s.tools} we recall the precise definitions of the Brownian Gibbs properties and collect various monotonicity and technical tools we will need in the main arguments, though we defer most of their proofs to the appendices as they do not require many new ideas and would obstruct the flow of the main arguments. In Sections~\ref{s.one point limit shape} and \ref{s.one point asymptotics} respectively we prove the one-point limit shape and the one-point asymptotics. We do the two-point versions of the same in Sections~\ref{s.two point limit shape} and \ref{s.two point asymptotics}. In Section~\ref{s.extremal} we obtain a preliminary one-point upper tail decay for extremal stationary ensembles, thus verifying that extremal stationary ensembles satisfy the upper bound of Assumption~\ref{as.tails} and making the earlier arguments for the sharp asymptotics applicable. Finally in Section~\ref{s.general data} we give the arguments to obtain the one-point asymptotics for general initial data.

To streamline the presentation, there are three appendices. Appendix~\ref{app.monotonicity proofs} proves that the assumptions from Section~\ref{s.assumptions} hold in the ensembles of interest and gives proofs for the implicated monotonicity properties. Appendix~\ref{app.hamiltonian} collects some calculations involving $\h$ such as of its Hamiltonian for its Gibbs property. The final Appendix~\ref{app.brownian estimates} provides the proofs of various Brownian estimates from Section~\ref{s.monotonicity tools}.

\begin{notation}\label{n.conditional prob notation}
$\mc C([a,b],\R)$ will denote the space of real-valued continuous functions defined on $[a,b]$.

We will often consider conditional probability distributions on conditioning on a $\sigma$-algebra $\F$. For these objects we will use the shorthand notation
$$\PF(\cdot) = \P(\cdot \mid \F).$$

The existence of the regular conditional probability measures we will need is ensured by the fact that we will always take $\F$ to be generated by random variables taking values in a Borel space, and then invoking well-known abstract results such as \cite[Theorem~8.5]{Kallenberg}. Conditional probabilities of the form $\P(\cdot \mid \h_1(0)=\theta)$ and $\P(\cdot \mid \h_1(-\theta^{1/2})=a\theta, \h_1(\theta^{1/2})=b\theta)$ are also defined via these regular conditional distributions and their associated probability kernels.
\end{notation}

\subsection*{Acknowledgements}
MH thanks Patrik Ferrari for posing a question a couple years ago which ultimately led to this investigation. We thank Adam Jaffe for pointing us to references for characterizations of extremal Gibbs measures and extremality of determinantal point processes, and Ivan Corwin for helpful suggestions. We would also like to thank the anonymous referees for their careful comments, and Xuan Wu for bringing to our attention an issue in the verification of Assumption~\ref{as.corr}(b) for the KPZ line ensemble in an earlier version of this article.

This material is based upon work partially supported by the National Science Foundation under Grant No.\ 1440140, while the authors were in residence at the Mathematical Sciences Research Institute in Berkeley, California, during the Fall 2021 program ``Universality and Integrability in Random Matrix Theory and Interacting Particle Systems.'' SG is also partially supported by NSF grant DMS-1855688, NSF CAREER Award DMS-1945172 and a Sloan Research Fellowship. MH is also partially supported by NSF grant DMS-1937254.


\section{Line ensembles, monotonicity \& Brownian estimates}\label{s.tools}
 
In this section we introduce the formal definition of the Brownian Gibbs properties we work with as well as of the parabolic Airy line ensemble $\cP$ and collect come monotonicity tools and Brownian estimates that we will be making extensive use of in the main arguments in upcoming sections.

We introduce the Gibbs properties and $\cP$ in Section~\ref{s.gibbs and line ensembles}. We cover monotonicity tools in Section~\ref{s.monotonicity tools} and the Brownian ones in Sections~\ref{s.tools.brownian}--\ref{s.tools.brownian above para}. 
In Section~\ref{s.tools.kpz analogues} we obtain a useful consequence of Assumption~\ref{as.weak bk}(b\ensuremath{'}) that will be repeatedly used in subsequent sections.

The proofs of most of the tools are straightforward but tedious, or follow the same lines as arguments already existing in the literature. For this reason such proofs have been deferred to the appendices. %

\subsection{Line ensembles \& Brownian Gibbs}\label{s.gibbs and line ensembles}

We start by stating formally the definitions of the spaces and objects we will be working with.

\begin{definition}
A line ensemble is a random continuous function defined from $\R\times\N$ to $\R$, where the space of continuous functions $\R\times\N\to \R$ is endowed with the topology of uniform convergence on compact sets and the corresponding Borel $\sigma$-algebra.
\end{definition}

\begin{definition}[$H$-Brownian Gibbs and Brownian Gibbs properties]\label{d.bg}
Let $\cL = (\cL_1, \cL_2, \ldots)$ be a line ensemble. For $\intint{j,k}\subseteq \N$ and $[\ell,r]\subseteq \R$ a finite interval, define $\Fext(\intint{j,k},[\ell,r])$ to be the $\sigma$-algebra generated by $\{\cL_i(x) : (i,x)\not\in\intint{j,k}\times[\ell,r]\}$, i.e., all the data external to $[\ell,r]$ of the $j$\textsuperscript{th} to $k$\textsuperscript{th} curves; when $j=1$, we write $\Fext(k,[\ell,r]) := \Fext(\intint{1,k},[\ell,r])$. 

Let $H:\R\to[0,\infty)$ be a continuous function (called a \emph{Hamiltonian}).
We say a line ensemble $\cL$ has the \emph{$H$-Brownian Gibbs} property with respect to rate $\sigma^2$ Brownian bridge if, for any $\intint{j,k}\subseteq \N$ and $[a,b]\subseteq \R$, conditionally on $\F:=\Fext(\intint{j,k},[a,b])$, the following holds. Let $\vec x, \vec y \in \R^{k-j+1}$ be defined by $x_i = \cL_i(a)$ and $y_i = \cL_i(b)$ for $i=j, \ldots, k$. The Radon-Nikodym derivative of the $\F$-conditional law $\P_{\cL}^{j,k,a,b}$ of $(\cL_j, \ldots, \cL_k)$ on $[a,b]$ with respect to the law $\Pfree^{j-k+1, a,b, \vec x, \vec y}$ of $k-j+1$ independent rate $\sigma^2$ Brownian bridges on $[a,b]$ with endpoint values $\vec x $ and $\vec y$ is given by
$$\frac{\mathrm d \P_{\cL}^{j,k,a,b}}{\mathrm d \Pfree^{k-j+1,a,b,\vec x,\vec y}}(\cL_j, \ldots, \cL_k) = \frac{W_H^{k-j+1,a,b, \cL_{j-1}, \cL_{k+1}}(\cL_j, \ldots, \cL_k)}{Z_H^{k-j+1,a,b,\vec x, \vec y, \cL_{j-1}, \cL_{k+1}}},$$
where $\cL_0 = \infty$ and, for any $k\in\N$, $[a,b]\subseteq \R$, $\vec x, \vec y\in\R^k$, and $f, g, B_1, \ldots, B_k:[a,b]\to\R$,
\begin{equation}\label{e.gibbs rn derivative}
W_H^{k,a,b, f, g}(B_1, \ldots, B_k) = \exp\left\{-\sum_{i=0}^k \int_a^b H\left(B_{i+1}(u) - B_i(u)\right)\, \mathrm du\right\}
\end{equation}
(with $B_0=f$ and $B_{k+1}=g$) and
$$Z_H^{k,a,b,\vec x, \vec y, f, g} = \Efree^{k,a,b,\vec x, \vec y}\left[W_H^{k,a,b, f, g}(B_1, \ldots, B_k)\right].$$
When $f=\infty$, we will often drop it from the superscript of both $W$ and $Z$.

The \emph{Brownian Gibbs} property is specified by the above with $H(x) = \infty\cdot\one_{x>0}$; in plain words, the Radon-Nikodym derivative corresponds to conditioning on non-intersection of the curves $\cL_1, \ldots, \cL_k$ between themselves as well as the lower curve.
\end{definition}
In the remainder of the paper we will set $\sigma^2 = 2$.

 At a few points in the argument we will need the notion of a stopping domain, which is a random interval analogous to stopping times for Markov processes; the important point is that the Brownian Gibbs property can be applied to stopping domains, as we recall in Lemma~\ref{l.strong bg}.

\begin{definition}[Stopping domain and strong Brownian Gibbs]\label{d.strong bg}
Let $\cL:\N\to\R$ be an $\N$-indexed collection of continuous curves and $H$ a Hamiltonian.
A pair of random variables $\mf l, \mf r$ is a \emph{stopping domain} for $\cL_1, \ldots, \cL_k$ if, for all $\ell < r$,
$$\{\mf l \leq \ell, \mf r \geq r\} \in \F_{\mathrm{ext}}(k,\ell, r).$$
Define the $\sigma$-algebra $\F_{\mathrm{ext}}(k,\mf l, \mf r)$ to be the one generated by events $A$ such that $A\cap \{\mf l\leq \ell, \mf r\geq r\} \in \F_{\mathrm{ext}}(k,\ell, r)$ for all $\ell < r$. Also define, for $k\in\N$, the set $\mc C^k= \{(\ell,r, f_1, \ldots, f_k): \ell<r, f_1, \ldots, f_k\in \mc C([\ell,r])\}$.

An infinite collection of random continuous curves $\cL$ has the \emph{strong $H$-Brownian Gibbs} property if, for any $k\in\N$, stopping domain $[\mf l, \mf r]$ with respect to $\cL_1, \ldots, \cL_k$, and $F:\mc C^k\to \R$
$$\E[F(\mf l, \mf r, \cL_1|_{[\mf l,\mf r]}, \ldots, \cL_k|_{[\mf l,\mf r]}) \mid \Fext(k,\mf l, \mf r)] = \E_H^{k,a,b,\vec x, \vec y, \cL_{k+1}}[F(a,b, B_1, \ldots, B_k)],$$
where $\vec x = (\cL_1(\mf l), \ldots, \cL_k(\mf l))$, $\vec y = (\cL_1(\mf r), \ldots, \cL_k(\mf r))$, and $B_1, \ldots, B_k$ are distributed according to $\E_H^{k,a,b,\vec x, \vec y, \cL_{k+1}}$, i.e., Brownian bridges tilted by the Radon-Nikodym factor from Definition~\ref{d.bg}.

In words, the distribution of $\cL_1, \ldots, \cL_k$ on a stopping domain is still given by Brownian bridges with the appropriate endpoints reweighted as in the usual $H$-Brownian Gibbs property, except on the random interval $[\mf l, \mf r]$.
\end{definition}

\begin{lemma}[Lemma~2.5 of \cite{corwin2016kpz} and Lemma~2.5 of \cite{corwin2014brownian}]\label{l.strong bg}
If a line ensemble $\cL$ satisfies the $H$-Brownian Gibbs property, it also satisfies the strong $H$-Brownian Gibbs property, and similarly for the usual Brownian Gibbs property.
\end{lemma}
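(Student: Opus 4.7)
The strategy is to imitate the standard derivation of the strong Markov property from the ordinary Markov property: approximate the stopping domain from outside by stopping domains taking only countably many values, apply the (regular) $H$-Brownian Gibbs property on each level set of the discretization, and then pass to the limit by continuity. I will focus on the $H$-Brownian Gibbs case since the Brownian Gibbs case is entirely analogous (the role of the weight $W_H/Z_H$ is played by the non-intersection indicator divided by its free probability).

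First I would carry out a dyadic discretization of the stopping domain. For each $n \in \N$ set $\mf l_n = 2^{-n}\lfloor 2^n \mf l\rfloor$ and $\mf r_n = 2^{-n}\lceil 2^n \mf r\rceil$, so that $\mf l_n \leq \mf l \leq \mf r \leq \mf r_n$, the pair $(\mf l_n,\mf r_n)$ takes values in the countable grid $2^{-n}\Z \times 2^{-n}\Z$, and $(\mf l_n,\mf r_n)\to(\mf l,\mf r)$ monotonically. A key preliminary step is to check that $(\mf l_n,\mf r_n)$ is itself a stopping domain: for dyadic $\ell,r$ of the form $2^{-n}\cdot$integer, the event $\{\mf l_n\leq \ell,\mf r_n\geq r\}$ coincides with $\{\mf l\leq \ell,\mf r\geq r\}\in \Fext(k,\ell,r)$ by hypothesis, and the level set $\{\mf l_n=\ell,\mf r_n=r\}$ lies in $\Fext(k,\ell,r)$ by writing it as an intersection/difference of such events within the dyadic grid.

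Second, for a bounded continuous test function $F:\mc C^k\to\R$, I would partition according to the value of $(\mf l_n,\mf r_n)$ and apply the ordinary $H$-Brownian Gibbs property on each dyadic slab. Since $\{\mf l_n=\ell,\mf r_n=r\}\in\Fext(k,\ell,r)$, one gets
\begin{align*}
\MoveEqLeft[1] \E\!\left[F\bigl(\mf l_n,\mf r_n,\cL_1|_{[\mf l_n,\mf r_n]},\ldots,\cL_k|_{[\mf l_n,\mf r_n]}\bigr)\one_{\{\mf l_n=\ell,\mf r_n=r\}}\,\Big|\,\Fext(k,\ell,r)\right] \\
&= \one_{\{\mf l_n=\ell,\mf r_n=r\}}\cdot\E_H^{k,\ell,r,\vec x,\vec y,\cL_{k+1}}\!\left[F(\ell,r,B_1,\ldots,B_k)\right],
\end{align*}
with $\vec x=(\cL_1(\ell),\ldots,\cL_k(\ell))$ and $\vec y=(\cL_1(r),\ldots,\cL_k(r))$. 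Summing over dyadic $(\ell,r)$ gives the strong $H$-Brownian Gibbs identity at the level of the discretized domain $(\mf l_n,\mf r_n)$, conditioned on the finer $\sigma$-algebra $\bigcap_{\ell,r} (\Fext(k,\ell,r)\vee\sigma(\{\mf l_n=\ell,\mf r_n=r\}))$, which one identifies with $\Fext(k,\mf l_n,\mf r_n)$.

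Third, I would pass to the limit $n\to\infty$. By continuity of the curves $\cL_1,\ldots,\cL_k$, the endpoints $\vec x_n=\cL(\mf l_n)$ and $\vec y_n=\cL(\mf r_n)$ converge to $\cL(\mf l)$ and $\cL(\mf r)$, and $\cL_i|_{[\mf l_n,\mf r_n]}$ converges to $\cL_i|_{[\mf l,\mf r]}$ (after the standard identification of $\mc C^k$ across varying intervals). Continuity of the reweighted bridge measure $\E_H^{k,\ell,r,\vec x,\vec y,f}$ in $(\ell,r,\vec x,\vec y)$ follows from realising a rate-$2$ Brownian bridge on $[\ell,r]$ as an affine-plus-scaling transformation of a standard Brownian bridge on $[0,1]$, then using continuity of $H$ together with the fact that the partition function $Z_H^{k,\ell,r,\vec x,\vec y,f}$ is bounded away from $0$ in a neighbourhood of $(\mf l,\mf r,\cL(\mf l),\cL(\mf r))$ on $\cL_{k+1}$-almost-every realisation. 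The main obstacle, and the only place real work is needed, is justifying the limit on the conditioning side: one checks $\Fext(k,\mf l_n,\mf r_n)\supseteq\Fext(k,\mf l,\mf r)$ for each $n$ and applies the reverse martingale convergence theorem, exploiting boundedness of $F$ to invoke $L^1$-convergence, so that the discretized identity passes to $\Fext(k,\mf l,\mf r)$ in the limit. Combining these convergences with dominated convergence on both sides yields the claimed identity for $(\mf l,\mf r)$ and hence the strong $H$-Brownian Gibbs property.
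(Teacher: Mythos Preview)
The paper does not supply its own proof of this lemma; it simply cites Lemma~2.5 of \cite{corwin2014brownian} and \cite{corwin2016kpz}. Your argument---discretize the stopping domain from outside onto a dyadic grid, apply the ordinary Gibbs property on each level set, and pass to the limit using continuity of the reweighted bridge law in its boundary data together with (reverse) martingale convergence---is precisely the standard route and is the approach taken in those references, so your proposal is correct and in line with the cited proofs.
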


\begin{definition}[Parabolic Airy line ensemble]\label{d.parabolic Airy line ensemble}
The parabolic Airy line ensemble $\cP: \N\times\R\to \R$ is the line ensemble such that the finite dimensional distributions of the ensemble $\A$ given by $\A_i(x) = \cP_i(x) +x^2$ can be described as follows: for every $m\in\N$ and real $t_1 < \ldots< t_m$, the point process $\{(\A_i(t_j),t_j) : i\in\N, j\in\intint{1,m}\}$ is determinantal with correlation kernel given by the extended Airy kernel $K_\mathrm{Ai}^{\mathrm{ext}}:(\R\times(0,\infty))^2\to\R$, where
\begin{equation}\label{e.extended airy kernel}
K_\mathrm{Ai}^{\mathrm{ext}}\bigl((x,t); (y,s)\bigr) = \begin{cases}
\int_0^\infty e^{-\lambda(t-s)}\mathrm{Ai}(x+\lambda)\mathrm{Ai}(y+\lambda)\, \mathrm d\lambda & t\geq s\\
-\int^0_{-\infty} e^{-\lambda(t-s)}\mathrm{Ai}(x+\lambda)\mathrm{Ai}(y+\lambda)\, \mathrm d\lambda & t< s
\end{cases}
\end{equation}
with $\mrm{Ai}$ the classical Airy function. (The reader is referred to \cite{manjunath} for background on determinantal point processes.)
\end{definition}

Next are the Gibbs properties for $\h$ and $\cP$.

\begin{proposition}[Gibbs properties of $\h$ and $\cP$]\label{p.scaled gibbs}
(i) For each $0<t<\infty$, there exists a line ensemble $\h$ such that the (scaled as in \eqref{e.h_1 definition}) narrow wedge solution to the KPZ equation $\h_1$ is the lowest indexed curve of $\h$. Further, the latter has the $H_t$-Brownian Gibbs property with respect to rate two Brownian bridge, where the Hamiltonian $H_t$ is given by $H_t(x) = 2t^{2/3}\exp(t^{1/3}x)$.

(ii) $\cP$ has the Brownian Gibbs property with respect to rate two Brownian bridge.
\end{proposition}

The existence of the line ensemble $\h$ and its Brownian Gibbs property is proven in \cite{corwin2016kpz} (see also \cite{nica2021intermediate}); in fact, by later results characterizing the law of line ensembles by the distribution of the top curve \cite{dimitrov2021characterizationH}, this line ensemble is also unique in law. The Gibbs property of $\cP$ is established in \cite[Theorem 3.1]{corwin2014brownian}. 

We note that the above specification of the Hamiltonian for $\h$ differs from \cite[Theorem~2.15 (iii)]{corwin2016kpz}, which does not include the $\smash{2t^{2/3}}$ pre-factor in the Hamiltonian. This is due to a neglecting of a Jacobian factor which should be present, and so the correct Hamiltonian for $\h$ is indeed as stated in Proposition~\ref{p.scaled gibbs}. To clarify this point we write out the proof of Proposition~\ref{p.scaled gibbs} (i) in Appendix~\ref{app.hamiltonian}. A different fix involving a certain shift of the curves in the line ensemble to obtain the Hamiltonian of $\exp(t^{1/3}x)$ was adopted in \cite{wu2021tightness}.

Below we record the result of verifying the relevant assumptions for the KPZ, parabolic Airy, and extremal stationary line ensembles for easy reference; the proof will be given in Appendices~\ref{app.monotonicity proofs} and \ref{app.hamiltonian.verifying assumptions}. Recall the definition of an extremal stationary line ensemble from after Conjecture~\ref{conj.extremality}.

\begin{theorem}\label{t.assumptions hold}
The parabolic Airy and extremal stationary line ensembles satisfy Assumptions~\ref{as.bg}--\ref{as.tails} with $t=\infty$, as well as Assumptions~\ref{as.corr}(b\ensuremath{'}) and \ref{as.mono in cond stronger}. The KPZ line ensemble $\h$ satisfies Assumptions~\ref{as.bg}, \ref{as.corr}(a), \ref{as.weak bk}(b\ensuremath{'}), \ref{as.mono in cond stronger}, and \ref{as.tails} for any $t>0$, and, for any fixed $t_0>0$, the constants in Assumptions~\ref{as.weak bk}(b\ensuremath{'}) and \ref{as.tails} may be taken uniform over all $t>t_0$.

As a result, Theorems~\ref{mt.one point density asymptotics}--\ref{mt.fkg sharpness} and \ref{mt.one point limit shape}--\ref{mt.two-point limit shape} all hold for the parabolic Airy, extremal (in particular, implying Theorem~\ref{mt.extremal}), and KPZ line ensembles.
\end{theorem}

\subsection{Monotonicity tools}\label{s.monotonicity tools}

There are two monotonicity tools. The first is monotonicity in the boundary data of the Brownian bridges under both hard and soft constraints, Lemma~\ref{l.monotonicity}. The second, Lemma~\ref{l.bound point conditioning by tail conditioning}, is a straightforward consequence of Assumption~\ref{as.mono in cond} on monotonicity in conditioning, and relates conditioning on point values like $\cL_1(0)=\theta$ with conditioning on positive probability events like $\{\cL_1(0)\geq \theta\}$ (the latter has the benefit of being an increasing event).

\begin{lemma}[Monotonicity in boundary data]\label{l.monotonicity}
Fix $t>0$, $k_1\leq k_2\in \Z$, $a<b$, two pairs of vectors $\smash{w^{(i)}, z^{(i)}}\in \R^{k_2-k_1+1}$ and two pairs of measurable functions $\smash{(f^{(i)},g^{(i)})}$ for $i\in \{1,2\}$ such that $\smash{w^{(1)}_{j}\leq w^{(2)}_{j}}$ and $\smash{z^{(1)}_{j}\leq z^{(2)}_{j}}$ for all $j=k_1,\ldots,k_2$ and $\smash{f^{(i)}}:(a,b)\rightarrow \R\cup\{\infty\}$, $\smash{g^{(i)}}:(a,b)\rightarrow \R\cup\{-\infty\}$ and for all $s\in (a,b)$, $\smash{f^{(1)}(s)\leq f^{(2)}(s)}$ and $\smash{g^{(1)}(s)\leq g^{(2)}(s)}$. 

For $i\in \{1,2\}$, let $\smash{\mathcal{Q}^{(i)}=\{\mathcal{Q}^{(i)}_j\}_{j=k_1}^{k_2}}$ be a $\{k_1,\ldots,k_2\}\times (a,b)$-indexed line ensemble such that $\smash{\mathcal{Q}^{(i)}}$ has the $H_t$-Brownian Gibbs property with entrance data $\smash{w^{(i)}}$, exit data $\smash{z^{(i)}}$ and boundary data $\smash{(f^{(i)},g^{(i)})}$).

There exists a coupling of the laws of $\smash{\{\mathcal{Q}^{(1)}_j\}}$ and $\smash{\{\mathcal{Q}^{(2)}_j\}}$ such that almost surely $\smash{\mathcal{Q}^{(1)}_j(s)}\leq \smash{\mathcal{Q}^{(2)}_j(s)}$ for all $j\in \{k_1,\ldots, k_2\}$ and all $s\in (a,b)$.

The same is true in the $t=\infty$ (zero-temperature case) if additionally $\smash{w_j^{(i)} > w_{j+1}^{(i)}}$ and $\smash{z_j^{(i)} > z_{j+1}^{(i)}}$ for $j=k_1, \ldots, k_2-1$ and $i=1,2$, and $\smash{f^{(i)}(a) > w_{k_1}^{(i)}}$, $\smash{f^{(i)}(b) > z_{k_1}^{(i)}}$, $\smash{g^{(i)}(a) < w_{k_2}}$, $\smash{g^{(i)}(b) < z_{k_2}}$ for $i=1,2$.
\end{lemma}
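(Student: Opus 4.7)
The plan is to follow the now standard two-step structure for monotonicity of line ensembles (as in \cite{corwin2014brownian,corwin2016kpz,dimitrov2021characterization}): first establish monotonicity for a \emph{single} tilted Brownian bridge in all four pieces of boundary data (two endpoints and two barrier functions), and then bootstrap to the $k$-curve ensemble via a Gibbs sampler / Glauber dynamics argument. Specifically, the Gibbs property guarantees that the joint law of $\mathcal Q_{k_1},\ldots,\mathcal Q_{k_2}$ is invariant under the Markov chain that at each step selects an index $j$ (uniformly or in round-robin) and resamples $\mathcal Q_j$ on $(a,b)$ conditional on its neighbors $\mathcal Q_{j-1}$ and $\mathcal Q_{j+1}$ (where $\mathcal Q_{k_1-1} = f^{(i)}$ and $\mathcal Q_{k_2+1} = g^{(i)}$ play the role of the exterior upper/lower barriers), with fixed endpoint values $w_j^{(i)}$ and $z_j^{(i)}$. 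Because the Hamiltonian $H_t$ only couples adjacent curves, each such single-curve resampling step has the form treated in the first step; if one can couple such resamplings monotonically using common randomness, then starting the two chains with $\mathcal Q^{(1)} \leq \mathcal Q^{(2)}$ pointwise (a coupling which exists trivially at step zero by taking, say, linear interpolants) and iterating preserves the order in time, and passing to the stationary regime gives the desired coupling in equilibrium.

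For the single-curve monotonicity, I would discretize: approximate a rate-two Brownian bridge from $(a,w)$ to $(b,z)$ on a dyadic mesh by a Gaussian random walk bridge, tilted by the discrete counterpart of $\exp\{-\int H_t(\mathcal Q_j - \mathcal Q_{j+1})\,\mathrm du\}$ coming from the upper and lower barriers. On the discrete grid, run a site-by-site Gibbs sampler that, at each step, resamples one coordinate from its one-dimensional conditional density on $\R$, which is the product of two Gaussian transition densities (to the neighboring grid points) and the two barrier weights $\exp(-\delta H_t(\,\cdot\,-g(s)))\cdot\exp(-\delta H_t(f(s)-\,\cdot\,))$. This one-dimensional density has total mass that is log-concave-like in each of its parameters $(w,z,f,g)$ in the sense needed for a monotone coupling: its CDF is pointwise monotone in each parameter, since raising a neighboring endpoint or a barrier shifts the conditional density upward in the stochastic order. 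Driving the two discrete chains by a common uniform random variable (inverse-CDF coupling) at each resampling step then preserves ordering almost surely, and sending the mesh to zero together with iterating the Gibbs sampler yields the continuous-time statement; tightness of random walk bridges and continuity of $W_{H_t}$ in the sup norm ensure the limit is the correct tilted Brownian bridge measure.

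For the zero-temperature case, the single-curve conditional law becomes a Brownian bridge conditioned to lie in a strip $(g(s), f(s))$. The strict orderings $w_j^{(i)} > w_{j+1}^{(i)}$, $z_j^{(i)} > z_{j+1}^{(i)}$ together with the strict inequalities between $f^{(i)}$, $g^{(i)}$ and the endpoints ensure that all the conditioning events are of positive probability, so the ensemble is well defined; the monotonicity of a conditioned Brownian bridge in its strip parameters can be obtained either as the $H_t \to \infty\cdot \one_{x>0}$ limit of the positive-temperature monotonicity established above (via monotone convergence of the one-dimensional conditional CDFs), or directly by a dyadic midpoint construction combined with the inverse-CDF coupling at each level. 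The main obstacle in the whole argument is not monotonicity itself but the technical bookkeeping of (i) showing that the finite-dimensional monotone couplings obtained via the Gibbs sampler pass to the continuous limit while preserving the ordering, which uses that pointwise monotonicity on a dense set of times extends to all times by continuity of the sample paths, and (ii) handling the hard-constraint case at the boundary of the parameter space, which is where the strict-inequality assumptions in the zero-temperature statement are used. All other steps are routine once the single-site / single-curve monotone coupling is in hand.
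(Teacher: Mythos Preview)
Your proposal is correct and follows essentially the same approach as the references the paper cites. In fact, the paper does not give its own proof of this lemma: it simply invokes Lemmas~2.6 and 2.7 of \cite{corwin2016kpz} for the positive-temperature case and Lemmas~2.6 and 2.7 of \cite{corwin2014brownian} for the zero-temperature case, pointing also to \cite{dimitrov2021characterization,dimitrov2021characterizationH} for more detailed versions. The Gibbs-sampler-plus-monotone-coupling scheme you outline (discretize, couple single-site resamplings via inverse CDFs, iterate over curves, pass to the continuum limit) is precisely the strategy employed in those references, so there is nothing to compare.
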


\begin{proof}
The positive temperature ($t<\infty$) statements are Lemmas~2.6 and 2.7 of \cite{corwin2016kpz}. The zero temperature ($t=\infty$) statements are Lemmas~2.6 and 2.7 of \cite{corwin2014brownian}. See also \cite{dimitrov2021characterization} and \cite{dimitrov2021characterizationH} for more detailed proofs of the respective cases.
\end{proof}

\begin{lemma}[Monotonicity in conditioning]\label{l.bound point conditioning by tail conditioning}
Suppose $\cL$ satisfies Assumption \ref{as.mono in cond}. Let $F$ be an increasing function of $(\cL_1, \cL_2, \ldots)$. Let $y_1, y_2\in \R$ and $E_1, E_2\subseteq \R$ be (possibly infinite) intervals such that $\inf E_i = y_i$ and $y_i\in E_i$ for $i=1,2$. Then, for any $\theta>0$,
$$\E\left[F \ \big|\  \cL_1(-\theta^{1/2}) = y_1, \cL_1(\theta^{1/2}) = y_2\right] \leq \E\left[F \ \big|\  \cL_1(-\theta^{1/2}) \in E_1, \cL_1(\theta^{1/2}) \in E_2\right].$$%
Similarly, if $F$ is a decreasing function and $E_i$ are intervals such that $\sup E_i = y_i$ and $y_i\in E_i$ for $i=1$ and 2,
$$\P\left[F \ \big|\  \cL_1(-\theta^{1/2}) = y_1, \cL_1(\theta^{1/2}) = y_2\right] \leq \E\left[F \ \big|\  \cL_1(-\theta^{1/2}) \in E_1, \cL_1(\theta^{1/2}) \in E_2\right].$$
The same holds if the conditioning is at a single point, such as $0$, instead of two points $\pm\theta^{1/2}$ as stated above.
\end{lemma}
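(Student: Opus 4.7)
The plan is straightforward: disintegrate the right-hand side over the values of the conditioning pair $(\h_1(-\theta^{1/2}), \h_1(\theta^{1/2}))$ and apply Assumption~\ref{as.mono in cond} pointwise in the integrand. Since this pair takes values in the Borel space $\R^2$, a regular conditional distribution exists, as noted in Notation~\ref{n.conditional prob notation}, so the disintegration is justified.

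Concretely, I would let $\mu$ denote the conditional distribution of $(\h_1(-\theta^{1/2}), \h_1(\theta^{1/2}))$ given $\{\h_1(-\theta^{1/2})\in E_1,\, \h_1(\theta^{1/2})\in E_2\}$. Then by the tower property,
\begin{align*}
\MoveEqLeft[2]
\E\left[F \,\big|\, \h_1(-\theta^{1/2})\in E_1,\, \h_1(\theta^{1/2})\in E_2\right] \\
&= \int_{E_1\times E_2} \E\left[F \,\big|\, \h_1(-\theta^{1/2}) = y_1',\, \h_1(\theta^{1/2}) = y_2'\right]\, d\mu(y_1', y_2').
\end{align*}
Since $\inf E_i = y_i$ and $y_i\in E_i$, one has $y_i'\geq y_i$ for $\mu$-almost every $(y_1', y_2')$. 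Assumption~\ref{as.mono in cond} (with $i=1,2$) then gives that the conditional law of $\h$ given $\{\h_1(-\theta^{1/2}) = y_1',\, \h_1(\theta^{1/2}) = y_2'\}$ stochastically dominates its counterpart with $y_1', y_2'$ replaced by $y_1, y_2$. Since $F$ is increasing, the integrand at every $(y_1', y_2')$ in the support of $\mu$ is bounded below by $\E[F\mid \h_1(-\theta^{1/2}) = y_1,\, \h_1(\theta^{1/2}) = y_2]$, and integrating against $\mu$ yields the desired inequality.

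For the decreasing case, I would observe that the hypothesis $\sup E_i = y_i$ and $y_i\in E_i$ instead forces $y_i'\leq y_i$, so the stochastic dominations reverse; the fact that $F$ is decreasing reverses the direction of the resulting expectation inequality once more, recovering the statement as written. The single-point versions follow identically using the $i=1$ case of Assumption~\ref{as.mono in cond} in place of the two-point case. I do not foresee any serious obstacle; the only thing to keep in mind is that Assumption~\ref{as.mono in cond} is phrased in terms of the law of $\h_1$, whereas $F$ is permitted to depend on the full ensemble, so one has to rely on the fact (verified in Appendix~\ref{app.monotonicity proofs}) that the same stochastic domination in fact extends to the whole ensemble.
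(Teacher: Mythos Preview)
Your proposal is correct and follows essentially the same approach as the paper: disintegrate the right-hand side against the conditional law of $(\h_1(-\theta^{1/2}),\h_1(\theta^{1/2}))$, use that this law is supported on $[y_1,\infty)\times[y_2,\infty)$, and apply Assumption~\ref{as.mono in cond} pointwise to bound the integrand below by the left-hand side. Your observation that Assumption~\ref{as.mono in cond} is literally stated only for $\h_1$ while $F$ may depend on the full ensemble, and that one must appeal to the stronger ensemble-level domination established in Appendix~\ref{app.monotonicity proofs}, is a valid and careful point that the paper's own proof leaves implicit.
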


As mentioned, the proof of this is a straightforward consequence of Assumption~\ref{as.mono in cond} and is deferred to Appendix~	\ref{app.mono.proof of mono in cond lemma}.

\subsection{A bound on the tail of $\sup_{|x|\leq 1}(\cL_1(x)+x^2)$}\label{s.tools.sup tail}

Next we record a result which bounds the tail of the supremum of $\cL_1$ over an interval in terms of the one-point tail. This will be useful in Section~\ref{s.general data} on tail bounds for general initial data.

\begin{proposition}\label{p.sharp sup over interval tail}
Let $K$ be such that $\P(\cL_1(0) \leq -K) \leq \frac{1}{4}$. Under Assumptions~\ref{as.bg} and \ref{as.corr}(a), there exists $\theta_0 = \theta_0(K)$ such that, for all $\theta>\theta_0$,
$$\P\left(\sup_{x\in [-1,1]} \left(\cL_1(x)+x^2\right)\geq \theta\right) \leq 4\theta\cdot \P\left(\cL_1(0)\geq \theta-2\right).$$
\end{proposition}

Observe that if we know that the one-point tail decays at least stretched exponentially, as we assume in Assumption~\ref{as.tails}, then the tail decay at the level of the exponent is preserved for the supremum. And indeed, if we know that the one-point tail is $\exp(-\frac{4}{3}\theta^{3/2}(1+o(1)))$, the same gets transferred to the above supremum's tail.

The proof of Proposition~\ref{p.sharp sup over interval tail} is a refinement of the ``no big max'' argument given in \cite[Proposition~2.27]{hammond2016brownian} and \cite[Proposition~4.4]{corwin2014brownian}, and will be given in Appendix~\ref{s.gen data.no big max}.

\subsection{Gaussian and Brownian bridge estimates}\label{s.tools.brownian}
Here we recall well-known bounds for Gaussian random variables and Brownian bridges.
We start with a standard bound on the tail of centered Gaussian random variables. Here and in the rest of the paper, $\mc N(\mu, \sigma^2)$ represents a random variable distributed according to the normal distribution with mean $\mu$ and variance $\sigma^2$.

\begin{lemma}\label{l.normal bounds}
For $x\geq (4/3)^{1/2}\sigma$,
$$\frac{1}{\sqrt{2\pi}}\cdot\frac{\sigma}{4x}\exp\left(-\frac{x^2}{2\sigma^2}\right) \leq \P\left(\mc N(0,\sigma^2) \geq x\right) \leq \exp\left(-\frac{x^2}{2\sigma^2}\right).$$
\end{lemma}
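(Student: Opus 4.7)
The plan is to treat the two inequalities separately, since both are classical and follow by short arguments involving the Mills ratio for the Gaussian. Throughout, write $\P(N(0,\sigma^2) \geq x) = \frac{1}{\sqrt{2\pi}\sigma}\int_x^\infty e^{-u^2/2\sigma^2}\,\dif u$ and focus on estimating this integral.

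For the upper bound, I would use the elementary inequality that, for $u \geq x > 0$, the integrand can be dominated by $\frac{u}{x}e^{-u^2/2\sigma^2}$, whose antiderivative is explicit. This gives
\[
\int_x^\infty e^{-u^2/2\sigma^2}\,\dif u \;\leq\; \frac{\sigma^2}{x}e^{-x^2/2\sigma^2},
\]
and dividing by $\sqrt{2\pi}\sigma$ yields $\P(N(0,\sigma^2) \geq x) \leq \frac{1}{\sqrt{2\pi}}\cdot\frac{\sigma}{x}\cdot e^{-x^2/2\sigma^2}$. Under the hypothesis $x \geq (4/3)^{1/2}\sigma$, the prefactor $\frac{1}{\sqrt{2\pi}}\cdot\frac{\sigma}{x}$ is at most $\frac{1}{\sqrt{2\pi}}\cdot(3/4)^{1/2} < 1$, which absorbs it and leaves the cleaner bound $\exp(-x^2/2\sigma^2)$ as stated.

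For the lower bound, the natural device is integration by parts. Writing $e^{-u^2/2\sigma^2} = \frac{\sigma^2}{u}\cdot\frac{u}{\sigma^2}e^{-u^2/2\sigma^2}$ and integrating by parts with $v = -e^{-u^2/2\sigma^2}$ gives the identity
\[
\int_x^\infty e^{-u^2/2\sigma^2}\,\dif u \;=\; \frac{\sigma^2}{x}e^{-x^2/2\sigma^2} \;-\; \int_x^\infty \frac{\sigma^2}{u^2}e^{-u^2/2\sigma^2}\,\dif u.
\]
Bounding the remainder integral by replacing $1/u^2$ with $1/x^2$ and using the already-established upper bound on $\int_x^\infty e^{-u^2/2\sigma^2}\,\dif u$ shows that the remainder is at most $\frac{\sigma^4}{x^3}e^{-x^2/2\sigma^2}$, so
\[
\int_x^\infty e^{-u^2/2\sigma^2}\,\dif u \;\geq\; \frac{\sigma^2}{x}\left(1 - \frac{\sigma^2}{x^2}\right)e^{-x^2/2\sigma^2}.
\]
Under the assumption $x \geq (4/3)^{1/2}\sigma$, the factor $1 - \sigma^2/x^2$ is at least $1/4$, and dividing by $\sqrt{2\pi}\sigma$ gives the claimed lower bound $\frac{1}{\sqrt{2\pi}}\cdot\frac{\sigma}{4x}e^{-x^2/2\sigma^2}$.

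There is no real obstacle in either step; the only mildly delicate point is to check that the numeric threshold $(4/3)^{1/2}\sigma$ is exactly what is needed to make the constant $1/4$ appear in the lower bound and to absorb the Mills prefactor into the constant $1$ for the upper bound. Both verifications are immediate from the inequality $\sigma^2/x^2 \leq 3/4$.
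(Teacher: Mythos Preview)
Your proof is correct and the lower bound argument via integration by parts is exactly the paper's approach. For the upper bound, the paper invokes the Chernoff bound directly (which yields $\exp(-x^2/2\sigma^2)$ with no prefactor to absorb), whereas you pass through the Mills ratio bound and then absorb the $\frac{\sigma}{\sqrt{2\pi}\,x}$ prefactor using the hypothesis; both routes are standard and equally short.
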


\begin{proof}
We may set $\sigma=1$ without loss of generality. The upper bound is simply the Chernoff bound. For the lower bound it is well-known via integration by parts that $\P(\mc N(0,1) \geq x) \geq (2\pi)^{-1/2} (x^{-1}-x^{-3}) \exp(-x^2/2)$. Using that $x\geq (4/3)^{1/2}$ gives the result.
\end{proof}

Next we recall the well-known distribution of the supremum of a standard Brownian bridge $B$ defined on an interval $I$. Then we use it to prove Lemma~\ref{l.brownian bridge restricted sup tail}, which states a bound on the tail of $\sup_{J} B$ for an interval $J\subseteq I$ such that the bound adapts to the variance of $B$ on $J$.

\begin{lemma}[Equation (3.40) in chapter 4 of \cite{revuz2013continuous}]\label{l.brownian bridge sup tail exact}
Let $I=[a,b]\subseteq \R$ be an interval and $B:I\to\R$ a Brownian bridge with $B(a) = B(b) = 0$. Let $\sigma_I^2 = \max_{x\in I} \Var(B(x)) = |I|/4$. For any $M>0$, 
$$\P\left(\sup_{x\in I} B(x) \geq M\sigma_I\right) = \exp\left(-\frac{1}{2}M^2\right).$$
\end{lemma}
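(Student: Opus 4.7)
The plan is to reduce to the standard case on the unit interval by affine rescaling, and then invoke the classical reflection-principle computation for the supremum of a Brownian bridge.

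First I would write $\ell = \inf I$, $r = \sup I$, so $|I| = r-\ell$, and define the rescaled process
\begin{equation*}
\tilde B(s) := \frac{1}{\sqrt{r-\ell}}\, B\bigl(\ell + s(r-\ell)\bigr), \qquad s\in[0,1].
\end{equation*}
Using the Gaussian covariance structure of $B$ (or equivalently its representation as $W_t - (t/(r-\ell))W_{r-\ell}$ for a Brownian motion $W$ translated to start at $\ell$), one checks directly that $\tilde B$ is a standard Brownian bridge from $0$ to $0$ on $[0,1]$. Moreover
\begin{equation*}
\sup_{x\in I} B(x) \;=\; \sqrt{r-\ell}\,\sup_{s\in[0,1]} \tilde B(s),
\qquad \sigma_I \;=\; \frac{\sqrt{r-\ell}}{2},
\end{equation*}
so $M\sigma_I = \tfrac{M}{2}\sqrt{r-\ell}$ and the event $\{\sup_I B \geq M\sigma_I\}$ is identical to $\{\sup_{[0,1]}\tilde B \geq M/2\}$.

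Next I would apply the classical identity $\P\!\left(\sup_{s\in[0,1]}\tilde B(s) \geq a\right) = \exp(-2a^2)$ for $a>0$, which is exactly the quoted Revuz--Yor formula. Taking $a = M/2$ yields $\exp(-M^2/2)$, which is the claim. If one wished to reprove the classical identity rather than cite it, the standard route is to use the reflection principle for a Brownian motion $W$ on $[0,1]$: for $a>0$ and $y\leq a$,
\begin{equation*}
\P\bigl(\sup_{[0,1]} W \geq a,\; W(1)\in \mathrm{d}y\bigr) = \P\bigl(W(1) \in 2a-\mathrm{d}y\bigr),
\end{equation*}
from which the conditional probability given $W(1)=0$ (which realizes the bridge law) is $\exp(-2a^2)$ after dividing the two Gaussian densities.

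There is no real obstacle here; the only thing that requires care is bookkeeping the scaling factor so that the variance normalization $\sigma_I^2 = |I|/4$ matches, and confirming that the bridge is rate one (if it were rate two, an extra factor of $\sqrt{2}$ would propagate and the exponent would change; the lemma as stated is consistent with rate one, which is implicit in $\sigma_I^2 = |I|/4$).
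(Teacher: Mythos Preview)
Your argument is correct. The paper does not supply its own proof of this lemma; it is stated purely as a citation of the Revuz--Yor textbook. Your rescaling to the unit interval followed by the classical reflection-principle identity $\P(\sup_{[0,1]}\tilde B\ge a)=\exp(-2a^2)$ is exactly the standard derivation, and your bookkeeping with $a=M/2$ is right. Your remark on the rate is also apt: the displayed equality $\sigma_I^2=|I|/4$ pins the rate to one, though the conclusion $\P(\sup_I B\ge M\sigma_I)=\exp(-M^2/2)$ is in fact rate-independent once phrased in terms of $\sigma_I$, which is how the paper uses it with rate-two bridges elsewhere.
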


The following is the mentioned tail bound on $\sup_J B$. It will be useful to use in obtaining estimates on probabilities of events such as that a Brownian bridge stays above a parabola or a line. It is proved in Appendix~\ref{app.brownian estimates}.

\begin{lemma}\label{l.brownian bridge restricted sup tail}
Let $I=[a,b], J\subseteq \R$ be intervals with $J\subseteq I$, and $B:I\to\R$ be a Brownian bridge with $B(a) = B(b)=0$. Let $\sigma_J^2 = \max_{x\in J} \Var(B(x))$. Then, for all $M>0$,
$$\P\left(\sup_{x\in J} B(x) \geq M\sigma_J\right) \leq 3\exp\left(-\frac{1}{8}M^2\right).$$
\end{lemma}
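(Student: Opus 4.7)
My plan is to split into two cases based on whether the midpoint of $I$ lies in $J$, using the fact that the variance function $x \mapsto \Var(B(x)) = (x-\inf I)(\sup I - x)/|I|$ is a concave parabola maximized at the midpoint of $I$.

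In the first case, where the midpoint of $I$ lies in $J$, we have $\sigma_J^2 = |I|/4 = \sigma_I^2$, so $\sup_J B \leq \sup_I B$ implies
\[
\P\bigl(\sup_{x\in J} B(x) \geq M\sigma_J\bigr) \leq \P\bigl(\sup_{x\in I} B(x) \geq M \sigma_I\bigr) = \exp(-M^2/2)
\]
by Lemma~\ref{l.brownian bridge sup tail exact}, which is comfortably at most $3\exp(-M^2/8)$.

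In the second case, write $I=[a,b]$ and $J=[c,d]$ and assume without loss of generality that $d \leq (a+b)/2$ (the other case is symmetric). Here $\Var(B(\cdot))$ is increasing on $J$, so $\sigma_J^2 = \Var(B(d)) = (d-a)(b-d)/(b-a)$. I then decompose $B$ on $[c,d]$ as $B(x) = L(x) + \tilde B(x)$, where $L$ is the linear interpolation between $(c, B(c))$ and $(d, B(d))$, and $\tilde B$, conditionally on $(B(c), B(d))$, is a standard Brownian bridge on $[c,d]$ with zero endpoints (and is in fact independent of $(B(c), B(d))$ by the Markov property of Brownian bridge). This yields
\[
\sup_{x\in[c,d]} B(x) \leq \max(B(c), B(d)) + \sup_{x\in[c,d]} \tilde B(x),
\]
so a union bound reduces matters to controlling each summand by $M\sigma_J/2$.

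For the first summand, $B(c)$ and $B(d)$ are centered Gaussians with variances $\sigma_c^2 \leq \sigma_d^2 = \sigma_J^2$, so Lemma~\ref{l.normal bounds} and a union bound give $\P(\max(B(c),B(d)) \geq M\sigma_J/2) \leq 2\exp(-M^2/8)$. For the second, Lemma~\ref{l.brownian bridge sup tail exact} applied to $\tilde B$ gives $\P(\sup_{[c,d]} \tilde B \geq M\sigma_J/2) = \exp(-M^2\sigma_J^2 / (2(d-c)))$. The key elementary inequality is $\sigma_J^2/(d-c) \geq 1/2$, which holds because $d-c \leq d-a$ (as $c \geq a$) so $\sigma_J^2/(d-c) \geq (b-d)/(b-a) \geq 1/2$ (using $d \leq (a+b)/2$). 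Hence this probability is at most $\exp(-M^2/4) \leq \exp(-M^2/8)$, and summing the three contributions gives the claimed bound of $3\exp(-M^2/8)$. The only potential obstacle is the variance-to-length comparison $\sigma_J^2 \geq (d-c)/2$, but, as shown above, this falls out immediately from the assumption that $J$ lies in one half of $I$.
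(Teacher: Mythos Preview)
Your proof is correct and follows essentially the same approach as the paper's: both arguments decompose $B$ on $J$ into its linear interpolation between the endpoint values $B(\inf J), B(\sup J)$ and the residual bridge, control the endpoints by the Gaussian tail bound, and control the residual bridge via Lemma~\ref{l.brownian bridge sup tail exact} together with the variance-to-length comparison $\sigma_J^2 \geq |J|/2$ when $J$ lies in one half of $I$. The only cosmetic difference is that you handle the midpoint-in-$J$ case directly by bounding $\sup_J B \leq \sup_I B$ (yielding the sharper $\exp(-M^2/2)$ there), whereas the paper applies the same decomposition uniformly and only afterward splits into the two cases to verify $\sigma_J^2/|J| \geq 1/4$.
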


We note that a somewhat analogous statement giving a tail bound in terms of $\sigma_J$ as defined above is an immediate consequence of the famous Borell-TIS inequality for Gaussian processes (see e.g., \cite[Theorem~2.1.1]{adler2007random}); however, the bound coming from the Borell-TIS inequality would only hold for $M$ larger than $\E[\sup_{J}B]$, which grows with $\sigma_J$. This can be a technical nuisance compared to the above statement which only needs $M>0$.

\subsection{Estimates on parabolic avoidance probabilities}

The Brownian Gibbs property imposes a lower boundary condition of $\cL_2$; as is exactly true in the $t=\infty$ case, the lower boundary can be thought of as a curve which the top curve must avoid. Further we know that $\cL_2$ decays like a parabola $-x^2$ and typically has unit order separation from $\cL_1$.

Given these facts, it will be crucial to have a good understanding of the probability that a Brownian bridge stays above a parabola, where the starting and ending points of the bridge are at least order one away from the parabola. Such a bound is the next statement.

\begin{proposition}\label{p.parabola avoidance probability}
Let $z_1<z_2$ and $B$ be a rate two Brownian bridge on $[z_1, z_2]$ with endpoints at least as high as $(z_1,-z_1^2+1)$ and $(z_2,-z_2^2+1)$. Then, for all $z_2-z_1$ large enough,
$$\P\Bigl(B(x) > -x^2 \ \text{ for all } x\in[z_1, z_2]\Bigr) \geq \exp\left(-\frac{1}{12}(z_2-z_1)^3 - 2(z_2-z_1)\log(z_2-z_1)\right)$$
\end{proposition}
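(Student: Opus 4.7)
The plan is a discretize-and-estimate approach driven by the heuristic that, by Schilder-type reasoning, the optimal strategy for the bridge to stay above $-x^2$ is to follow a curve that just grazes the parabola, with cost $\exp(-\tfrac{1}{4}\int g'(x)^2\,\mathrm{d}x)$ where $g(x)=-x^2$ is the target path and the factor $\tfrac{1}{4}$ reflects the rate-$2$ convention. A direct computation gives
\[
\tfrac{1}{4}\int_{z_1}^{z_2}(-2x)^2\,\mathrm{d}x \;-\; \tfrac{1}{4}\int_{z_1}^{z_2}L'(x)^2\,\mathrm{d}x \;=\; \tfrac{1}{12}(z_2-z_1)^3,
\]
where $L$ is the chord between the endpoints, confirming the target rate. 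First I would reduce to the case where $B$ has endpoints \emph{exactly} $(z_1,-z_1^2+1)$ and $(z_2,-z_2^2+1)$ by the standard coupling-based monotonicity of Brownian bridges in their endpoint heights (a simple instance of Lemma~\ref{l.monotonicity}); raising the endpoints stochastically raises the bridge, which only increases the avoidance probability.

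Writing $L=z_2-z_1$, I would then fix a mesh $x_0=z_1 < x_1 < \cdots < x_N = z_2$ of equal spacing $\Delta$ (with $\Delta$ a small multiple of $1/\log L$, so $N \sim L\log L$) and consider the event $E = \bigcap_{i=1}^{N-1}\{B(x_i)\in[-x_i^2+1,\,-x_i^2+1+\delta]\}$ for $\delta\sim\sqrt\Delta$. Writing the joint Gaussian density of $(B(x_1),\ldots,B(x_{N-1}))$ as a product of rate-$2$ bridge transition kernels normalized by the endpoint density, and evaluating at $b_i=-x_i^2+1$, a Riemann-sum calculation using $b_i-b_{i-1}=-\Delta(x_i+x_{i-1})$ gives the key identity
\[
\frac{1}{4\Delta}\sum_{i=1}^N (b_i-b_{i-1})^2 \;-\; \frac{(b_N-b_0)^2}{4L} \;=\; \frac{L^3-\Delta^2 L}{12},
\]
which extracts the target exponent $-L^3/12$. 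A summation-by-parts computation using the discrete second-derivative identity $2y_i-y_{i-1}-y_{i+1}=2\Delta^2$ (where $y_i=-x_i^2+1$) controls how much the density varies across the $\delta$-box, so that $\P(E)$ is at least $\exp(-L^3/12)$ times the combined normalization and box-volume factor $(4\pi\Delta)^{-N/2}(4\pi L)^{1/2}\delta^{N-1}$.

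For the conditional survival given $E$, the Markov property says that on each sub-interval $B$ is a Brownian bridge whose endpoints lie at least $1$ above the parabola, so the chord sits above $-x^2$ by at least $1-\Delta^2/4$; an application of Lemma~\ref{l.brownian bridge restricted sup tail} bounds the probability of a dip below $-x^2$ on any sub-interval by $2\exp(-c/\Delta)$, which becomes negligible after union-bounding over the $N\sim L\log L$ sub-intervals once $\Delta \leq c'/\log L$. The main obstacle is balancing $\Delta$ and $\delta$ so that the $\log L$ factor from the normalization $(4\pi\Delta)^{-N/2}$ and the analogous factor $\delta^{N-1}$ produce a residual of only $O(L\log L)$: choosing $\delta$ proportional to $\sqrt\Delta$ causes the leading $\log\log L$ contributions in $\tfrac{N}{2}\log(4\pi\Delta)$ and $(N-1)\log\delta$ to cancel, leaving a linear-in-$N$ residual which, for $L$ sufficiently large and with a careful choice of the constant multiplying $1/\log L$ in $\Delta$, fits inside the stated $2(z_2-z_1)\log(z_2-z_1)$ error budget.
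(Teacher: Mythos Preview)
Your discretize-and-estimate strategy is the right idea, but there is a genuine gap in the box-probability step. Your summation-by-parts argument, via the discrete second-derivative identity, only controls the \emph{linear} variation of the log-density across the $\delta$-box, namely the cross term $-\tfrac{1}{2\Delta}\sum(b_i-b_{i-1})(s_i-s_{i-1})$, which indeed telescopes to $-\Delta\sum_i s_i=O(L\delta)$. It does not touch the quadratic term $-\tfrac{1}{4\Delta}\sum(s_i-s_{i-1})^2$, which is $\le 0$ everywhere on the box; since the linear term is also $\le 0$ there (as $s_i\ge 0$), the density is \emph{maximized} at the corner $s=0$. Hence your claimed bound $\P(E)\ge\exp(-L^3/12)\cdot(4\pi\Delta)^{-N/2}(4\pi L)^{1/2}\delta^{N-1}$ is actually an \emph{upper} bound on $\P(E)$, not a lower bound. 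A correct lower bound must pay an additional factor for the quadratic piece (worst case $\exp(-N\delta^2/(4\Delta))=\exp(-Na^2/4)$ with $a=\delta/\sqrt\Delta$), which is another $\Theta(N)=\Theta(L\log L)$ contribution that must enter your constant balancing before you can certify the coefficient $2$; as written, taking $a$ large would make your stated factor exceed $1$, which already signals the problem.

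The paper sidesteps all of this by taking a \emph{constant} mesh $\varepsilon\in\bigl((4/3)^{1/3},\sqrt{2}\bigr)$ (so $N\sim L$, not $L\log L$) and using \emph{tail} events $\{B(x_j)>-x_j^2+1\}$ rather than $\delta$-boxes. Because these events are increasing, one gets the clean monotonicity iteration
\[
\P\Bigl(\bigcap_j\{B(x_j)>b_j\}\Bigr)\;\ge\;\prod_j\P\bigl(B(x_j)>b_j\,\big|\,B(x_{j-1})=b_{j-1}\bigr),
\]
each factor a single Gaussian tail whose exponent simplifies algebraically to $-\tfrac{\varepsilon}{4}(L-\varepsilon j)(L-\varepsilon(j-1))$, summing exactly to $-L^3/12$. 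The $N\sim L$ Gaussian prefactors, each of order $(z_2-z_1)^{-1}$, then give the error $2L\log L$ directly. The sub-interval bridges are independent of the mesh values, and with constant mesh the chord sits at least $1-\varepsilon^2/4>\tfrac12$ above the parabola, so each survives with uniformly positive probability; no $(\Delta,\delta)$ balancing is needed.
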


\begin{remark}
The first order term in the exponent looks very similar to the lower tail asymptotics of $\cL_1(0)$ (when not too deep in the tail) and the parabolic Airy$_2$ process, namely $\P(\cL_1(0) \leq -\theta) \approx \exp(-\frac{1}{12}\theta^3)$ (see for example \cite[Theorem~1.1]{corwin2020lower} and \cite[Theorem~1.3]{ramirez2011beta}). We do not know if this is a coincidence or if some deeper phenomenon is present.
\end{remark}

\begin{remark}\label{r.avoidance prob upper bound}
Perhaps somewhat surprisingly, an upper bound which matches Proposition~\ref{p.parabola avoidance probability} to first order can be obtained rather quickly. We illustrate this in the simpler symmetric case that $z_1 = -z_2 = -z$ for convenience. We assume that $B$ is a Brownian bridge on $[-z,z]$ with endpoints equal to $-z^2+1$. In that case the lower bound from Proposition~\ref{p.parabola avoidance probability} is, to first order in the exponent, $\exp(-2z^3/3)$.

Let $\tilde B$ be a rate two Brownian bridge from $(-z,0)$ to $(z,0)$, i.e., $B$ shifted to have mean zero. Observe that
\begin{align*}
\P\left(B(x) > -x^2\ \  \forall x\in[-z,z]\right)
&= \P\left(\tilde B(x) - z^2+1 > -x^2\ \ \forall x\in[-z,z]\right)\\
&\leq \P\left(\int_{-z}^z (\tilde B(x) - z^2 + x^2 + 1) \, \mathrm dx > 0\right).
\end{align*}
Now $\int_{-z}^z \tilde B(x)\,\dif x$ is a normal random variable with mean zero and variance given by
\begin{align*}
\E\left[\left(\int_{-z}^z \tilde B(s)\, \mathrm ds\right)^2\right] &= \int_{-z}^z\int_{-z}^z \mathrm{Cov}(\tilde B(s), \tilde B(t))\,\mathrm ds\,\mathrm dt\\
&= 2\iint\limits_{-z<s<t<z} 2\cdot\frac{(s+z)(z-t)}{2z}\,\mathrm ds\,\mathrm dt
= \frac{4}{3}z^3.
\end{align*}
Inputting this into the earlier display and evaluating the integrals of the constant terms there gives that
\begin{align*}
\P\left(B(x) > -x^2\ \  \forall x\in[-z,z]\right) \leq \P\left(\mc N(0, 4z^3/3) > 4z^3/3-2z\right).
\end{align*}
The last probability is at most $\exp(-2z^3/3)$ by Lemma~\ref{l.normal bounds}, up to first order in the exponent.

The fact that a matching upper bound could be obtained by considering the integral may initially appear surprising, but can be explained by observing that the parabola is the function which minimizes the Brownian energy functional under the constraint that the function's integral be larger than a given level (i.e., the function encloses at least a certain area).
\end{remark}

The proof of Proposition~\ref{p.parabola avoidance probability} is straightforward but slightly tedious, and relies on first demanding that the Brownian bridge remain above $-x^2$ on a fine mesh, and then controlling the fluctuation of the bridge on the intervals in between the mesh points. It is deferred to Appendix~\ref{app.brownian estimates}.

As was indicated in the proof outline in Section~\ref{s.intro.proof ideas}, we will need to have lower bounds also for partition functions with similar parabolic boundary data. The following is a general tool to convert lower bounds on non-avoidance probabilities to lower bounds on analogous partition functions. Though simple to prove, as we will see, this already yields sharp estimates needed to transfer our arguments from zero temperature to positive temperature.

\begin{lemma}\label{l.pos temp Z  lower bound via non-avoid prob}
Suppose $t>0$ and $z_1<z_2$. Let $p$ be a given lower boundary curve. Let $x > -z_1^2$ and $y > -z_2^2$, and let $B$ be a rate two Brownian motion from $(z_1,x)$ to $(z_2, y)$. Then, for any measurable function $g:[z_1,z_2]\to \R$, $Z_{H_t}^{1,z_1, z_2, x, y, p}$ is lower bounded by
$$\exp\left(-2t^{2/3}e^{-t^{1/6}}\int_{z_1}^{z_2}\exp(-t^{1/3}g(u))\,\mathrm du)\right)\cdot\P\left(B(u) > p(u) + g(u) + t^{-1/6} \ \forall u\in[z_1,z_2]\right).$$
\end{lemma}

Here, the fact that we give a buffer of $t^{-1/6}$ on the right-hand side, and in particular the constant $\smash{\frac{1}{6}}$, has no significance; it is merely to ensure that we get a decay factor, in this case $\exp(-t^{1/6})$, which goes to zero faster than $t^{2/3}$ as $t\to\infty$. 

\begin{proof}
The proof essentially amounts to a form of Markov's inequality: for any event $A$, $Z_{H_t}^{1,z_1, z_2, x, y, p} = \Efree^{1,z_1,z_2,x,y}[W^{1,z_1,z_2, p}_{H_t}(B)] \geq \E[W^{1,z_1,z_2, p}_{H_t}(B)\one_{B\in A}] \geq \inf_{f\in A}W^{1,z_1,z_2, p}_{H_t}(f)\Pfree^{1,z_1,z_2,x,y}(B\in A)$. Indeed, taking $A = \{B(u) > p(u) + g(u) + t^{-1/6}\ \forall u\in[z_1,z_2]$\} yields
\begin{align*}
\MoveEqLeft[1]
Z_{H_t}^{1,z_1, z_2, x, y, p}\\
&\geq \exp\left(-2t^{2/3}e^{-t^{1/6}}\int_{z_1}^{z_2}\exp(-t^{1/3}g(u))\,\mathrm du\right)\cdot\P\bigl(B(u) > p(u) + g(u) + t^{-1/6} \ \forall u\in[z_1,z_2]\bigr).\qedhere
\end{align*}
\end{proof}

The following proposition lower bounding the normalizing constant in positive temperature is an easy consequence of the zero-temperature calculation we performed in Proposition~\ref{p.parabola avoidance probability} and the just-proved Lemma~\ref{l.pos temp Z  lower bound via non-avoid prob}.

\begin{corollary}\label{c.pos temp parabolic avoidance lower bound}
Suppose $t>0$ and $z_1<z_2$. Let $p$ be given by $p(u) = -u^2$. Let $x \geq -z_1^2 + 1 + t^{-1/6}$, $y \geq -z_2^2+1+ t^{-1/6}$. Then, for all $z_2-z_1$ larger than an absolute constant, 
$$Z_{H_t}^{1,z_1, z_2, x, y, p} \geq \exp\left(-\frac{1}{12}(z_2-z_1)^3 - 3(z_2-z_1)\log(z_2-z_1)\right).$$
\end{corollary}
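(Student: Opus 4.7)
The plan is to apply Lemma~\ref{l.pos temp Z  lower bound via non-avoid prob} with the simplest possible choice $g \equiv 0$, and then invoke the zero-temperature estimate Proposition~\ref{p.parabola avoidance probability} on the resulting probability factor. The key observation is that the additive buffer of $1 + t^{-1/6}$ in the endpoint hypotheses of the corollary is exactly what is needed to make both tools applicable after a trivial $t^{-1/6}$ vertical shift of the Brownian bridge.

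More precisely, with $g \equiv 0$, Lemma~\ref{l.pos temp Z  lower bound via non-avoid prob} bounds $Z_{H_t}^{1,z_1,z_2,x,y,p}$ below by
\[
\exp\bigl(-2t^{2/3} e^{-t^{1/6}} (z_2-z_1)\bigr) \cdot \P\bigl(B(u) > -u^2 + t^{-1/6} \ \forall u \in [z_1,z_2]\bigr),
\]
where $B$ is a rate two Brownian bridge with endpoint values $x,y$. To estimate the probability factor, I would consider the shifted bridge $\tilde B = B - t^{-1/6}$; its endpoints are at heights $x - t^{-1/6} \geq -z_1^2 + 1$ and $y - t^{-1/6} \geq -z_2^2 + 1$, so $\tilde B$ satisfies the hypotheses of Proposition~\ref{p.parabola avoidance probability}. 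The event $\{B(u) > -u^2 + t^{-1/6}\}$ coincides with $\{\tilde B(u) > -u^2\}$, so Proposition~\ref{p.parabola avoidance probability} gives the lower bound $\exp(-\tfrac{1}{12}(z_2-z_1)^3 - 2(z_2-z_1)\log(z_2-z_1))$ for $z_2-z_1$ large.

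For the Radon--Nikodym prefactor, I would observe that the function $t \mapsto 2t^{2/3} e^{-t^{1/6}}$ is bounded above by some absolute constant $M < \infty$ on $(0,\infty)$, since it vanishes both as $t \to 0$ and as $t \to \infty$. Thus the prefactor is at least $\exp(-M(z_2-z_1))$, uniformly in $t$. For $z_2-z_1$ larger than an absolute constant, $M \leq \log(z_2-z_1)$, so this contributes at most an additional $(z_2-z_1)\log(z_2-z_1)$ to the exponent. Combining with the probability estimate yields the claimed bound $\exp(-\tfrac{1}{12}(z_2-z_1)^3 - 3(z_2-z_1)\log(z_2-z_1))$.

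I do not anticipate any serious obstacle: the corollary is essentially the combination of the two preceding results, and the buffer $1 + t^{-1/6}$ in the hypothesis has been set up precisely so that the zero-temperature non-avoidance estimate can be invoked after a tiny shift without loss. The only minor point to verify is the uniform-in-$t$ boundedness of $2t^{2/3} e^{-t^{1/6}}$, which is elementary.
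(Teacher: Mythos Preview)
Your proposal is correct and follows essentially the same argument as the paper's proof: apply Lemma~\ref{l.pos temp Z  lower bound via non-avoid prob} with $g\equiv 0$, use the $t^{-1/6}$ shift to invoke Proposition~\ref{p.parabola avoidance probability}, and absorb the prefactor via the uniform bound on $2t^{2/3}e^{-t^{1/6}}$ (the paper records the explicit value $t^{2/3}e^{-t^{1/6}}\leq 5$, hence $10\leq \log(z_2-z_1)$ for $z_2-z_1$ large).
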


\begin{proof}
We apply Proposition~\ref{p.parabola avoidance probability} and Lemma~\ref{l.pos temp Z  lower bound via non-avoid prob} with $p(u) = -u^2$ and $g(u) \equiv 0$, and then observe by direct calculation that $t^{2/3}e^{-t^{1/6}} \leq 5$ for all $t>0$. Finally we use that $10 \leq\log(z_2-z_1)$ for sufficiently large $z_2-z_1$.
\end{proof}

\begin{remark}\label{r.upper bound on Z for parabola}
As in the zero-temperature case discussed in Remark~\ref{r.avoidance prob upper bound}, we can prove a matching upper bound on $Z_{H_t}^{1,z_1,z_2,x,y,p}$ when $t\geq t_0$ (with an error term depending on $t_0$) and $p(u) = -u^2$ . We show it in the simpler case that $z_2=-z_1 = z$ and $x=y=-z^2+1+t^{-1/6}$ for convenience. Indeed,
\begin{align*}
Z_{H_t}^{1,-z, z, x, y, p} 
&= \Efree^{1,-z,z,x,y}\left[\exp\left\{-\int_{-z}^{z} 2t^{2/3}\exp\left(t^{1/3}(p(u) - B(u))\right)\,\mathrm du\right\}\right]\\
&\leq \Efree^{1,-z,z,x,y}\left[\exp\left\{-\int_{-z}^{z} 2t^{2/3}\exp\left(t^{1/3}(p(u) - B(u))\right)\one_{B(u) < p(u)}\,\mathrm du\right\}\right]\\
&\leq \Efree^{1,-z,z,x,y}\left[\exp\left\{-\int_{-z}^{z} 2t^{2/3}(-C+(2t_0)^{-1}t^{1/3}(p(u) - B(u)))\one_{B(u) < p(u)}\,\mathrm du\right\}\right],
\end{align*}
using that $\exp(w) \geq -C+(2t_0)^{-1}w$ for all $w\in\R$ for some $C=C(t_0)$; this can be seen to be true for some $C$ immediately using the convexity of $w\mapsto\exp(w)$, and simple calculus shows that $C=(2t_0)^{-1}(\log((2t_0)^{-1})-1)$ works. Using the monotonicity in $t$ of the integrand and that $t\geq t_0$, we see that the previous expression is at most
\begin{align*}
\MoveEqLeft[10]
e^{4Czt^{2/3}}\cdot\Efree^{1,-z,z,x,y}\left[\exp\left\{-\int_{-z}^{z} t_0^{-1}t(p(u) - B(u))\one_{B(u) < p(u)}\,\mathrm du\right\}\right]\\
&\leq e^{4Czt^{2/3}}\cdot\Efree^{1,-z,z,x,y}\left[\exp\left\{\int_{-z}^{z} (B(u) - p(u))\one_{B(u) < p(u)}\,\mathrm du\right\}\right]\\
&\leq e^{4Czt^{2/3}}\cdot\Efree^{1,-z,z,x,y}\left[\exp\left\{\int_{-z}^{z} (B(u) - p(u))\,\mathrm du\right\}\right].
\end{align*}
Now, $\int_{-z}^{z} -p(u)\,\mathrm du = 2z^3/3$, while $\int_{-z}^{z} B(u)\,\mathrm du$ is a normal random variable with mean $-z^2+1+t^{-1/6}$ and variance $4z^3/3$ (by the calculation in Remark~\ref{r.avoidance prob upper bound}).
So, integrating the mean of the Gaussian separately, the previous upper bound on the normalizing constant is equal to
\begin{align*}
\MoveEqLeft[16]
\exp\left(-2z^3+2z+2zt^{-1/6} + \frac{2}{3}z^3 + 4Czt^{2/3}\right)\E\left[\exp(\mc N(0,4z^3/3))\right]\\
&= \exp\left(-\frac{4}{3}z^3+2z+2zt^{-1/6}+\frac{2z^3}{3}+4Czt^{2/3}\right)\\
&\leq \exp\left(-\frac{2}{3}z^3+6z+4Czt^{2/3}\right).
\end{align*}
Observe from our expression for $C(t_0)$ that we may take it to be 0 if $t_0\geq (2e)^{-1}$.
\end{remark}

\subsection{Estimates on Brownian fluctuations above parabolas}\label{s.tools.brownian above para}

While Proposition~\ref{p.parabola avoidance probability} stated a lower bound on the probability of a Brownian bridge staying above a parabola when its endpoints were close to the parabola, we will often be considering cases where the bridge's endpoints are much higher than the parabola, such that the line connecting the endpoints is tangent to the parabola. In such cases we will need to know that the probability of avoidance is basically of constant order.

In the following proposition we make essentially that statement. See Figure~\ref{f.tangent avoidance} for a depiction of the setting of Proposition~\ref{p.brownian versatile tangent estimate}.

\begin{figure}[t]
\includegraphics[scale=1.3]{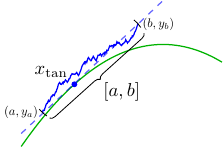}
\caption{A depiction of the setup of Proposition~\ref{p.brownian versatile tangent estimate}. There is a Brownian bridge $B$ (in blue) defined on an interval $[a,b]$, and we consider the probability that it avoids the parabola (in green) plus a quantity $\varepsilon M \sigma_\tan$, where $\sigma_\tan^2$ is the variance of $B$ at $x_\tan$; because $x_\tan$ is the point at which the line joining $B$'s endpoints is closest to the parabola, the probability of $B$ avoiding $-x^2$ in its neighbourhood is the governing contribution to the overall non-intersection probability, and it is for this reason that we scale the demanded separation between $B$ and $-x^2$ by $\sigma_\tan$.}\label{f.tangent avoidance}
\end{figure}

\begin{proposition}\label{p.brownian versatile tangent estimate}
Let $[a,b] \subseteq \R$ be a finite interval. Suppose $x_\tan\in [a + 1, b-1]$, and let $(a, y_{a})$ and $(b, y_{b})$ be points on the line $\ellt: [a,b]\to\R$ tangent to the curve $-x^2$ at the point $(x_\tan, -x_\tan^2)$. Let $B$ be a rate two Brownian bridge from $(a, y_{a})$ to $(b, y_{b})$ and $\sigma^2_\tan = \Var(B(x_\tan))$. Then there exist constants $C >0$, $\varepsilon_0 >0$, and $M_0$ (all universal) such that, for any $M_0 < M < C^{-1}\min(x_\tan-a, b -x_\tan)^{3/2}$,
\begin{equation}\label{e.brownian versatile tangent to bound}
\P\left(B(x) > -x^2 + \varepsilon_0 M\sigma_\tan\ \forall x\in [a,b]\right) \geq C^{-1}\exp(-M^2).
\end{equation}
There exists a constant $\varepsilon_0>0$ (independent of $t$) such that the same bound holds with the left-hand side equal to $Z_{H_t}$ for any $t>0$, where $Z_{H_t}$ is the partition function on $[a,b]$ with boundary data $(a , y_{a})$ and $(b, y_{b})$ and lower boundary curve $-x^2 + \varepsilon_0 M\sigma_\tan$.
\end{proposition}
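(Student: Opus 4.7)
The plan is to subtract the tangent line: setting $\hat B = B - \ellt$ gives a rate two Brownian bridge on $I$ vanishing at both endpoints, and the identity $\ellt(x) + x^2 = (x - x_\tan)^2$ rewrites the target event as $\{\hat B(x) > -(x - x_\tan)^2 + \varepsilon_0 M \sigma_\tan \text{ for all } x \in I\}$. I would first lower bound this by restricting to the event $\{\hat B(x_\tan) \geq y_1\}$ with $y_1 = c_1 M \sigma_\tan$ for a constant $c_1$ chosen just below $2/5$. Since $\hat B(x_\tan) \sim N(0, \sigma_\tan^2)$, Lemma~\ref{l.normal bounds} gives that this event has probability at least $(4\sqrt{2\pi} c_1 M)^{-1} \exp(-c_1^2 M^2/2)$; for $M \geq M_0$ large depending on $c_1$, this exceeds $2 \exp(-\tfrac{2}{25} M^2)$, the extra factor of $2$ being absorbed below.

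By the Markov property at $x_\tan$, conditional on $\hat B(x_\tan) = y$, the Brownian bridge decomposes into two independent rate two Brownian bridges on $[\inf I, x_\tan]$ and $[x_\tan, \sup I]$. Brownian bridge monotonicity in boundary data (a special case of Lemma~\ref{l.monotonicity}) yields that the target event's conditional probability is non-decreasing in $y$, so it suffices to show this conditional probability is at least $1/2$ at $y = y_1$. By symmetry I focus on $[a, c] := [\inf I, x_\tan]$ and decompose the conditional bridge as $\tilde m + X$, where $\tilde m(x) = y_1 (x - a)/(c - a)$ is the linear interpolation and $X$ is a centered rate two Brownian bridge. The target event becomes $X(x) > G(x)$ with $G(x) = \varepsilon_0 M \sigma_\tan - (x - c)^2 - \tilde m(x)$.

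Direct calculus identifies the unique maximum of $G$ at $x^* = c - y_1/(2(c-a))$ with value $\varepsilon_0 M \sigma_\tan - y_1 + y_1^2/(4(c-a)^2)$. Using $\sigma_\tan^2 \leq 2(c-a)$ together with the hypothesis $M \leq C^{-1}(c-a)^{3/2}$ for $C$ large, plus $\varepsilon_0$ taken small relative to $c_1$, this yields $-G(x) \geq c' M \sigma_\tan + (x - x^*)^2$ on $[a, c]$ for some $c' > 0$. To conclude it remains to show $\P(X(x) \leq G(x) \text{ for some } x \in [a, c]) \leq 1/4$, which I would carry out by a dyadic partition of $[a, c]$ around $x^*$ into $J_0 = \{|x - x^*| \leq 1\} \cap [a, c]$ and $J_k = \{2^{k-1} < |x - x^*| \leq 2^k\} \cap [a, c]$ for $k \geq 1$: on $J_k$ the required separation is at least $c' M \sigma_\tan + 4^{k-1}$ while the local variance is $\sigma_{J_k}^2 \lesssim 2^{k+1} + y_1/(c - a)$; applying Lemma~\ref{l.brownian bridge restricted sup tail} with this local variance and summing the resulting rapidly decaying geometric series yields the bound. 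Combining the three probabilities (the conditioning event at $x_\tan$ and the two conditional sub-interval events) yields the stated lower bound $\exp(-\tfrac{2}{25} M^2)$.

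For the positive-temperature statement, I would apply Lemma~\ref{l.pos temp Z lower bound via non-avoid prob} with $p(u) = -u^2$ and $g \equiv 0$ to convert the above bound into one on $Z_{H_t}$; since $t^{2/3} e^{-t^{1/6}}$ is uniformly bounded in $t > 0$, the exponential prefactor is absorbed into a minor adjustment of $\varepsilon_0$ to accommodate the $t^{-1/6}$ buffer demanded by the lemma. The main obstacle is the dyadic step: when $x_\tan$ is close to an endpoint of $I$, the centered bridge $X$ can have variance on $[a, c]$ as large as $(c-a)/2 \gg \sigma_\tan^2$, so a uniform bound on $\sup |X|$ is far too weak. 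One must leverage the quadratic growth of $-G$ away from $x^*$ to dominate the Brownian fluctuations on each dyadic annulus, and balancing the variance factor $\sigma_{J_k}^2$ against the separation $c' M \sigma_\tan + 4^{k-1}$ in the exponent is where the careful bookkeeping resides.
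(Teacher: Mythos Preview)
Your zero-temperature argument is a legitimate alternative to the paper's. Both subtract the tangent line to reduce to a centered bridge $\hat B$ avoiding $-(x-x_\tan)^2+\varepsilon_0 M\sigma_\tan$; the paper then rescales to $[0,1]$, uses the FKG inequality for Brownian bridge to split into three intervals $[0,\tfrac12\tilde x_\tan]$, $[\tfrac12\tilde x_\tan,2\tilde x_\tan]$, $[2\tilde x_\tan,1]$, pays the $\exp(-cM^2)$ cost on the middle interval by forcing $\tilde B$ high at its two endpoints, and handles the outer intervals by a dyadic decomposition in scales $[2^k\tilde x_\tan,2^{k+1}\tilde x_\tan]$. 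You instead condition on $\hat B(x_\tan)$ via the Markov property, pay the Gaussian cost once, and do a dyadic decomposition around the maximizer $x^*$ of $G$ on each half. Your route avoids rescaling and the FKG step, at the price of the bookkeeping you flag; both yield the same constant $2/25$ after choosing $c_1$ just under $2/5$ (the paper gets it by setting $\varepsilon=1/5$).

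There is, however, a genuine gap in your positive-temperature step. With $g\equiv 0$ in Lemma~\ref{l.pos temp Z  lower bound via non-avoid prob}, the prefactor is
\[
\exp\Bigl(-2t^{2/3}e^{-t^{1/6}}\!\int_I 1\,\mathrm du\Bigr)=\exp\bigl(-2t^{2/3}e^{-t^{1/6}}\,|I|\bigr),
\]
and while $t^{2/3}e^{-t^{1/6}}$ is bounded in $t$, the factor $|I|$ is not: the hypothesis $M<C^{-1}\min(x_\tan-\inf I,\sup I-x_\tan)^{3/2}$ is only an \emph{upper} bound on $M$, so one can have $M=M_0$ fixed and $|I|$ arbitrarily large, in which case the prefactor swamps the target $\exp(-\tfrac{2}{25}M^2)$. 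This cannot be absorbed into $\varepsilon_0$. The fix is to choose $g$ with quadratic growth, e.g.\ $g(u)=\tfrac12(u-x_\tan)^2$, so that $\int_I e^{-t^{1/3}g(u)}\,\mathrm du$ is a Gaussian integral bounded by $Ct^{-1/6}$ independently of $|I|$; the prefactor then becomes $\exp(-C't^{1/2}e^{-t^{1/6}})$, uniformly bounded in $t$. The avoidance probability in the lemma then asks for $\hat B(u)>-\tfrac12(u-x_\tan)^2+\varepsilon_0 M\sigma_\tan+t^{-1/6}$, which is exactly the same type of event you already handled (a shallower parabola), and your dyadic argument applies verbatim. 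The paper takes this route with $g(u)=(u-x_\tan)^2$; note that their choice makes $p+g$ coincide with $\ell^{\tan}+\varepsilon M\sigma_\tan$, which is actually above the bridge at the endpoints, so a coefficient strictly less than $1$ in front of $(u-x_\tan)^2$ is what is really needed.
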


Proposition~\ref{p.brownian versatile tangent estimate} is also proved in Appendix~\ref{app.brownian estimates}.

We remind the reader of our remark after Theorem~\ref{mt.one point limit shape} that the upper bound on $M$ in many of our estimates arises for simplicity, as beyond that range a KPZ type tail instead of a Gaussian one will be present. Here, however, the upper bound on $M$ has a different reason: without it, the probability would be zero, as $B$ would not satisfy the event at $x=a$ or $b$ where its value is deterministic.

We prove Proposition~\ref{p.brownian versatile tangent estimate} by controlling point values and fluctuations of $B$ on a sequence of dyadic scales. This multi-scale argument is necessary because we allow the tangency point $x_\tan$ to be very close to the boundaries of $[a,b]$, independent of the size of the interval; allowing this closeness is necessary for our arguments, presented in Section~\ref{s.extremal}, verifying that extremal ensembles satisfy Assumption~\ref{as.tails} (tail probability bounds). One can observe some of the delicacy of the estimate by noting that if the tangency location \emph{was} the boundary, then the left-hand side of \eqref{e.brownian versatile tangent to bound} would be zero (similar to why we impose the upper bound on $M$). It is also to ensure that the probability is uniformly bounded below that we consider fluctuations on the scale of $\sigma_\tan$.

The situation where Proposition~\ref{p.brownian versatile tangent estimate} will often be needed is to control the probability of a Brownian bridge, reweighted by the Radon-Nikodym derivative associated to the $H_t$-Brownian Gibbs property, deviating from the line joining its endpoints. This can be accomplished by controlling the ratio of a Brownian bridge deviation probability and the non-intersection probability or partition function, since the numerator $W_t$ of the reweighting factor is at most $1$ always, and we record such a bound as an immediate corollary of Lemma~\ref{l.brownian bridge restricted sup tail} and Proposition~\ref{p.brownian versatile tangent estimate} next.

\begin{corollary}\label{c.ratio of deviation prob and part func}
Let $[a,b]$, $x_\tan$, and $B$ be as in Proposition~\ref{p.brownian versatile tangent estimate} and satisfy the same assumptions. Let $J\subseteq [a,b]$ be an interval and $\sigma_J^2 = \sup_{x\in J}\Var(B(x))$. Then there exist constants $C >0$, $c>0$, $\varepsilon_0 >0$, and $M_0$ (all universal) such that, for any $M_0 < M < C^{-1}\min(x_\tan-a, b -x_\tan)^{3/2}$,
$$\frac{\P\left(\sup_{x\in J} B(x) - \ellt(x) \geq M\sigma_J\right)}{\P\left(B(x) > -x^2 + \varepsilon_0 M\sigma_\tan\ \forall x\in [a,b]\right)} \leq \exp(-cM^2).$$
The same bound holds with the denominator equal to $Z_{H_t}$ (with $\varepsilon_0$ and $c$ independent of $t$) for any $t>0$, where $Z_{H_t}$ is the partition function on $[a,b]$ with boundary data $(a, y_{a})$ and $(b, y_{b})$ and lower boundary curve $-x^2 + \varepsilon_0 M\sigma_\tan$.
\end{corollary}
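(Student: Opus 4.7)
The plan is to combine the two preceding results directly. For the numerator, observe that since the endpoints of $B$ lie on $\ellt$, the difference $\tilde B := B - \ellt$ is a rate two Brownian bridge with $\tilde B(\inf I) = \tilde B(\sup I) = 0$, and the variance of $\tilde B$ at any point equals that of $B$. Applying Lemma~\ref{l.brownian bridge restricted sup tail} to $\tilde B$ (with the same $J$ and $\sigma_J$) immediately gives
\[
\P\!\left(\sup_{x \in J} B(x) - \ellt(x) \geq M\sigma_J\right) \leq 3\exp\!\left(-\tfrac{1}{8}M^2\right).
\]

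For the denominator, we apply Proposition~\ref{p.brownian versatile tangent estimate} directly: under the same hypotheses on $M$, we obtain
\[
\P\!\left(B(x) > -x^2 + \varepsilon_0 M\sigma_\tan\ \forall x\in I\right) \;\geq\; \exp\!\left(-\tfrac{2}{25}M^2\right),
\]
and likewise for the partition function $Z_{H_t}$ in the second part of the statement. Taking the ratio yields
\[
\frac{\P\!\left(\sup_{x\in J} B(x) - \ellt(x) \geq M\sigma_J\right)}{\P\!\left(B(x) > -x^2 + \varepsilon_0 M\sigma_\tan\ \forall x\in I\right)} \;\leq\; 3\exp\!\left(-\Bigl(\tfrac{1}{8} - \tfrac{2}{25}\Bigr)M^2\right) \;=\; 3\exp\!\left(-\tfrac{9}{200}M^2\right),
\]
so any $c < 9/200$ works after choosing $M_0$ large enough to absorb the factor $3$. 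The partition function version is identical.

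There is essentially no obstacle beyond confirming the two numerical facts that drive the argument: the Gaussian coefficient $\tfrac{1}{8}$ in the numerator bound (which comes from Lemma~\ref{l.brownian bridge restricted sup tail}) is strictly larger than the coefficient $\tfrac{2}{25}$ in the denominator lower bound from Proposition~\ref{p.brownian versatile tangent estimate}, leaving a positive gap $\tfrac{9}{200}$. This is precisely why Proposition~\ref{p.brownian versatile tangent estimate} was formulated with the exponent $\tfrac{2}{25}$ rather than something larger; the delicate multi-scale construction there is arranged to give a coefficient small enough for this corollary to be deducible by a trivial division. Thus the proof is a one-line consequence of the two inputs.
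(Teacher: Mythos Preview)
Your proof is correct and follows exactly the approach the paper intends: the paper describes this result as ``an immediate corollary of Lemma~\ref{l.brownian bridge restricted sup tail} and Proposition~\ref{p.brownian versatile tangent estimate}'' and explicitly remarks that the constant $\tfrac{2}{25}$ was chosen to be smaller than the $\tfrac{1}{8}$ from Lemma~\ref{l.brownian bridge restricted sup tail}, precisely so that this division works.
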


Here $\varepsilon_0$ will be slightly different from its value in Proposition~\ref{p.brownian versatile tangent estimate} so as to successfully play off the constants in the exponent coming from the latter and Lemma~\ref{l.brownian bridge restricted sup tail}.

The scales $\sigma_J$ and $\sigma_\tan$ respectively of the fluctuations of the bridge and the height above the parabola that we demand the bridge stay above are chosen such that the ratio of the probabilities is small and independent of $J$ or the location of $x_{\tan}$.

The next result concerns the fluctuations of a Brownian bridge which is conditioned to stay above a parabola. It will allow us to control the shape of the profile (conditional on $\cL_1(0)  = \theta$) beyond $[-\theta^{1/2},\theta^{1/2}]$ and say that it is close to the parabola $-x^2$, as asserted in Proposition~\ref{p.para fluctuation h_t}.

\begin{proposition}\label{p.para fluctuation}
Let $I=[a,b]$ be an interval. For $0< H\leq |I|^{1/2}\log|I|$, let $\bpara:I\to\R$ be a Brownian bridge from $(a, -a^2 + H)$ to $(b, -b^2+ H)$ conditioned on $\bpara(x) > -x^2$ for all $x\in I$. Then there exist absolute constants $C,c>0$, such that for $|I|>C$ and $M>\log |I|$,
$$\P\left(\sup_{x\in I} \left(\bpara(x) + x^2\right) > 2M|I|^{1/2}\right) \leq \exp(-cM^2).$$
The same statement holds uniformly in $t>0$ if we replace $\bpara$ by a Brownian bridge $\bparat$ between the same endpoints as $\bpara$ but tilted by the Radon-Nikodym derivative given by $W_{H_t}/Z_{H_t}$, where the latter is associated to the same boundary data and lower boundary curve $-x^2$.
\end{proposition}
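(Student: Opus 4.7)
The proof would proceed by first reducing to a pointwise estimate via discretization. I would partition $I$ into consecutive unit-length sub-intervals and apply the strong Brownian Gibbs (or $H_t$-Brownian Gibbs) property on each sub-interval: conditional on the values of $\bpara$ at the sub-interval endpoints, $\bpara$ on each sub-interval is a reweighted Brownian bridge whose oscillation is controlled by Lemma~\ref{l.brownian bridge restricted sup tail} together with Corollary~\ref{c.ratio of deviation prob and part func} (the latter to absorb the avoidance/partition-function conditioning, whose probability is uniformly positive on a unit interval since the bridge has $O(1)$ local fluctuations and the parabola's contribution on a unit interval is subdominant). The contribution of $x^2$ to the oscillation of $\bpara(x)+x^2$ over a unit interval is at most $2|I|$, which is absorbed into $M|I|^{1/2}$ for $M \geq 2|I|^{1/2}$ (the complementary range of $M$ can be handled directly). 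A union bound over the $O(|I|)$ sub-intervals then reduces matters to establishing
\[
\P\bigl(\bpara(y) + y^2 > M|I|^{1/2}\bigr) \leq \exp(-c' M^2)
\]
for each fixed $y \in I$, with $c' > c$ so the union bound factor can be absorbed.

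For the single-point bound I would use Bayes' rule and the Markov property of the underlying unconditioned bridge $B$ at $y$ to write
\[
\P\bigl(\bpara(y) + y^2 > T\bigr) = \frac{\int_T^\infty f_y(v)\, p_L(v)\, p_R(v)\, dv}{\int_0^\infty f_y(v)\, p_L(v)\, p_R(v)\, dv},
\]
where $f_y(v)$ is the Gaussian density of $B(y)+y^2$ (with mean $L(y)+y^2 = (y-a)(y-b)+H \leq H \leq |I|^{1/2}\log|I|$ and variance at most $|I|/2$), and $p_L(v), p_R(v)$ are the avoidance probabilities on $[\inf I, y]$ and $[y, \sup I]$ conditional on $B(y) = -y^2 + v$. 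The denominator is bounded below by restricting to $v$ in a neighbourhood of $H$ and applying Proposition~\ref{p.parabola avoidance probability} to each half, yielding $\exp(-|I|^3/12 - O(|I|\log|I|))$; the numerator is bounded by $\int_T^\infty f_y(v)\, dv$ using $p_L p_R \leq 1$.

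The main obstacle will be that taking the ratio of these bounds naively produces an $\exp(O(|I|\log|I|))$ error factor that the Gaussian tail $\exp(-M^2)$ (which is what $(T - H)^2/(2\Var(B(y)))$ yields for $T = M|I|^{1/2}$ with $M > \log|I|$) does not dominate across the full range of $M$. To handle $M$ near $\log|I|$, I would refine the numerator estimate using the integral-trick upper bound from Remark~\ref{r.avoidance prob upper bound} applied to each half-bridge, which sharpens $p_L(v)p_R(v)$ to roughly $\exp(-|I|^3/6 + O(|I|(v+H)) - O((v+H)^2/|I|))$ in the relevant range. Combined with the Gaussian $f_y$, the numerator integrand takes the form $\exp(-(v-H)^2/|I| + \mathrm{const})$ up to constants absorbed by the denominator's leading $|I|^3$ term, which after integration yields $\exp(-c(T-H)^2/|I|) = \exp(-cM^2)$ for $M > \log|I|$ (using $T - H \geq M|I|^{1/2}/2$). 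The positive-temperature version is handled identically with the partition-function lower bound of Corollary~\ref{c.pos temp parabolic avoidance lower bound} replacing Proposition~\ref{p.parabola avoidance probability}, and with $W_{H_t} \leq 1$ ensuring the numerator side transfers unchanged.
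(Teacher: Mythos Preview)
Your approach is substantially different from the paper's, and the reduction step has a genuine gap.

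\textbf{The paper's approach.} For each unit interval $[k,k+1]\subseteq I$, the paper uses \emph{monotonicity} (not a Bayes'-rule computation) to stochastically dominate $\bparat$ by a Brownian bridge $B_k$ on all of $I$ whose endpoints are raised to lie on the tangent line $\ellt_k$ to $-x^2$ at $k$, shifted up by $|I|^{1/2}\log|I|$; this dominates because $H\leq|I|^{1/2}\log|I|$ and tangent lines lie above the parabola. The key observation is that on $[k,k+1]$ one has $x^2+\ellt_k(x)=-(x-k)^2=O(1)$, so
\[
B_k(x)+x^2=\bigl(B_k(x)-\ellt_k(x)\bigr)+O(1),
\]
where $B_k-\ellt_k$ is a Brownian bridge on $I$ with constant mean $|I|^{1/2}\log|I|$. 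Corollary~\ref{c.ratio of deviation prob and part func} (with $x_\tan=k$ and $J=[k,k+1]$) then gives the $\exp(-cM^2)$ bound in one stroke, since the tangent-line endpoints make the avoidance probability/partition function uniformly bounded below. No single-point estimate and no two-sided parabolic-avoidance asymptotics are needed.

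\textbf{The gap in your reduction.} Your claim that ``the contribution of $x^2$ to the oscillation of $\bpara(x)+x^2$ over a unit interval is at most $2|I|$'' is not correct: on $[k,k+1]$ the oscillation of $x^2$ is $|2k+1|=O(\max_{I}|x|)$, which bears no a priori relation to $|I|$ (the proposition places no assumption on where $I$ sits on the real line). Even in the paper's application, where $\max_I|x|=O(|I|)$, your absorption works only for $M\gtrsim|I|^{1/2}$, and the range $\log|I|<M\lesssim|I|^{1/2}$---which is the principal regime---is left to ``be handled directly'' with no indication of how. The tangent-line substitution $x^2=-\ellt_k(x)+(x-k)^2$ is precisely what makes this oscillation $O(1)$ rather than $O(\max_I|x|)$, and it is the idea your proposal is missing.

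\textbf{On the single-point Bayes' computation.} Even granting the reduction, the approach you sketch would require delicate matching: the upper bound on $p_L(v)p_R(v)$ from Remark~\ref{r.avoidance prob upper bound} is nontrivial only when $v+H\lesssim\min(\ell_L,\ell_R)^2$, so the integration domain must be split, and the leading $|I|^3$ terms in numerator and denominator must cancel exactly to leave the desired $\exp(-cM^2)$. This may be workable but is far more intricate than the paper's monotonicity shortcut, which never needs sharp two-sided asymptotics for the avoidance probability.
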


Note that we impose that $M>\log |I|$, unlike previous estimates where $M$ was lower bounded just by an absolute constant. This is to allow us to perform a union bound; essentially, we approximate the parabola by $O(|I|)$ many line segments which are close to the parabola for a unit order length, control fluctuations of Brownian bridges defined on these line segments, and take a union bound over all of them.

\begin{proof}[Proof of Proposition~\ref{p.para fluctuation}]
We will adopt the notation that $\bpara = \bparat$ with $t=\infty$, and prove the proposition for $\bparat$ for all $t > 0$; to get the $t=\infty$ case, one needs to just replace applications of the $H_t$-Brownian Gibbs property by the usual Brownian Gibbs property.

We will show that there exists $c>0$ such that, for any $k\in[a,b-1]$,
\begin{equation}\label{e.para fluc to prove}
\P\left(\sup_{x\in[k,k+1]} \left(\bparat(x) + x^2\right) > 2M|I|^{1/2}\right) \leq \exp(-cM^2).
\end{equation}
This suffices to prove the proposition by a union bound over $O(|I|)$ many values of $k$, since we have set $M>\log|I|$ and $|I|$ large enough.

Now \eqref{e.para fluc to prove} is easy to prove by considering the line $\ellt_k$ which is tangent to $-x^2$ at $k$. Indeed, let $B_k$ be a Brownian bridge from $(a, \ellt_k(a)+|I|^{1/2}\log|I|)$ to $(b, \ellt_k(b)+|I|^{1/2}\log|I|)$. By concavity of the function $-x^2$ and the assumed upper bound on $H$, these points are above $(a, -a^2+H)$ and $(b, -b^2+H)$ respectively. Let $\tilde W_{H_t}$ and $\tilde Z_{H_t}$ be the Boltzmann weight and partition function associated to the boundary values of $B_k$ and the lower boundary curve $-x^2 + |I|^{1/2}\log|I|$; note that $\ellt_k(x) + |I|^{1/2}\log|I|$ is tangent to this lower boundary curve at $x=k$. So, by the $H_t$-Brownian Gibbs property and monotonicity (Lemma~\ref{l.monotonicity}), $B_k$, tilted by $\tilde W_{H_t}/\tilde Z_{H_t}$, is stochastically larger than $\bparat$. Thus,
\begin{align*}
\MoveEqLeft[6]
\P\left(\sup_{x\in[k,k+1]} \left(\bparat(x) + x^2\right) > 2M|I|^{1/2}\right)\\
&\leq \tilde Z_{H_t}^{-1}\E\left[\one_{\sup_{x\in[k,k+1]} \left(B_k(x) + x^2\right) > 2M|I|^{1/2}} \tilde W_{H_t}\right].
\end{align*}

Next we observe that on $[k,k+1]$, $+x^2$ differs from $-\ellt_k(x)$ by a unit order. Combining this with $\tilde W_{H_t}\leq 1$, the right-hand side is upper bounded by
$$\tilde Z_{H_t}^{-1}\P\left(\sup_{x\in[k,k+1]} \left(B_k(x) - \ellt_k(x)\right) > \tfrac{3}{2}M|I|^{1/2}\right).$$
By Corollary~\ref{c.ratio of deviation prob and part func},
 this is upper bounded by $\exp(-cM^2)$, so we are done.
\end{proof}

\subsection{Control on the second curve conditional on the first}\label{s.tools.kpz analogues}

As discussed in the proof ideas Section~\ref{s.intro.proof ideas}, we will need some control on the second curve conditional on the first one. The source of this control is Assumption~\ref{as.weak bk}(b\ensuremath{'}), but here we derive a more convenient, albeit more technical looking formulation. Instead of attempting to justify the details of its form, let us simply point out that it allows us to condition on multiple points of $\cL_1$ rather than a single point as in Assumption~\ref{as.weak bk}(b\ensuremath{'}) and also allows us to simultaneously handle events of the first curve being larger than a given function, such as those which appear in the limit shape results Theorems~\ref{mt.one point limit shape}--\ref{mt.two-point limit shape}. As a result, this statement will be frequently used in the upcoming arguments. The proof is a fairly straightforward consequence of Assumptions~\ref{as.corr}(b\ensuremath{'}) and \ref{as.mono in cond stronger}. This is the only location where Assumption~\ref{as.mono in cond stronger} is required rather than \ref{as.mono in cond}.

\begin{lemma}\label{l.lower curve control new}
Suppose $\cL$ satisfies Assumptions~\ref{as.weak bk}(b\ensuremath{'}) and \ref{as.mono in cond stronger}. Then there exists $C>0$ such that the following holds. For an interval $I$ and $R>0$, let $E_{I,R} = \{\sup_{x\in I} (\cL_2(x)+x^2) > (\log R)^C\}$ and $f:I\to\R\cup\{-\infty\}$ be upper semicontinuous. 

Fix any $m\in\N$, real numbers $x_1, \ldots, x_m\in I$, and bounded Borel sets $E_1, \ldots, E_m\subseteq \R$, and let $A = \cap_{i=1}^m\{g \in \mc C(I, \R): g(x_i) \in E_i\}$ if $m>1$ and $\mc C(I, \R)$ if $m=1$. 
Next, fix $j\in\intint{1,m}$ and let $\theta_j = \sup E_j$. Let $A^\shortuparrow = \cap_{i\in\intint{1,m}\setminus\{j\}}\{g \in \mc C(I, \R): g(x_i) \geq \sup E_i\}$ if $m>1$ and $\mc C(I, \R)$ if $m=1$.

 If $I$ and $R,K>0$ satisfy $R>C(\theta_j+K + x_j^2)^{3/4}$ and $\log |I| \leq (\log R)^{2}$, then
\begin{align*}
\P\left(E_{I,R}, \sup_{I}(\cL_1 -f )\geq 0 \midd \cL_1|_{I} \in A\right) \leq \frac{\P\left(\sup_{I}(\cL_1-f)\geq 0 \mid \cL_1|_{I} \in A\right)}{2\cdot \P\left(\inf_I (\cL_1 - f) \geq 0, \cL_1|_I \in A^{\shortuparrow} \mid \cL_1(x_j) = \theta_j + K\right)}.
\end{align*}
\end{lemma}

\begin{proof}%
First, trivially,
\begin{align*}
\MoveEqLeft[6]
\P\left(E_{I,R}, \sup_{I}(\cL_1-f)\geq 0 \midd \cL_1|_{I} \in A\right)\\
&= \P\left(\sup_{I}(\cL_1-f)\geq 0 \midd \cL_1|_{I} \in A\right)\cdot \P\left(E_{I,R} \midd \sup_{I}(\cL_1-f)\geq 0, \cL_1|_{I} \in A\right).
\end{align*}
We now apply monotonicity in conditioning (Assumption~\ref{as.mono in cond stronger}) to modify the conditioning in the second factor. Indeed, by an averaging argument as in the proof of Lemma~\ref{l.bound point conditioning by tail conditioning}, since every member of $A^\shortuparrow\cap\{f: f(x_j)=\theta_j+K\}$ is larger than every member of $A$ on $\{x_1, \ldots, x_m\}$, and $E_{I,R}$ is an increasing event, we obtain from Assumption~\ref{as.mono in cond stronger} that 
\begin{align}\label{e.weak bk after mono in cond}
\P\left(E_{I,R} \midd \sup_I(\cL_1 - f) \geq 0, \cL_1|_{I} \in A\right) \leq \P\left(E_{I,R} \midd \inf_I(\cL_1 - f) \geq 0, \cL_1|_{I} \in A^{\shortuparrow}, \cL_1(x_j) = \theta_j+K\right).
\end{align}
Then by the trivial bound, we obtain that
\begin{align*}
\P\left(E_{I,R} \midd \sup_I(\cL_1 - f) \geq 0, \cL_1|_{I} \in A\right)
&\leq \frac{\P\left(E_{I,R} \midd \cL_1(x_j) = \theta_j + K\right)}{\P\left(\inf_I(\cL_1-f)\geq 0, \cL_1|_I \in A^{\shortuparrow} \midd \cL_1(x_j) = \theta_j+ K\right)}.
\end{align*}
By Assumption~\ref{as.mono in cond} we may replace the conditioning in the numerator by $\cL_1(x_j)\geq \theta+j+K$. Then by Assumption~\ref{as.weak bk}(b\ensuremath{'}), there exists $C$ independent of $I, x_j, \theta_j$, $K$, and $R$ such that the numerator is at most $\frac{1}{2}$, as long as $K$ satisfies $R > C(\theta_j + K + x_j^2)^{3/4}$ and $\log |I| \leq (\log R)^{2}$, as assumed. This completes the proof.
\end{proof}

\section{One-point limit shape}\label{s.one point limit shape}

In this section we prove a result on the shape of the profile of $\cL_1$ on $[-\theta^{1/2}, \theta^{1/2}]$ under the conditioning that $\cL_1(0) = \theta$. It has a slightly weaker tail bound than Theorem~\ref{mt.one point limit shape}, but will be bootstrapped later.

Recall the role of $\beta$ from Assumption~\ref{as.tails}, namely that $\P(\cL_1(0) > \theta)\leq \exp(-c_2\theta^\beta)$.

\begin{theorem}\label{t.limit shape}
Let $\cL$ satisfy Assumptions~\ref{as.bg}--\ref{as.tails}. There exist $C<\infty$, $c>0$, and $\theta_0$ such that, for all $\theta>\theta_0$ and $C< M < C^{-1}\theta^{3/4}$, 
$$\P\left(\sup_{x\in[-\theta^{1/2},\theta^{1/2}]} \left(\cL_1(x) - \tri(x)\right) \geq M\theta^{1/4} \ \Big|\  \cL_1(0) = \theta\right) \leq \exp(-cM^{4\beta/3}) + \exp(-cM^2)$$
and
\begin{align*}
\MoveEqLeft[20]
\P\left(\inf_{x\in[-\theta^{1/2},\theta^{1/2}]} \left(\cL_1(x) - \tri(x)\right) \leq -M\theta^{1/4} \ \Big|\  \cL_1(0) = \theta\right)\\
&\leq \exp(-cM^{2}) + 4\cdot\P\left(\cL_1(0) \leq -\tfrac{1}{2}M\theta^{1/4}\right).
\end{align*}
\end{theorem}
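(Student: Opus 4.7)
The two bounds in the theorem require genuinely different strategies, so I would prove them separately. In both cases the central task is to produce pinning points at which $\h_1$ is comparable to $\tri$, after which an application of Brownian Gibbs on a suitable interval, combined with an appropriate Brownian estimate from Section~\ref{s.tools.brownian above para}, yields the quantitative estimate.

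\textbf{Lower bound on the profile.} The key observation is that, unlike $\{\h_1(0)=\theta\}$, the increasing event $\{\h_1(0)\geq\theta\}$ is amenable to the FKG inequality. My first step is to use Lemma~\ref{l.bound point conditioning by tail conditioning} (monotonicity in conditioning) to pass from the point conditioning to $\{\h_1(0)\geq \theta\}$. Then by Assumption~\ref{as.corr}, $\h_1$ conditioned on this event stochastically dominates its unconditional law. Using stationarity of $\h_1(x)+x^2$ (Assumption~\ref{as.bg}) and Assumption~\ref{as.tails}, I would show that $\h_1(\pm\theta^{1/2})\geq -\theta - \tfrac{1}{2}M\theta^{1/4}$ with probability at least $1 - 2\,\P(\h_1(0)\leq -\tfrac{1}{2}M\theta^{1/4})$. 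These two points lie on a copy of $\tri$ shifted down by $\tfrac{1}{2}M\theta^{1/4}$. By Lemma~\ref{l.monotonicity} I can set $\h_1(\pm\theta^{1/2})$ equal to these bottom values and drop the lower boundary to $-\infty$ for a stochastic lower bound. Applying the (strong) Brownian Gibbs property on $[-\theta^{1/2},0]$ and $[0,\theta^{1/2}]$, $\h_1$ then stochastically dominates an unconditioned rate-two Brownian bridge between these endpoints, whose mean is $\tri - \tfrac{1}{2}M\theta^{1/4}(1-|x|/\theta^{1/2})$ and whose fluctuations on the interval are governed by Lemma~\ref{l.brownian bridge restricted sup tail} at scale $\theta^{1/4}$, giving the infimum deviation bound $\exp(-cM^2)$. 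A union bound over the two halves finishes the argument.

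\textbf{Upper bound on the profile.} This direction is more delicate because the lower boundary $\h_2$ cannot be discarded. By symmetry of the conditioning I focus on $[-\theta^{1/2},0]$. The conditioning $\h_1(0)=\theta$ already gives a right pinning point. The main task is to produce a left pinning point at some $x_\theta = -z\theta^{1/2}$ with $z$ a sufficiently large constant (when $\alpha=\beta$ in Assumption~\ref{as.tails}; otherwise $z$ may need to grow mildly in $\theta$). The pinning calculation uses stationarity and Lemma~\ref{l.bound point conditioning by tail conditioning} to pass to the conditioning $\{\h_1(0)\geq\theta\}$:
\begin{align*}
\P\bigl(\h_1(x_\theta)\geq \tri(x_\theta)+s \,\big|\, \h_1(0)\geq\theta\bigr)
 \leq \frac{\P(\h_1(0)\geq \theta(z-1)^2+s)}{\P(\h_1(0)\geq \theta)},
\end{align*}
which by Assumption~\ref{as.tails} can be made small for appropriate $z$. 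Having raised $\h_1(x_\theta)$ to lie on $\tri$ (by Lemma~\ref{l.monotonicity}, since we seek an upper bound), I would apply Brownian Gibbs on $[x_\theta,0]$. The lower boundary $\h_2$ on this (polynomially sized) interval is controlled by Lemma~\ref{l.lower curve control} (via the BK inequality part of Assumption~\ref{as.corr}): $\sup_{x\in[x_\theta,0]}(\h_2(x)+x^2)\leq \varepsilon_0 M\theta^{1/4}$ with probability $1 - \exp(-c(M\theta^{1/4})^\beta)$. On this favorable event, monotonicity and Proposition~\ref{p.versatile Airy tangent estimate} (using that $\tri$ is tangent to $-x^2$) bound the probability that $\h_1$ exceeds $\tri + M\theta^{1/4}$ anywhere on the interval by $\exp(-cM^2)$. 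Combining the pinning failure probability, the lower-curve failure probability $\exp(-c(M\theta^{1/4})^\beta)$, and the Brownian bridge deviation probability $\exp(-cM^2)$, and optimizing over the pinning parameters in the regime $0<M\leq C^{-1}\theta^{3/4}$, yields the claimed exponent $M^{4\beta/3}$ (at the upper endpoint $M\asymp\theta^{3/4}$ this recovers the one-point tail exponent $\theta^\beta$, consistent with Assumption~\ref{as.tails}).

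\textbf{Main obstacle.} The lower bound is essentially clean once the right framework (FKG, monotonicity, and the unconditioned Brownian bridge) is set up. The delicate part is the upper bound: one must choose $z$ so that the pinning tail, the lower-curve control, and the Brownian Gibbs fluctuations simultaneously give bounds that balance to produce the stated exponent $M^{4\beta/3}$. In particular, ensuring that Proposition~\ref{p.versatile Airy tangent estimate} and Lemma~\ref{l.lower curve control} are applicable throughout the full range $0<M<C^{-1}\theta^{3/4}$ (the upper constraint on $M$ ensures we remain in the Gaussian regime of these estimates) and that the bound after combining terms is $\theta$-independent requires a careful bookkeeping of constants.
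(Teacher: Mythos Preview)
Your overall architecture matches the paper's proof closely, but there are two technical slips worth fixing.

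\textbf{Lower bound: direction of Lemma~\ref{l.bound point conditioning by tail conditioning}.} The event $\{\h_1(-\theta^{1/2}) \leq -\theta - \tfrac12 M\theta^{1/4}\}$ is decreasing, so Lemma~\ref{l.bound point conditioning by tail conditioning} lets you pass from $\{\h_1(0)=\theta\}$ to $\{\h_1(0)\leq\theta\}$, not to $\{\h_1(0)\geq\theta\}$. Your route through FKG after conditioning on $\{\h_1(0)\geq\theta\}$ would give the inequality in the wrong direction. The paper's fix is simple: pass to $\{\h_1(0)\leq\theta\}$ and use $\P(\h_1(0)\leq\theta)\geq\tfrac12$ for large $\theta$, which yields $2\,\P(\h_1(0)\leq -\tfrac12 M\theta^{1/4})$ directly without invoking FKG at all. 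After that, your Brownian Gibbs step (drop the lower boundary to $-\infty$, lower the endpoint, bound the free bridge) is exactly what the paper does.

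\textbf{Upper bound: Proposition~\ref{p.versatile Airy tangent estimate} is not available.} That proposition is stated under the hypothesis $\beta=\tfrac32$, which you do not have here. The paper's proof of Theorem~\ref{t.limit shape} explicitly notes this and instead redoes the argument of that proposition inline: after applying the $H_t$-Brownian Gibbs property on $[x_0,0]$ and using monotonicity to replace the lower boundary by $-x^2 + \varepsilon M\theta^{1/4}$, it invokes Corollary~\ref{c.ratio of deviation prob and part func} (a purely Brownian statement with no hypothesis on $\beta$) to bound the bridge deviation by $\exp(-cM^2)$. The lower-curve failure is handled by Lemma~\ref{l.lower curve control}, giving $\exp(-c(\varepsilon M\theta^{1/4})^\beta)$, and this is where the exponent $M^{4\beta/3}$ actually appears: the constraint $M\leq C^{-1}\theta^{3/4}$ gives $\theta^{1/4}\geq c'M^{1/3}$, so $(M\theta^{1/4})^\beta\geq c''M^{4\beta/3}$. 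No optimization over pinning parameters is involved; the paper simply takes $x_0=2\theta^\gamma$ with $\gamma=(2\beta)^{-1}(\alpha+\tfrac32)$ so that the pinning failure probability is at most $\exp(-\theta^{3/2})\leq\exp(-cM^2)$, which is dominated by the lower-curve term.

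With these two corrections your proof coincides with the paper's.
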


To move from Theorem~\ref{t.limit shape} to Theorem~\ref{mt.one point limit shape}, one needs to replace the first term in the first bound above by $\exp(-cM^2)$, i.e., show we can take $\beta = 3/2$. This is accomplished in Section~\ref{s.one point asymptotics} with the proof of Theorem~\ref{mt.one point tail asymptotics}, and the proof of Theorem~\ref{mt.one point limit shape} is given there.

\begin{proof}[Proof of Theorem~\ref{t.limit shape}: $\cL_1$ is above $\tri - O(\theta^{1/4})$ with high probability]
We first show the second inequality above.
In fact, by a union bound and symmetry, it is enough to prove the inequality when the supremum inside the probability is over only $[-\theta^{1/2}, 0]$. Now,
\begin{equation}\label{e.lower bound first split}
\begin{split}
\MoveEqLeft[2]
\P\left(\inf_{x\in[-\theta^{1/2}, 0]} \left(\cL_1(x) - \tri(x)\right) \leq - M\theta^{1/4} \ \big|\ \cL_1(0)=\theta\right)\\
& \leq \P\left(\inf_{x\in[-\theta^{1/2}, 0]} \left(\cL_1(x) - \tri(x)\right) \leq - M\theta^{1/4}, \cL_1(-\theta^{1/2}) > -\theta - \tfrac{1}{2}M\theta^{1/4} \ \big|\ \cL_1(0)=\theta\right)\\
&\qquad+ \P\left(\cL_1(-\theta^{1/2}) \leq -\theta - \tfrac{1}{2}M\theta^{1/4} \ \big|\ \cL_1(0)=\theta\right).
\end{split}
\end{equation}
Let us bound the second term. We see from Lemma~\ref{l.bound point conditioning by tail conditioning} that 
$$\P\left(\cL_1(-\theta^{1/2}) \leq -\theta - \tfrac{1}{2}M\theta^{1/4} \ \big|\ \cL_1(0)=\theta\right) \leq \P\left(\cL_1(-\theta^{1/2}) \leq -\theta - \tfrac{1}{2}M\theta^{1/4} \ \big|\ \cL_1(0)\leq \theta\right).$$
We may assume $\theta$ is large enough using Assumption~\ref{as.tails} that $\P(\cL_1(0) \leq \theta)\geq \frac{1}{2}$. Then we obtain that, for all large enough $\theta$,
\begin{align*}
\P\left(\cL_1(-\theta^{1/2}) \leq -\theta - \tfrac{1}{2}M\theta^{1/4} \ \big|\ \cL_1(0)\leq \theta\right)
&\leq 2\cdot\P\left(\cL_1(-\theta^{1/2}) \leq -\theta - \tfrac{1}{2}M\theta^{1/4}\right)\\
&= 2\cdot\P\left(\cL_1(0) \leq - \tfrac{1}{2}M\theta^{1/4}\right),
\end{align*}
the last line by stationarity of $\cL_1(x) + x^2$.

Now we return to the first term of \eqref{e.lower bound first split}. 
We want to bound the numerator using the $H_t$-Brownian Gibbs property. Let $\F=\Fext(1,[-\theta^{1/2}, \theta^{1/2}])$. Recall the notation $\PF(\cdot) = \P(\cdot\mid \F)$ introduced on page~\pageref{n.conditional prob notation}. Then the first term of \eqref{e.lower bound first split} is
$$\E\left[\PF\left(\inf_{x\in[-\theta^{1/2}, 0]}\left(\cL_1(x) - \tri(x)\right) \leq - M\theta^{1/4}\ \Big|\  \cL_1(0) = \theta\right)\one_{\cL_1(-\theta^{1/2}) > -\theta - \frac{1}{2}M\theta^{1/4}}\right].$$
By the $H_t$-Brownian Gibbs property, under the conditional measure $\PF$, $\cL_1$ is a rate two Brownian bridge from $(-\theta^{1/2}, \cL_1(-\theta^{1/2}))$ to $(0, \cL_1(0))$ subject to the Radon-Nikodym derivative given by $W_{H_t}/Z_{H_t}$ associated to these boundary values and lower boundary data $\cL_2$ (see Definition~\ref{d.bg}). Now, we are restricted to the situation that $\cL_1(0) = \theta$ and $\cL_1(-\theta^{1/2}) \geq -\theta - \frac{1}{2}M\theta^{1/4}$. By monotonicity (Lemma~\ref{l.monotonicity}), the probability that we are considering, that $\cL_1$ is below $\tri$, is increased if we lower the endpoints of the Brownian bridge as much as possible (i.e., so that they are $-\theta-\frac{1}{2}M\theta^{1/4}$ and $\theta$ at the corresponding points) and take the lower boundary curve to $-\infty$, i.e., remove the lower boundary conditioning from the Brownian bridge. Thus, we see
\begin{align*}
\MoveEqLeft[6]
\PF\left(\inf_{x\in[-\theta^{1/2}, 0]}\left(\cL_1(x) - \tri(x)\right) \leq - M\theta^{1/4}\ \Big|\  \cL_1(0) = \theta\right)\one_{\cL_1(-\theta^{1/2}) > -\theta - \frac{1}{2}M\theta^{1/4}}\\
&\leq \P\left(\inf_{x\in[-\theta^{1/2}, 0]} \left(B(x) - \tri(x)\right) \leq - M\theta^{1/4}\right),
\end{align*}
where $B$ is a rate two Brownian bridge from $(-\theta^{1/2}, -\theta-\frac{1}{2}M\theta^{1/4})$ to $(0, \theta)$. Observe that $\E[B(x)] = \tri(x) + \frac{1}{2}M\theta^{-1/4}x$. So,
\begin{align*}
\MoveEqLeft[6]
\PF\left(\inf_{x\in[-\theta^{1/2}, 0]}\left(\cL_1(x) - \tri(x)\right) \leq - M\theta^{1/4}\ \Big|\  \cL_1(0) = \theta\right)\one_{\cL_1(-\theta^{1/2}) > -\theta - \tfrac{1}{2}M\theta^{1/4}}\\
&\leq \P\left(\inf_{x\in[-\theta^{1/2}, 0]} \left(B(x) - \E[B(x)]\right) \leq - \tfrac{1}{2}M\theta^{1/4}\right)\\
&\leq \exp(-cM^2),
\end{align*}
using tail bounds on the infimum of Brownian bridge from Lemma~\ref{l.brownian bridge sup tail exact} (since the Brownian bridge law is symmetric under negation).
Tracing the steps back to \eqref{e.lower bound first split}, we obtain
\begin{align*}
\P\left(\inf_{x\in[-\theta^{1/2}, 0]}\left(\cL_1(x) - \tri(x)\right) \leq - M\theta^{1/4}, \cL_1(-\theta^{1/2}) > -\theta - \tfrac{1}{2}M\theta^{1/4} \ \Big|\ \cL_1(0)=\theta\right) \leq \exp(-cM^2).
\end{align*}
Returning to \eqref{e.lower bound first split} and using the earlier bound for its second term, we see that
\begin{align*}
\P\left(\inf_{x\in[-\theta^{1/2}, 0]} \cL_1(x) - \tri(x) \leq - M\theta^{1/4} \ \Big|\ \cL_1(0)=\theta\right) \leq \exp(-cM^2) + 2\cdot\P\left(\cL_1(0) \leq -\tfrac{1}{2}M\theta^{1/4}\right),
\end{align*}
which completes the proof of one side of the theorem.
\end{proof}

\bigskip

Now we return to the second half of the proof of Theorem~\ref{t.limit shape}.

\begin{proof}[Proof of Theorem~\ref{t.limit shape}: $\cL_1$ is below $\tri + O(\theta^{1/4})$ with high probability]
As in the previous bound, we will only bound 
$$\P\left(\sup_{x\in[-\theta^{1/2}, 0]} \left(\cL_1(x) - \tri(x)\right) \geq  M\theta^{1/4} \ \Big|\ \cL_1(0)=\theta\right),$$
i.e., we look at the curve to the left of $0$ only.

Recall from Section~\ref{s.intro.proof sketch.limit shape} and the figure there that we want to find a point \emph{on} the tangency line such that, even under the large deviation conditioning, $\cL_1(x)$ is below that point with high probability. We take $x_0=2\theta^\gamma$, where $\gamma = (2\beta)^{-1}(\alpha+\frac{3}{2})$,\footnote{This choice of $\gamma$ is made purely to ensure that $\P(\cL_1(0) > \theta^{2\gamma})/\P(\cL_1(0)\geq \theta) \leq \exp(-\theta^{3/2})$; the expression for $\gamma$ then follows from the upper and lower bounds in terms of $\alpha$ and $\beta$ on the one-point probability from Assumption~\ref{as.tails}.} with $\alpha$ and $\beta$ as in Assumption~\ref{as.tails}. Now, the value of the tangent (i.e., $\tri$ extended in the obvious linear way to $\R$) at $-x_0$ is $\theta -2\smash{\theta^{1/2}}x_0 = \theta - \smash{4\theta^{\gamma+1/2}}$ and the value of the parabola is $-x_0^2 = -\smash{4\theta^{2\gamma}}$; clearly the height separating the tangent and the parabola is $\theta - 2\theta^{1/2}x_0 + x_0^2 = (x_0-\theta^{1/2})^2 = (2\theta^\gamma - \theta^{1/2})^2$. Consider the probability
$$\P\left(\cL_1(-x_0) >  \theta-2\theta^{1/2}x_0\mid \cL_1(0) > \theta\right) = \P\bigl(\cL_1(-x_0) +x_0^2 >  (x_0-\theta^{1/2})^2\mid \cL_1(0) > \theta\bigr).$$
Using the tail bounds from Assumption~\ref{as.tails} and the stationarity from Assumption~\ref{as.bg}, this probability can be bounded, for $\theta>\theta_0(t_0)$, as
\begin{equation}\label{e.tangent point choice}
\P\Bigl(\cL_1(-x_0)+x_0^2 > (2\theta^{\gamma}-\theta^{1/2})^2 \mid \cL_1(0) > \theta\Bigr) \leq \exp\left(-c_2(2\theta^\gamma - \theta^{1/2})^{2\beta} + c_1\theta^\alpha \right).
\end{equation}
Since $\alpha\geq\beta$, it follows that $\gamma>\frac{1}{2}$, so the previous right-hand side is at most $\exp(-c_2\theta^{2\gamma\beta} + c_1\theta^\alpha) \leq \exp(-\theta^{3/2})$ for all large enough $\theta$ since our choice of $\gamma$ satisfies $2\gamma\beta= \alpha+\frac{3}{2}$.
Also observe that, since $M < C^{-1}\theta^{3/4}$, $\exp(-\theta^{3/2}) \leq \exp(-c M^2)$.

Now, similar to the proof lower bounding, we include some auxiliary events to aid us in the analysis. Apart from control over $\cL_1(-x_0\theta^{1/2})$, we will need an upper bound on the second curve's deviation above $-x^2$. We adopt the shorthand 
$$\Bigl\{\cL_1\in A_{\theta,M}\Bigr\} = \left\{\sup_{x\in[-\theta^{1/2},0]} \cL_1(x) - \tri(x) \geq  M\theta^{1/4}\right\}$$
for notational convenience. Now,
\begin{align}
\MoveEqLeft[0]
\P\left(\cL_1\in A_{\theta,M} \ \big|\ \cL_1(0)=\theta\right)\nonumber\\
&\leq \P\left(\cL_1\in A_{\theta,M},\, \cL_1(-x_0) \leq \theta-2\theta^{1/2}x_0,\, \sup_{x\in[-x_0, 0]}\left(\cL_2(x)+x^2\right) \leq \varepsilon_0 M\theta^{1/4} \ \Big|\ \cL_1(0)=\theta\right)\nonumber\\
&\quad + \P\Bigl(\cL_1(-x_0) > \theta-2\theta^{1/2}x_0 \ \Big|\  \cL_1(0) = \theta\Bigr) \label{e.upper bound first split}\\
&\quad + \P\left(\cL_1\in A_{\theta,M}, \sup_{x\in[-x_0, 0]} \left(\cL_2(x)+x^2\right) > \varepsilon_0 M \theta^{1/4} \ \Big|\  \cL_1(0) = \theta\right),\nonumber
\end{align}
where $\varepsilon_0>0$ is the $t$-independent constant from Proposition~\ref{p.brownian versatile tangent estimate}.
We already saw that the second term is bounded above by $\exp(-cM^2)$, with $c$ independent of $t$ for $\theta>\theta_0(t_0)$ coming from Assumption~\ref{as.tails}. 
By Lemma~\ref{l.lower curve control new} with $f(x)=\tri(x)- M\theta^{1/4}$ and $I=[-x_0,0]$, the  third term is bounded by
$$\tfrac{1}{2}\cdot\frac{\P(\cL_1\in A_{\theta,M}\mid \cL_1(0)=\theta)}{\P(\inf_{[-\theta^{1/2},0]} \cL_1(x)-\tri(x) \geq M\theta^{1/4} \mid \cL_1(0)=\theta+K)}$$
for any $K$ such that $\varepsilon_0 M \theta^{1/4} \geq \frac{3}{4}\log (C(\theta+K))$ as long as $\log x_0 \leq (\log \theta)^{2}$. The latter clearly holds since $x_0$ is polynomial in $\theta$. We take $K=\theta$, in which case the former condition also easily holds. Now, for this choice of $K$ the denominator of the previous display is lower bounded by $3/4$ for all large enough $\theta$ by the first part of Theorem~\ref{t.limit shape} already proved. So the third term of \eqref{e.upper bound first split} is at most $\frac{2}{3} \P(\cL_1\in A_{\theta,M} \mid \cL_1(0) = \theta)$, and it may be taken to the lefthand side.

So we may focus on the first term of \eqref{e.upper bound first split}. The argument we now present is similar to that of Proposition~\ref{p.versatile Airy tangent estimate} and essentially extends that proposition to the case where $\beta<3/2$. 

As in the lower bound, we apply the Brownian Gibbs property. Similarly to before, let $\F=\Fext(1,[-x_0,0])$. Then we see that the first term in the last display is equal to
$$\E\left[\PF(\cL_1\in A_{\theta,M}\mid \cL_1(0) = \theta) \one_{\cL_1(-x_0) \leq \theta - 2\theta^{1/2}x_0,\, \sup_{x\in[-x_0, 0]}\left(\cL_2(x)+x^2\right) \leq \varepsilon_0 M\theta^{1/4}}\right]$$
By the Brownian Gibbs property, under $\PF$ and $\{\cL_1(0) = \theta\}$, $\cP_1$ is a rate two Brownian bridge from $(-x_0, \cL_1(-x_0))$ to $(0, \cL_1(0)) = (0,\theta)$ subject to the Radon-Nikodym derivative given by $\tilde W_{H_t}/\tilde Z_{H_t}$ associated with the mentioned boundary values and lower boundary data $\cL_2$. By monotonicity, the probability of this Brownian bridge lying in $A_{\theta,M}$ increases by raising its endpoints and raising the lower boundary condition. Thus the previous display is upper bounded by (using that indicators are bounded by $1$)
\begin{align*}
\E\left[\one_{B\in A_{\theta,M}} \frac{W_{H_t}}{Z_{H_t}}\right] \leq \P\left(B\in A_{\theta,M}\right)\cdot Z_{H_t}^{-1},
\end{align*}
where $B$ is a rate two Brownian bridge from $(-x_0, \theta-2\theta^{1/2}x_0)$ to $(0,\theta)$ and $W_{H_t}$ and $Z_{H_t}$ are associated with the same boundary values along with the lower boundary curve $-x^2 + \varepsilon M\theta^{1/4}$.

Now, since $\tri$ is the line tangent to $-x^2$ at $(-\theta^{1/2}, -\theta)$ we may apply Corollary~\ref{c.ratio of deviation prob and part func} with $I = [-x_0, 0]$, $J=[-\theta^{1/2},0]$, and $x_\tan = -\theta^{1/2}$ to see that the previous display is upper bounded by $\exp(-cM^2)$ when $\theta>\theta_0(t_0)$.
\end{proof}

As promised before, while Theorem~\ref{mt.one point limit shape} is focused on the profile inside $[-\theta^{1/2}, \theta^{1/2}]$, we do have a result for the shape outside the interval.

\begin{proposition}\label{p.para fluctuation h_t}
Let $\cL$ satisfy Assumptions~\ref{as.bg}--\ref{as.tails}. For any $L>1$, there exist $c = c(L)>0$ and $\theta_0=\theta_0(L)$ such that, for $\theta>\theta_0$,
$$\P\left(\sup_{x : |x|\in[\theta^{1/2}, L\theta^{1/2}]}\left(\cL_1(x) + x^2\right) > \theta^{1/4}\log\theta \  \Big|\  \cL_1(0) = \theta\right) \leq \exp(-c(\log\theta)^2)$$
and
\begin{align*}
\MoveEqLeft[14]
\P\left(\inf_{x : |x|\in[\theta^{1/2}, L\theta^{1/2}]}\left(\cL_1(x) + x^2\right) < -\theta^{1/4}\log\theta \  \Big|\  \cL_1(0) = \theta\right)\\
&\leq 2\cdot \P\left(\inf_{x : |x|\in[\theta^{1/2}, L\theta^{1/2}]}\left(\cL_1(x)+x^2\right)  < - \theta^{1/4}\log\theta\right).
\end{align*}

\end{proposition}

The second bound is stated without a quantitative dependence on $\theta$ as we have not assumed lower tails for the infimum of $\cL_1$ over an interval. These are however known for the KPZ and parabolic Airy line ensembles (see \cite[Proposition~B.1]{wu2021tightness} (for $t<\infty$) and \cite[Proposition~A.2]{hammond2016brownian} respectively). 

Next note that we need the deviation to be of order $\theta^{1/4}\log\theta$ instead of $\theta^{1/4}$; the reason is the same as in Proposition~\ref{p.para fluctuation}.

As the reader might already realize, while the true fluctuation scale inside $[-\theta^{1/2}, \theta^{1/2}]$ is $\theta^{1/4}$ (at least in the bulk of the intervals $[-\theta^{1/2}, 0]$ and $[0, \theta^{1/2}]$), the true fluctuation scale on intervals of the form $[-L\theta^{1/2}, \theta^{1/2}]$ and $[\theta^{1/2}, L\theta^{1/2}]$ for $L>1$ is expected to be $(\log \theta)^{2/3}$ (rather, $O(1)$ at any given point in such intervals, with the logarithmic factor coming when taking a supremum of fluctuations over the whole interval). While a refinement of our approach is expected to yield improvements over the stated $\theta^{1/4}\log\theta$  bound we do not pursue this.

Proposition~\ref{p.para fluctuation h_t} is not used for any argument in the paper; in particular, the proof of Theorem~\ref{mt.one point tail asymptotics} is independent of it, and we will invoke Theorem~\ref{mt.one point tail asymptotics} in the proof of Proposition~\ref{p.para fluctuation h_t}.

\begin{proof}[Proof of Proposition~\ref{p.para fluctuation h_t}, the lower bound on the limit shape:]
By Assumption~\ref{as.mono in cond} (monotonicity in conditioning),
\begin{align*}
\MoveEqLeft[12]
\P\left(\inf_{x : |x|\in[\theta^{1/2}, L\theta^{1/2}]}\left(\cL_1(x)+x^2\right)  < - \theta^{1/4}\log\theta \  \Big|\  \cL_1(0) = \theta\right)\\
&\leq \P\left(\inf_{x : |x|\in[\theta^{1/2}, L\theta^{1/2}]}\left(\cL_1(x)+x^2\right)  < - \theta^{1/4}\log\theta \  \Big|\  \cL_1(0) \leq \theta\right)\\
&\leq 2\cdot \P\left(\inf_{x : |x|\in[\theta^{1/2}, L\theta^{1/2}]}\left(\cL_1(x)+x^2\right)  < - \theta^{1/4}\log\theta\right),
\end{align*}
the last inequality by picking $\theta_0$ large enough that $\P(\cL_1(0) \leq \theta)\geq \frac{1}{2}$ for all $\theta>\theta_0$.

\medskip
\emph{Proof of the upper bound on the limit shape:}
By symmetry, it is enough to prove the bound when the supremum inside the probability is over $x\in[-L\theta^{1/2}, \theta^{1/2}]$, i.e., only on the left side of zero.

Similar to the argument in Theorem~\ref{t.limit shape}, the idea is to show that, conditionally on $\{\cL_1(0)=\theta\}$,  $\cL_1$ is ``pinned'' close to the parabola $-x^2$ at $-L\theta^{1/2}$ and at $-\theta^{1/2}$ and then apply Proposition~\ref{p.para fluctuation}. From Theorem~\ref{t.limit shape} we have this pinning at $-\theta^{1/2}$. We next prove that we also have it at $-L\theta^{1/2}$.

Consider the line $\ellt_L$ tangent to $-x^2$ at $(-L\theta^{1/2}, -L^2\theta)$. Let $K>L$ be such that, for some $c>0$,
\begin{equation}\label{e.alpha requirement}
\P\left(\cL_1(-K\theta^{1/2}) > \ellt_L(-K\theta^{1/2}) \mid \cL_1(0) = \theta\right) \leq \exp(-c\theta^{3/2}).
\end{equation}
Such a $K$ exists because, by stationarity of $\cL_1(x)+x^2$ and monotonicity in conditioning (Lemma~\ref{l.bound point conditioning by tail conditioning}), we can upper bound the previous probability by
\begin{align*}
\P\left(\cL_1(-K\theta^{1/2}) > \ellt_L(-K\theta^{1/2}) \midd \cL_1(0) \geq \theta)\right)
&\leq \frac{\P(\cL_1(-K\theta^{1/2}) > \ellt_L(-K\theta^{1/2}))}{\P(\cL_1(0) \geq \theta)}\\
&= \frac{\P(\cL_1(0) > \ellt_L(-K\theta^{1/2})+K^2\theta)}{\P(\cL_1(0) \geq \theta)},
\end{align*}
and $\ellt_L(-K\theta^{1/2}) + K^2\theta$ is such that, for any $C<\infty$, there is a $K$ such that $\ellt_L(-K\theta^{1/2}) + K^2\theta > C\theta$. By choosing a $C$ large enough and using the upper and lower bounds on $\P(\cL_1(0)\geq \theta)$ from Theorem~\ref{mt.one point tail asymptotics}, we obtain \eqref{e.alpha requirement}.

We will now show that we have the pinning at $-L\theta^{1/2}$ with high probability, i.e., letting $\pin_{L,M} = \smash{\{\cL_1(-L\theta^{1/2}) \leq -L^2\theta + M\theta^{1/4}\}}$, where $M$ will ultimately be taken to be $\log\theta$, we want to show that
\begin{equation}\label{e.non-immediate pinning}
\P\left((\pin_{L,M})^c\midd \cL_1(0) = \theta\right) \leq \exp(-cM^2).
\end{equation}
To prove \eqref{e.non-immediate pinning}, we may assume from \eqref{e.alpha requirement} and Theorem~\ref{t.limit shape} with high probability that, at $-K\theta^{1/2}$ and $-\theta^{1/2}$, $\cL_1$ is lower than the tangent line $\ellt_L$. More precisely, we see from \eqref{e.alpha requirement} and Theorem~\ref{t.limit shape} that
\begin{align*}
\MoveEqLeft[4]
\P\left((\pin_{L,M})^c\midd \cL_1(0) = \theta\right)\\
&\leq \P\left((\pin_{L,M})^c, \cL_1(-K\theta^{1/2}) \leq \ellt_L(-K\theta^{1/2}), \cL_1(-\theta^{1/2}) \leq -\theta + M\theta^{1/4} \ \Big|\  \cL_1(0) = \theta\right)\\
&\qquad + \exp(-c\theta^{3/2}) + \exp(-cM^2).
\end{align*}
The first term is bounded by $\exp(-cM^2)$ by the first inequality of Proposition~\ref{p.versatile Airy tangent estimate} by taking $I = [-K\theta^{1/2}, -\theta^{1/2}]$ and $x_\tan = -L\theta^{1/2}$. We need $M\leq \theta^{3/8}$ to apply this bound, but this is satisfied as we have taken $M=\log\theta$. So, overall, we obtain \eqref{e.non-immediate pinning} for such~$M$.

With the pinning of $\cL_1$ at $-L\theta^{1/2}$ and $-\theta^{1/2}$ established, we may move on to the fluctuations of $\cL_1$ on $J = [-L\theta^{1/2}, -\theta^{1/2}]$. Let $A_{L, \theta}(\cL_1) = \{\sup_{x\in J} \left(\cL_1(x)+x^2\right) > \theta^{1/4} \log \theta\}$, where we have now made $M = \log \theta$ explicit.

First, Lemma~\ref{l.lower curve control new} with $I=J$, $R=\theta_j=\theta$, and $f(x) = -x^2-\theta^{1/4}\log\theta$ yields that, if $\theta > (\theta+K)^{3/4}$ and $\log((L+1)\theta^{1/2}) \leq (\log \theta)^2$ (which clearly holds for $\theta>\theta_0(L)$), then
\begin{align*}
\MoveEqLeft[10]
\P\left(A_{L,\theta}(\cL_1), \sup_{x\in J}(\cL_2(x) + x^2) \geq (\log \theta)^C \mid \cL_1(0) = \theta\right)\\
&\leq \frac{\P\left(A_{L,\theta}(\cL_1)\mid \cL_1(0)=\theta\right)}{2\cdot \P\left(\inf_{x\in J} (\cL_1(x) + x^2) \geq \theta^{1/4}\log\theta \mid \cL_1(0)=\theta+K\right)}.
\end{align*}
By taking $K=4L^2\theta$ and applying the first part of Theorem~\ref{t.limit shape} with $M$ a large enough constant, we see that the probability in the denominator can be made at least $3/4$. Decomposing $\P(A_{L,\theta}(\cL_1)\mid \cL_1(0)=\theta)$ based on whether $\sup_{x\in J}(\cL_2(x) + x^2) \geq (\log \theta)^C$ occurs or not, the previous display implies
\begin{align*}
\P\left(A_{L,\theta}(\cL_1)\mid \cL_1(0)=\theta\right) 
&\leq 3\cdot \P\left(A_{L,\theta}(\cL_1), \sup_{x\in J}(\cL_2(x) + x^2) \leq (\log \theta)^C\mid \cL_1(0)=\theta\right).
\end{align*}

Then by a union bound we see that
\begin{equation}\label{e.parabolic fluctuation break up}
\begin{split}
\MoveEqLeft[2]
\P(A_{L,\theta}(\cL_1) \mid \cL_1(0) = \theta)\\
&\leq 3\cdot \P\left(A_{L,\theta}(\cL_1), \pin_{L, \log \theta}, \pin_{1, \log \theta}, \sup_{x\in J}\left(\cL_2(x)+x^2\right) \leq (\log \theta)^{C} \midd \cL_1(0) = \theta\right)\\
&\quad + 3\cdot \P\left((\pin_{L,\log\theta})^c  \cup (\pin_{1,\log\theta})^c  \midd \cL_1(0) = \theta\right).%
\end{split}
\end{equation}
By Theorem~\ref{mt.one point limit shape} (with $M=\log\theta$) and \eqref{e.non-immediate pinning}, we know that
$$\P\left((\pin_{L,\log\theta})^c\cup (\pin_{1,\log\theta})^c \ \Big|\ \cL_1(0) = \theta\right) \leq \exp(-c(\log\theta)^2).$$
So it only remains to bound the first term of \eqref{e.parabolic fluctuation break up}.

We will do this by massaging things into the form of Proposition~\ref{p.para fluctuation}. The argument bears similarities to that of Proposition~\ref{p.versatile Airy tangent estimate} as derived as a consequence of Proposition~\ref{p.brownian versatile tangent estimate}, with Proposition~\ref{p.para fluctuation} in place of the latter.
We again make use of the $H_t$-Brownian Gibbs property. Let $\F$ be the $\sigma$-algebra generated by the lower curves and the top curve outside of $J$ (which recall is $[-L\theta^{1/2}, -\theta^{1/2}]$). Then
\begin{align*}
\MoveEqLeft[8]
\P\left(A_{L,\theta}(\cL_1), \pin_{L, \log \theta}, \pin_{1, \log \theta}, \sup_{x\in J}\left(\cL_2(x)+x^2\right) \leq (\log \theta)^{C}\ \Big|\  \cL_1(0) = \theta\right)\\
&= \E\left[\PF\left(A_{L,\theta}(\cL_1)\right)\one_{\pin_{L, \log \theta},\, \pin_{1, \log \theta},\, \sup_{x\in J} \left(\cL_2(x)+x^2\right) \leq (\log \theta)^{C}}  \midd \cL_1(0) = \theta\right].
\end{align*}
Under $\PF$, $\cL_1$ on $J$ is a Brownian bridge from $(-L\theta^{1/2}, \cL_1(-L\theta^{1/2}))$ to $(-\theta^{1/2}, \cL_1(-\theta^{1/2}))$, tilted by the Radon-Nikodym derivative given by $\smash{\tilde W_{H_t}/\tilde Z_{H_t}}$ with the same boundary values and lower boundary curve $\cL_2$. On the events mentioned, by monotonicity (Lemma~\ref{l.monotonicity}), this Brownian bridge is stochastically dominated by a Brownian bridge $B$ from $(-L\theta^{1/2}, -L^2\theta + \theta^{1/4}\log\theta)$ to $(-\theta^{1/2}, -\theta + \theta^{1/4}\log\theta)$ which is reweighted by $W_{H_t}/Z_{H_t}$ associated with the same boundary data and lower curve $-x^2 + (\log \theta)^{C}$. Since $A_{L,\theta}$ is an increasing event, we see that the expectation in the immediately previous display is bounded above by
$$Z_{H_t}^{-1}\E\left[\one_{B\in A_{L,\theta}} W_{H_t}\right].$$
We want to bound this by applying Proposition~\ref{p.para fluctuation} after using that $W_{H_t}\leq 1$. To have the expression fit into the framework of that proposition, we need to shift $B$ and the event $A_{L,\theta}$ down by $\smash{(\log\theta)^{C}}$. Doing so and applying Proposition~\ref{p.para fluctuation} with $H=\smash{\theta^{1/4}\log\theta - (\log\theta)^{C}}$ yields that the previous display is upper bounded by $\exp(-c(\log\theta)^2)$.
Tracing this bound back completes the proof of the upper bound on the limit shape in Proposition~\ref{p.para fluctuation h_t}.
\end{proof}

\section{One-point estimates}\label{s.one point asymptotics}

In this section we prove the one-point estimates under Assumptions~\ref{as.bg}--\ref{as.tails}, thereby proving Theorems~\ref{mt.one point density asymptotics} and \ref{mt.one point tail asymptotics}. Here and in the rest of the paper, unless explicitly indicated otherwise, Assumptions~\ref{as.bg}--\ref{as.tails} refers to Assumptions~\ref{as.bg}, \ref{as.corr}(a), \ref{as.weak bk}(b\ensuremath{'}), \ref{as.mono in cond stronger}, and \ref{as.tails}.

We prove the upper bound on the tail in Section~\ref{s.one point.upper bound}, the lower bound in Section~\ref{s.one point.lower bound}, and both the density bounds in Section~\ref{s.one point.density}.

\subsection{Upper bound on the tail}\label{s.one point.upper bound}

For the reader's convenience, here we restate the upper bound in Theorem~\ref{mt.one point tail asymptotics}. The lower bound is stated as Theorem~\ref{t.upper tail lower bound} (using fewer assumptions).

\begin{theorem}\label{t.upper tail upper bound}
Let $\cL$ satisfy Assumptions~\ref{as.bg}--\ref{as.tails}. There exist $\theta_0 > 0$ and $C<\infty$ such that, for $\theta > \theta_0$,
$$\P\left(\cL_1(0) \geq \theta\right) \leq \exp\left(-\frac{4}{3}\theta^{3/2} + C\theta^{3/4}\right).$$
\end{theorem}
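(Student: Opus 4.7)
The plan is to apply the $H_t$-Brownian Gibbs property on the interval $I_\theta = [-\theta^{1/2}, \theta^{1/2}]$, with the top curve viewed as a rate two Brownian bridge reweighted by $W_{H_t}/Z_{H_t}$. The event $\{\h_1(0) \geq \theta\}$ is increasing, so by Lemma~\ref{l.monotonicity} its probability can be upper bounded by replacing the boundary data (side values and lower curve) with anything stochastically larger. The aim is to show that, with overwhelming probability, the boundary values $\h_1(\pm\theta^{1/2})$ can be raised to $-\theta + M\theta^{1/4}$ and the lower curve $\h_2$ on $I_\theta$ can be raised to $-x^2 + M\theta^{1/4}$ for $M = O(\theta^{1/2})$.

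First I would decompose $\P(\h_1(0)\geq\theta)$ as the probability on the good event where these bounds hold, plus an error. For the good event on the side values, I would use a version of the one-point limit shape (Theorem~\ref{t.limit shape}) applied under the conditioning $\{\h_1(0)\geq\theta\}$; alternatively and more directly, one can use Assumption~\ref{as.tails} together with Lemma~\ref{l.bound point conditioning by tail conditioning} and stationarity of $x\mapsto \h_1(x)+x^2$ to bound $\P(\h_1(\pm\theta^{1/2}) > -\theta + M\theta^{1/4} \mid \h_1(0)\geq\theta)$ by $\exp(-cM^{3/2})\cdot\P(\h_1(0)\geq\theta)^{-1}$; choosing $M$ a sufficiently large multiple of $\theta^{1/2}$ makes this error absorbable into the target bound. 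For the lower curve, the BK part of Assumption~\ref{as.corr} reduces the conditional law of $\h_2$ to being dominated by an unconditional copy of $\h_1$, so Proposition~\ref{p.sharp sup over interval tail}, Lemma~\ref{l.lower curve control} and a union bound over $O(\theta^{1/2})$ unit intervals covering $I_\theta$ give that $\sup_{x\in I_\theta}\h_2(x)+x^2 \leq M\theta^{1/4}$ with probability $1 - \exp(-c M^{3/2}\theta^{3/8})$.

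After conditioning on the external $\sigma$-algebra $\Fext(1, I_\theta)$ and invoking monotonicity, we are left with estimating
\begin{equation*}
Z_{H_t}^{-1}\cdot \P\bigl(B(0) \geq \theta\bigr),
\end{equation*}
where $B$ is a rate two Brownian bridge on $I_\theta$ with endpoints $-\theta + M\theta^{1/4}$ and $Z_{H_t}$ is the partition function with these endpoints and lower boundary curve $-x^2 + M\theta^{1/4}$. The numerator is a Gaussian tail: $B(0)$ is normal with mean $-\theta + M\theta^{1/4}$ and variance $\theta^{1/2}$, so Lemma~\ref{l.normal bounds} gives $\P(B(0)\geq\theta) \leq \exp(-2\theta^{3/2} + O(M\theta^{5/4}))$. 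The denominator is lower bounded using Corollary~\ref{c.pos temp parabolic avoidance lower bound} (for $t<\infty$) or directly Proposition~\ref{p.parabola avoidance probability} (for $t=\infty$), yielding $Z_{H_t} \geq \exp(-\tfrac{2}{3}\theta^{3/2} - O(\theta^{1/2}\log\theta))$ after accounting for the $M\theta^{1/4}$ shift in the lower boundary (which costs only $O(M\theta^{3/4})$ by translating the bridge). Taking the ratio produces $\exp(-\tfrac{4}{3}\theta^{3/2} + O(M\theta^{5/4}))$, and optimizing over $M$ around $\theta^{-1/2}\cdot\theta^{3/4} = \theta^{1/4}$ — i.e.\ taking $M$ of constant order in the Gaussian tail but permitting $M = C\theta^{1/2}$ in the boundary-control errors — balances the error at $O(\theta^{3/4})$.

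The main obstacle is keeping the $\theta^{3/4}$-size error terms coherent across the three pieces: the probability that the side values exceed $-\theta + M\theta^{1/4}$, the partition function lower bound with a raised parabolic lower boundary, and the Gaussian numerator. The Gaussian penalty for shifting the endpoints by $M\theta^{1/4}$ is $\exp(O(M\theta^{5/4}))$, while the partition function shift costs $\exp(O(M\theta^{3/4}))$, so the endpoint parameter $M$ must be taken as a constant (or at most $O(\log\theta)$) rather than scaling with $\theta^{1/2}$; correspondingly, to make the side-value error term small, I would first upgrade the one-point upper tail input from Assumption~\ref{as.tails} (with exponent $\beta$) together with Lemma~\ref{l.bound point conditioning by tail conditioning} into a conditional statement with constant $M$, which works because the ratio $\exp(-c(\theta+\theta^{1/2}M\theta^{1/4})^{\beta})/\exp(-c'\theta^\alpha)$ is much smaller than $\exp(-\tfrac{4}{3}\theta^{3/2})$ already when $M$ is a suitable constant. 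Verifying this arithmetic — and in particular handling the zero versus positive temperature cases uniformly via Corollary~\ref{c.pos temp parabolic avoidance lower bound} — is the bulk of the technical work.
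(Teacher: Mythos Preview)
Your overall architecture --- resample the top curve on $[-\theta^{1/2},\theta^{1/2}]$, push up the boundary data by monotonicity, and bound the resulting ratio of a Gaussian tail by a parabolic partition function --- is exactly the paper's. The genuine gap is in your control of the side values $\h_1(\pm\theta^{1/2})$.

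Your ``alternative'' route via Assumption~\ref{as.tails} does not work with $M$ of constant order. By stationarity,
\[
\P\bigl(\h_1(-\theta^{1/2}) > -\theta + M\theta^{1/4}\bigr) = \P\bigl(\h_1(0) > M\theta^{1/4}\bigr),
\]
and for constant $M$ this is of order one, while $\P(\h_1(0)\geq\theta)$ is exponentially small; so the crude ratio bound $\P(A\mid B)\leq \P(A)/\P(B)$ gives nothing. Your expression $\exp\bigl(-c(\theta+\theta^{1/2}M\theta^{1/4})^\beta\bigr)$ in the last paragraph does not correspond to any quantity in the problem --- the relevant threshold after the parabolic shift is $M\theta^{1/4}$, not $\theta+M\theta^{3/4}$ --- and the claimed smallness of that ratio is simply false.

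The side-value control must therefore come from the limit-shape Theorem~\ref{t.limit shape}, as you note in your first option. But that theorem is stated under the conditioning $\{\h_1(0)=\theta\}$, not $\{\h_1(0)\geq\theta\}$, and under the latter $\h_1(0)$ could be arbitrarily large, altering the shape. The paper's resolution, which you are missing, is to first reduce to bounding $\P\bigl(\h_1(0)\in[\theta-1,\theta]\bigr)$ (summing over integer translates recovers the tail), and then invoke monotonicity in conditioning (Lemma~\ref{l.bound point conditioning by tail conditioning}) to transfer Theorem~\ref{t.limit shape} from the point conditioning $\{\h_1(0)=\theta\}$ to the interval conditioning $\{\h_1(0)\in[\theta-1,\theta]\}$. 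This gives $\h_1(\pm\theta^{1/2})\leq -\theta+M\theta^{1/4}$ with probability at least $\tfrac12$ for a \emph{constant} $M$, after which your Gaussian/partition-function computation goes through.

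One smaller arithmetic point: the Gaussian penalty for endpoints at $-\theta+M\theta^{1/4}$ is
\[
\exp\Bigl(-\tfrac{(2\theta-M\theta^{1/4})^2}{2\theta^{1/2}}\Bigr)=\exp\bigl(-2\theta^{3/2}+2M\theta^{3/4}-\tfrac12 M^2\bigr),
\]
so the error is $O(M\theta^{3/4})$, not $O(M\theta^{5/4})$ as you wrote. With $M$ constant this is already the target $\theta^{3/4}$ error, so there is no optimization over $M$ to perform.
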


\begin{remark}
In the zero temperature case, the above bound trivially also applies to $\cL_k(0)$ for any $k$ by ordering of the curves (in the positive temperature case they are not ordered, but it is also not clear if they are stochastically ordered, which would have sufficed). 
However observe that, in the zero-temperature case, Assumption~\ref{as.corr}(b) and Theorem~\ref{t.upper tail upper bound} imply an improved better tail bound for $\cL_2(0)$:
\begin{equation}\label{e.upper tail with bk}
\begin{split}
\P(\cL_2(0)\geq \theta)
= \P(\cL_2(0)\geq \theta, \cL_1(0)\geq \theta)
&= \P(\cL_2(0)\geq \theta\mid \cL_1(0)\geq \theta)\cdot\P(\cL_1(0)\geq \theta)\\
&\leq \P(\cL_1(0)\geq \theta)^2 \leq \exp\left(-\frac{8}{3}\theta^{3/2}+2C\theta^{3/4}\right).
\end{split}
\end{equation}
In the first equality we used that, since $t=\infty$, $\cL$ is a non-intersecting ensemble and so $\cL_2(0)\geq \theta \implies \cL_1(0)\geq \theta$.

Now, if one has the BK inequality in Assumption~\ref{as.corr}(b) extended in a natural fashion to the $k$\textsuperscript{th} curve (bounding an upper tail probability of the $k$\textsuperscript{th} curve by the first curve's probability to the $k$\textsuperscript{th} power), which indeed can be established for the parabolic Airy line ensemble, then in conjunction with a similar argument as above one obtains that
\begin{equation}\label{e.bk upper tail for general k}
\P(\cL_k(0)\geq \theta) \leq \exp\left(-\frac{4}{3}k\theta^{3/2} + Ck\theta^{3/4}\right).
\end{equation}
We expect the first order term of $\frac{4}{3}k\theta^{3/2}$ would be sharp; however, have not included an argument for the lower bound here, and we have not found an estimate of this explicit form in the literature.
\end{remark}

Given Theorems~\ref{t.limit shape} and \ref{t.upper tail upper bound}, we can prove Theorem~\ref{mt.one point limit shape}.

\begin{proof}[Proof of Theorem~\ref{mt.one point limit shape}] 
Theorem~\ref{t.upper tail upper bound} yields that Assumption~\ref{as.tails} holds with $\beta=3/2$. Using this in the first bound of Theorem~\ref{t.limit shape} yields the first bound of Theorem~\ref{mt.one point limit shape}; the second bound is the same in both.
\end{proof}

We next prove Theorem~\ref{t.upper tail upper bound}; recall the proof outline given in Section~\ref{s.intro.proof ideas.one-point}.

\begin{proof}[Proof of Theorem~\ref{t.upper tail upper bound}]
We will in fact show that $\P(\cL_1(0)\in[\theta-1,\theta]) \leq \exp(-\frac{4}{3}\theta^{3/2}+C\theta^{3/4})$, which will clearly suffice since $\P(\cL_1(0)\geq \theta) = \sum_{s=\theta}^\infty\P(\cL_1(0)\in[s,s+1])$. We consider the event $\cL_1(0)\in[\theta-1,\theta]$ instead of the entire upper tail as under the former event we have an upper bound on $\cL_1(0)$ which converts to a high probability upper bound on $\smash{\cL_1(\pm \theta^{1/2})}$ from Theorem~\ref{t.limit shape}.

Indeed,
recall that Theorem~\ref{t.limit shape} implies that $\P(\cL_1(\pm\theta^{1/2})+\theta \geq M\theta^{1/4}\mid \cL_1(0)=\theta)$ is small for large enough $M$ independent of $\theta>\theta_0$. By Lemma~\ref{l.bound point conditioning by tail conditioning} (monotonicity in conditioning), we can convert Theorem~\ref{t.limit shape}'s statement to the same statement conditional on $\cL_1(0)\in[\theta-1,\theta]$. Next, Lemma~\ref{l.lower curve control new} with $f\equiv -\infty$ gives that $\P(\sup_{[-\theta^{1/2},\theta^{1/2}]}\left(\cL_2(x)+x^2\right)\geq \smash{(\log \theta)^{C}}\mid \cL_1(0)\in[\theta-1,\theta])\leq\frac{1}{2}$ for some $C>0$. Combining all this by a union bound, there exists a large enough constant $M$ (independent of $\theta$) such that
\begin{align*}
\MoveEqLeft[2]
\tfrac{1}{4}\P\left(\cL_1(0)\in [\theta-1,\theta]\right)\\
&\leq \P\left(\cL_1(0)\in [\theta-1,\theta]\right)\\
&\qquad\times \P\left(\cL_1(\pm\theta^{1/2}) +\theta \leq M\theta^{1/4}, \sup_{[-\theta^{1/2}, \theta^{1/2}]} \left(\cL_2(x)+x^2\right) \leq (\log \theta)^{C}\ \Big|\  \cL_1(0)\in [\theta-1,\theta]\right)\\
&= \P\left(\cL_1(0)\in [\theta-1,\theta], \cL_1(\pm \theta^{1/2}) +\theta \leq M\theta^{1/4}, \sup_{[-\theta^{1/2}, \theta^{1/2}]} \left(\cL_2(x)+x^2\right) \leq (\log \theta)^{C}\right).
\end{align*}
Let $\F$ be the $\sigma$-algebra generated by everything outside the top curve on $[-\theta^{1/2}, \theta^{1/2}]$. The probability in the last display is
\begin{align*}
\MoveEqLeft[8]
\E\left[\PF\bigl(\cL_1(0)\in [\theta-1,\theta]\bigr)\one_{\cL_1(\pm \theta^{1/2})+\theta \leq M\theta^{1/4},\, \sup_{[-\theta^{1/2}, \theta^{1/2}]} \left(\cL_2(x)+x^2\right) \leq (\log \theta)^{C}}\right]\\
&\leq \E\left[\PF\bigl(\cL_1(0)\geq \theta-1\bigr)\one_{\cL_1(\pm \theta^{1/2})+\theta \leq M\theta^{1/4},\, \sup_{[-\theta^{1/2}, \theta^{1/2}]} \left(\cL_2(x)+x^2\right) \leq (\log \theta)^{C}}\right].
\end{align*}

Now conditionally on $\F$, $\cL_1$ is a Brownian bridge from $(-\theta^{1/2}, \cL_1(-\theta^{1/2}))$ to $(\theta^{1/2}, \cL_1(\theta^{1/2}))$ which is reweighted by the Radon-Nikodym factor $\smash{\tilde W_{H_t}/\tilde Z_{H_t}}$ associated to the boundary data $\cL_2$. By monotonicity Lemma~\ref{l.monotonicity} and on the $\F$-measurable event in the indicator, this bridge is stochastically dominated by the Brownian bridge $B$ from $(-\theta^{1/2}, -\theta+M\theta^{1/4})$ to $(\theta^{1/2}, -\theta+M\theta^{1/4})$ reweighted by the factor $W_{H_t}/Z_{H_t}$ associated to the boundary data $-x^2 + (\log \theta)^{C}$. Further $W_{H_t}\leq 1$. Thus we see that the previous display is upper bounded (since the event there is increasing) by
\begin{align}\label{e.ratio of brownian probabilities}
\MoveEqLeft[8]
Z_{H_t}^{-1}\P\left(B(0)\geq \theta-1\right)
\end{align}
Since $B(0)$ is a normal random variable of mean $-\theta+M\theta^{1/4}$ and variance $2\times\frac{\theta^{1/2}\times\theta^{1/2}}{2\theta^{1/2}}=\theta^{1/2}$, using the normal tail bounds from Lemma~\ref{l.normal bounds}, the numerator is upper bounded by
\begin{align*}
\exp\left(-\frac{(2\theta-M\theta^{1/4}-1)^2}{2\theta^{1/2}}\right) \leq \exp\left(-2\theta^{3/2} + C\theta^{3/4}\right).
\end{align*}
Next we turn to the denominator of \eqref{e.ratio of brownian probabilities}. By Corollary~\ref{c.pos temp parabolic avoidance lower bound}, for large enough $\theta$ such that $\smash{M\theta^{1/4} > t_0^{-1/6}+1 > t^{-1/6}+1}$ so as to satisfy the corollary's hypotheses,
\begin{align*}
Z_{H_t}
\geq \exp\left(-\frac{1}{12}(2\theta^{1/2})^3 - 6\theta^{1/2}\log(2\theta^{1/2})\right)
&\geq \exp\left(-\frac{2}{3}\theta^{3/2} - C\theta^{3/4}\right).
\end{align*}
Together, this implies that 
\begin{align*}
\P\left(\cL_1(0) \in [\theta-1,\theta]\right) \leq \exp\left(-\frac{4}{3}\theta^{3/2} + C\theta^{3/4}\right)
\end{align*}
for all large $\theta$, completing the proof of Theorem~\ref{t.upper tail upper bound}.
\end{proof}

\subsection{Lower bound on the tail}\label{s.one point.lower bound}

The following is the lower bound half of Theorem~\ref{mt.one point tail asymptotics}.

\begin{theorem}\label{t.upper tail lower bound}
Let $\cL$ satisfy Assumptions~\ref{as.bg} and \ref{as.corr}(a). There exist $C>0$ and $\theta_0$ such that, for $\theta>\theta_0$,
$$\P\left(\cL_1(0) \geq \theta\right) \geq \exp\left(-\frac{4}{3}\theta^{3/2} - C\theta^{1/2}\log \theta\right).$$
\end{theorem}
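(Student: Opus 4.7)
My plan is to establish a recursive inequality for $p(\theta) := \P(\h_1(0) \geq \theta)$ of the form $p(\theta) \geq c\hair\theta^{-3/4}e^{-\theta^{3/2}}\hair p(\theta/4)^2$ with $c>0$ uniform in $t \in [t_0,\infty]$, and then iterate it down to a scale where a uniform-in-$t$ base case takes over. The geometric intuition is the one illustrated in Figure~\ref{f.fkg}: the points $(\pm\tfrac{1}{2}\theta^{1/2}, 0)$ lie on the one-point limit shape $\tri$, and the chord joining them is tangent to the parabola $-x^2$, so a Brownian bridge pinned at these points climbs to $\theta$ at $0$ while barely feeling the lower-boundary curve $\h_2$.

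To prove the recursion, set $A_- = \{\h_1(-\tfrac{1}{2}\theta^{1/2}) \geq 0\}$ and $A_+ = \{\h_1(\tfrac{1}{2}\theta^{1/2}) \geq 0\}$. By stationarity of $\h_1(x)+x^2$ (Assumption~\ref{as.bg}) each of $A_\pm$ has probability $p(\theta/4)$, and by the FKG inequality (Assumption~\ref{as.corr}) $\P(A_-\cap A_+) \geq p(\theta/4)^2$. Let $\F = \Fext(1,[-\tfrac{1}{2}\theta^{1/2},\tfrac{1}{2}\theta^{1/2}])$; $A_\pm$ are $\F$-measurable by continuity. Under $\PF$, the curve $\h_1$ on the interval is a rate-two Brownian bridge between the observed endpoint values, reweighted by $W_{H_t}/Z_{H_t}$ with lower curve $\h_2$ (the $t=\infty$ case giving the hard non-intersection constraint). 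On $A_-\cap A_+$, monotonicity in boundary data and in the lower curve (Lemma~\ref{l.monotonicity}) lets us lower the endpoint values to $0$ and send the lower curve to $-\infty$, yielding a stochastic lower bound by the free rate-two Brownian bridge $B$ from $(-\tfrac{1}{2}\theta^{1/2},0)$ to $(\tfrac{1}{2}\theta^{1/2},0)$. Since $\{\h_1(0)\geq\theta\}$ is increasing, this gives
\begin{equation*}
p(\theta) \;\geq\; \P(A_-\cap A_+)\cdot\P(B(0)\geq\theta) \;\geq\; p(\theta/4)^2 \cdot \P(B(0)\geq\theta).
\end{equation*}
Since $B(0) \sim N(0, \tfrac{1}{2}\theta^{1/2})$, Lemma~\ref{l.normal bounds} yields $\P(B(0)\geq\theta) \geq c\hair\theta^{-3/4}\exp(-\theta^{3/2})$ for $\theta$ large, completing the recursion.

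To iterate, set $\theta_k = 4^{-k}\theta$ and unfold $K$ times:
\begin{equation*}
-\log p(\theta) \;\leq\; \sum_{k=0}^{K-1} 2^k\bigl(\theta_k^{3/2} + \tfrac{3}{4}\log\theta_k + \log c^{-1}\bigr) \;+\; 2^K\bigl(-\log p(\theta_K)\bigr).
\end{equation*}
The telescoping identity $\sum_{k=0}^{K-1} 2^k\theta_k^{3/2} = \theta^{3/2}\sum_{k=0}^{K-1} 4^{-k} \to \tfrac{4}{3}\theta^{3/2}$ as $K\to\infty$ generates the sharp constant. Choosing $K$ so that $\theta_K$ is of some fixed positive order, we have $K = \Theta(\log\theta)$ and $2^K = \Theta(\theta^{1/2})$; the polynomial error $\sum 2^k(\tfrac{3}{4}\log\theta_k + \log c^{-1})$ is dominated by $(\log\theta_*+K\log 4)\cdot 2^K = O(\theta^{1/2}\log\theta)$, matching the stated error term. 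Finally, $2^K(-\log p(\theta_K)) = O(\theta^{1/2})$ provided we have a uniform-in-$t$ base case $\inf_{t\in[t_0,\infty]} p(\theta_*) > 0$ for some fixed $\theta_*$.

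The main obstacle is this base case, since we only assume one-point tightness from Assumption~\ref{as.tails} and not a quantitative lower bound. The plan is to establish it by a one-shot variant of the same argument: tightness gives a positive-probability event on which $\h_1$ at some deterministic point exceeds a fixed level, and FKG together with the Gibbs/monotonicity resampling on an $O(1)$ interval converts this into $p(\theta_*) \geq c_* > 0$ uniformly in $t \in [t_0,\infty]$. Combining this with the recursion and iteration produces the inequality $p(\theta) \geq \exp(-\tfrac{4}{3}\theta^{3/2} - C\theta^{1/2}\log\theta)$ asserted in Theorem~\ref{t.upper tail lower bound}.
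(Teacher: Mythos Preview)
Your proposal is correct and follows essentially the same approach as the paper: the identical recursion $p(\theta)\geq c\,\theta^{-3/4}e^{-\theta^{3/2}}p(\theta/4)^2$ via FKG at $\pm\tfrac12\theta^{1/2}$, Gibbs resampling with the lower boundary dropped by monotonicity, and a tightness-based base case (the paper's Lemma~\ref{l.first suboptimal constant}). The only cosmetic difference is that you unfold the recursion top-down from $\theta$ to a fixed scale $\theta_*$, whereas the paper iterates bottom-up, tracking constants $C_n\to\tfrac43$ starting from the crude bound $\exp(-5\theta^{3/2})$; both yield the same $O(\theta^{1/2}\log\theta)$ error.
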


Observe that the above statement is made without using Assumption~\ref{as.tails} which gives an a priori lower bound on the upper tail. Thus the above actually verifies the lower bound part of the upper tail in Theorem~\ref{mt.extremal}, as well as the lower bound of Assumption~\ref{as.tails} with $\alpha=\frac{3}{2}$.

\begin{proof}[Proof of Theorem~\ref{mt.one point tail asymptotics}]
This is an immediate consequence of Theorems~\ref{t.upper tail upper bound} and \ref{t.upper tail lower bound}.
\end{proof}

We start by proving a lower bound with the right exponent of $3/2$ but a suboptimal constant using only Assumptions~\ref{as.bg} and \ref{as.corr}(a).

\begin{lemma}\label{l.first suboptimal constant}
Let $\cL$ satisfy Assumptions~\ref{as.bg} and \ref{as.corr}(a). There exists $\theta_0$ such that, for $\theta>\theta_0$,
$$\P\Big(\cL_1(0) \geq \theta\Big) \geq \exp\big(-5\theta^{3/2}\big).$$
\end{lemma}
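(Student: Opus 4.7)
The plan is to derive a functional recursion for $p_t(\theta) := \P(\h_1(0) \geq \theta)$ and iterate it; because the target exponent $5$ is considerably larger than the natural exponent $\tfrac{4}{3}$ produced by the recursion, there will be comfortable slack to absorb lower-order error terms.

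First I will set up the recursion. Lower-bound
$$p_t(\theta) \geq \P\Bigl(\h_1(0) \geq \theta,\ \h_1(-\tfrac{1}{2}\theta^{1/2}) \geq 0,\ \h_1(\tfrac{1}{2}\theta^{1/2}) \geq 0\Bigr),$$
and condition on $\F := \Fext(1, [-\tfrac{1}{2}\theta^{1/2}, \tfrac{1}{2}\theta^{1/2}])$. Under $\PF$, the $H_t$-Brownian Gibbs property describes $\h_1$ on the interval as a rate-two Brownian bridge between its endpoint values, reweighted by the Gibbs factor with lower boundary curve $\h_2$. Since $\{\h_1(0) \geq \theta\}$ is increasing, monotonicity (Lemma~\ref{l.monotonicity}) allows me, on the indicator event $\{\h_1(\pm\tfrac{1}{2}\theta^{1/2}) \geq 0\}$, to lower the endpoint values to $0$ and simultaneously to send the lower boundary curve to $-\infty$, which removes the Radon--Nikodym tilt entirely (since $H_t(x) \to 0$ as $x \to -\infty$ in positive temperature). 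This produces an unconditional rate-two Brownian bridge $B$ from $(-\tfrac{1}{2}\theta^{1/2}, 0)$ to $(\tfrac{1}{2}\theta^{1/2}, 0)$. Combining with FKG (Assumption~\ref{as.corr}) and stationarity of $\h_1(x) + x^2$ (Assumption~\ref{as.bg}), which together yield $\P(\h_1(\pm\tfrac{1}{2}\theta^{1/2}) \geq 0) \geq p_t(\theta/4)^2$, I obtain
$$p_t(\theta) \geq p_t(\theta/4)^2 \cdot \P(B(0) \geq \theta).$$
Since $B(0) \sim N(0, \tfrac{1}{2}\theta^{1/2})$, Lemma~\ref{l.normal bounds} gives $\P(B(0) \geq \theta) \geq c\, \theta^{-3/4} \exp(-\theta^{3/2})$ for large $\theta$.

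Next I will iterate. Writing $q_t(\theta) := -\log p_t(\theta)$ converts the recursion into $q_t(\theta) \leq 2\, q_t(\theta/4) + \theta^{3/2} + O(\log \theta)$. Unrolling $k$ times and using $\sum_{j \geq 0} 2^j (\theta/4^j)^{3/2} = \theta^{3/2} \sum_{j \geq 0} 2^{-2j} = \tfrac{4}{3}\theta^{3/2}$, I get
$$q_t(\theta) \leq \tfrac{4}{3}\theta^{3/2} + 2^k q_t(\theta/4^k) + O\!\bigl(2^k \log \theta\bigr).$$
Choosing $k = \lfloor \log_4(\theta/\theta_\star)\rfloor$ for a fixed base scale $\theta_\star$ makes $2^k \leq (\theta/\theta_\star)^{1/2}$, so the total error is at most $(\theta/\theta_\star)^{1/2}\, q_t(\theta_\star) + O(\theta^{1/2} \log \theta)$. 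Once $q_t(\theta_\star)$ is controlled, this is $o(\theta^{3/2})$, and the bound $q_t(\theta) \leq 5\theta^{3/2}$ follows for $\theta$ sufficiently large.

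The main obstacle is the \emph{uniform-in-$t$} base case: securing $p_t(\theta_\star) \geq c_0 > 0$ for some fixed $\theta_\star$ and $c_0$, valid for all $t \in [t_0, \infty]$. For each individual $t$, qualitative positivity $p_t(\theta_\star) > 0$ is automatic from absolute continuity of $\h_1$ with respect to Brownian bridge on compact intervals (a consequence of Assumption~\ref{as.bg}), which already gives the conclusion of the lemma with a $t$-dependent constant in place of $5$. To make $\theta_0$ uniform over $t \in [t_0, \infty]$, I will run an analogous FKG-and-Brownian-Gibbs argument at unit scale, demanding that $\h_1$ exceeds fixed thresholds at several well-separated points, invoking stationarity to convert these into powers of $p_t$ at a single common value, and closing the loop via a short self-bootstrapping step that uses only the structural content of Assumptions~\ref{as.bg} and~\ref{as.corr}.
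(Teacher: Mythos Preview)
Your recursion is exactly the engine the paper uses to prove the \emph{sharp} bound (Theorem~\ref{t.upper tail lower bound}, constant $\tfrac{4}{3}$), but the paper proves Lemma~\ref{l.first suboptimal constant} by a much simpler \emph{direct} one-step argument precisely so that it can serve as the base case for that recursion. You have inverted this logic: you try to iterate first and then scramble for a base case, whereas the base case is the whole content of the lemma.

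The idea you are missing is this. Instead of asking that $\h_1(\pm\tfrac{1}{2}\theta^{1/2}) \geq 0$ (which costs $p_t(\theta/4)^2$ and forces the recursion), the paper asks only that $\h_1(\pm\theta^{1/2}) \geq -\theta - M$ for a large constant $M$. By stationarity this is $\{\h_1(0) \geq -M\}$ at two points, and by one-point tightness of $\{\h_1(0)\}_{t\geq t_0}$ together with FKG this favourable event has probability at least $\tfrac{1}{2}$ uniformly in $t$. Applying the $H_t$-Brownian Gibbs property on $[-\theta^{1/2},\theta^{1/2}]$, dropping the lower boundary, and using the Gaussian lower bound on $\P\bigl(N(-\theta-M,\theta^{1/2}) \geq \theta\bigr)$ then gives $p_t(\theta) \geq \tfrac{1}{2}\cdot c\theta^{-3/4}\exp\bigl(-(2\theta+M)^2/(2\theta^{1/2})\bigr) \geq \exp(-5\theta^{3/2})$ for $\theta$ large. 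No iteration is needed.

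Your ``self-bootstrapping'' sketch for the base case $p_t(\theta_\star) \geq c_0$ uniformly in $t$ does not work as stated: Assumptions~\ref{as.bg} and~\ref{as.corr} alone contain no uniform-in-$t$ quantitative information, so any argument using only FKG, stationarity, and the Gibbs property will produce a bound that depends on $t$ through some uncontrolled quantity. The uniform input one actually needs is one-point tightness (the last clause of Assumption~\ref{as.tails}), and once you have that, the paper's direct argument is both shorter and avoids the iteration entirely.
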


\begin{proof}
For an $M$ to be chosen, consider the favourable event 
$$\fav = \left\{\cL_1(-\theta^{1/2})\geq -\theta- M\right\}\cap \left\{\cL_1(\theta^{1/2})\geq -\theta - M\right\}.$$
By stationarity and positive association of $\cL_1(x) + x^2$, and almost sure finiteness of $\cL_1(0)$, it follows that $\P(\fav)\geq 1/2$ for $M>M_0$ sufficiently large.

Consider the $\sigma$-algebra $\F = \Fext(1,[-\theta^{1/2}, \theta^{1/2}])$. 
The Brownian Gibbs property says that the distribution of $\cL$ on $[-\theta^{1/2}, \theta^{1/2}]$, conditionally on $\F$, is that of a rate two Brownian bridge tilted by the Radon-Nikodym derivative $W_{H_t}/Z_{H_t}$ associated to the conditioned boundary data. By monotonicity Lemma~\ref{l.monotonicity}, on $\fav$, this Brownian bridge stochastically dominates the rate two Brownian bridge $B$ from $(-\theta^{1/2}, -\theta-M)$ to $(\theta^{1/2}, -\theta-M)$ with no lower boundary condition.

Thus we see
\begin{align*}
\P\big(\cL_1(0) \geq \theta\big) \geq \E\Big[\PF\big(\cL_1(0) \geq \theta\big)\cdot\one_\fav\Big]  \geq \frac{1}{2}\cdot \P\left(B(0) \geq \theta\right).
\end{align*}
Now $B(0)$ is a normal random variable with mean $-\theta-M$ and variance $\theta^{1/2}.$ Thus using the standard lower bound on the normal probability from Lemma~\ref{l.normal bounds}, we see that, on $\fav$,
\begin{align*}
\P\big(\cL_1(0) \geq \theta\big) \geq c\theta^{-3/4} \cdot\exp\left(-\frac{(\theta+\theta+M)^2}{2\theta^{1/2}}\right)%
&\geq \exp(-5\theta^{3/2}),
\end{align*}
the last inequality for $\theta>M$. This completes the proof.
\end{proof}

Next we prove Theorem~\ref{t.upper tail lower bound}, i.e., obtain the optimal constant in the exponent. We remind the reader of the proof sketch given in Section~\ref{s.intro.proof ideas.one-point}.

\begin{proof}[Proof of Theorem~\ref{t.upper tail lower bound}]
Suppose we know that there exist $C_n>0$, $\gamma_n$, and $\theta_n$ such that for all $\theta>\theta_n$,
\begin{equation}\label{e.current lower bound}
\P\left(\cL_1(0) \geq \theta\right) \geq \exp(-C_n \theta^{3/2}-\gamma_n\log\theta).
\end{equation}
We will show that then there exist $C_{n+1}$, $\gamma_{n+1}$, and $\theta_{n+1}$ such that, for $\theta>\theta_{n+1}$,
$$\P\left(\cL_1(0) \geq \theta\right) \geq \exp(-C_{n+1} \theta^{3/2}-\gamma_{n+1}\log\theta),$$
with the property that $\lim_{n\to\infty} C_n = \frac{4}{3}$; $\gamma_n$ and $\theta_n$ will go to infinity, but at a rate which contributes to the error term of $-\theta^{1/2}\log\theta$. To start the iterations, recall that we have \eqref{e.current lower bound} for $n=0$ for some constant $\theta_0$, $C_0 = 5$, and $\gamma_0=0$ by Lemma~\ref{l.first suboptimal constant}.

We define the $\sigma$-algebra $\F = \Fext(1,[-\frac{1}{2}\theta^{1/2}, \frac{1}{2}\theta^{1/2}])$. Consider the $\F$-measurable favourable event $\fav$ defined by
$$\fav := \left\{\cL_1(-\tfrac{1}{2}\theta^{1/2}) \geq 0\right\} \cap \left\{\cL_1(\tfrac{1}{2} \theta^{1/2}) \geq 0\right\}.$$
From \eqref{e.current lower bound}, the stationarity of $\cL_1(x) + x^2$, and the positive association of $\cL_1(-\tfrac{1}{2}\theta^{1/2})$ and $\cL_1(\tfrac{1}{2}\theta^{1/2})$ from Assumption~\ref{as.corr}(a), we see that
\begin{equation}\label{e.fav lower bound}
\begin{split}
\P(\fav) \geq \P(\cL_1(0) > \tfrac{1}{4}\theta)^2
&\geq \exp\left(-2C_n 4^{-3/2}\theta^{3/2} - 2\gamma_n\log(\theta/4)\right)\\
&\geq \exp\left(-\tfrac{1}{4}C_n\theta^{3/2} - 2\gamma_n\log\theta\right)
\end{split}
\end{equation}
for $\theta> 4\theta_n$. Now,
\begin{align*}
\P\big(\cL_1(0) \geq \theta\big) \geq \E\Big[\PF(\cL_1(0) \geq \theta) \cdot \one_{\fav}\Big]
\end{align*}
The Brownian Gibbs property says that $\cL_1$, on $[-\frac{1}{2}\theta^{1/2}, \frac{1}{2}\theta^{1/2}]$ and conditionally on $\F$, is distributed as a Brownian bridge with the appropriate endpoints and tilted by $\smash{W_{H_t}/Z_{H_t}}$ associated to the boundary conditions. As usual, on $\fav$, this Brownian bridge stochastically dominates the Brownian bridge $B$ with between $\smash{(-\frac{1}{2}\theta^{1/2}, 0)}$ and $\smash{(\frac{1}{2}\theta^{1/2},0)}$ and no lower boundary condition. Now, $B(0)$ is a normal random variable with mean zero and variance $\smash{\sigma^2 = \frac{1}{2} \theta^{1/2}}$. Thus on $\fav$ and using \eqref{e.fav lower bound} and a lower bound on normal tails from Lemma~\ref{l.normal bounds}, the previous display is lower bounded by
\begin{align*}
\exp\left(-\theta^{3/2}\left[1+ \tfrac{1}{4}C_n\right]   -(2\gamma_n + 1)\log\theta\right)
\end{align*}
(lower bounding the constant factor in the normal bound by $\theta^{-1/4}$ for convenience).

Thus we have shown that, if $\P(\cL_1(0) > \theta) \geq \exp(-C_n\theta^{3/2}-\gamma_n\log\theta)$ for $\theta > \theta_n$, then the same holds true with $n$ replaced by $n+1$, with $C_{n+1} = 1+C_n/4$, $\gamma_{n+1} = 2\gamma_n + 1$, and $\smash{\theta_{n+1} = 4\theta_n}$. It is easy to solve these recurrences to get $\smash{\theta_n = 4^n\theta_0}$, $\smash{\gamma_n = 2^n-1}$ and (using $C_0 = 5$)
$$C_n = \frac{4}{3}\left(1+11\cdot4^{-n-1}\right).$$
This establishes that, for $n\in\N$ and $4^n\theta_0 < \theta \leq 4^{n+1}\theta_0$,
$$\P\left(\cL_1(0) > \theta\right) \geq \exp\left(-\frac{4}{3}(1+11\cdot 4^{-n-1})\theta^{3/2} - 2^n \log \theta\right).$$
Given $\theta>\theta_0$, we choose $n$ such that the condition for the previous display holds. Now observing that $\smash{4^{-n-1} \leq \theta_0 \theta^{-1}}$ and $\smash{2^n \leq (\theta/\theta_0)^{1/2}}$ yields the claim.
\end{proof}

\subsection{Density bounds}\label{s.one point.density}

In this subsection we restate and prove Theorem~\ref{mt.one point density asymptotics} on the upper and lower bounds on the density of $\cL_1(0)$, which, recall, we denote by $\frac{1}{\dif\theta}\P(\cL_1(0)\in[\theta,\theta+\dif\theta])$:

\setcounter{maintheorem}{0}
\begin{maintheorem}\label{t.density}
Let $\cL$ satisfy Assumptions~\ref{as.bg}--\ref{as.tails}. There exist constants $C$ and $\theta_0$ such that, for $\theta>\theta_0$,
$$\exp\left(-\frac{4}{3}\theta^{3/2}-C\theta^{3/4}\right) \leq \frac{1}{\dif \theta}\P\Bigl(\cL(0)\in[\theta,\theta+\dif\theta]\Bigr) \leq \exp\left(-\frac{4}{3}\theta^{3/2}+C\theta^{3/4}\right).$$
\end{maintheorem}

Starting with the basic fact that 
$$\int_{\theta}^{\theta+1}\frac{1}{\dif\theta}\P(\cL_1(0)\in[\theta,\theta+\dif\theta])\,\dif\theta = \P(\cL_1(0)\in[\theta,\theta+1]) \leq \P(\cL_1(0) \geq \theta),$$
and given the upper bound we have on the right-hand side via Theorem~\ref{mt.one point tail asymptotics}, to obtain a bound on the density at a particular value instead of its integral, we will seek to establish that the density satisfies some regularity property. For instance, it would be sufficient to know that the density is decreasing at least for all large arguments. However, as plausible as it might sound, proving this does not seem straightforward. Instead, we establish that it doesn't decay too fast which also turns out to suffice allowing us to approximate the integral by the density's value at a point.

\begin{proposition}\label{p.density control}
Let $\cL$ satisfy Assumptions~\ref{as.bg}--\ref{as.tails}. There exist $M>0$ and $\theta_0$ such that, for $\theta>\theta_0$ and $s>0$,
\begin{align*}
\MoveEqLeft[10]
\frac{1}{\dif\theta}\P\left(\cL_1(0) \in [\theta+s,\theta+s+\dif\theta]\right)\\
&\geq \frac{1}{2}\cdot\frac{1}{\dif\theta}\P\left(\cL_1(0) \in [\theta,\theta+\dif\theta]\right)\cdot\exp\left(-2s\theta^{1/2}-Ms\theta^{-1/4}-s^2\theta^{-1/2}\right).
\end{align*}
In particular there exists (a slightly larger) $M>0$ such that, if $0<s\leq\theta^{1/4}$,
\begin{equation}\label{e.density regularity}
\frac{1}{\dif\theta}\P\left(\cL_1(0) \in [\theta+s,\theta+s+\dif\theta]\right)
\geq \frac{1}{\dif\theta}\P\left(\cL_1(0) \in [\theta,\theta+\dif\theta]\right)\cdot\exp\left(-2s\theta^{1/2}-M\right).
\end{equation}
\end{proposition}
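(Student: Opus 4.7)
The plan is to apply the $H_t$-Brownian Gibbs property on $I=[-\theta^{1/2},\theta^{1/2}]$ to write out the conditional density of $\h_1(0)$ explicitly, and then compare its value at $\theta$ to that at $\theta+s$ pointwise on a favourable $\F$-measurable event. Set $\F=\Fext(1,I)$. Under $\PF$, $\h_1|_I$ has Radon-Nikodym derivative $W_{H_t}(B)/Z_{H_t}$ (with lower boundary $\h_2$) with respect to the law $\Pfree$ of a rate-two free Brownian bridge $B$ from $(-\theta^{1/2},\h_1(-\theta^{1/2}))$ to $(\theta^{1/2},\h_1(\theta^{1/2}))$. Disintegrating $\Pfree$ according to $B(0)$ yields the identity
\[
f_{\h_1(0)\mid\F}(x)=\frac{\phi_{\mu,\sigma^2}(x)\,\Efree\!\left[W_{H_t}(B)\,\big|\,B(0)=x\right]}{Z_{H_t}},
\]
where $\phi_{\mu,\sigma^2}$ is the normal density with $\mu=\tfrac12(\h_1(-\theta^{1/2})+\h_1(\theta^{1/2}))$ and $\sigma^2=\theta^{1/2}$.

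\textbf{The two ratios on a favourable event.} For a constant $M>0$ to be chosen, define the $\F$-measurable event $E=\{\h_1(-\theta^{1/2})\geq-\theta-M\theta^{1/4}\}\cap\{\h_1(\theta^{1/2})\geq-\theta-M\theta^{1/4}\}$. On $E$ one has $\theta-\mu\leq 2\theta+M\theta^{1/4}$, so the Gaussian factor of the density satisfies
\[
\frac{\phi_{\mu,\sigma^2}(\theta+s)}{\phi_{\mu,\sigma^2}(\theta)}=\exp\!\left(-\frac{s(\theta-\mu)}{\sigma^2}-\frac{s^2}{2\sigma^2}\right)\geq\exp\!\left(-2s\theta^{1/2}-Ms\theta^{-1/4}-s^2\theta^{-1/2}\right).
\]
The remaining factor satisfies $\Efree[W_{H_t}(B)\mid B(0)=\theta+s]\geq\Efree[W_{H_t}(B)\mid B(0)=\theta]$ for $s>0$: conditioning on $B(0)=x$ decomposes $B$ into independent free bridges on $[-\theta^{1/2},0]$ and $[0,\theta^{1/2}]$ whose laws are stochastically increasing in $x$, and $W_{H_t}(B)=\exp(-\int_I H_t(\h_2(u)-B(u))\dif u)$ is increasing in $B$ because $H_t(y)=2t^{2/3}e^{t^{1/3}y}$ is increasing in $y$ (at $t=\infty$ the same argument applies with $W=\one\{B>\h_2\}$).

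\textbf{Assembly and main obstacle.} Combining the two estimates on $E$ and integrating over $\F$ yields
\[
p(\theta+s)\;\geq\;\E[f_{\h_1(0)\mid\F}(\theta+s)\one_E]\;\geq\;e^{-2s\theta^{1/2}-Ms\theta^{-1/4}-s^2\theta^{-1/2}}\cdot\E[f_{\h_1(0)\mid\F}(\theta)\one_E],
\]
where the last expectation equals $p(\theta)\cdot\P(E\mid\h_1(0)=\theta)$ by disintegration. It therefore suffices to fix $M$ so that $\P(E\mid\h_1(0)=\theta)\geq 1/2$ uniformly for $\theta\geq\theta_0$: by Lemma~\ref{l.bound point conditioning by tail conditioning}, stationarity of $\h_1(x)+x^2$, and $\P(\h_1(0)\leq\theta)\geq\tfrac12$ for $\theta$ large, the complementary probability is bounded by $4\P(\h_1(0)\leq-M\theta^{1/4})$, which is small by any reasonable one-point lower tail control on $\h_1(0)$ (cubic decay for KPZ and parabolic Airy; one-point tightness already suffices provided $M$ and $\theta_0$ are taken large enough). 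The ``in particular'' assertion \eqref{e.density regularity} then follows from the first estimate by restricting to $s\leq\theta^{1/4}$, so that $Ms\theta^{-1/4}$ and $s^2\theta^{-1/2}$ are each $O(1)$ and absorbable together with $\log(1/2)$ into an enlarged constant. The one slightly delicate point is the monotonicity of $\Efree[W_{H_t}(B)\mid B(0)=x]$ in $x$; once that is in hand, the rest is essentially a Gaussian computation plus the good-event estimate.
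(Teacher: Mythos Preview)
Your proof is correct and follows essentially the same approach as the paper's: write the conditional density of $\h_1(0)$ given the exterior data as a Gaussian times a weight that is increasing in the argument, compare the density at $\theta$ and $\theta+s$ on a favourable boundary event, and bound the conditional probability of that event from below. The only cosmetic difference is that the paper additionally conditions on the \emph{bridges} of $\h_1$ on $[-\theta^{1/2},0]$ and $[0,\theta^{1/2}]$, which makes the weight factor $W^{\mrm{pt}}(x)$ a deterministic increasing function of $x$ and renders the monotonicity immediate; you instead integrate out the bridge parts and obtain the same monotonicity of $\Efree[W_{H_t}(B)\mid B(0)=x]$ via stochastic monotonicity of free bridges in their endpoints together with $W_{H_t}$ being an increasing functional (and, for the favourable event, you reproduce inline the lower-bound half of the limit-shape theorem that the paper cites as Theorem~\ref{t.limit shape}).
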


While our eventual goal of obtaining an upper bound on the density does allow a lot of room in the comparison estimate, note that the coefficient of $-2$ in front of $s\theta^{1/2}$ in the exponential is in fact sharp: anticipating that the density at $\theta$ is approximately $\exp(-\frac{4}{3}\theta^{3/2})$, we see that
\begin{align*}
\exp\left(-\frac{4}{3}(\theta+s)^{3/2}\right) = \exp\left(-\frac{4}{3}\theta^{3/2}(1+s\theta^{-1})^{3/2}\right)
&\approx \exp\left(-\frac{4}{3}\theta^{3/2} - \frac{4}{3}\theta^{3/2}\cdot \frac{3}{2}s\theta^{-1}\right)\\
&= \exp\left(-\frac{4}{3}\theta^{3/2} - 2s\theta^{1/2}\right).
\end{align*}
The sharpness of the estimate in Proposition~\ref{p.density control} turns out to be crucial in obtaining the tight lower bound on the density in Theorem~\ref{mt.one point density asymptotics}.

Before turning to the proof of Proposition~\ref{p.density control}, we give the proof of Theorem~\ref{t.density}.
To prove the lower bound in Theorem~\ref{t.density}, we will need a preliminary uniform lower bound on the density. This is a consequence of Proposition~\ref{p.density control} as well as both the upper and lower bounds on the one-point upper tail from Theorems~\ref{t.upper tail upper bound} and \ref{t.upper tail lower bound}.

\begin{lemma}\label{l.crude uniform density lower bound}
Let $\cL$ satisfy Assumptions~\ref{as.bg}--\ref{as.tails}. There exists $c>0$ and $\theta_0>0$ such that, %
$$\tfrac{1}{\dif\theta}\P\left(\cL_1(0)\in[\theta,\theta+\dif\theta]\right)\Bigr|_{\theta=\theta_0}\geq c.$$
\end{lemma}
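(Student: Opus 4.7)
The strategy is to locate a bounded interval $[A, B]$ (with $A, B$ chosen depending only on $t_0$) on which the law of $\h_1(0)$ carries mass bounded below uniformly in $t \in [t_0, \infty]$; extract a point in that interval at which the density is not too small (the point possibly depending on $t$); and then use the one-sided regularity estimate of Proposition~\ref{p.density control} to transport this lower bound to a fixed right endpoint $\theta_0 = B$.

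First, fix $A$ large enough that the thresholds in Theorems~\ref{t.upper tail upper bound}, \ref{t.upper tail lower bound}, and Proposition~\ref{p.density control} are all satisfied (each depends only on $t_0$). Theorem~\ref{t.upper tail lower bound} then yields the uniform-in-$t$ lower bound
\begin{align*}
p_A := \exp\bigl(-\tfrac{4}{3} A^{3/2} - C A^{1/2} \log A\bigr) \leq \P(\h_1(0) \geq A).
\end{align*}
Using Theorem~\ref{t.upper tail upper bound}, pick $B > A$ large enough (depending only on $A$, hence only on $t_0$) that $\P(\h_1(0) \geq B) \leq \exp(-\tfrac{4}{3} B^{3/2} + C B^{3/4}) \leq p_A / 2$ for all $t \in [t_0, \infty]$. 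Subtracting gives $\P(\h_1(0) \in [A, B)) \geq p_A / 2 =: p > 0$ uniformly. Expressing this probability as the integral of the density over $[A, B]$, there must exist $\theta^\ast = \theta^\ast(t) \in [A, B]$ with density value at $\theta^\ast$ at least $p / (B - A)$ (otherwise the density would be pointwise strictly smaller on $[A,B]$ and the integral would fall short of $p$).

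Now apply Proposition~\ref{p.density control} with base $\theta = \theta^\ast$ and offset $s = B - \theta^\ast \in [0, B - A]$, obtaining
\begin{align*}
\tfrac{1}{\dif\theta}\P\bigl(\h_1(0) \in [B, B + \dif\theta]\bigr) \geq \tfrac{1}{2} \cdot \tfrac{p}{B - A} \cdot \exp\bigl(-2(B - \theta^\ast)(\theta^\ast)^{1/2} - M(B - \theta^\ast)(\theta^\ast)^{-1/4} - (B - \theta^\ast)^2 (\theta^\ast)^{-1/2}\bigr).
\end{align*}
Since $\theta^\ast \in [A, B]$, the exponent is bounded below by $-2(B - A) B^{1/2} - M(B - A) A^{-1/4} - (B - A)^2 A^{-1/2}$, a constant depending only on $A, B, M$, and therefore only on $t_0$. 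Setting $\theta_0 := B$ and taking $c$ to be the resulting positive lower bound proves the lemma. The only subtle point, and arguably the main conceptual obstacle, is that Proposition~\ref{p.density control} is a one-sided regularity estimate; but this is exactly what the argument needs, since the $t$-dependent point $\theta^\ast$ always lies to the left of the fixed target $B$, so a single forward application of the proposition with a bounded offset $s \leq B - A$ loses only a uniform multiplicative factor.
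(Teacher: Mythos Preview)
Your proof is correct and follows essentially the same approach as the paper's own argument: both use the tail bounds (Theorems~\ref{t.upper tail upper bound} and~\ref{t.upper tail lower bound}) to find a bounded interval carrying uniform-in-$t$ mass, extract a point in that interval where the density is not too small, and then apply Proposition~\ref{p.density control} to transport that lower bound to the right endpoint. The paper uses the interval $[\tfrac{1}{2}\theta_0,\theta_0]$ and phrases the extraction via the supremum of the density, while you use a general interval $[A,B]$ and the average value $p/(B-A)$, but the logic is identical.
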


\begin{proof}
Let $g(\theta)$ be the density of $\cL_1(0)$. First we observe that, for any $\theta_0>0$
$$\sup_{x\in[\frac{1}{2}\theta_0,\theta_0]} g(x)\geq 2\theta_0^{-1}\P(\cL_1(0) \in[\tfrac{1}{2}\theta_0,\theta_0]).$$
It is easy to check that a uniform lower bound on the right-hand side follows from the upper and lower bounds on the one-point upper tail from Theorem~\ref{mt.one point tail asymptotics} if we take $\theta_0$ large enough.
Then to see that the previous display implies a lower bound on $g(\theta_0)$, we apply Proposition~\ref{p.density control}'s statement that, for $s>0$,
$$g(\theta_0)\geq \frac{1}{2}g(\theta_0-s)\cdot\exp\left(-2s(\theta_0-s)^{1/2}-Ms(\theta_0-s)^{-1/4}-s^2(\theta_0-s)^{-1/2}\right),$$
with $s\in[0, \frac{1}{2}\theta_0]$ such that that $g(\theta_0-s)\geq \frac{1}{2}\sup_{x\in[\frac{1}{2}\theta_0,\theta_0]} g(x)$.
\end{proof}

\begin{proof}[Proof of Theorem~\ref{t.density}]
Let $f(\theta) = \log \frac{1}{\dif\theta}\P(\cL_1(0)\in[\theta,\theta+\dif\theta])$. We start with the lower bound. 

Let $\theta_0$ be the value from Lemma~\ref{l.crude uniform density lower bound}, which is uniform in $t>t_0$, and we assume that $\theta$ is such that $\theta^{1/4}>\theta_0$. In terms of $f$ we observe that \eqref{e.density regularity} of Proposition~\ref{p.density control} (substituting $\theta-s$ for $\theta$) implies that, for $0< s\leq \smash{\frac{1}{2}\theta^{1/4}}$ (which implies that $0< s\leq \smash{(\theta-s)^{1/4}}$ as needed for \eqref{e.density regularity}),
$$f(\theta) \geq f(\theta-s) - 2s(\theta-s)^{1/2}-M.$$
We iterate this inequality $2\theta^{3/4}-1$ times with $s=\frac{1}{2}\theta^{1/4}$, and one last time with $s\in[0,\smash{\frac{1}{2}\theta^{1/4}}]$ such that $\theta-\frac{1}{2}\theta^{1/4}(2\theta^{3/4}-1)-s= \theta_0$. This yields
\begin{align}\label{e.density lower bound recursion}
f(\theta) \geq -2\cdot\tfrac{1}{2}\theta^{1/4}\sum_{i=1}^{2\theta^{3/4}}\bigl(\theta-i\cdot\tfrac{1}{2}\theta^{1/4}\bigr)^{1/2} - 3M\theta^{3/4} + f(\theta_0);
\end{align}
in the first term the final summand should be $\theta_0^{1/2}$ instead of 0, but we absorb this discrepancy into the $M\theta^{3/4}$ term (which is why its coefficient is $3$ instead of $2$). Since we know by our choice of $\theta_0$ that $f(\theta_0)$ is bounded from below, we may absorb $f(\theta_0)$ also into the $M\theta^{3/4}$ term for large enough $\theta$ by raising its coefficient to $4$.
Now, 
$$\theta^{1/4}\sum_{i=1}^{2\theta^{3/4}}\bigl(\theta-i\cdot\tfrac{1}{2}\theta^{1/4}\bigr)^{1/2} = \theta^{3/2}\cdot \theta^{-3/4}\sum_{i=1}^{2\theta^{3/4}}\bigl(1-i\cdot\tfrac{1}{2}\theta^{-3/4}\bigr)^{1/2}$$
and so equals $\theta^{3/2}$ times the (right) Riemann sum of $\int_0^2(1-\tfrac{1}{2}x)^{1/2}\,\dif x = \frac{4}{3}$. Since $(1-\tfrac{1}{2}x)^{1/2}$ is decreasing and we are considering the right Riemann sum, it follows that
$$\theta^{1/4}\sum_{i=1}^{2\theta^{3/4}}\bigl(\theta-i\cdot\tfrac{1}{2}\theta^{1/4}\bigr)^{1/2} \leq \frac{4}{3}\theta^{3/2}.$$
Substituting this into \eqref{e.density lower bound recursion} yields that $f(\theta) \geq -\frac{4}{3}\theta^{3/2} - M\theta^{3/4}$ for some $M$ (relabeling from its previous value).

For the upper bound, it follows from \eqref{e.density regularity} and the fact that $\P(\cL_1(0) \in [\theta,\theta+1]) \leq \exp(-\frac{4}{3}\theta^{3/2})$. In more detail, using \eqref{e.density regularity} for the second line,
\begin{align*}
\P\left(\cL_1(0) \in[\theta,\theta+1]\right) 
&= \int_{0}^{1} \frac{1}{\dif\theta}\P(\cL_1(0)\in[\theta+s,\theta+s+\dif \theta])\,\dif s\\
&\geq  \frac{1}{\dif\theta}\P(\cL_1(0)\in[\theta,\theta+\dif \theta])\int_{0}^{1}\exp(-2s\theta^{1/2}-M)\,\dif s\\
&\geq  \frac{1}{\dif\theta}\P(\cL_1(0)\in[\theta,\theta+\dif \theta])\cdot\exp(-2\theta^{1/2}-M).
\end{align*}
Since $\P(\cL_1(0)\in[\theta,\theta+1]) \leq \P(\cL_1(0)\geq \theta) \leq \exp\left(-\frac{4}{3}\theta^{3/2} + C\theta^{3/4}\right)$ by Theorem~\ref{t.upper tail upper bound}, we are done.
\end{proof}

Now we turn to the proof of Proposition~\ref{p.density control}. The basic argument relies on a resampling trick which allows us to take a configuration with $\cL_{1}(0)\in [\theta,\theta+\dif\theta]$ and construct one where $\cL_{1}(0)\in [\theta+s,\theta+s+\dif\theta]$ and in the process compare their ``probabilities."  However to carry this out efficiently, we will need more information about the distribution of $\cL_1$ than is available from just $H_t$-Brownian Gibbs and monotonicity statements, which have been the main ingredients of previous arguments. The idea here is to condition on more information, in such a way that the conditional distribution of $\cL_1(0)$ is more explicitly Gaussian; in the vanilla conditioning of the $H_t$-Brownian Gibbs property, the distribution of $\cL_1(0)$ is Gaussian, but reweighted by a Radon-Nikodym factor which is affected by $\cL_1$'s values at other points as well. This makes things hard to control, and the conditioning here will avoid these extra dependencies in the Radon-Nikodym derivative.

In more detail, here, in addition to conditioning on $\Fext(1,[a,b])$, where $[a,b]$ will be taken as $[-\theta^{1/2},\theta^{1/2}]$, we further include the \emph{bridges} of $\cL_1$ on $[-\theta^{1/2}, 0]$ and $[0,\theta^{1/2}]$ and call the resulting $\sigma$-algebra $\F$. The bridge $f^{[a,b]}$ of a function $f:I\to\R$ on an interval $[a,b]\subseteq I$ is the function obtained by affinely shifting $f$ to equal zero at $a$ and $b$; more explicitly, it is given by
$$x\mapsto f(x) - \frac{b-x}{b-a}f(a)-\frac{x-a}{b-a}f(b).$$
The effect of including this data in the $\sigma$-algebra we condition on is that the only information that remains random is the value of $\cL_1(0)$. By the $H_t$-Brownian Gibbs property, the conditional distribution of $\cL_1(0)$ is a suitably reweighted normal random variable (where the reweighting is only a function of $\cL_1(0)$), and the proof of Proposition~\ref{p.density control} relies crucially on this fairly explicit representation.  An important ingredient for this representation is that, conditionally on the richer $\sigma$-algebra, $\cL_1$ is given inside of $[-\theta^{1/2}, \theta^{1/2}]$ (recall it has been conditioned on outside of $[-\theta^{1/2}, \theta^{1/2}]$) by $\smash{\cL^{X}_1:[-\theta^{1/2}, \theta^{1/2}]}\to\R$, where
\begin{equation}\label{e.reconstructed narrow-wedge}
\cL^{x}_1(u) = \begin{cases}
\cL^{[-\theta^{1/2}, 0]}_1(u) + \frac{u+\theta^{1/2}}{\theta^{1/2}}\cdot x + \frac{-u}{\theta^{1/2}}\cdot\cL_1(-\theta^{1/2}) & u\in[-\theta^{1/2},0]\\
\cL^{[0,\theta^{1/2}]}_1(u) + \frac{\theta^{1/2}-u}{\theta^{1/2}}\cdot x + \frac{u}{\theta^{1/2}}\cdot\cL_1(\theta^{1/2}) & u\in[0,\theta^{1/2}],
\end{cases}
\end{equation}
 and $X$ is distributed according to the $\F$-conditional distribution of $\cL_1(0)$. 

Using \eqref{e.reconstructed narrow-wedge} we can describe the $\F$-conditional distribution of $\cL_1(0)$ more precisely: it is a normal distribution with $\F$-measurable mean $\mu = \smash{\frac{1}{2}(\cL_1(-\theta^{1/2}) + \cL_1(\theta^{1/2}))}$ and variance $\sigma^2 = 2\times\smash{\frac{\theta^{1/2}\times\theta^{1/2}}{2\theta^{1/2}}} = \theta^{1/2}$, which is reweighted by a Radon-Nikodym factor $W_t^{\mrm{pt}}(\cL_1(0))/Z_t^{\mrm{pt}}$, where $W_t^{\mrm{pt}}$ and $Z_t^{\mrm{pt}}$ are given by
\begin{align*}
W_t^{\mrm{pt}}(x) &= \exp\left(-\int_{-\theta^{1/2}}^{\theta^{1/2}} H_t\bigl(\cL_2(u) - \cL_1^{x}(u)\bigr)\, \dif u\right)\\
Z_t^{\mrm{pt}} &= \E_\F\left[W^{\mrm{pt}}(\cL_1(0))\right].
\end{align*}
Observe that $W^{\mrm{pt}}(x)$ is increasing in $x$. 

The idea of including the bridge data in Brownian Gibbs resamplings was introduced in \cite{hammond2016brownian} and has been used several times in subsequent studies \cite{calvert2019brownian,KPZfixedptHD}. The correctness of the above description of the conditional distribution is straightforward to verify using the $H_t$-Brownian Gibbs property; see for example proofs of similar statements in \cite[Section~4.1.3]{calvert2019brownian}.

With this description in hand we can turn to the proof of Proposition~\ref{p.density control}.

\begin{proof}[Proof of Proposition~\ref{p.density control}]
As above, let $\F$ be the $\sigma$-algebra generated by $\Fext(1,[a,b])$ and the bridges of $\cL_1$ on $[-\theta^{1/2},0]$ and $[0,\theta^{1/2}]$. Conditional on $\F$, the distribution of $\cL_1(0)$ is that of a normal random variable with an $\F$-measurable mean $\mu$ (whose formula is given above) and $\sigma^2=\theta^{1/2}$ reweighted by $W_t^{\mrm{pt}}(\cL(0))/Z_t^{\mrm{pt}}$.

Condition on $\F$, let $X$ be a normal random variable of mean $\mu$ and variance $\sigma^2$, and set $\fav=\smash{\{\mu\geq -\theta-M\theta^{1/4}\}}$ for an $M$ to be chosen (so that, from Theorem~\ref{t.limit shape} on the conditional limit shape, $\fav$ has uniformly positive probability when $M$ is large). Then,
\begin{align*}
\frac{1}{\dif \theta}\P\left(\cL_1(0) \in [\theta+s,\theta+s+\dif\theta]\right) 
&= \frac{1}{\dif \theta}\E\left[\PF\left(\cL_1(0) \in [\theta+s,\theta+s+\dif\theta]\right)\right]\\
&= \frac{1}{\dif \theta}\E\left[\EF\left[\one_{X\in [\theta+s,\theta+s+\dif \theta]}W^{\mrm{pt}}(X)\right](Z^{\mrm{pt}})^{-1}\right]\\
&=\frac{1}{\dif \theta}\E\left[\PF\left(X\in [\theta+s,\theta+s+\dif \theta]\right)W^{\mrm{pt}}(\theta+s)(Z^{\mrm{pt}})^{-1}\right]\\
&\geq \E\left[(2\pi\theta^{1/2})^{-1/2}\exp\left(-\frac{(\theta+s-\mu)^2}{2\theta^{1/2}}\right)W^{\mrm{pt}}(\theta+s)(Z^{\mrm{pt}})^{-1}\one_{\msf{Fav}}\right]\\
&\geq \E\biggl[(2\pi \theta^{1/2})^{-1/2}\exp\left(-\frac{(\theta-\mu)^2 +s^2+2s(\theta-\mu)}{2\theta^{1/2}}\right)\\
&\hspace{6cm}\times 
W^{\mrm{pt}}(\theta)(Z^{\mrm{pt}})^{-1}\one_{\msf{Fav}}\biggr];
\end{align*}
the last inequality using that $W^{\mrm{pt}}(x)$ is increasing in $x$. Next we reinterpret $(2\pi\theta^{1/2})^{-1/2}\exp(-(\theta-\mu)^2/(2\theta^{1/2}))W^{\mrm{pt}}(\theta)(Z^{\mrm{pt}})^{-1}$ as $\EF[\one_{X\in[\theta,\theta+\dif\theta]}W^{\mrm{pt}}(\theta)(Z^{\mrm{pt}})^{-1}]$ and use that $\theta -\mu\leq 2\theta+M\theta^{1/4}$ on $\fav$ to see that, for $s>0$,
\begin{align*}
\MoveEqLeft[6]
\frac{1}{\dif \theta}\P\left(\cL_1(0) \in [\theta+s,\theta+s+\dif\theta]\right)\\
&\geq \frac{1}{\dif \theta}\E\left[\EF\left[\one_{X\in[\theta,\theta+\dif\theta]}W^{\mrm{pt}}(\theta)(Z^{\mrm{pt}})^{-1}\right]\exp\left(-\frac{s^2+2s(\theta-\mu)}{2\theta^{1/2}}\right)\one_{\msf{Fav}}\right]\\
&=\frac{1}{\dif \theta}\E\left[\EF\left[\one_{X\in[\theta,\theta+\dif\theta]}W^{\mrm{pt}}(X)(Z^{\mrm{pt}})^{-1}\right]\exp\left(-\frac{s^2+2s(\theta-\mu)}{2\theta^{1/2}}\right)\one_{\msf{Fav}}\right]\\
&\geq\frac{1}{\dif \theta}\E\Bigl[\PF\left(\cL_1(0)\in[\theta,\theta+\dif\theta]\right)\\
&\qquad\times\exp\left(-2s\theta^{1/2}-Ms\theta^{-1/4}-\tfrac{1}{2}s^2\theta^{-1/2}\right)\one_{\msf{Fav}}\Bigr]\\
&\geq \frac{1}{\dif \theta}\P\left(\cL_1(0)\in[\theta,\theta+\dif\theta], \fav\right)\cdot\exp\Bigl(-2s\theta^{1/2}-Ms\theta^{-1/4}-s^2\theta^{-1/2}\Bigr),
\end{align*}
the last line using that $\fav$ is $\F$-measurable and the tower property of conditional expectations. Next, by Theorem~\ref{t.limit shape}, there exists $M$ large enough independent of $t$ and $\theta$ such that $\P(\cL_1(-\theta^{1/2}), \cL_1(\theta^{1/2})\geq -\theta-M\theta^{1/4}\mid \cL_1(0)=\theta) \geq \frac{1}{2}$. So, setting $M$ to be such a value,
\begin{align*}
\frac{1}{\dif \theta}\P\left(\cL_1(0)\in[\theta,\theta+\dif\theta], \fav\right)
&= \frac{1}{\dif \theta}\P\left(\cL_1(0)\in[\theta,\theta+\dif\theta]\right)\cdot\P\left(\fav \mid \cL_1(0) = \theta\right)\\
&= \frac{1}{\dif \theta}\P\left(\cL_1(0)\in[\theta,\theta+\dif\theta]\right)\cdot\P\left(\mu \geq -\theta - M\theta^{1/4} \mid \cL_1(0) = \theta\right)\\
&\geq \frac{1}{2}\cdot \frac{1}{\dif \theta}\P\left(\cL_1(0)\in[\theta,\theta+\dif\theta]\right).
\end{align*}
Putting it together, we have shown that, for some $M$ and all $s>0$,
\begin{equation*}
\frac{1}{\dif \theta}\P\left(\cL_1(0) \in[\theta+s,\theta+s+\dif\theta]\right) \geq \frac{1}{2}\frac{1}{\dif \theta}\P\left(\cL_1(0)\in[\theta,\theta+\dif\theta]\right)\cdot\exp(-2s\theta^{1/2}-Ms\theta^{-1/4}-s^2\theta^{-1/2}).\qedhere
\end{equation*}
\end{proof}

\section{Two-point limit shape}\label{s.two point limit shape}

We now prove Theorems~\ref{mt.two-point limit shape} and \ref{mt.two point tail} on two-point upper tail limit shapes and asymptotics, in this and the next sections respectively. As already indicated in Section~\ref{s.intro.proof ideas}, while conceptually all the ideas were present in the arguments for the one-point results, the arguments in these two sections are technically more complex and may be somewhat harder to parse, owing mostly to the more complicated formulas that arise in the two-point case.

As said before, the proofs will make clear that similar arguments would also yield sharp asymptotic expressions for $k$-point upper tails and limit shapes; see Remark~\ref{r.k-point}.

\begin{figure}[t]
\includegraphics[width=0.9\textwidth]{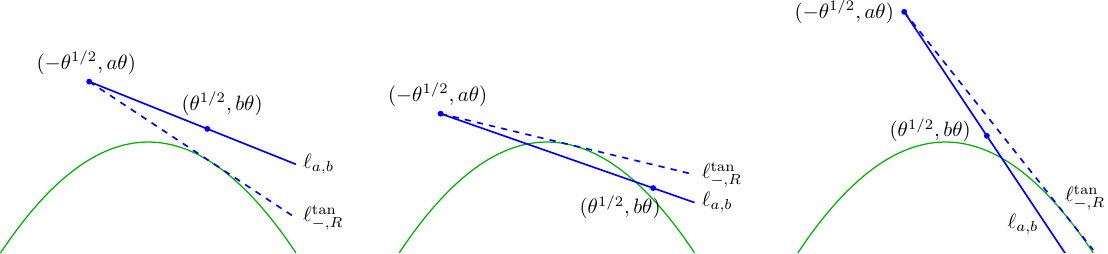}
\caption{A depiction of Lemma~\ref{l.convexity algebra}. Assuming $a\geq b>-1$, if $a\leq (\sqrt{1+b}+1)(\sqrt{1+b}+3) \iff (a-b)^2\leq 8(a+b)$, then $\ell_{a,b}$ stays above $\ellt_{-,R}$ to the right of $-\theta^{1/2}$, as depicted in the left panel. Thus $\ell_{a,b}$ intersects the parabola at at most one point. In the middle and right panels two geometrically distinct subcases of $(a-b)^2>8(a+b)$ (so that $\ell_{a,b}$ lies below $\ellt_{-,R}$ and intersects the parabola at two distinct points) are shown: in the middle panel, the intersection points of $\ell_{a,b}$ with $-x^2$ lie inside $[-\theta^{1/2}, \theta^{1/2}]$, while in the right panel they lie in $(\theta^{1/2},\infty)$.}\label{f.two-point algebra}
\end{figure}

The following technical lemma relates certain geometric conditions that will be relevant in the analysis with algebraic relations, and will be used many times to work with expressions that arise in the proofs. The geometric situations are depicted in Figure~\ref{f.two-point algebra}.

\begin{lemma}\label{l.convexity algebra}
Let $a\geq b> -1$. Let $\ell_{a,b}$ be the line through $\smash{(-\theta^{1/2}, a\theta)}$ and $\smash{(\theta^{1/2},b\theta)}$ and $\ellt_{-,R}$ be the line through $\smash{(-\theta^{1/2},a\theta)}$ which is tangent to $\smash{-x^2}$ at a point in $\smash{[-\theta^{1/2}, \infty)}$, i.e., right of $\smash{-\theta^{1/2}}$.

Suppose also $(a-b)^2 \leq 8(a+b)$, which, with $a\geq b>-1$, is equivalent to $b\geq (\sqrt{1+a}-1)(\sqrt{1+a}-3)$ and implies $a\leq (\sqrt{1+b}+1)(\sqrt{1+b}+3)$. Then, for $x\in[-\theta^{1/2}, \infty)$, $\ell_{a,b}(x)\geq \ellt_{-,R}(x)$ ; see Figure~\ref{f.two-point algebra}. If $(a-b)^2 > 8(a+b)$, then $\ell_{a,b}(x) < \ellt_{-,R}(x)$ for all $x\in [-\theta^{1/2}, \infty)$.

As a consequence, under the condition $(a-b)^2\leq 8(a+b)$, the convex hull of the function $x\mapsto -x^2$ and the points $(-\theta^{1/2}, a\theta)$ and $(\theta^{1/2}, b\theta)$ has both of the points as extreme points, and $\ell_{a,b}$ intersects the parabola at at most one point.

Finally, if $\ell_{a,b}$ and $\ellt_{-,R}$ coincide, then $(a-b)^2 = 8(a+b)$.
\end{lemma}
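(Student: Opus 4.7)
The plan is to handle the algebra and the geometry in three stages: first, extract explicit formulas for $\ellt_{-,R}$ and $\ell_{a,b}$; second, recast the comparison $\ell_{a,b}\geq \ellt_{-,R}$ on $[-\theta^{1/2},\infty)$ as a slope inequality; third, tie that slope inequality to the quadratic condition $(a-b)^2\leq 8(a+b)$ through a discriminant calculation, from which the geometric consequences fall out quickly.

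First I would parametrize $\ellt_{-,R}$. The tangent to $-x^2$ at $(x_0,-x_0^2)$ has equation $y=x_0^2 -2x_0 x$; imposing passage through $(-\theta^{1/2},a\theta)$ produces $x_0^2 + 2\theta^{1/2}x_0 - a\theta =0$, whose root in $[-\theta^{1/2},\infty)$ is $x_0=\theta^{1/2}(\sqrt{1+a}-1)$, so the slope of $\ellt_{-,R}$ is $2\theta^{1/2}(1-\sqrt{1+a})$. Meanwhile, the slope of $\ell_{a,b}$ is $(b-a)\theta^{1/2}/2$. Because both lines pass through $(-\theta^{1/2},a\theta)$, the inequality $\ell_{a,b}(x)\geq \ellt_{-,R}(x)$ for all $x\geq -\theta^{1/2}$ is equivalent to the comparison of slopes, which after cancellation reads $b\geq a+4-4\sqrt{1+a}$. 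Substituting $u=\sqrt{1+a}$ factors the right side as $(u-1)(u-3)=(\sqrt{1+a}-1)(\sqrt{1+a}-3)$. The same computation with strict slope inequality yields the strict case $\ell_{a,b}(x)<\ellt_{-,R}(x)$ on $(-\theta^{1/2},\infty)$ when $(a-b)^2>8(a+b)$.

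Second, to verify the algebraic equivalence $(a-b)^2\leq 8(a+b)\iff b\geq (\sqrt{1+a}-1)(\sqrt{1+a}-3)$ under $a\geq b>-1$, I would view $(a-b)^2-8(a+b)\leq 0$ as a quadratic in $b$ with discriminant $64(a+1)$, giving roots $b=(\sqrt{1+a}\pm 1)(\sqrt{1+a}\pm 3)$. A short check shows the upper root strictly exceeds $a$, so under $b\leq a$ only the lower root constrains. The implication $a\leq (\sqrt{1+b}+1)(\sqrt{1+b}+3)$ follows symmetrically by regarding the quadratic in $a$, whose upper root is always above $b$ but bounds $a$ from above.

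Third, for the geometric conclusions, I would compute the discriminant of $x^2+\tfrac{1}{2}(b-a)\theta^{1/2}x+\tfrac{1}{2}(a+b)\theta=0$, which governs intersections of $\ell_{a,b}$ with $-x^2$; it equals $\tfrac{\theta}{4}\bigl[(a-b)^2-8(a+b)\bigr]$, so $(a-b)^2\leq 8(a+b)$ forces at most one intersection. Since $\ell_{a,b}$ lies weakly above $\ellt_{-,R}$ and $\ellt_{-,R}$ lies weakly above $-x^2$ by tangency, the segment between the two conditioning points sits on the upper boundary of the convex hull, so both $(-\theta^{1/2},a\theta)$ and $(\theta^{1/2},b\theta)$ are extreme points. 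The closing claim is then immediate: if $\ell_{a,b}=\ellt_{-,R}$, the line is tangent to $-x^2$, so the intersection discriminant must vanish, giving $(a-b)^2=8(a+b)$. The main bookkeeping hazard will be tracking which of the two roots of each quadratic is binding under $a\geq b>-1$; everything else reduces to direct calculus of lines and parabolas.
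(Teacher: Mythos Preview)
Your proposal is correct and follows essentially the same route as the paper: compute the tangent point $x_0=\theta^{1/2}(\sqrt{1+a}-1)$, reduce the comparison of $\ell_{a,b}$ and $\ellt_{-,R}$ on $[-\theta^{1/2},\infty)$ to a slope inequality (since both lines share the point $(-\theta^{1/2},a\theta)$), factor to get $b\geq(\sqrt{1+a}-1)(\sqrt{1+a}-3)$, and then solve the quadratic $(a-b)^2-8(a+b)$ in $b$ (respectively $a$) to obtain the equivalence and the implication for $a$; the discriminant computation for $\ell_{a,b}=-x^2$ matches the paper's as well. Your observation that the strict inequality holds only on the open interval $(-\theta^{1/2},\infty)$ is a small sharpening of the paper's phrasing.
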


\begin{proof}
The equation of the line $\ellt_{-,R}$ that passes through $(-\theta^{1/2}, a\theta)$ and is tangent to $-x^2$ at a point in $[-\theta^{1/2}, \infty)$ is $-2x_0(x+\theta^{1/2})+a\theta$, where $x_0 = \theta^{1/2}(\sqrt{1+a}-1)$, while the equation of $\ell_{a,b}(x)$ is $(b-a)\theta^{1/2}(x+\theta^{1/2})/2 + a\theta$. Thus the condition that the latter equation lies above the former on $[-\theta^{1/2},\infty)$ is equivalent to
\begin{equation}\label{e.convexity algebra main ienquality}
\frac{(b-a)\theta^{1/2}}{2} \geq -2x_0 \iff b\geq a-4\sqrt{1+a} + 4 = (\sqrt{1+a}-1)(\sqrt{1+a}-3).
\end{equation}
Viewing the latter inequality as a quadratic in $\sqrt{1+a}$, it is easy to show that it implies that 
$$2-\sqrt{1+b} \leq \sqrt{1+a}\leq 2+\sqrt{1+b};$$
squaring the second inequality yields $a\leq(\sqrt{1+b}+1)(\sqrt{1+b}+3)$.

It can also be checked that \eqref{e.convexity algebra main ienquality} is implied by $(a-b)^2\leq 8(a+b)$, by solving the latter as a quadratic in $b$, which yields the following equivalent condition on $b$ when $a\geq -1$:
$$(\sqrt{1+a}-1)(\sqrt{1+a}-3)\leq b\leq (\sqrt{1+a}+1)(\sqrt{1+a}+3).$$
When $b\leq a$, the second inequality is automatically satisfied, so \eqref{e.convexity algebra main ienquality} along with $-1< b\leq a$ implies $(a-b)^2\leq 8(a+b)$. It is clear that $\ell_{a,b}$ being a tangent line corresponds to equality in all of the preceding inequalities.

A line lying below a tangent line on an infinite ray does not imply the line intersects the parabola at two points. So to see that $(a-b)^2 > 8(a+b)$ is equivalent to $\ell_{a,b}$ having two intersection points (counted with multiplicity) with $-x^2$, we solve the quadratic $-x^2 = \ell_{a,b}(x)$ and note that its discriminant (which needs to be non-negative) is $(b-a)^2/4-2(b+a)$. 
\end{proof}

Recall $\tent$ from before Theorem~\ref{mt.two-point limit shape}, and recall that $\Ilin$ is the smallest closed set outside of which $\tent(x) = -x^2$. On $\Ilin$, $\tent$ is piecewise linear. Observe from Lemma~\ref{l.convexity algebra} (see also Figure~\ref{f.three cases of two-point}) that $\Ilin$ is either an interval or a union of two disjoint intervals, depending on whether $\ell_{a,b}$ intersects $-x^2$ inside $[-\theta^{1/2}, \theta^{1/2}]$ or not.

For the reader's convenience we restate here Theorem~\ref{mt.two-point limit shape} before giving its proof in Section~\ref{s.two point limit shape proofs}.

\setcounter{maintheorem}{6}

\begin{maintheorem}\label{t.two-point limit shape}
Let $\cL$ satisfy Assumptions~\ref{as.bg}--\ref{as.tails}. Let $\theta > 0$ and $a \geq b > -1$. For $M>0$, let $M_{a,b} = M[(1+a)^{1/4}+(1+b)^{1/4}]$. There exist $c>0$, $C<\infty$, $\theta_0$, and $a_0=b_0$ such that, if $\theta > \theta_0$ or $a,b\geq a_0, b_0$, and for $0<M\leq C^{-1}[(1+a)^{3/4}+(1+b)^{3/4}]\theta^{3/4}$,
\begin{align}\label{e.two point limit prob bound}
\MoveEqLeft[30]
\P\left(\sup_{x \in \Ilin} (\cL_1(x) - \tent(x)) \geq M_{a,b}\theta^{1/4} \ \Big|\  \cL_1(-\theta^{1/2}) = a\theta, \cL_1(\theta^{1/2}) = b\theta \right)
\leq \exp(-cM^2)
\end{align}
and
\begin{equation}\label{e.two point limit prob lower bound}
\begin{split}
\MoveEqLeft[18]
\P\left(\inf_{x \in \Ilin} (\cL_1(x) - \tent(x)) \leq -M_{a,b}\theta^{1/4} \ \Big|\  \cL_1(-\theta^{1/2}) = a\theta, \cL_1(\theta^{1/2}) = b\theta \right) \\
&\leq \exp(-cM^2) + 8\cdot \P\left(\cL_1(0) \leq -\tfrac{1}{2}M_{a,b}\theta^{1/4}\right) .
\end{split}
\end{equation}
\end{maintheorem}

\begin{remark}\label{r.why not two-point limit shape on whole domain}
Note that we have restricted the statement of Theorem~\ref{t.two-point limit shape} to the set $\Ilin$ where $\tent$ is piecewise linear; in the case that $\Ilin$ is two disjoint intervals (see the middle panel of Figure~\ref{f.three cases of two-point}), this excludes, for example, the portion between the two intervals, where $\tent(x) = -x^2$. In fact we could prove a statement similar to Theorem~\ref{t.two-point limit shape} including this interval as well by making use of Proposition~\ref{p.para fluctuation}; we have not done so simply because it would complicate the statement (e.g., we would need $M>\log \theta$ just for this interval in order to apply Proposition~\ref{p.para fluctuation}), and we do not need this extra information to prove the two-point asymptotic Theorem~\ref{t.two point asymptotics}.
\end{remark}

\subsection{A useful estimate}

In the proof of the first half of Theorem~\ref{t.two-point limit shape}, i.e., \eqref{e.two point limit prob bound}, we will need to make a similar argument a number of times, namely that the second curve can be controlled and that that can then be used to show that the first curve follows a linear path with sub-Gaussian upper tails. We package that argument in the proof of the following  statement that covers all the cases that will arise next, after which we will turn to the proof of Theorem~\ref{t.two-point limit shape} in Section~\ref{s.two point limit shape proofs}.

\begin{proposition}\label{p.versatile Airy tangent estimate}
Suppose Assumptions~\ref{as.bg}, \ref{as.weak bk}(b\ensuremath{'}), \ref{as.mono in cond stronger}, and Assumption~\ref{as.tails}, with $\beta =\frac{3}{2}$,  hold. Let $I=[c,d]\subseteq \R$ be an interval and let $(c, y_{c})$ and $(d, y_{d})$ be points on the line $\ellt_{x_{\tan}}$ tangent to the curve $-x^2$ at the point $(x_\tan, -x_\tan^2)$ with $x_\tan\in[c+|I|^{1/2}, d - |I|^{1/2}]$. Let $\msf{BT}$ be the event $\{\cL_1(c) \leq y_{c}, \cL_1(d)\leq y_{d}\}$ of being below the tangent. 

Suppose $-\theta^{1/2},\theta^{1/2}\not\in(c,d)$, that $a\geq b > -1$. Then there exist absolute constants  $c>0$, $M_0$, and $C>0$ such that, if $|I|\geq \max(C,|b^{1/4}|)$ and $\max_{u\in\{c,d\}} |\theta^{1/2}- u| \leq C|I|$, then, for all $M_0< M < C^{-1}|I|^{3/4}$,
\begin{align*}
\MoveEqLeft[16]
\P\left(\sup_{x\in I}\left(\cL_1(x)-\ellt_{x_\tan}(x)\right) > M|I|^{1/2} \ \Big|\  \cL_1(-\theta^{1/2}) = a\theta, \cL_1(\theta^{1/2})=b\theta\right)\\
&\leq \exp(-cM^2)  + 3\cdot \P\left(\msf{BT}^c \mid \cL_1(-\theta^{1/2}) = a\theta, \cL_1(\theta^{1/2})=b\theta\right).
\end{align*}
\end{proposition}

We do not strictly need $\beta=\frac{3}{2}$ for Proposition~\ref{p.versatile Airy tangent estimate} to hold, but we do so as such a bound has already been established in Section~\ref{s.one point asymptotics}. 

\begin{proof}[Proof of Proposition~\ref{p.versatile Airy tangent estimate}]
Let $\varepsilon_0>0$ be the constant from Proposition~\ref{p.brownian versatile tangent estimate} and $\sigma_\tan^2 = (x_\tan-a)(b- x_\tan)/|I|$. Note that $\smash{\sigma_\tan^2 \geq |I|^{1/2}(1-|I|^{-1/2}) \geq \frac{1}{2}|I|^{1/2}}$ by our assumption that $x_\tan\in[a+|I|^{1/2}, b-|I|^{1/2}]$, since the minimum is attained at one of the boundaries, and since $|I|>4\implies |I|^{-1/2}\leq 2^{-1}$. To bound the probability appearing in the statement, we first break up the probability based on the occurrence of a favourable event concerning the second curve. Let $E = \{\sup_{x\in I} \left(\cL_2(x)+x^2\right) \leq \varepsilon_0 M\sigma_\tan\}$. Then,
\begin{align}
\MoveEqLeft[3]
\P\left(\sup_{x\in I}\left(\cL_1(x)-\ellt_{x_\tan}(x)\right)\geq M|I|^{1/2} \ \Big|\ \cL_1(-\theta^{1/2}) = a\theta, \cL_1(\theta^{1/2})=b\theta\right)\nonumber\\
&\leq \P\left(\sup_{x\in I}\left(\cL_1(x)-\ellt_{x_\tan}(x)\right) \geq M|I|^{1/2},\, \msf{BT},\, E \ \Big|\ \cL_1(-\theta^{1/2}) = a\theta, \cL_1(\theta^{1/2})=b\theta\right)\nonumber\\
&\quad + \P\left(\sup_{x\in I}\left(\cL_1(x)-\ellt_{x_\tan}(x)\right)\geq M|I|^{1/2}, E^c  \ \Big|\ \cL_1(-\theta^{1/2}) = a\theta, \cL_1(\theta^{1/2})=b\theta\right) \label{e.tangent probability for h^t breakup}\\
&\quad + \P\left(\msf{BT}^c \mid \cL_1(-\theta^{1/2}) = a\theta, \cL_1(\theta^{1/2})=b\theta\right).
\end{align}
By Lemma~\ref{l.lower curve control new} (with $f(x) = \ellt_{x_\tan}(x) - M|I|^{1/2}$ and $R$ such that $(\log R)^C = \varepsilon_0 M \sigma_{\tan}$) and since $\sigma_\tan \geq 2^{-1/2}|I|^{1/4}$, the second term of \eqref{e.tangent probability for h^t breakup} is upper bounded by
\begin{equation}\label{e.two point limit bk step}
\frac{1}{2}\cdot\frac{\P\left(\sup_{x\in I}\left(\cL_1(x)-\ellt_{x_\tan}(x)\right)\geq M|I|^{1/2} \midd \cL_1(-\theta^{1/2}) = a\theta, \cL_1(\theta^{1/2})=b\theta\right)}{\P\left(\inf_{x\in I}\left(\cL_1(x)-\ellt_{x_\tan}(x)\right)\geq M|I|^{1/2}, \cL_1(\theta^{1/2}) \geq b\theta \midd \cL_1(-\theta^{1/2}) = a\theta +  K\right)}
\end{equation}
for any $K$ such that $\varepsilon_0 M \sigma_{\tan} \geq \log (C((a+1)\theta+K)^{3/4})$. 
Take 
$$K= 2\max_{u\in\{c,d\}} |u-\theta^{1/2}|^2 + 2(b+D)\theta + 2MI^{1/2}$$
for a sufficiently large constant $D$. The value of $K$ is not special apart from the consideration that raising the value of $\cL_1$ at a given point by a level $K$ compared to the parabola $-x^2$ (ignoring $a\theta$ as it may be essentially at the parabola level when $a\approx -1$) only affects $\cL_1$ at a distance of $K^{1/2}$ from that point (from Theorem~\ref{mt.one point limit shape}), so we take a value of $K$ which is a multiple of the square of the distances that need to be affected and then add a multiple of how large we need to raise the points by.  It is easy to verify from the first part of Theorem~\ref{t.limit shape} that then the probability in the denominator in \eqref{e.two point limit bk step} is at least $3/4$ if $M$, $\theta$ are larger than some large enough constants. Since $\sigma_\tan$ is polynomial in $|I|$, $\max_{u\in\{c,d\}}|u-\theta^{1/2}|\leq C|I|$ by assumption, and $|I|\geq \max(C,|b|^{1/4})$ it is also immediate that the required condition on $K$ is satisfied. Thus \eqref{e.two point limit bk step} is at most 
$$\P\left(\sup_{x\in I}\left(\cL_1(x)-\ellt_{x_\tan}(x)\right)\geq M|I|^{1/2} \midd \cL_1(-\theta^{1/2}) = a\theta, \cL_1(\theta^{1/2})=b\theta\right),$$
which, substituting into \eqref{e.tangent probability for h^t breakup}, yields that
\begin{align*}
\MoveEqLeft[6]
\P\left(\sup_{x\in I}\left(\cL_1(x)-\ellt_{x_\tan}(x)\right)\geq M|I|^{1/2} \ \Big|\ \cL_1(-\theta^{1/2}) = a\theta, \cL_1(\theta^{1/2})=b\theta\right)\\
&\leq 3\cdot \P\left(\sup_{x\in I}\left(\cL_1(x)-\ellt_{x_\tan}(x)\right) \geq M|I|^{1/2},\, \msf{BT},\, E \ \Big|\ \cL_1(-\theta^{1/2}) = a\theta, \cL_1(\theta^{1/2})=b\theta\right)\\
& \qquad +3\cdot \P\left(\msf{BT}^c \mid \cL_1(-\theta^{1/2}) = a\theta, \cL_1(\theta^{1/2})=b\theta\right).
\end{align*}

Next we analyze the first term on the righthand side of the previous display. Let $\F = \Fext(1,I)$. Observe that $E = \{\sup_{x\in I}\left(\cL_2(x)+x^2\right) \leq \varepsilon_0 M \sigma_\tan\}\in \F$. Then we see that the first term equals
\begin{align*}
3\cdot\E\left[\PF\left(\sup_{x\in I}\left(\cL_1(x)-\ellt_{x_\tan}(x)\right) \geq M|I|^{1/2}\ \Big|\ \cL_1(-\theta^{1/2}) = a\theta, \cL_1(\theta^{1/2})=b\theta\right)\one_{\msf{BT},\, E}\right].
\end{align*}
Under $\PF$, $\cL_1$ is distributed as a Brownian bridge from $(c, \cL_1(c))$ to $(d, \cL_1(d))$ tilted by the Radon-Nikodym derivative $W_{H_t}/Z_{H_t}$ associated with lower boundary curve $\cL_2$ on $I$. 

On the event $\msf{BT} \cap \{\sup_{x\in I}\left(\cL_2(x)+x^2\right) \leq \varepsilon_0 M \sigma_{\tan}\}$, Lemma~\ref{l.monotonicity} (monotonicity) says that $\cL_1$ under $\PF$ is stochastically dominated by a Brownian bridge $B$ from $(c, y_{c})$ to $(d, y_{d})$, tilted by $\smash{\tilde W_{H_t}/\tilde Z_{H_t}}$ associated to the lower boundary curve $-x^2+ \varepsilon_0 M \sigma_\tan$ on $I$. 

Since $B$ does not depend on any data in $\F$ and $\tilde W_{H_t}\leq 1$, the previous display is bounded above by
$$\tilde Z_{H_t}^{-1}\cdot\P\left(\sup_{x\in J_{k}} \left(B(x) -\ellt_{x_\tan}(x)\right) \geq M|I|^{1/2}\right).$$
By Corollary~\ref{c.ratio of deviation prob and part func} and our choice of $\varepsilon$, the latter probability is bounded by $\exp(-cM^2)$, so we have proved the first inequality of Proposition~\ref{p.versatile Airy tangent estimate}; the second inequality is proved in the same way by considering the event $\{\sup_{x\in I}(\cL_1(x) -\ellt_\beta(x)) > M|I|^{1/2}\}$ in place of $\{\sup_{x\in J_{k}} (\cL_1(x) + x^2) \geq M|I|^{1/2}\}$.
\end{proof}

\subsection{Proof of the two-point limit shapes}\label{s.two point limit shape proofs}

Here we prove Theorem~\ref{t.two-point limit shape}. We start with the first half, the upper bound.

\begin{proof}[Proof of Theorem~\ref{t.two-point limit shape}, upper bound]
Here we prove \eqref{e.two point limit prob bound}. 
As noted above, there are two cases: (1) $\Ilin$ is a union of two disjoint intervals (2) $\Ilin$ is a single interval (and so $\tent$ is piecewise linear inside $\Ilin$).

\medskip

\emph{Case 1: $\Ilin$ is a union of two disjoint intervals.} Let $\xtan_{\pm,\ell}$ and $\xtan_{\pm, r}$ be defined by $\Ilin = [\xtan_{-,\ell}, \xtan_{-,r}] \cup [\xtan_{+,\ell}, \xtan_{+,r}]$. Note that (as can be seen by looking at the middle panel of Figure~\ref{f.three cases of two-point}) $-\theta^{1/2}\in [\xtan_{-,\ell}, \xtan_{-,r}]$ and $\theta^{1/2}\in[\xtan_{+,\ell}, \xtan_{+,r}]$. 

We argue \eqref{e.two point limit prob bound} with $[\xtan_{-,\ell}, \xtan_{-,r}]$ in place of $\Ilin$; the same argument will apply to $[\xtan_{+,\ell}, \xtan_{+,r}]$. Define the event
$$\dev^{\rmleft}_{-,M} = \left\{\sup_{x\in[\xtan_{-,\ell}, -\theta^{1/2}]} \left(\cL_1(x) - \tent(x)\right) \geq M_{a,b}\theta^{1/4}\right\}$$
and similarly $\smash{\dev^{\rmright}_{-,M}}$ by replacing $\smash{[\xtan_{-,\ell}, -\theta^{1/2}]}$ with $[-\theta^{1/2}, \xtan_{-,r}]$. It is enough to show that the conditional probability given the values of $\smash{\cL_1(\pm \theta^{1/2})}$ of each of these events is at most $\exp(-cM^2)$.

Let $\ellt_{-,L}$ and $\ellt_{-,R}$ be the tangent lines passing through $(-\theta^{1/2}, a\theta)$ on the left and right sides respectively.  We now find pinning points on either side of $[\xtan_{-,\ell}, \xtan_{-,r}]$, i.e., points on the left and right of the interval at which $\smash{\cL_1}$ is below $\smash{\ellt_{-,L}}$ and $\smash{\ellt_{-,R}}$ respectively with high probability, conditionally on $\smash{\cL_1(-\theta^{1/2}) = a\theta}$ and $\smash{\cL_1(\theta^{1/2}) = b\theta}$. 

To obtain a pinning point on the right side, observe (as can be seen from the middle panel of Figure~\ref{f.two-point algebra}) that the assumption that $\Ilin$ is two disjoint intervals implies that $(\theta^{1/2}, b\theta)$ is below $\smash{\ellt_{-,R}(\theta^{1/2})}$, and so $\smash{\theta^{1/2}}$ serves as the pinning point.  Recall that $[-\theta^{1/2}, \theta^{1/2}] \supseteq [-\theta^{1/2}, \xtan_{-,r}]$. Thus we see, by applying Proposition~\ref{p.versatile Airy tangent estimate} with $I = \smash{[-\theta^{1/2}, \theta^{1/2}]}$ and since the pinning points $\pm\theta^{1/2}$ we have chosen satisfy $\P(\msf{BT}\mid \cL_1(-\theta^{1/2}) = a\theta, \cL_1(\theta^{1/2})=b\theta) = 1$,
\begin{align*}
\P\left(\dev^{\rmright}_{-,M} \mid \cL_1(-\theta^{1/2}) = a\theta, \cL_1(\theta^{1/2})=b\theta\right) \leq \exp(-cM^2).
\end{align*}
To find a pinning point on the left we make use of stationarity and parabolic decay, as in the proof of Theorem~\ref{t.limit shape}. By Lemma~\ref{l.bound point conditioning by tail conditioning} (monotonicity in conditioning),
\begin{equation*}\label{e.two point left pinning}
\begin{split}
\MoveEqLeft[10]
\P\left(\cL_1(-x_L) > \ellt_L(-x_L) \ \big|\  \cL_1(-\theta^{1/2}) = a\theta, \cL_1(\theta^{1/2}) = b\theta\right)\\
&\leq \P\left(\cL_1(-x_L) > \ellt_L(-x_L) \ \big|\  \cL_1(-\theta^{1/2}) \geq a\theta, \cL_1(\theta^{1/2}) \geq b\theta\right)\\
&\leq \frac{\P\left(\cL_1(-x_L) > \ellt_L(-x_L)\right)}{\P\left(\cL_1(-\theta^{1/2}) \geq a\theta, \cL_1(\theta^{1/2}) \geq b\theta\right)}.
\end{split}
\end{equation*}
Now we can lower bound the denominator by the FKG inequality (Assumption~\ref{as.corr}(a)) and Theorem~\ref{t.upper tail lower bound}. Upper bounding the numerator by Theorem~\ref{t.upper tail upper bound}, and using stationarity and parabolic decay, we can find $-x_L < \xtan_{-,\ell}$ which is $O([(1+a)^{1/2}+(1+b)^{1/2}]\theta^{1/2})$ such that, for an absolute constant $c>0$,
\begin{align*}
\P\left(\cL_1(-x_L) > \ellt_L(-x_L) \ \big|\  \cL_1(-\theta^{1/2}) = a\theta, \cL_1(\theta^{1/2}) = b\theta\right) \leq \exp\left(-c[(1+a)^{3/2}+(1+b)^{3/2}]\theta^{3/2}\right),
\end{align*}
which is further bounded by $\exp(-cM^2)$ since $M\leq C^{-1}[(1+a)^{3/4}+(1+b)^{3/4}]\theta^{3/4}$.
Now using this and Proposition~\ref{p.versatile Airy tangent estimate} with $I=[-x_{L}, -\theta^{1/2}]$ we see that
\begin{equation}\label{e.devleft split}
\begin{split}
\P\left(\dev^{\rmleft}_{-,M} \ \big|\  \cL_1(-\theta^{1/2}) = a\theta, \cL_1(\theta^{1/2}) = b\theta \right)\leq 4\exp(-cM^2).
\end{split}
\end{equation}
This completes the proof by absorbing $4$ into the exponential by reducing $c$.

\bigskip
\emph{Case 2: $\Ilin$ is one interval.} 
We label $\Ilin$ as $[\xtan_\ell, \xtan_r]$. These are locations of tangency of the tangents to $-x^2$ which pass through $(\smash{-\theta^{1/2}}, a\theta)$ and $(\smash{\theta^{1/2}}, b\theta)$. For reference in a future proof we record the equations of the tangents as
\begin{equation}\label{e.two point tangency lines}
\begin{split}
y &= 2x_\ell^{\mathrm{tan}}(x_\ell^{\mathrm{tan}}-x)-(x_\ell^{\mathrm{tan}})^2,\\
y &= -2x_r^{\mathrm{tan}}(x-x_r^{\mathrm{tan}}) - (x_r^{\mathrm{tan}})^2,
\end{split}
\end{equation}
where
\begin{equation}\label{e.two-point tangency points}
\xtan_\ell = -(1+\sqrt{1+a})\theta^{1/2} \qquad\text{and}\qquad \xtan_r = (1+\sqrt{1+b})\theta^{1/2},
\end{equation}
as can be calculated by using the information that the lines in \eqref{e.two point tangency lines} pass through $(-\theta^{1/2},a\theta)$ and $(\theta^{1/2}, b\theta)$ respectively and solving.

Let $\Ilin^{\mathrm{left}} = [\xtan_\ell, -\theta^{1/2}]$, $\Ilin^{\mathrm{cent}} = [-\theta^{1/2}, \theta^{1/2}]$, and $\Ilin^{\mathrm{right}} = [\theta^{1/2}, \xtan_r]$. Define the event
$$\dev^\rmleft_M = \left\{\sup_{x \in \Ilin^\rmleft} \left(\cL_1(x) - \tent(x)\right) \geq M_{a,b}\theta^{1/4}\right\},$$
and similarly define $\dev^\rmcent_M$ and $\dev^\rmright_M$ with $\Ilin^\rmcent$ and $\Ilin^\rmright$ in place of $\Ilin^\rmleft$; thus we are trying to bound 
$$\P\left(\dev^\rmleft_M\cup\dev^\rmcent_M\cup\dev^\rmright_M \ \big|\  \cL_1(-\theta^{1/2})= a\theta,\cL_1(\theta^{1/2})= b \theta\right).$$
The conditional probabilities of $\dev^\rmleft_M$ and $\dev^\rmright_M$ are bounded by the same argument as was explained for bounding the conditional probability of $\dev^{\rmleft}_{-,M}$ in Case 1. So we turn to $\dev^{\rmcent}_M$.

By applying the Brownian Gibbs property, $\P(\dev^\rmcent_M \ \big|\  \cL_1(-\theta^{1/2})= a\theta,\cL_1(\theta^{1/2})= b \theta)$ is the probability that a Brownian bridge between $(-\theta^{1/2},a\theta)$ and $(\theta^{1/2}, b\theta)$ which is conditioned to stay above $\cL_2$ has a deviation greater than $M_{a,b}\theta^{1/4}$. As in arguments already presented, we use Lemma~\ref{l.lower curve control new} to control $\cL_2$. This yields
\begin{align*}
\MoveEqLeft[2]
\P\left(\dev^{\rmcent}_M, \sup_{x\in\Ilin^\rmcent}\left(\cL_2(x)+x^2\right) > \varepsilon M_{a,b}\theta^{1/4} \ \Big|\  \cL_1(-\theta^{1/2}) =a\theta, \cL_1(\theta^{1/2}) = b\theta\right)\\
&\leq \frac{\P\left(\dev^{\rmcent}_M \ \Big|\  \cL_1(-\theta^{1/2}) =a\theta, \cL_1(\theta^{1/2}) = b\theta\right)}{2\cdot \P\left(\inf_{x \in \Ilin^\rmleft} \left(\cL_1(x) - \tent(x)\right) \geq M_{a,b}\theta^{1/4}, \cL_1(\theta^{1/2}) \geq b\theta \midd \cL_1(-\theta^{1/2}) =a\theta + K\right)},
\end{align*}
as long as $K$ is such that $\varepsilon M_{a,b}\theta^{1/4} \geq \log(C((a+1)\theta+K)^{3/4} )$. As in the proof of Proposition~\ref{p.versatile Airy tangent estimate}, taking $K = 8\theta + 2(b+D)\theta + 2MI^{1/2} - (a+1)\theta$, for a large enough constant $D$, suffices to lower bound the denominator by $\frac{3}{4}$. This can be used to conclude that
\begin{align*}
\MoveEqLeft[5]
\P\left(\dev^{\rmcent}_M \ \Big|\  \cL_1(-\theta^{1/2}) =a\theta, \cL_1(\theta^{1/2}) = b\theta\right)\\
&\leq 3\cdot \P\left(\dev^{\rmcent}_M, \sup_{x\in\Ilin^\rmcent}\left(\cL_2(x)+x^2\right) \leq \varepsilon M\theta^{1/4} \ \Big|\  \cL_1(-\theta^{1/2}) =a\theta, \cL_1(\theta^{1/2}) = b\theta\right).
\end{align*}

By applying the Brownian Gibbs property and then using monotonicity (Lemma~\ref{l.monotonicity}) we can replace the lower boundary condition in the Gibbs measure by $-x^2+\varepsilon M_{a,b}\theta^{1/4}$. Since in this subcase the line joining the endpoints of the Brownian bridge is tangent to or lies above $-x^2$, we get a Gaussian bound on the previous display by Corollary~\ref{c.ratio of deviation prob and part func}.
\end{proof}

\begin{proof}[Proof sketch of Theorem~\ref{t.two-point limit shape}, lower bound]
Here we give a sketch of the proof of \eqref{e.two point limit prob lower bound}, as it is quite similar to the lower bound of Theorem~\ref{t.limit shape}. We start with the case that $\Ilin$ is a single interval $[x^{\tan}_{\ell}, x^{\tan}_r]$. We call the event we are bounding the conditional probability of by $\msf{LowDev}_M$, and define $\msf{BdyCtrl}_M=\{\min(\cL_1(\xtan_\ell)+(\xtan_\ell)^2, \cL_1(\xtan_r)+(\xtan_r)^2) >  - \tfrac{1}{2}M_{a,b}\theta^{1/4}\}$.  We write
\begin{equation}\label{e.two point lower bound first split}
\begin{split}
\MoveEqLeft[10]
\P\left(\msf{LowDev}_M \midd \cL_1(-\theta^{1/2}) =a\theta, \cL_1(\theta^{1/2}) = b\theta\right)\\
&\leq \P\left(\msf{LowDev}_M,  \msf{BdyCtrl}_M\midd \cL_1(-\theta^{1/2}) =a\theta, \cL_1(\theta^{1/2}) = b\theta\right)\\
&\qquad + \P\left(\msf{BdyCtrl}_M^c \midd \cL_1(-\theta^{1/2}) =a\theta, \cL_1(\theta^{1/2}) = b\theta\right).
\end{split}
\end{equation}
By a union bound, Lemma~\ref{l.bound point conditioning by tail conditioning}, Assumption~\ref{as.tails}, and stationarity, it follows that, for large enough $a\theta$ and $b\theta$ such that $\P(\cL_1(-\theta^{1/2}) \leq a\theta, \cL_1(\theta^{1/2}) \leq b\theta)\geq \frac{1}{2}$,
\begin{align*}
\P\left(\msf{BdyCtrl}_M^c \midd \cL_1(-\theta^{1/2}) =a\theta, \cL_1(\theta^{1/2}) = b\theta\right)
&\leq \P\left(\msf{BdyCtrl}_M^c \midd \cL_1(-\theta^{1/2}) \leq a\theta, \cL_1(\theta^{1/2}) \leq b\theta\right)\\
&\leq 4\cdot \P\left(\cL_1(0) \leq -\tfrac{1}{2}M_{a,b}\theta^{1/4}\right).
\end{align*}
Now we must bound the first term of the righthand side of \eqref{e.two point lower bound first split}. We apply the Gibbs property to $[\xtan_\ell, \xtan_r]$ and use monotonicity to drop the second curve. This yields that the probability is upper bounded by that of a Brownian bridge from $(\xtan_\ell, \cL_1(\xtan_\ell))$ to $(\xtan_r, \cL_1(\xtan_r))$  which is conditioned to pass through $(-\theta^{1/2}, a\theta)$ and $(\theta^{1/2}, b\theta)$ satisfying $\msf{LowDev}_M$. This can be bounded by breaking up the Brownian bridge into three Brownian bridges on the intervals $[\xtan_\ell, -\theta^{1/2}]$, $[-\theta^{1/2}, \theta^{1/2}]$, and $[\theta^{1/2}, \xtan_r]$ (with the prescribed endpoint values), and applying Lemma~\ref{l.brownian bridge sup tail exact}. See the second half of the proof of Theorem~\ref{t.limit shape}'s lower bound for a very similar argument.

The case where $\Ilin$ is two intervals is done in the same way by treating each interval separately.
\end{proof}

\subsection{Control on the marginal height when conditioned on the upper tail}

The following lemma will be needed for the upper bound on the two-point upper tail that we prove in the next section. Roughly, as in the proof of Theorem~\ref{t.upper tail upper bound}, we will first upper bound the probability that $\cL_1(-\theta^{1/2})\in[a\theta, a\theta+1], \cL_1(\theta^{1/2})\in[b\theta, b\theta+1]$ instead of the probability of the entire upper tail, and to do so we will consider the probability that a Brownian bridge $B$ with appropriate endpoints near the endpoints of $\linhull$ satisfies $B(-\theta^{1/2})\geq a\theta, B(\theta^{1/2})\geq b\theta$, i.e., we come back to the entire upper tail for $B$ (this is due to invoking monotonicity at an earlier point in the argument, as in the one-point case).

To estimate this probability we will break it up into $\P(B(-\theta^{1/2})\geq a\theta)\cdot\P(B(\theta^{1/2})\geq b\theta\mid B(-\theta^{1/2})\geq a\theta)$; to estimate the second probability we will need to know that $B(-\theta^{1/2})$ is not too high above $a\theta$. Observe that such considerations did not arise in the one-point case. The following implies exactly this control on the margin of $B(-\theta^{1/2})$ above its conditioned value; see also Figure~\ref{f.not too much margin} for a depiction of the situation.

\begin{lemma}\label{l.brownian bridge margin}
Suppose $a\geq b>-1$ and $(a-b)^2\leq8(a+b)$. For $M\in\R$, let $M_{a,b} = M((1+a)^{1/4}+(1+b)^{1/4})$ and let $B$ be a rate two Brownian bridge from $(\xtan_\ell, -(\xtan_\ell)^2+M_{a,b}\theta^{1/4})$ to $(\xtan_{r}, -(\xtan_r)^2+M_{a,b}\theta^{1/4})$, with $\xtan_\ell$ and $\xtan_r$ from \eqref{e.two-point tangency points}. Then there exists an absolute constant $M_0$ such that, for $\theta>0$ and $M>M_0$,
$$\P\left(B(-\theta^{1/2})\geq a\theta+2M_{a,b}\theta^{1/4}\text{ or } B(\theta^{1/2})\geq b\theta+2M_{a,b}\theta^{1/4} \ \Big|\  B(-\theta^{1/2})\geq a\theta, B(\theta^{1/2})\geq b\theta\right) \leq \frac{1}{2}.$$
\end{lemma}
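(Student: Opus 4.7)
The pair $(X_1, X_2) := (B(-\theta^{1/2}), B(\theta^{1/2}))$ is jointly Gaussian, and I would compute its parameters explicitly using standard Brownian-bridge formulas on $[\xtan_\ell, \xtan_r]$ with the specified endpoints and tangency points from \eqref{e.two-point tangency points}. Writing $A := \sqrt{1+a}$ and $B := \sqrt{1+b}$, one finds
\begin{align*}
\E[X_1] &= -(1+2A+AB)\theta + M_{a,b}\theta^{1/4},\\
\sigma_-^2 := \Var(X_1) &= \frac{2A(2+B)}{2+A+B}\theta^{1/2},\\
\rho &:= \frac{\Cov(X_1, X_2)}{\sigma_-\sigma_+} = \sqrt{\frac{AB}{(2+A)(2+B)}},
\end{align*}
with symmetric expressions for $\E[X_2]$ and $\sigma_+^2 := \Var(X_2)$. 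The hypotheses $a \geq b > -1$ and $(a-b)^2 \leq 8(a+b)$ force $a \geq 0$ and hence $A \geq 1$, so $1-\rho^2 = 2(2+A+B)/[(2+A)(2+B)]$ is bounded below uniformly. By the union bound and the symmetry $(a, -\theta^{1/2}) \leftrightarrow (b, \theta^{1/2})$, it suffices to show
\[\P(X_1 \geq a\theta + s \mid X_1 \geq a\theta,\ X_2 \geq b\theta) \leq 1/4,\qquad s := 2M_{a,b}\theta^{1/4}.\]

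Changing variable $x_1 \mapsto x_1 - s$ in the numerator of the defining ratio rewrites it as the standardized bivariate Gaussian tail ratio $L(\alpha+\delta,\beta;\rho)/L(\alpha,\beta;\rho)$, where $\alpha := (a\theta-\E[X_1])/\sigma_-$, $\beta := (b\theta-\E[X_2])/\sigma_+$, $\delta := s/\sigma_-$, and $L(y_1,y_2;\rho) := \P(Z_1 \geq y_1,\ Z_2 \geq y_2)$ for bivariate standard normal $(Z_1, Z_2)$ of correlation $\rho$. Using the one-dimensional representation $L(y_1, y_2; \rho) = \int_{y_1}^\infty \phi(t)\,\Psi((y_2 - \rho t)/\sqrt{1-\rho^2})\,\dif t$ (with $\phi, \Psi$ the standard normal density and survival function, coming from $Z_2 = \rho Z_1 + \sqrt{1-\rho^2}\,Z$ with $Z \perp Z_1$) and substituting $t \to t+\delta$ in the numerator, one extracts the Gaussian density ratio $\phi(t+\delta)/\phi(t) = e^{-\delta t - \delta^2/2}$, which on the integration region is at most $e^{-\max(\alpha,0)\delta - \delta^2/2}$. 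A Mills-ratio comparison bounds the residual ratio of $\Psi$-integrals by a universal constant $C$, via a case split on whether $\beta - \rho\alpha \geq 0$ (Mills applied to the shifted $\Psi$-kernel) or $\beta - \rho\alpha < 0$ (both $\Psi$-factors are bounded below by $1/2$, trivializing the ratio); when $\alpha < 0$ one instead uses the direct one-dimensional bound $\Psi(\alpha+\delta)/\Psi(\alpha) \leq 2 e^{-(\alpha+\delta)^2/2}$, which is valid since $\alpha + \delta > 0$ always (as $A(A+B+2)\theta + M_{a,b}\theta^{1/4} > 0$).

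Finally, a short algebraic computation gives $\delta^2 = 2M^2(A^{1/2}+B^{1/2})^2 (2+A+B)/[A(2+B)]$; on the feasible region $\{A \geq B > 0,\ (A^2-B^2)^2 \leq 8(A^2+B^2-2),\ A\geq 1\}$ this is bounded below by an absolute positive multiple of $M^2$ (the infimum $4M^2$ is approached at the boundary $(A,B) \to (2,0)$, i.e.\ $(a,b) \to (3,-1)$), and similarly $\alpha+\delta$ is bounded below by a positive multiple of $M$. Combining yields an overall bound of the form $Ce^{-c_0 M^2}$ for absolute $c_0 > 0$, which is $\leq 1/4$ for $M \geq M_0$ with $M_0$ absolute. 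The main obstacle is the uniform control of the $\Psi$-integral ratio across all parameter regimes, which is handled by the case split on $\beta - \rho\alpha$; a secondary subtlety is that $\alpha$ can be negative for small $\theta$ or small $B$, requiring the direct one-dimensional tail comparison in place of the Mills-style bound on the density-ratio factor.
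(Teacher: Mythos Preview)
Your direct bivariate-Gaussian approach is genuinely different from the paper's, but it contains a real gap in the step you call the ``Mills-ratio comparison.''

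\textbf{The gap.} After extracting $e^{-\max(\alpha,0)\delta-\delta^2/2}$, your residual ratio is exactly
\[
\frac{\int_\alpha^\infty \phi(t)\,\Psi\!\bigl(\tfrac{\beta-\rho t-\rho\delta}{\sqrt{1-\rho^2}}\bigr)\,dt}{\int_\alpha^\infty \phi(t)\,\Psi\!\bigl(\tfrac{\beta-\rho t}{\sqrt{1-\rho^2}}\bigr)\,dt}
=\frac{L(\alpha,\beta-\rho\delta;\rho)}{L(\alpha,\beta;\rho)}.
\]
You claim this is bounded by a universal constant via a case split on the sign of $\beta-\rho\alpha$. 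The case $\beta-\rho\alpha<0$ is fine (both $\Psi$-factors exceed $1/2$). But in the present problem one always has $A\le B+2$ on the feasible set (the boundary $(a-b)^2=8(a+b)$ is precisely $A=B+2$), which forces $\beta-\rho\alpha\ge 0$ for $\theta$ large; so the relevant branch is your Case~1b. There the residual ratio is \emph{not} bounded: for $a=b$ (so $A=B=K$) one computes $\alpha\sim 2K^{3/2}\theta^{3/4}$, $\delta\sim 4M$, $\rho\sim 1-2/K$, and the Savage-type asymptotic gives
\[
\frac{L(\alpha,\alpha-\rho\delta;\rho)}{L(\alpha,\alpha;\rho)}\approx \exp\Bigl(\frac{\rho\alpha\delta}{1+\rho}-\frac{\rho^2\delta^2}{2(1-\rho^2)}\Bigr)\approx \exp\bigl(4MK^{3/2}\theta^{3/4}-2M^2K\bigr),
\]
which is astronomically large in $\theta$. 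Thus no universal $C$ can bound it. (Relatedly, your claim that $1-\rho^2$ is uniformly bounded below is false: $1-\rho^2=2(2+A+B)/[(2+A)(2+B)]\to 0$ as $A=B\to\infty$.) Your approach could be salvaged by showing that the factor $e^{-\alpha\delta}$ dominates this blow-up---indeed the product equals $e^{-\alpha\delta/(1+\rho)-\delta^2/[2(1-\rho^2)]}$, which \emph{is} small---but that is a different and more delicate computation than what you wrote, and requires tracking the interplay rather than bounding the residual independently.

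\textbf{The paper's route.} The paper avoids all of this by a monotonicity trick. It introduces a second bridge $\tilde B$ whose endpoints lie on the line $\ell$ through $(-\theta^{1/2},a\theta+M_{a,b}\theta^{1/4})$ and $(\theta^{1/2},b\theta+M_{a,b}\theta^{1/4})$; the convexity hypothesis $(a-b)^2\le 8(a+b)$ ensures these endpoints lie above those of $B$. By Lemma~\ref{l.monotonicity} (raising both the endpoints and the lower-boundary ``curve'' from two isolated points to the full line $\ell-M_{a,b}\theta^{1/4}$), the conditioned $B$ is stochastically dominated by $\tilde B$ conditioned on $\tilde B\ge \ell-M_{a,b}\theta^{1/4}$ everywhere. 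Since $\tilde B-\ell$ is a centred rate-two Brownian bridge on an interval of length $(2+\sqrt{1+a}+\sqrt{1+b})\theta^{1/2}$, the problem reduces to bounding $\P(\sup(\tilde B-\ell)\ge M_{a,b}\theta^{1/4}\mid \inf(\tilde B-\ell)\ge -M_{a,b}\theta^{1/4})$, which is immediate from Lemma~\ref{l.brownian bridge sup tail exact} and the fact that $(A^{1/2}+B^{1/2})^2/(2+A+B)$ is bounded below on the feasible set. This sidesteps the bivariate tail analysis entirely.
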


\begin{proof}
Let $y,z$ be such that $(\xtan_\ell,y)$ and $(\xtan_r,z)$ lie on the line $\ell$ connecting $(-\theta^{1/2},a\theta+M_{a,b}\theta^{1/4})$ and $(\theta^{1/2},b\theta+M_{a,b}\theta^{1/4})$; see Figure~\ref{f.not too much margin}. The convexity hypothesis that $(a-b)^2\leq 8(a+b)$ implies that $y>-(\xtan_\ell)^2+M_{a,b}\theta^{1/4}$ and $z>-(\xtan_r)^2+M_{a,b}\theta^{1/4}$.

Let $\tilde B$ be a rate 2 Brownian bridge from $(\xtan_\ell,y)$ to $(\xtan_r,z)$. Let $I=[\xtan_\ell, \xtan_r]$. Note that $\ell(x)-M_{a,b}\theta^{1/4}$ passes through $(-\theta^{1/2},a\theta)$ and $(\theta^{1/2},b\theta)$. By monotonicity (Lemma~\ref{l.monotonicity}),
\begin{align*}
\MoveEqLeft[1]
\P\left(B(-\theta^{1/2})\geq a\theta+2M_{a,b}\theta^{1/4}\text{ or } B(\theta^{1/2})\geq b\theta+2M_{a,b}\theta^{1/4} \ \Big|\  B(-\theta^{1/2})\geq a\theta, B(\theta^{1/2})\geq b\theta\right)\\
&\leq \P\left(\tilde B(-\theta^{1/2})\geq a\theta+2M_{a,b}\theta^{1/4} \text{ or } \tilde B(\theta^{1/2})\geq b\theta+2M_{a,b}\theta^{1/4} \ \Big|\  \inf_{x\in I}(\tilde B(x) - \ell(x) )\geq -M_{a,b}\theta^{1/4}\right)\\
&\leq \P\left(\sup_{x\in I}(\tilde B(x)-\ell(x))\geq M_{a,b}\theta^{1/4} \ \Big|\  \inf_{x\in I}(\tilde B(x)- \ell(x))\geq -M_{a,b}\theta^{1/4}\right)
\leq 2\exp(-cM_{a,b}^2),
\end{align*}
using Lemma~\ref{l.brownian bridge sup tail exact} on the tail of the supremum of a Brownian bridge and for $c =2\theta^{1/2}(\xtan_r-\xtan_\ell)^{-1}=2(2+\sqrt{1+a}+\sqrt{1+b})^{-1}$ (the second equality by \eqref{e.two-point tangency points}). For large enough $M$ (independent of $a,b$) this probability is less than $1/2$, as required.
\end{proof}

\begin{figure}
\includegraphics[scale=1.1]{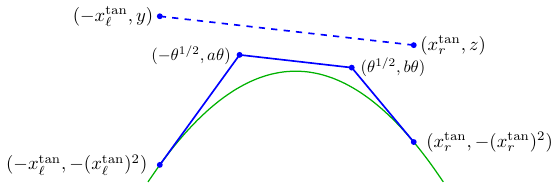}
\caption{The solid blue lines show $\linhull$ in the main case of Theorem~\ref{t.two-point limit shape}, when $(a-b)^2\le 8(a+b)$. The dotted line above is at a height of $M_{a,b}\theta^{1/4}$ above the line connecting $(-\theta^{1/2},a\theta)$ and $(\theta^{1/2}, b\theta)$ and appears in the proof of Lemma~\ref{l.brownian bridge margin}.}\label{f.not too much margin}
\end{figure}

\section{Two-point estimates}\label{s.two point asymptotics}

Here we prove Theorem~\ref{mt.two point tail}, after reformulating it slightly using Lemma~\ref{l.convexity algebra}. Recall that Theorem~\ref{mt.two point tail} has three cases depending on the number of extreme points of the convex hull $\linhull$ inside $[-\theta^{1/2}, \theta^{1/2}]$. Using Lemma~\ref{l.convexity algebra}, it can be seen that the number of extreme points of the convex hull can be characterized in terms of the intersection of $\ell_{a,b}$ (the line passing through $\smash{(-\theta^{1/2}, a\theta)}$ and $\smash{(\theta^{1/2}, b\theta)}$) with $-x^2$, along with an algebraic condition on $a$ and $b$. This turns out to be convenient for the proof. 

The reader can take a look at Figure~\ref{f.three cases of two-point} for a depiction of the three cases below (the panels are in the same order as the cases).

\begin{theorem}\label{t.two point asymptotics}
Let $\cL$ satisfy Assumptions~\ref{as.bg}--\ref{as.tails}. There exist constants $\theta_0$ and $a_0=b_0$ such that the following hold. If (i) $\theta > \theta_0$ and $a\geq b>-1$ or (ii) $\theta>0$ and $a\geq a_0$, $b\geq b_0$, $a\geq b$, then, if $(a-b)^2\leq 8(a+b)$,
\begin{align*}
\MoveEqLeft[4]
\P\Bigl(\cL_1(-\theta^{1/2}) \geq a \theta, \cL_1(\theta^{1/2}) \geq b \theta\Bigr)\\
&= \exp\left(-\frac{\theta^{3/2}}{24}\left[3(a-b)^2 + 24(a+b) + 16\left((1+a)^{3/2} + (1+b)^{3/2}\right) + 32\right] +\mathrm{error}\right).
\end{align*}
while if $(a-b)^2 > 8(a+b)$ and $\ell_{a,b}$ intersects $-x^2$ inside $[-\theta^{1/2}, \theta^{1/2}]$,
\begin{align*}
\P\Bigl(\cL_1(-\theta^{1/2}) \geq a \theta, \cL_1(\theta^{1/2}) \geq b \theta\Bigr)
&= \exp\left(-\frac{4}{3}\theta^{3/2}\left[(1+a)^{3/2} + (1+b)^{3/2}\right] +\mathrm{error}\right),
\end{align*}
and if $(a-b)^2 > 8(a+b)$ but $\ell_{a,b}$ intersects $-x^2$ outside $[-\theta^{1/2}, \theta^{1/2}]$,
\begin{align*}
\P\Bigl(\cL_1(-\theta^{1/2}) \geq a \theta, \cL_1(\theta^{1/2}) \geq b \theta\Bigr)
&= \exp\left(-\frac{4}{3}\theta^{3/2}(1+a)^{3/2} +\mathrm{error}\right),
\end{align*}
The error term may be lower bounded, up to a universal constant factor, by $-((1+a)^{1/2}+(1+b)^{1/2})\theta^{1/2}\log[(1+a)(1+b)\theta]$ in the first and second case, and $-(1+a)^{3/4}\theta^{3/4}$ in the third. 

It may be upper bounded, again up to a universal constant factor, by $((1+a)^{1/2}+(1+b)^{1/2})\theta^{1/2}\log[(1+a)(1+b)\theta]$ for the first case, $((1+a)^{3/4}+(1+b)^{3/4})\theta^{3/4}$ for the second case, and $(1+a)^{3/4}\theta^{3/4}$ in the third.
\end{theorem}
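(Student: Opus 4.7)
The proof follows the same skeleton as the one-point argument of Theorems~\ref{t.upper tail upper bound} and \ref{t.upper tail lower bound}, with the two-point limit shape Theorem~\ref{t.two-point limit shape} replacing the one-point version and an enlarged Gibbs resampling interval $[\xtan_\ell, \xtan_r]$ whose endpoints are the tangency locations in \eqref{e.two-point tangency points}. The key simplification is that, after applying monotonicity and the curve-control tools of Section~\ref{s.tools} to reduce to a free Brownian bridge quotient, the remaining estimate has two essentially independent pieces: a bivariate Gaussian upper-tail probability (the numerator) and a parabolic partition function (the denominator), each with a closed-form asymptotic. The three cases of the theorem correspond to the three geometric configurations of the convex hull identified in Lemma~\ref{l.convexity algebra}.

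\textbf{Upper bound, Case 1 ($(a-b)^2 \leq 8(a+b)$).} As in the one-point proof, I bound $\P(\h_1(-\theta^{1/2}) \in [a\theta, a\theta+1],\, \h_1(\theta^{1/2}) \in [b\theta, b\theta+1])$ and sum. On this event, Theorem~\ref{t.two-point limit shape} (extended from point conditioning to interval conditioning via Lemma~\ref{l.bound point conditioning by tail conditioning}) places $\h_1(\xtan_\ell), \h_1(\xtan_r)$ within $M_{a,b}\theta^{1/4}$ of $-(\xtan_\ell)^2,\, -(\xtan_r)^2$, Lemma~\ref{l.brownian bridge margin} prevents $\h_1(\pm\theta^{1/2})$ from exceeding $a\theta, b\theta$ by more than $2M_{a,b}\theta^{1/4}$, and Lemma~\ref{l.lower curve control} places $\sup_{[\xtan_\ell,\xtan_r]}(\h_2 + x^2)$ below $M_{a,b}\theta^{1/4}$. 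Applying the $H_t$-Brownian Gibbs property on $[\xtan_\ell,\xtan_r]$ and invoking monotonicity Lemma~\ref{l.monotonicity} to raise the side values and the lower boundary, one arrives at
\[
\P\bigl(\h_1(-\theta^{1/2}) \in [a\theta,a\theta{+}1],\ \h_1(\theta^{1/2}) \in [b\theta,b\theta{+}1]\bigr) \;\leq\; C\cdot Z_{H_t}^{-1}\cdot \P\bigl(B(-\theta^{1/2}) \geq a\theta,\ B(\theta^{1/2}) \geq b\theta\bigr),
\]
where $B$ is a rate two Brownian bridge from $(\xtan_\ell, -(\xtan_\ell)^2 + M_{a,b}\theta^{1/4})$ to $(\xtan_r, -(\xtan_r)^2 + M_{a,b}\theta^{1/4})$ and $Z_{H_t}$ is the partition function with lower boundary $-x^2 + M_{a,b}\theta^{1/4}$. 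The denominator is handled by Corollary~\ref{c.pos temp parabolic avoidance lower bound}, giving $-\tfrac{1}{12}(\xtan_r-\xtan_\ell)^3 = -\tfrac{1}{12}(2+\sqrt{1+a}+\sqrt{1+b})^3\theta^{3/2}$ up to $O((\xtan_r-\xtan_\ell)\log(\xtan_r-\xtan_\ell))$. For the numerator, setting $u=\sqrt{1+a}, v=\sqrt{1+b}$, a direct computation gives $\vec v - \vec\mu = (u+v+2)\theta(u,v)^T$ with $2\times 2$ Brownian bridge covariance $\Sigma$ satisfying $\det \Sigma = 8uv\theta/(2+u+v)$, whence Lemma~\ref{l.normal bounds} applied to the standardized Gaussian yields the leading exponent
\[
-\tfrac{1}{2}(\vec v - \vec\mu)^T\Sigma^{-1}(\vec v-\vec\mu) \;=\; -\tfrac{(u+v+2)^2}{8}\bigl[\,2(u+v)+(u-v)^2\bigr]\theta^{3/2}.
\]
Summing with the partition function contribution and expanding (setting $p=u+v,\, q=u-v$) produces exactly the stated expression after routine algebra.

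\textbf{Lower bound, Case 1.} I introduce the favourable event
\[
\msf{Fav} \;=\; \{\h_1(\xtan_\ell) \leq -(\xtan_\ell)^2 + 1\} \cap \{\h_1(\xtan_r) \leq -(\xtan_r)^2 + 1\} \cap \{\sup_{[\xtan_\ell, \xtan_r]}(\h_2 + x^2) \leq K\}.
\]
All three events are decreasing in the ensemble, so the FKG inequality (Assumption~\ref{as.corr}), combined with the BK inequality and tightness of $\{\h_1(0)\}_{t\geq t_0}$, gives a uniform positive lower bound on $\P(\msf{Fav})$ for large $K$. Applying Brownian Gibbs on $[\xtan_\ell, \xtan_r]$ and using monotonicity in the opposite direction (this time lowering the endpoints from their observed values to the reference points on the parabola) reduces the task to lower bounding $\P(B(-\theta^{1/2}) \geq a\theta,\ B(\theta^{1/2}) \geq b\theta)/Z_{H_t}$. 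The numerator is bounded below by integrating the bivariate Gaussian density over a unit box at $(a\theta,b\theta)$; the matching upper bound on $Z_{H_t}$ needed here is the computation sketched in Remark~\ref{r.upper bound on Z for parabola}. This produces an error of at most $O((u+v)\theta^{1/2}\log\theta)$, matching the stated lower-bound error term.

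\textbf{Cases 2 and 3.} In Case 2, $\Ilin = [\xtan_\ell, x_1]\cup[x_2,\xtan_r]$ where $x_1 < x_2$ are the intersection points of $\ell_{a,b}$ with $-x^2$ inside $[-\theta^{1/2},\theta^{1/2}]$. For the upper bound I pick $x^\star = (x_1 + x_2)/2$ and use Lemma~\ref{l.bound point conditioning by tail conditioning}, stationarity, and the one-point upper bound (Theorem~\ref{t.upper tail upper bound}) to conclude that $\h_1(x^\star) \leq -(x^\star)^2 + O(\log\theta)$ except on an event of probability much smaller than the target bound; conditioning on this and applying Brownian Gibbs separately on $[\xtan_\ell, x^\star]$ and $[x^\star,\xtan_r]$ reduces to two instances of the Case~1 analysis, each with only one nontrivial side constraint, whose exponents sum to $-\tfrac{4}{3}\theta^{3/2}[(1+a)^{3/2}+(1+b)^{3/2}]$. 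The matching lower bound in Case~2 is immediate from the FKG inequality and Theorem~\ref{t.upper tail lower bound} applied to each marginal separately. For Case 3, Lemma~\ref{l.convexity algebra} ensures that the tangent from $(-\theta^{1/2}, a\theta)$ to $-x^2$ passes above $(\theta^{1/2}, b\theta)$, so the $b$-constraint is essentially implied by the $a$-constraint: the upper bound follows from the one-point bound in Theorem~\ref{t.upper tail upper bound} (dropping the $b$-constraint is monotone), while the lower bound combines Theorem~\ref{t.upper tail lower bound} with the one-point limit shape Theorem~\ref{t.limit shape} to verify that $\h_1(\theta^{1/2}) \geq b\theta$ with probability at least $\tfrac{1}{2}$ conditional on $\h_1(-\theta^{1/2}) \geq a\theta$.

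\textbf{Main obstacle.} The principal new ingredient beyond the one-point analysis is the bivariate Gaussian exponent computation in Case~1 and the verification that it fuses cleanly with the partition function estimate to produce the specific algebraic form in the theorem statement; the $2\times 2$ matrix inversion is routine but the expansion in the variables $u,v$ requires some care. A related technical point is Case~2, where obtaining the matching upper bound requires a "middle pinning" at a point where $\h_1$ is typically close to $-x^2$, which in turn relies on combining the two-point limit shape with the one-point upper tail to obtain enough control over the profile in between the two far-apart tail events.
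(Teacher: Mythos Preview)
Your upper bound and your Cases~2--3 are essentially the paper's approach, modulo cosmetic differences: the paper factors the bivariate Gaussian via conditional probabilities rather than a $2\times 2$ matrix inversion, and for the Case~2 upper bound it pins at the inner tangency points $x^{\mrm{in}}_\ell, x^{\mrm{in}}_r$ via Theorem~\ref{t.two-point limit shape} rather than at a midpoint via one-point tails. (Your Case~2 pinning as written does not work: to make the unconditional probability $\P(\h_1(x^\star)+(x^\star)^2>M)$ smaller than the target you would need $M$ of order $\theta$, not $\log\theta$, and then the resampled bridge endpoint is too high to recover the one-point exponents. Invoking the limit shape at $x^{\mrm{in}}_\ell, x^{\mrm{in}}_r$ instead, as the paper does, repairs this.)

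The genuine gap is your Case~1 lower bound. Your $\msf{Fav}$ imposes \emph{upper} bounds on $\h_1$ at $\xtan_\ell,\xtan_r$; but to lower-bound the $\F$-conditional probability of the increasing target event via Lemma~\ref{l.monotonicity} you need \emph{lower} bounds on the endpoints, so that lowering them to fixed reference values is a valid minorization. More fundamentally, the quantity $\Pfree(B\in A)/Z_{H_t}$ you aim for is an \emph{upper} bound on the reweighted probability $\Efree[\one_A W_{H_t}]/Z_{H_t}$ (since $W_{H_t}\leq 1$), not a lower bound; no upper bound on $Z_{H_t}$ from Remark~\ref{r.upper bound on Z for parabola} can flip this. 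And if you instead drop the lower boundary to $-\infty$, the free-bridge exponent you obtain is exactly the Gaussian numerator of your upper bound, hence off from the target by the missing partition-function term $\tfrac{1}{12}(2+\sqrt{1+a}+\sqrt{1+b})^3\theta^{3/2}$.

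The paper's Case~1 lower bound is genuinely different and mimics the one-point argument of Theorem~\ref{t.upper tail lower bound}. It resamples on the \emph{shorter} interval $[-x_\ell^0, x_r^0]$ whose endpoints are where the two outer tangent lines through $(-\theta^{1/2},a\theta)$ and $(\theta^{1/2},b\theta)$ cross height zero (so $x_\ell^0=\tfrac12|\xtan_\ell|$, $x_r^0=\tfrac12\xtan_r$), uses FKG together with Theorem~\ref{t.upper tail lower bound} to lower-bound $\P(\h_1(-x_\ell^0)\geq 0,\,\h_1(x_r^0)\geq 0)$, and then computes a \emph{free} Brownian bridge probability on $[-x_\ell^0,x_r^0]$ with the lower boundary sent to $-\infty$. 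The height-zero choice is precisely what makes the FKG cost plus the free-bridge exponent sum to the claimed answer, and it requires no lower-tail control on $\h_2$ --- which is essential, since Assumptions~\ref{as.bg}--\ref{as.tails} provide none.
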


\begin{proof}[Proof of Theorem~\ref{mt.two point tail}]
This follows immediately by combining Theorem~\ref{t.two point asymptotics} and Lemma~\ref{l.convexity algebra}.
\end{proof}

\begin{proof}[Proof of lower bound of Theorem~\ref{t.two point asymptotics}]
We prove the bounds case-by-case, in increasing order of complexity of the argument. Note that this order is not the same as the order in which the estimates are stated in Theorem~\ref{t.two point asymptotics}.

\medskip

\emph{Case 1: $(a-b)^2 > 8(a+b)$ and $\ell_{a,b}$ intersects $-x^2$ inside $[-\theta^{1/2}, \theta^{1/2}]$.} Here the lower bound follows immediately from the FKG inequality (Assumption~\ref{as.corr}(a)) and the one-point lower bound Theorem~\ref{t.upper tail lower bound}. The error term is $-C[(1+a)^{1/2}+(1+b)^{1/2}]\theta^{1/2}\log[(1+a)(1+b)\theta]$.

\medskip

\emph{Case 2: $(a-b)^2>8(a+b)$ and $\ell_{a,b}$ intersects $-x^2$ outside $[-\theta^{1/2}, \theta^{1/2}]$.} Observe from Lemma~\ref{l.convexity algebra} that the tangent line from $(-\theta^{1/2},a\theta)$ to the right lies above $(\theta^{1/2}, b\theta)$. So by the one-point limit shape (Theorem~\ref{t.limit shape}) and monotonicity in conditioning (Lemma~\ref{l.bound point conditioning by tail conditioning}), for large enough constant $M$,
\begin{align*}
\P\left(\cL_1(\theta^{1/2})\geq b\theta \ \Big|\  \cL_1(-\theta^{1/2})\geq a\theta+M\theta^{1/4}\right) \geq \P\left(\cL_1(\theta^{1/2})\geq b\theta \ \Big|\  \cL_1(-\theta^{1/2})= a\theta+M\theta^{1/4}\right)
&\geq \frac{1}{2}.
\end{align*}
So we see that
\begin{align*}
\P\left(\cL_1(-\theta^{1/2})\geq a\theta,\, \cL_1(\theta^{1/2})\geq b\theta\right)
&\geq \P\left(\cL_1(-\theta^{1/2})\geq a\theta+M\theta^{1/4},\, \cL_1(\theta^{1/2})\geq b\theta\right)\\
&\geq \frac{1}{2}\P\left(\cL_1(-\theta^{1/2})\geq a\theta+M\theta^{1/4}\right)\\
&\geq \exp\left(-\frac{4}{3}\theta^{3/2}(1+a)^{3/2} - C(1+a)^{3/4}\theta^{3/4}\right),
\end{align*}
using Theorem~\ref{t.upper tail lower bound} for the last line. This completes the proof of the lower bound in this case.

\bigskip

\emph{Case 3: $(a-b)^2 \leq 8(a+b)$.} This is the main case. The idea is to consider the convex hull of the parabola $-x^2$ and the two points $(-\theta^{1/2}, a\theta)$ and $(\theta^{1/2}, b\theta)$. By assuming $a\geq b > -1$ and $(a-b)^2\leq 8(a+b)$, we have ensured (see Lemma~\ref{l.convexity algebra}) that the convex hull is the parabola outside an interval, and a piecewise linear function inside, with both $(-\theta^{1/2}, a\theta)$ and $(\theta^{1/2}, b\theta)$ as extreme points; see Figure~\ref{f.not too much margin}. In other words, $\ell_{a,b}$ lies above the parabola.

We want to get a lower bound on the two-point probability by considering the event where $\cL_1$ lies above a well-chosen point on each of the tangency lines, and then compute the probability that a Brownian bridge with these two boundary points passes above $(-\theta^{1/2}, a\theta)$ and $(\theta^{1/2}, b\theta)$. Notice that the Brownian bridge will ignore the lower boundary condition, which is allowed when proving lower bounds since, by monotonicity, the lower boundary would only push the bridge up. We will lower bound the probability of $\cL_1$ being above the two chosen boundary points using the FKG inequality. We anticipate that the FKG inequality will be sharp in lower bounding the probability of $\{\cL_1(x_1) > \theta_1, \cL_1(x_2) > \theta_2\}$ for any $x_1 < x_2$ and $\theta_1$, $\theta_2$ such that the line connecting $(x_1,\theta_1)$ and $(x_2,\theta_2)$ is tangent to $-x^2$, so this is the condition that guides the choice of the boundary points.

In fact, we will take one of the boundary points to be $\theta^{1/2}$ and the other to be the location where the left tangent line to $-x^2$ passing through $(\theta^{1/2}, b\theta)$ intersects the left tangent line passing through $(-\theta^{1/2}, a\theta)$. So the Brownian computation consists only of the probability of the Brownian bridge lying above $(-\theta^{1/2}, a\theta)$.
We label the left boundary point as $(-x_0\theta^{1/2}, -y_0\theta)$. It is the unique point of intersection of the lines $y=2(1+\sqrt{1+a})\theta^{1/2}x + a\theta + 2(1+\sqrt{1+a})\theta$ (the left tangency line passing through $(-\theta^{1/2}, a\theta)$) and $y=-2(1-\sqrt{1+b})\theta^{1/2}x + 2(1-\sqrt{1+b})\theta + b\theta$ (the left tangency line passing through $(\theta^{1/2}, b\theta)$). Solving the linear equations and some algebraic manipulations yield that
\begin{align}\label{e.x0 y0 definition}
x_0 := \tfrac{1}{2}(\sqrt{1+a} + \sqrt{1+b}) \quad\text{and}\quad y_0 := (1+\sqrt{1+a})(\sqrt{1+b}-1).
\end{align}
Observe that $-x_0\theta^{1/2} \leq -\theta^{1/2}$: indeed, this is equivalent to $\sqrt{1+a}+\sqrt{1+b} \geq 2$, which in turn is equivalent (since $1+a,1+b\geq 0$) by rearranging and squaring both side to $b \geq (\sqrt{1+a}-1)(\sqrt{1+a}-3)$; this is implied by Lemma~\ref{l.convexity algebra} by the condition $(a-b)^2\leq 8(a+b)$ in force.

Let $\F$ be the $\sigma$-algebra generated by the lower curves on $\R$ and the top curve outside of $[-x_0\theta^{1/2}, \theta^{1/2}]$. Now 
\begin{align}\label{e.main case two-point lower bound}
\PF\Bigl(\cL_1(-\theta^{1/2}) > a\theta, \cL_1(\theta^{1/2}) > b\theta\Bigr) \geq \PF\Bigl(\cL_1(-\theta^{1/2}) > a\theta\Bigr)\cdot\one_{\cL_1(-x_0\theta^{1/2})\geq -y_0\theta, \cL_1(\theta^{1/2})\geq b\theta}.
\end{align}
By the $H_t$-Brownian Gibbs property and monotonicity Lemma~\ref{l.monotonicity}, the conditional probability on the RHS on the specified events is lower bounded by the probability that a rate two Brownian bridge $B$ from $(-x_0\theta^{1/2}, -y_0\theta)$ to $(\theta^{1/2}, b\theta)$ is greater than $a\theta$ at $-\theta^{1/2}$. (Note that we have removed the lower boundary condition.) The latter probability is
\begin{align*}
\P\bigl(B(-\theta^{1/2}) \geq a\theta\bigr).
\end{align*}
Since this is a one-point tail probability of a rate two Brownian bridge, it is equal, up to a polynomial prefactor error, to the analogous one-point density of a rate two Brownian bridge. This in turn can be calculated as
\begin{align}\label{e.two point brownian lower bound}
\exp\left(-\theta^{3/2}\left[\frac{(a+y_0)^2}{2\cdot 2(x_0-1)} + \frac{(b-a)^2}{2\cdot 4} - \frac{(b+y_0)^2}{2\cdot 2(x_0+1)}\right]\right).
\end{align}
Next, by the FKG inequality and Theorem~\ref{t.upper tail lower bound},
\begin{align*}
\MoveEqLeft[6]
\P\left(\cL_1(-x_0\theta^{1/2})\geq -y_0\theta, \cL_1(\theta^{1/2})\geq b\theta\right)\\
&\geq \P\left(\cL_1(-x_0\theta^{1/2})\geq -y_0\theta\right)\cdot\P\left(\cL_1(\theta^{1/2})\geq b\theta\right)\\
&\geq \exp\Bigl(-\frac{4}{3}\theta^{3/2}\left[(1+b)^{3/2} + (x_0^2-y_0)^{3/2}\right]\\
&\quad +O\left((x_0^2-y_0)^{1/2}\theta^{1/2}\log((x_0^2-y_0)\theta) + (1+b)^{1/2}\theta^{1/2}\log((1+b)\theta)\right)\Bigr).
\end{align*}
By using that, for any $c,d\in\R$, $(c+d)^2-4cd = (c-d)^2$, it is an easy calculation that $x_0^2-y_0 = \frac{1}{4}(2+\sqrt{1+a} - \sqrt{1+b})^2$. So we may write
\begin{align}
\MoveEqLeft[6]
\P\left(\cL_1(-x_0\theta^{1/2})\geq -y_0\theta, \cL_1(\theta^{1/2})\geq b\theta\right)\nonumber\\
&\geq \exp\Bigl(-\frac{4}{3}\theta^{3/2}(1+b)^{3/2} - \frac{1}{6}\theta^{3/2} (2+\sqrt{1+a} - \sqrt{1+b})^{3} \label{e.two point fkg lower bound}\\
&\quad +O\left(\bigl((1+a)^{1/2}+(1+b)^{1/2}\bigr)\theta^{1/2}\log((1+a)(1+b)\theta)\right)\Bigr).\nonumber
\end{align}
We will soon substitute \eqref{e.two point brownian lower bound}  into \eqref{e.main case two-point lower bound}, take expectations, and apply \eqref{e.two point fkg lower bound} to obtain a lower bound on $\P(\cL_1(-\theta^{1/2})\geq a\theta, \cL_1(\theta^{1/2})\geq b\theta)$. To obtain our claim, we need to simplify the sum of the expressions in the exponents in \eqref{e.two point brownian lower bound} and \eqref{e.two point fkg lower bound}. We first observe that $(b-a)^2/8$ present in \eqref{e.two point brownian lower bound} is also present in the statement of Theorem~\ref{t.two point asymptotics} we are proving so we may leave it as is.

Next, writing $a=(\sqrt{1+a}+1)(\sqrt{1+a}-1)$ and $b$ similarly, we see from \eqref{e.x0 y0 definition} that $a+y_0 = 2(\sqrt{1+a}+1)(x_0-1)$ and $b+y_0 = 2(\sqrt{1+b}-1)(x_0+1)$. This yields that the sum of the first and last terms in the square brackets in \eqref{e.two point brownian lower bound} can be written as
\begin{align*}
\frac{(a+y_0)^2}{2\cdot 2(x_0-1)} - \frac{(b+y_0)^2}{2\cdot 2(x_0+1)} = (1+\sqrt{1+a})^2(x_0-1) - (\sqrt{1+b}-1)^2(x_0+1).
\end{align*}
Let $z_1 := 1+\sqrt{1+a}$ and $z_2 := \sqrt{1+b}-1$. Then we see from the previous display and the definition \eqref{e.x0 y0 definition} of $x_0$ that
\begin{align*}
\MoveEqLeft[6]
\frac{(a+y_0)^2}{2\cdot 2(x_0-1)} - \frac{(b+y_0)^2}{2\cdot 2(x_0+1)} + \frac{4}{3}(1+b)^{3/2} + \frac{1}{6} (2+\sqrt{1+a} - \sqrt{1+b})^{3}\\
&= z_1^2\left(\tfrac{1}{2}(z_1+z_2)-1\right) -z_2^2\left(\tfrac{1}{2}(z_1+z_2)+1\right) + \tfrac{4}{3}(1+z_2)^3 + \tfrac{1}{6}(z_1-z_2)^3\\
&= -z_1^2 - z_2^2 + \tfrac{1}{2}(z_1+z_2)^2(z_1-z_2) + \tfrac{4}{3}(1+z_2)^3 + \tfrac{1}{6}(z_1-z_2)^3\\%
&=\tfrac{1}{3}\left(z_1^2(2z_1-3) + (z_2+2)^2(2z_2+1)\right)\\
&=\tfrac{1}{3}\left((1+\sqrt{1+a})^2(2\sqrt{1+a}-1) + (1+\sqrt{1+b})^2(2\sqrt{1+b}-1)\right).
\end{align*}
Putting this together with \eqref{e.main case two-point lower bound}, \eqref{e.two point brownian lower bound}, and \eqref{e.two point fkg lower bound} yields that
\begin{align*}
\MoveEqLeft[1]
\P\left(\cL_1(-\theta^{1/2})\geq a\theta, \cL_1(\theta^{1/2})\geq b\theta\right)\\
&\geq \exp\left(-\frac{\theta^{3/2}}{24}\left[3(a-b)^2 + 8\left((1+\sqrt{1+a})^2(2\sqrt{1+a}-1) + (1+\sqrt{1+b})^2(2\sqrt{1+b}-1)\right)\right]\right).
\end{align*}
It is easy to check that this is the same as the expression claimed in Case 2 of Theorem~\ref{t.two point asymptotics}.

\bigskip

\emph{Proof of upper bound of Theorem~\ref{t.two point asymptotics}.}
We again prove the statement case-by-case in order of increasing complexity of the argument. (Cases 1 and 2 are switched compared to the lower bound arguments).

\medskip

\emph{Case 1: $(a-b)^2>8(a+b)$ and $\ell_{a,b}$ intersects $-x^2$ outside $[-\theta^{1/2}, \theta^{1/2}]$.} In this case the bound follows immediately from Theorem~\ref{t.upper tail upper bound} (one-point upper tail) and stationarity, since always $\P(A \cap B)\leq \P(A)$.

\bigskip

In the remaining two cases we will in fact prove that the claimed upper bounds are upper bounds for the probability of the event
$$A(\theta, a, b) = \left\{\cL_1(-\theta^{1/2})\in[a\theta,a\theta+1], \cL_1(\theta^{1/2})\in[b\theta,b\theta+1]\right\};$$
clearly this suffices, by summing over $a$ and $b$ (and applying the probability bound to each summand corresponding to the case in the theorem statement it falls under). Indeed, that the sum will have the same form claimed in the statement follows from the fact that the summands decay exponentially.

\bigskip

\emph{Case 2: $(a-b)^2>8(a+b)$ and $\ell_{a,b}$ intersects $-x^2$ inside $[-\theta^{1/2}, \theta^{1/2}]$.} Let $M_{a,b} =M((1+a)^{1/4}+(1+b)^{1/4})$ and let $\fav$ be given by
$$\fav = \left\{\max_{x\in\{\xtan_\ell, x^{\mrm{in}}_\ell, x^{\mrm{in}}_r, \xtan_r\}}\left(\cL_1(x)+x^2\right)\leq M_{a,b}\theta^{1/4}, \sup_{x\in[\xtan_\ell, \xtan_r]} \left(\cL_2(x)+x^2\right) \leq \tfrac{1}{4} M_{a,b}\theta^{1/4}\right\},$$
where $x^{\mrm{in}}_\ell=(-1+\sqrt{1+a})\theta^{1/2}, x^{\mrm{in}}_r = (1-\sqrt{1+b})\theta^{1/2}$ are the other tangency points (apart from $\xtan_{\ell}$ and $\xtan_r$) corresponding to the tangents to $-x^2$ which pass through $(-\theta^{1/2}, a\theta)$ and $(\theta^{1/2}, b\theta)$, and $C$ is a large constant to be set. It can be checked that the hypotheses in this case imply that $x^{\mrm{in}}_\ell < x^{\mrm{in}}_r$, which also implies that both lie inside $(-\theta^{1/2},\theta^{1/2})$; we label them ``in'' as they are the inner ones, i.e., lie inside $(-\theta^{1/2},\theta^{1/2})$.

  By a union bound, Theorem~\ref{t.two-point limit shape}, and Lemma~\ref{l.lower curve control new} (with $f\equiv -\infty$), we see that, for all large enough $M$,
  \begin{align*}
  \P\left(\fav^c \mid A(\theta,a,b)\right)
  &\leq \P\left(\max_{x\in\{\xtan_\ell, x^{\mrm{in}}_\ell, x^{\mrm{in}}_r, \xtan_r\}}\left(\cL_1(x)+x^2\right)\geq M_{a,b}\theta^{1/4} \midd A(\theta,a,b)\right)\\
  &\qquad + \P\left(\sup_{x\in[\xtan_\ell, \xtan_r]} \left(\cL_2(x)+x^2\right) \geq \tfrac{1}{4} M_{a,b}\theta^{1/4} \midd A(\theta,a,b)\right)\\
  &\leq \frac{1}{8} + \frac{1}{2\cdot \P(\cL_1(\theta^{1/2})\geq b\theta+1 \mid \cL_1(-\theta^{1/2}) = a\theta+1+K)},
  \end{align*}
  as long as $K$ is such that $\tfrac{1}{4} M_{a,b}\theta^{1/4} \geq \log(C( (a+1)\theta+1+K)^{3/4})$. By Theorem~\ref{t.limit shape}, the probability in the denominator is lower bounded by $\frac{4}{5}$ for all large enough $\theta$ or all large enough $a,b$ if $K=2(b+4)\theta$, which clearly satisfies the just mentioned condition on $K$. Thus, for all large enough $M$,
$$\P\left(\fav \mid A(\theta,a,b)\right)\geq \tfrac{1}{4},$$
so that
\begin{align*}
\tfrac{1}{4}\P\left(A(\theta,a,b)\right) \leq \P\left(A(\theta,a,b), \fav\right).
\end{align*}
Let $\F$ be the usual sigma algebra associated with $[\xtan_\ell, x^{\mrm{in}}_\ell]$. Then we see that the previous probability is equal to
\begin{align*}
\E\left[\PF\left(\cL_1(-\theta^{1/2})\in[a\theta,a\theta+1]\right)\one_{\cL_1(\theta^{1/2})\in[b\theta,b\theta+1],\, \fav}\right].
\end{align*}
Let $B$ be a Brownian bridge from $(\xtan_\ell, -(\xtan_\ell)^2+M_{a,b}\theta^{1/4})$ to $(x^{\mrm{in}}_\ell, -(x^{\mrm{in}}_\ell)^2+M_{a,b}\theta^{1/4})$ and $Z_{H_t}$ the partition function associated to the same boundary data and lower curve $-x^2+ \frac{1}{4}M_{a,b}\theta^{1/4}$. By monotonicity Lemma~\ref{l.monotonicity}, the previous display is upper bounded by
$$Z_{H_t}^{-1}\P\left(B(-\theta^{1/2})\geq a\theta\right)\cdot\P\left(\cL_1(\theta^{1/2})\in[b\theta,b\theta+1],\, \fav\right).$$
The first two terms can be bounded, as in the proof of Theorem~\ref{t.upper tail upper bound}, by using normal tail bounds Lemma~\ref{l.normal bounds} and the lower bound on the partition function associated to boundary curve $-x^2$ from Lemma~\ref{l.pos temp Z  lower bound via non-avoid prob}. Doing so yields an overall upper bound on the previous display of
$$\exp\left(-\frac{4}{3}\theta^{3/2}(1+a)^{3/2}+C[(1+a)^{3/4} + (1+a)^{1/2}(1+b)^{1/4}]\theta^{3/4}\right)\cdot\P\left(\cL_1(\theta^{1/2})\in[b\theta,b\theta+1],\, \fav\right),$$ 
where the $(1+b)^{1/4}$ factor is due to the error coming from the lower boundary being as high as $M( (1+a)^{1/4} + (1+b)^{1/4} )\theta^{1/4}$.
A similar argument applied to the second term gives the claim: in slightly more detail, we consider the $\sigma$-algebra $\F$ generated by $\cL_1$ outside $[x^{\mrm{in}}_r, \xtan_r]$ and  $\cL_2,\cL_3, \ldots$ on $\R$. Again $\fav$ is $\F$-measurable, and an argument using the $H_t$-Brownian Gibbs property and monotonicity as above applies to yield that
\begin{align*}
\P\left(\cL_1(\theta^{1/2})\in[b\theta,b\theta+1],\, \fav\right) \leq \exp\left(-\frac{4}{3}\theta^{3/2}(1+b)^{3/2} + C[(1+b)^{3/4} + (1+b)^{1/2}(1+a)^{1/4}]\theta^{3/4}\right).
\end{align*}
Multiplying the two bounds completes the proof.

\bigskip

\emph{Case 3: $(a-b)^2\leq 8(a+b)$.} This is the main case. Recall $M_{a,b}=M((1+a)^{1/4}+(1+b)^{1/4})$ and
\begin{align*}
\fav &= \left\{\max\Bigl\{\cL_1(\xtan_\ell) + (\xtan_\ell)^2, \cL_1(\xtan_r) + (\xtan_r)^2\Bigr\} \leq M_{a,b}\theta^{1/4}\right\}\\
&\qquad\cap \left\{\sup_{x\in[\xtan_\ell, \xtan_r]}\left(\cL_2(x)+x^2\right) \leq \tfrac{1}{4}M_{a,b}\theta^{1/4}\right\}.
\end{align*}
As in the previous case, Theorem~\ref{t.two-point limit shape} (two-point limit shape), Lemma~\ref{l.lower curve control new} (upper tail control of $\cL_2$ conditioned on $\cL_1$), and Lemma~\ref{l.bound point conditioning by tail conditioning} (monotonicity in conditioning variable) imply that there is a constant $M$ large enough such that $\P(\fav \mid A(\theta, a, b))\geq \frac{1}{4}$ for all large enough $\theta$ or large enough $a,b$. So,
\begin{align*}
\tfrac{1}{4}\P\left(A(\theta, a, b)\right)
&\leq \P\left(A(\theta, a, b)\right)\cdot \P\left(\fav \mid A(\theta, a, b)\right)\\
&= \P\left(A(\theta, a, b), \fav\right).
\end{align*}
Let $\F = \Fext(1, [\xtan_\ell, \xtan_r])$ be the $\sigma$-algebra generated by everything outside the top curve on $[\xtan_\ell, \xtan_r]$. Conditioning on $\F$, the probability in the last display is $\PF(A(\theta, a, b))\one_{\fav}$. By the $H_t$-Brownian Gibbs property and monotonicity (Lemma~\ref{l.monotonicity}), this conditional probability, on $\fav$, is upper bounded by
\begin{equation}
\begin{split}\label{e.two point upper bound ratio}
Z_{H_t}^{-1}\E\Bigl[\one_{B(-\theta^{1/2}) \geq a\theta, B(\theta^{1/2}) \geq b\theta} W_{H_t}\Bigr]
&\leq Z_{H_t}^{-1}\P\left(B(-\theta^{1/2}) \geq a\theta, B(\theta^{1/2}) \geq b\theta\right),
\end{split}
\end{equation}
where $B$ is a Brownian bridge from $(\xtan_\ell, -(\xtan_\ell)^2 + M_{a,b}\theta^{1/4})$ to $(\xtan_r, -(\xtan_r)^2 + M_{a,b}\theta^{1/4})$ and $W_{H_t}$ and $Z_{H_t}$ are the Boltzmann factor and partition function associated with the same boundary values and lower boundary curve $-x^2+ \frac{1}{4}M_{a,b}\theta^{1/4}$; note that we have replaced the intervals $[a\theta,a\theta+1]$ and $[b\theta,b\theta+1]$ by $[a\theta,\infty)$ and $[b\theta,\infty)$, which is what allows us to apply monotonicity.

We estimate the numerator of \eqref{e.two point upper bound ratio} first. We write, using Lemma~\ref{l.brownian bridge margin},
\begin{align}
\MoveEqLeft[6]
\frac{1}{2}\P\left(B(-\theta^{1/2}) \geq a\theta,\, B(\theta^{1/2}) \geq b\theta\right)\nonumber\\
&\leq \P\left(B(-\theta^{1/2}) \in [a\theta,a\theta+2M_{a,b}\theta^{1/4}],\, B(\theta^{1/2})\in[b\theta,b\theta+2M_{a,b}\theta^{1/4}]\right) \nonumber\\
&\leq\P\bigl(B(-\theta^{1/2})\geq a\theta\bigr)\cdot\P\left(B(\theta^{1/2}) \geq b\theta \ \Big|\  B(-\theta^{1/2})\in[a\theta,a\theta+2M_{a,b}\theta^{1/4}]\right) \label{e.two point brownian breakup}
\end{align}
Now $B(-\theta^{1/2})$ is a normal random variable with mean $\mu_{\ell,0,r}$ and variance $\sigma_{\ell,0,r}^2$ given by
\begin{align*}
\mu_{\ell,0,r} &= \frac{\xtan_\ell-\theta^{1/2}}{\xtan_r + \xtan_\ell}(-(\xtan_r)^2) + \frac{\xtan_r + \theta^{1/2}}{\xtan_r + \xtan_\ell}(-(\xtan_\ell)^2)+M_{a,b}\theta^{1/4}\\
&= -\left(1+2\sqrt{1+a}+\sqrt{(1+a)(1+b)}\right)\theta +M_{a,b}\theta^{1/4};\\
\sigma_{\ell,0,r}^2 &= 2\cdot\frac{(\xtan_\ell - \theta^{1/2})(\xtan_r + \theta^{1/2})}{\xtan_r + \xtan_\ell}\\
&= \frac{2\sqrt{1+a}(2+\sqrt{1+b})}{2+\sqrt{1+a}+\sqrt{1+b}}\cdot \theta^{1/2};\
\end{align*}
Observe that $\mu_{\ell,0,r}-M_{a,b}\theta^{1/4}\leq -\theta$ while $a>-1$, so $\mu_{\ell,0,r}-M_{a,b}\theta^{1/4}\leq a\theta$. So by standard normal bounds from Lemma~\ref{l.normal bounds}, the first factor in \eqref{e.two point brownian breakup} is upper bounded by
\begin{align}\label{e.a*theta bound}
\exp\left(-\frac{1}{2\sigma_{\ell, 0, r}^2}(a\theta - \mu_{\ell,0, r})^2 + \frac{1}{2\sigma_{\ell,0,r}^2}M_{a,b}^2\theta^{1/2}\right),
\end{align}
the final term included to handle the case that $\mu_{0,\ell,0,r} \geq a\theta$ in which case we cannot apply Lemma~\ref{l.normal bounds}; however then $|a\theta-\mu_{\ell,0,r}|\leq M_{a,b}\smash{\theta^{1/4}}$, and so in that case the above displayed expression is greater than 1, making the bound hold trivially.

The second term of \eqref{e.two point brownian breakup} is upper bounded by
$$\P\left(B(\theta^{1/2}) \geq b\theta \ \Big|\  B(-\theta^{1/2}) = a\theta+2M_{a,b}\theta^{1/4}\right).$$
Now $B(\theta^{1/2})$, when conditioned on $B(-\theta^{1/2})=a\theta+2M_{a,b}\theta^{1/4}$, is distributed as a normal random variable with mean $\mu_{0,0,r}$ and variance $\sigma^2_{0,0,r}$ given by
\begin{align*}
\mu_{0,0,r} &= \frac{2\theta^{1/2}}{\xtan_r + \theta^{1/2}}(-(\xtan_r)^2+M_{a,b}\theta^{1/4}) + \frac{\xtan_r - \theta^{1/2}}{\xtan_r + \theta^{1/2}}(a\theta+2M_{a,b}\theta^{1/4})\\
&\leq \frac{-2(\sqrt{1+b}+1)^2+a\sqrt{1+b}}{2+\sqrt{1+b}}\cdot \theta + 2M_{a,b}\theta^{1/4};\\
\sigma_{0,0,r}^2 &= 2\cdot\frac{2\theta^{1/2}(\xtan_r - \theta^{1/2})}{\xtan_r + \theta^{1/2}}
= \frac{4\sqrt{1+b}}{2+\sqrt{1+b}}\cdot \theta^{1/2}.
\end{align*}
Above the bound on $\mu_{0,0,r}$ is obtained by replacing the first term of $M_{a,b}\theta^{1/4}$ by $2M_{a,b}\theta^{1/4}$. 

Next, we can apply the convexity hypothesis $a\leq (\sqrt{1+b}+1)(\sqrt{1+b}+3)$ to see that $\mu_{0,0,r}-2M_{a,b}\leq b\theta$. So by the standard normal bounds from Lemma~\ref{l.normal bounds}, the second factor of \eqref{e.two point brownian breakup} is at most
\begin{align}\label{e.b*theta bound}
\exp\left(- \frac{1}{2\sigma_{0, 0, r}^2}(b\theta - \mu_{0,0, r})^2 + \frac{1}{2\sigma_{0,0,r}^{2}}4M_{a,b}^2\theta^{1/2}\right),
\end{align}
where the second term is again present to handle the situation where $\mu_{0,0,r}\geq b\theta$, in which case $|\mu_{0,0,r}-b\theta|\leq 2M_{a,b}\theta^{1/4}$.

By Corollary~\ref{c.pos temp parabolic avoidance lower bound}, the denominator $Z_{H_t}$ of \eqref{e.two point upper bound ratio} is lower bounded by
\begin{equation*}
\exp\left(-\frac{1}{12}\left(\xtan_\ell+\xtan_r\right)^3 - 3(\xtan_\ell + \xtan_r)\log(\xtan_\ell + \xtan_r)\right).
\end{equation*}
Combining the previous bound with \eqref{e.a*theta bound} and \eqref{e.b*theta bound}, substituting into \eqref{e.two point upper bound ratio} and \eqref{e.two point brownian breakup}, and simplifying the exponent, we obtain that \eqref{e.two point upper bound ratio} is upper bounded by
\begin{align*}
\MoveEqLeft[14]
\exp\Biggl(-\frac{\theta^{3/2}}{24}\left[3(a-b)^2 + 24(a+b) + 16\left((1+a)^{3/2} + (1+b)^{3/2}\right) + 32\right]\\
&\qquad + CM_{a,b}\theta^{3/4}+ 3(\xtan_\ell+\xtan_r)\log(\xtan_\ell+\xtan_r)+ CM_{a,b}^2\Biggr),
\end{align*}
which completes the proof of Theorem~\ref{t.two point asymptotics}.
\end{proof}

\begin{remark}[Two-point density bounds]\label{r.density bounds}
A similar argument as above can also produce two-point density bounds. Indeed, by Theorem~\ref{mt.two-point limit shape} it follows that, conditional on $\{\cL_1(-\theta^{1/2}) = a\theta, \cL_1(\theta^{1/2}) = b\theta\}$ and with probability at least $\frac{1}{2}$, $\cL_1(\xtan_{\ell})\in [-(\xtan_{\ell})^2-M, -(\xtan_{\ell})^2+M]$ and the analogue for $\cL_1(\xtan_{r})$. Given this the two-point density bound is a Brownian calculation as in the proof of Case 3 (i.e., $(a-b)^2\leq 8(a+b)$) of Theorem~\ref{t.two point asymptotics}. In particular, one obtains the first asymptotic in Theorem~\ref{t.two point asymptotics} as long as $\ell_{a,b}$ does not intersect the parabola $-x^2$ in $[-\theta^{1/2}, \theta^{1/2}]$, and the second asymptotic if it does.
\end{remark}

We can now prove Theorem~\ref{mt.fkg sharpness} on the characterization of the sharpness of the FKG inequality.

\begin{proof}[Proof of Theorem~\ref{mt.fkg sharpness}]
The case when $(a-b)^2 > 8(a+b)$ and $\ell_{a,b}$ intersects $-x^2$ inside $[-\theta^{1/2}, \theta^{1/2}]$ is asserted in Theorem~\ref{t.two point asymptotics}, so we may assume $(a-b)^2 = 8(a+b)$ and the same condition on $\ell_{a,b}$.

We remind the reader from the proof of Lemma~\ref{l.convexity algebra} that $\ell_{a,b}$ is given by
$$\ell_{a,b}(x) = \frac{(b-a)}{2}\cdot \theta^{1/2} (x+\theta^{1/2}) + a\theta,$$
and that the discriminant of the quadratic obtained by equating the right-hand side with $-x^2$ is equal to zero exactly when $(a-b)^2=8(a+b)$. This discriminant being zero is the condition for $\ell_{a,b}$ being a tangent to $-x^2$ somewhere on $\R$.

Now, for the tangency point to be inside $[-\theta^{1/2}, \theta^{1/2}]$, the slope $(b-a)\theta^{1/2}/2$ of $L$ must lie inside the range of slopes of tangency lines of points inside $[-\theta^{1/2}, \theta^{1/2}]$, i.e., inside $[-2\theta^{1/2}, 2\theta^{1/2}]$. Thus, we get the condition
$$|a-b| \leq 4.$$
Let us set $a-b = 4z$ for $z \in[0,1]$, as $a\geq b$. Then from $(a-b)^2=8(a+b)$, we obtain
$$a = z^2 +2z \quad\text{and}\quad b = z^2 - 2z.$$
Looking at the upper bound from Theorem~\ref{t.two point asymptotics} and using this parametrization of $a$ and $b$, we see that 
\begin{align*}
3(a-b)^2 + 24(a+b) + 32 = 48z^2 + 48z^2 + 32 = 96z^2+32,
\end{align*}
while
\begin{align*}
16\left[(1+a)^{3/2} + (1+b)^{3/2}\right] = 16[(1-z)^3 + (1+z)^3] = 16(6z^2+2) = 96z^2 + 32.
\end{align*}
Thus we see from Theorem~\ref{t.two point asymptotics} that
\begin{align*}
\P\left(\cL_1(-\theta^{1/2}) > a\theta, \cL_1(\theta^{1/2}) > b\theta\right) = \exp\left(-\frac{32}{24}\theta^{3/2}\left[(1+a)^{3/2} + (1+b)^{3/2}\right](1+o(1))\right),
\end{align*}
which simplifies to the claim.
\end{proof}

\begin{remark}[Procedure for $k$-point asymptotics]\label{r.k-point}
At this point the procedure for obtaining $k$-point asymptotics is clear: one finds pinning points (the number depends on the heights and locations of the individual point values, through their effect on the convex hull of the point values and the parabola, as in the three cases of Theorem~\ref{mt.two point tail}) and uses Brownian bridge resamplings between pinning points to obtain the limit shape. 

To obtain asymptotics, one essentially just has to calculate the Brownian bridge probabilities of achieving the desired heights when the pinning points are fixed, while taking into account the essentially parabolic lower boundary condition; for the lower bound, one handles the boundary condition by making use of the FKG inequality carefully (i.e., such that it satisfies the tangency conditions and so will be sharp) but otherwise ignoring it, while for the upper bound one makes use of Proposition~\ref{p.parabola avoidance probability} or Corollary~\ref{c.pos temp parabolic avoidance lower bound}.

Implementing this for higher values of $k$ in general is quite tedious, as the resulting expressions get more complicated and the number of cases grows quite quickly; already from $k=1$ to 2 it has increased from $1$ case to $3$. However, in practice one may be interested in only particular cases, in which case the procedure can be implemented for only those ones.
\end{remark}

\section{Extremal stationary ensembles}\label{s.extremal}

In this section we prove part of Theorem~\ref{t.assumptions hold} for extremal stationary ensembles. More precisely, we show that any line ensemble satisfying Assumptions~\ref{as.bg}--\ref{as.mono in cond} also satisfies the upper bound of Assumption~\ref{as.tails} with $\beta=1$. The validity of Assumptions~\ref{as.corr} and \ref{as.mono in cond} for extremal stationary line ensembles is proved in Appendix~\ref{app.monotonicity proofs}, and the lower bound of Assumption \ref{as.tails} with $\alpha = \frac{3}{2}$ follows from Theorem~\ref{t.upper tail lower bound}.

Throughout this section, $\cL$ satisfying Assumptions~\ref{as.bg} will mean with $t=\infty$, i.e., the zero temperature case. We also emphasize that in this section we work with the BK inequality (Assumption~\ref{as.corr}(b)) and not its weaker version Assumption~\ref{as.weak bk}(b\ensuremath{'}); see the beginning of Section~\ref{s.ingredients of extremal proof} for a brief discussion.

\begin{theorem}\label{t.extremal initial tail}
Suppose $\cL$ satisfies Assumptions~\ref{as.bg}--\ref{as.mono in cond}. Then there exist $\theta_0$ and $c>0$ such that, for $\theta>\theta_0$,
$$\P\left(\cL_1(0) > \theta\right) \leq \exp(-c\theta).$$
\end{theorem}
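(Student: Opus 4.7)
The plan is to follow the strategy outlined in Section~\ref{s.intro.proof ideas.extremal}. The main ingredient, which is the principal obstacle, is a uniform linear upper bound on the second curve: for any fixed $m>0$, there exists an almost surely finite random $K$ with $\cL_2(x) \leq -m|x|+K$ for all $x\in\R$. Everything else is a reduction that uses Brownian Gibbs together with this bound and a weak pinning of the top curve.

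First I would establish the weak pinning. By stationarity of $\cL_1(x)+x^2$ and one-point tightness of $\cL_1(0)$, $\P(\cL_1(\pm x)>-|x|)=\P(\cL_1(0)>x^2-|x|)\to 0$ as $|x|\to\infty$, so given any $\eps>0$ one can choose a (possibly enormously $\theta$-dependent) $x_\theta>0$ with $\P(\cL_1(\pm x_\theta)>-|x_\theta|)\leq \eps\cdot\P(\cL_1(0)\geq\theta)$. The BK part of Assumption~\ref{as.corr} then transfers this to the conditional law given $\{\cL_1(0)\geq\theta\}$.

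The hard part is the uniform bound on $\cL_2$. Fix $m>0$. By the BK inequality, the conditional law of $\cL_2$ given any increasing event on $\cL_1$ is dominated by the unconditional law of $\cL_1$, so $K$ has no $\theta$-dependence and it suffices to prove the statement with $\cL_1$ in place of $\cL_2$. I would argue by contradiction: if $\cL_1$ exceeded $-m|\tau_n|+n$ at a random sequence $\tau_n\to\infty$, then applying the Brownian Gibbs property on each $[\tau_n,\tau_{n+1}]$, the top curve is a Brownian bridge whose endpoints lie well above the line $-m|x|$ and which is conditioned to avoid the lower curve. Since the avoidance conditioning only pushes the bridge further up, standard estimates for free Brownian bridge give a uniformly positive probability (independent of $n$) that the bridge stays above the steeper line $-2m|x|$ throughout $[\tau_n,\tau_{n+1}]$; consequently $\cL_1(x)>-2m|x|$ on a set of positive density in $\R$ almost surely. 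On the other hand, stationarity, one-point tightness, and Borel-Cantelli produce a deterministic sequence $x_n\to\infty$ with $\cL_1(x_n)<-2m|x_n|$ for all but finitely many $n$ a.s., a contradiction. Extremality enters here through the triviality of the tail $\sigma$-algebra, which upgrades conditional Gibbs-type statements (where the Brownian Gibbs property applies directly) to unconditional ones via the increasing-domain argument sketched in Section~\ref{s.intro.proof ideas.extremal}.

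Finally, I would carry out two rounds of Gibbs resampling to conclude. With $m>1/2$ fixed, I would resample $\cL_1$ on $[-x_\theta,0]$ and $[0,x_\theta]$, using monotonicity to lower the pinned endpoints to $(\pm x_\theta, -|x_\theta|)$ and to raise the lower boundary to $K - 2m|x|$; then the top curve on these intervals is stochastically dominated by a free Brownian bridge conditioned to avoid a line whose slope is strictly steeper than that of the bridge's endpoint chord, and the avoidance conditioning has uniformly positive probability by a linear analogue of Proposition~\ref{p.brownian versatile tangent estimate}. The key numerical point is that the variance of this bridge at $\pm\theta$ is $O(\theta)$ regardless of how large $x_\theta$ is, so a Gaussian bound yields $\P(\cL_1(\pm\theta)\leq\theta/2 \mid \cL_1(0)\geq\theta)\geq 1/2$. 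A second Gibbs resampling on $[-\theta,\theta]$ with endpoints at $(\pm\theta,\theta/2)$ and the same linear lower boundary bounds $\cL_1(0)$ above by $B(0)$ for a free rate-two Brownian bridge $B$ of mean $\theta/2$ and variance $\theta$ at the origin; the Gaussian tail gives $\P(B(0)\geq\theta)\leq\exp(-c\theta)$, completing the proof.
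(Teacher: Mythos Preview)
Your proposal follows the paper's approach, but the argument for the crucial linear bound $\sup_{x\in\R}\cL_1(x)+m|x|<\infty$ a.s.\ (Proposition~\ref{p.top curve below tent}) has a real gap.

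Your contradiction step does not go through. From ``applying Gibbs on $[\tau_n,\tau_{n+1}]$ gives uniformly positive conditional probability of staying above $-2m|x|$'' you cannot infer ``$\cL_1(x)>-2m|x|$ on a set of positive density in $\R$ almost surely''; a conditional probability bound on each interval says nothing about the realized path on all of them simultaneously. Moreover the random intervals $[\tau_n,\tau_{n+1}]$, defined via successive peaks of $\cL_1(x)+m|x|$, are not obviously stopping domains. What the paper actually does is fix a deterministic mesh $\{x_n\}$ with $\sum_n\P(\cL_1(x_n)>-2m|x_n|)<\infty$, set $I_n=[x_{n-1},x_n]$ and $S_I=\sup_{x\in I}\cL_1(x)+m|x|$, and prove directly that for each fixed $I$, $\sum_{J}\P(S_I\geq 0,S_J\geq 0)<\infty$. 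On $\{S_I\geq 0,S_J\geq 0\}$ one forms the stopping domain $[\tau_I^\ell,\tau_J^r]$ (first/last times $\cL_1$ touches $-m|x|$ in $I,J$), applies the \emph{strong} Gibbs property there, and observes that the free bridge between those endpoints has mean exactly $-m|x_n|$ at the deterministic $x_n=\inf J$, so $\P(\cL_1(x_n)\geq -2m|x_n|\mid S_I\geq 0,S_J\geq 0)\geq\tfrac12$. This yields $\P(S_I\geq 0,S_J\geq 0)\leq 2\P(\cL_1(x_n)\geq -2m|x_n|)$, summable over $J$; Borel--Cantelli finishes. Your heuristic is the right intuition (it is exactly the sketch in Section~\ref{s.intro.proof ideas.extremal}), but the rigorous version requires this pairs-of-intervals Borel--Cantelli structure, not a contradiction argument.

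Two smaller points. First, in your final resampling you write ``lower the pinned endpoints to $(\pm x_\theta,-|x_\theta|)$''; to upper bound $\cL_1(\pm\theta)$ you must \emph{raise} the endpoints to get stochastic domination, and the paper (Proposition~\ref{p.nearby pinning for extremal}) raises them to $(\pm x_\theta,\theta-|x_\theta|)$, i.e.\ on the line of slope $\pm 1$ through $(0,\theta)$, after first conditioning on $\cL_1(0)=\theta$ (via monotonicity in conditioning) to pin the center. Second, extremality is not used in the proof of Proposition~\ref{p.top curve below tent} itself; it enters only through the verification of Assumptions~\ref{as.corr} and~\ref{as.mono in cond} (BK and monotonicity in conditioning), which you invoke elsewhere.
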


While we have stated the result for the zero temperature case,
 it is not hard to check that the argument also applies to the positive temperature case, if one changes applications of the Brownian Gibbs property to the $H_t$ analogue (again with Assumption~\ref{as.corr}(b) and not Assumption~\ref{as.weak bk}(b\ensuremath{'})). However, in the $t<\infty$ case, the argument may not yield uniform bounds for $t>t_0$. See also Remark~\ref{r.why no uniformity in extremal argument}.

Before turning to the arguments for Theorem~\ref{t.extremal initial tail}, we clarify a point of confusion concerning the meaning of extremal stationary ensembles that has arisen between two previous works which discuss them, namely \cite{corwin2014brownian} and \cite{corwin2014ergodicity}.

\subsection{Different notions of extremality}\label{s.notions of extremality}
The first work, \cite{corwin2014brownian}, introduced extremal stationary ensembles in the context of Conjecture~\ref{conj.extremality} that all such ensembles are the parabolic Airy line ensemble up to a deterministic vertical shift. The second, \cite{corwin2014ergodicity}, proved that the parabolic Airy line ensemble is ergodic, and stated that this implies that it is the only candidate for the conjecture in \cite{corwin2014brownian}. However, there is a discrepancy in the meaning of an extremal stationary ensemble in the two works.

The discrepancy lies in the choice of convex set for which the extreme points are considered. Let $\msf{Gibbs}$ be the collection of measures of line ensembles which possess the Brownian Gibbs property, and $\msf{Stat}$ that of line ensembles which are stationary under horizontal shifts after an addition of the parabola $x\mapsto x^2$; by standard theory, both are convex sets. For a convex set $A$, let $\mrm{Ex}(A)$ be the set of its extreme points. 

In \cite{corwin2014brownian}, the conjecture characterizes the elements of $\mrm{Ex}(\msf{Gibbs})\cap\msf{Stat}$ as being the parabolic Airy line ensemble up to a constant vertical shift, while \cite{corwin2014ergodicity} says the same about the elements of $\mrm{Ex}(\msf{Gibbs}\cap\msf{Stat})$.
Unfortunately, it is not obvious that these two sets coincide. However, it is easy to see that $\mrm{Ex}(\msf{Gibbs})\cap\msf{Stat} \subseteq \mrm{Ex}(\msf{Gibbs}\cap \msf{Stat})$ as a general property of extreme points of convex sets (and indeed, \cite{corwin2014ergodicity} uses the similar fact that $\mrm{Ex}(\msf{Stat})\cap\msf{Gibbs} \subseteq \mrm{Ex}(\msf{Gibbs}\cap\msf{Stat})$). 

While it is not clear that the two sets are the same, we can say that the parabolic Airy line ensemble with any given deterministic vertical shift also belongs to $\mrm{Ex}(\msf{Gibbs})\cap\msf{Stat}$, just as \cite{corwin2014ergodicity} establishes their membership in $\mrm{Ex}(\msf{Gibbs}\cap\msf{Stat})$. This follows from results on the triviality of the tail $\sigma$-algebra of determinantal point processes \cite{osada2018discrete,lyons2018note,bufetov2016kernels}\footnote{Technically, this is a statement about the triviality of the tail $\sigma$-algebra generated by finite collections of values of the line ensemble; further, the processes considered in \cite{osada2018discrete,lyons2018note,bufetov2016kernels} take values in the space of probability measures as they are regarded as point processes. To get the statement made here about the tail $\sigma$-algebra of the line ensemble as defined on the space of infinite collections of continuous functions, one needs to change the space and approximate the process by the finite collections of values. We do not do the former here as this is a technical and not significant point for our purposes, while the latter follows from the almost sure continuity of the ensemble.} and the fact that the finite dimensional distributions of the parabolic Airy line ensemble are determinantal (with kernel the extended Airy kernel), combined with the abstract characterization of extremal measures as those which have trivial tail $\sigma$-algebras \cite[Theorem~2.1]{preston2006random}. In particular, the parabolic Airy line ensemble is extremal stationary, and thus our arguments apply to it. 

Having noted this discrepancy between \cite{corwin2014brownian} and \cite{corwin2014ergodicity}, in the rest of this section, we will work with elements of $\mrm{Ex}(\msf{Gibbs})\cap\msf{Stat}$; in particular, we will need the triviality of the $\sigma$-algebra containing boundary data as the domain goes to infinity, an equivalent condition to lying in $\mrm{Ex}(\msf{Gibbs})$ as we just noted.

\subsection{The ingredients for the proof of Theorem~\ref{t.extremal initial tail}}\label{s.ingredients of extremal proof}
To prove Theorem~\ref{t.extremal initial tail} we have several ingredients which are qualitative counterparts to similar steps in the proof of Theorem~\ref{t.upper tail upper bound}, as we explained earlier in Section~\ref{s.intro.proof ideas}. At a basic level, we need to be able to control the second curve to be below some decaying deterministic function, and have control over the top curve at two boundary points at some location.

We need to control the second curve on the event that $\cL_1(0)$ is large; unlike in previous sections where we worked with Assumption~\ref{as.weak bk}(b\ensuremath{'}), here we will work with Assumption~\ref{as.corr}(b). This is because while Assumption~\ref{as.weak bk}(b\ensuremath{'}) says that the second curve lies $(\log M)^C$ above the parabola on $[-M,M]$ conditional on $\cL_1$, in the setting of extremal ensembles we will have no a priori control on how large we will need to make the interval $[-M,M]$ for the overall argument. Instead, we will control the conditioned second curve on all of $\R$ by obtaining similar control on the top curve under no conditioning and using Assumption~\ref{as.corr}(b). The following proposition obtains this control, showing that the top curve lies below a  linearly decaying function of any constant slope up to a random vertical shift.

\begin{proposition}\label{p.top curve below tent}
Suppose $\cL$ satisfies Assumptions~\ref{as.bg}--\ref{as.mono in cond}. Then $\sup_{x\in\R} (\cL_1(x) + K|x|)$ is almost surely finite for any $K>0$.
\end{proposition}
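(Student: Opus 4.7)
I argue by contradiction. Fix $K>0$ and let $E_K = \{\sup_{x \in \R}(\cL_1(x) + K|x|) = \infty\}$. Since $\cL_1$ is continuous and so bounded on every compact set, $E_K$ coincides with $\{\limsup_{|x| \to \infty}(\cL_1(x) + K|x|) = \infty\}$, a tail-measurable event. By Section~\ref{s.notions of extremality}, extremality of $\cL$ is equivalent to triviality of its tail $\sigma$-algebra, so $\P(E_K) \in \{0,1\}$; assume for contradiction that $\P(E_K) = 1$.

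Stationarity of $\cL_1(x)+x^2$ (Assumption~\ref{as.bg}) gives $\cL_1(n) \stackrel{d}{=} \cL_1(0) - n^2$ for every $n\in\Z$, and the one-point tightness in Assumption~\ref{as.tails} forces $\P(\cL_1(0) > \theta) \to 0$ as $\theta \to \infty$. Consequently I can pick a sparse sequence of positive integers $n_k \to \infty$ with $\sum_k \P(\cL_1(0) > n_k^2/2) < \infty$ (for instance, by requiring the $k$th summand to be at most $2^{-k}$). Borel--Cantelli then ensures that, almost surely, $\cL_1(n_k) \leq -n_k^2/2 \leq -2K n_k$ for all but finitely many $k$, and an identical argument handles the left side.

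On the other hand, on the event $E_K$ of full probability, for every $M>0$ there exist random $\tau_\pm^M$ with $|\tau_\pm^M| \to \infty$ as $M \to \infty$ and $\cL_1(\tau_\pm^M) \geq -K|\tau_\pm^M| + M$. Invoking the strong Brownian Gibbs property (Lemma~\ref{l.strong bg}) on the stopping domain $[\mf l, \mf r] = [\tau_-^M, \tau_+^M]$, conditionally on $\Fext(1,[\mf l, \mf r])$ the restriction $\cL_1|_{[\mf l, \mf r]}$ is a rate-two Brownian bridge between $(\mf l, \cL_1(\mf l))$ and $(\mf r, \cL_1(\mf r))$ conditioned to lie above $\cL_2$. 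By monotonicity in the lower boundary (Lemma~\ref{l.monotonicity}) this constrained bridge stochastically dominates the unconstrained bridge with the same endpoints; since both endpoints lie at height at least $M$ above the tent $y = -2K|x|$, a Gaussian fluctuation estimate in the style of Lemma~\ref{l.brownian bridge sup tail exact} shows the unconstrained bridge stays above $-2K|x|$ throughout $[\mf l, \mf r]$ with uniformly positive probability, provided $M$ is sufficiently large compared with $\sqrt{\mf r - \mf l}$. Hence with positive probability $\cL_1(n_k) \geq -2K n_k$ for some of the $n_k$ lying in $[\mf l, \mf r]$, directly contradicting the Borel--Cantelli conclusion.

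\textbf{Main obstacle.} The chief delicacy is coupling $M$ with the random interval length $\mf r - \mf l$: the Brownian estimate needs $M \gtrsim \sqrt{\mf r - \mf l}$, but a priori the peak separation on $E_K$ is uncontrolled. The freedom to take $M$ arbitrarily large, furnished by $\P(E_K)=1$, has to be harnessed carefully, for instance by selecting $\mf r$ as the first exceedance of $-K|x|+M_k$ to the right of a deterministic $n_k$, so that $\mf r - n_k$ becomes a one-sided exit time with controllable tail, and then letting $M_k$ grow slowly enough with $k$. An alternative local route is a ``peak-propagation'' argument: Gibbs on a single unit interval $[\tau - 1, \tau + 1]$ around a high random peak $\tau$ forces at least one of $\cL_1(\tau \pm 1)$ to be comparably high with positive constant probability, after which iteration in unit steps (combined again with Borel--Cantelli along the integers) delivers the contradiction without requiring a global Brownian estimate on a long random interval.
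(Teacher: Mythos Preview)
Your overall architecture is the same as the paper's: a Borel--Cantelli sequence $\{n_k\}$ along which $\cL_1(n_k)\le -2K|n_k|$ eventually, versus random ``high'' points between which a Brownian-Gibbs resampling forces $\cL_1$ to be too high at one of the $n_k$. But the gap you yourself flag---that your Gaussian estimate needs $M\gtrsim\sqrt{\mf r-\mf l}$ while the interval length is completely uncontrolled---is real, and neither of your suggested fixes is carried out. The first fix (``grow $M_k$ slowly'') is circular without some a priori control on exit times, and the ``peak-propagation'' sketch is not an argument as it stands. A second, related issue: you take $\tau_-^M<0<\tau_+^M$ on opposite sides of zero, so the chord between the lifted endpoints need not stay above the tent $-2K|x|$ at all (the tent has a corner at $0$), which further complicates your fluctuation estimate.

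The paper resolves this with a much simpler estimate that removes any dependence on interval length. It works on one side of $0$ at a time, fixes the deterministic cover $\{[x_{n-1},x_n]\}$ from your Borel--Cantelli sequence, and for two far-apart intervals $I,J$ on the same side considers the stopping domain $[\tau_I^\ell,\tau_J^r]$ where $\tau_I^\ell,\tau_J^r$ are the first and last points in $I,J$ at which $\cL_1$ touches the line $-K|x|$ (so $M=0$, not large). On that side of zero the function $-K|x|$ is linear, so the free Brownian bridge between these endpoints has mean \emph{exactly} $-K|x_n|$ at the fixed point $x_n=\inf J$ lying strictly between them. Hence by Gaussian symmetry,
\[
\P\bigl(B(x_n)\ge -2K|x_n|\bigr)\;\ge\;\P\bigl(B(x_n)\ge \E[B(x_n)]\bigr)\;=\;\tfrac12,
\]
uniformly in the length of the stopping domain. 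Monotonicity (dropping the lower boundary $\cL_2$) then gives
\[
\P(S_I\ge 0,\ S_J\ge 0)\;\le\;2\,\P\bigl(\cL_1(x_n)\ge -2K|x_n|\bigr),
\]
and summing over $J$ reproduces exactly your Borel--Cantelli input. Note in particular that this route never invokes extremality or the tail $0$--$1$ law; it is a direct first-moment bound that works for any stationary Brownian-Gibbs ensemble with $\cL_1(0)$ a.s.\ finite. The fix to your argument is thus: stay on one side of $0$, take peaks on the line $-K|x|$ rather than $-K|x|+M$, and compare at a single deterministic point rather than uniformly.
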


The basic idea of the proof is to find a sequence of deterministic points where $\cL_1(x) > -2K|x|$ only finitely often almost surely. But if there exist infinitely many (random) points where $\cL_1(x) > -|K|x$, it would be unlikely that a Brownian bridge between those points goes below $-2K|x|$ at some location, especially since the lower boundary condition pushes the bridge upwards. This causes a contradiction since the random points must contain points from the deterministic sequence between them. 

\begin{proof}[Proof of Proposition~\ref{p.top curve below tent}]
Let $K$ be given.
We start by observing that $\P(\cL_1(x) > -2K|x|) = \P(\cL_1(0) > x^2-2K|x|)$, so, since $\cL_1(0)$ is almost surely finite and $x^2-2K|x|\to\infty$ as $|x|\to\infty$, it follows that there exists a deterministic sequence of ordered points $\{x_n\}_{n\in\Z}$ with $x_n\to \infty, x_{-n}\to -\infty$ as $n\to\infty$ such that
\begin{equation}\label{e.x_n summability}
\sum_{n\in\Z}\P\left(\cL_1(x_n) > -2K|x_n|\right) < \infty.
\end{equation}
It will be convenient to assume (without loss of generality) that $|x_n-x_{n-1}| \geq 2$, and we do so.

Consider the collection of intervals $\mc I = \{[x_{n-1}, x_n]$: $n\in\Z$\}. Note that $\cup_{I\in\mc I} I = \R$. For any interval $I \in \mc I$, let $S_I = \sup_{x\in I} \cL_1(x) + K|x|$. Suppose that, for all $I \in \mc I$ with $\sup I \leq -1$ or $\inf I\geq 1$,
\begin{equation}\label{e.condition for pairs of intervals}
\sum_{\substack{J\in\mc I: \\0\to I\to J}} \P\left(S_I \geq 0, S_J \geq 0\right) < \infty;
\end{equation}
here the second line below the sum is to indicate that the sum is over all $J$ distinct from $I$ which are further from $0$ than $I$ is, i.e., if $I$ is to the right of zero then $J$ is to the right of $I$, and analogously for the left.

It is easy to see that \eqref{e.condition for pairs of intervals} implies that $\sup_{x\in\R} (\cL_1(x) + K|x|) < \infty$ almost surely. Indeed, given \eqref{e.condition for pairs of intervals} for all $I$ with $\inf I \geq 1$, it follows from the Borel-Cantelli lemma that almost surely, for each such $I$, either (i) $S_I < 0$ or (ii) $S_I \geq 0$ but there are only finitely many $J$ with $J$ to the right of $I$ such that $S_J \geq 0$. Taking an intersection over these events yields a probability 1 event on which either $S_I < 0$ for all $I\in \mc I$ with $\inf I \geq 1$, or there exists some $I$ with $S_I \geq 0$ and only finitely many $J$ to the right of zero such that $S_J \geq 0$. In both cases, there is a random compact set $\mc C\subseteq [0,\infty)$ such that $\cL_1(x) < -K|x|$ for all $x\in[0,\infty)\setminus \mc C$, and, by continuity, $\cL_1(x) + K|x|$ is bounded inside $\mc C$ almost surely. So we obtain that $\sup_{x\geq 0} (\cL_1(x) + K|x|) < \infty$ almost surely. Repeating the argument using \eqref{e.condition for pairs of intervals} for all $I\in\mc I$ such that $\sup I \leq -1$ gives that $\sup_{x\leq 0} (\cL_1(x) + K|x|) < \infty$ almost surely.

So it remains to prove \eqref{e.condition for pairs of intervals}. On the event that $S_I \geq 0$, let $\tau^\ell_I = \inf\{x\in I: \cL_1(x) \geq -K|x|\}$ and $\tau^r_I = \sup\{x\in I: \cL_1(x) \geq -K|x|\}$. If the corresponding sets are empty, i.e., when $S_I \leq 0$, let $\tau_I^\ell = \sup I + 1$ and $\tau_I^r = \inf I -1$ (this definition is purely to make some future statements cleaner). We will use the analogous definitions for $\tau_J^\ell$ and $\tau_J^r$ as well. 

We will now argue \eqref{e.condition for pairs of intervals} under the condition that $\inf I \geq 1$; the argument for the case that $\sup I \leq -1$ is analogous.
We assume without loss of generality that $\sup I + 2 \leq \inf J$ as we are ignoring at most one term in \eqref{e.condition for pairs of intervals} (recall $|x_n-x_{n-1}| \geq 2$). This is so that $\tau^\ell_I < \tau^r_J$.

We first observe that $[\tau_I^\ell, \tau_J^r]$ is a stopping domain (recall Definition~\ref{d.strong bg}). Let $\F_{\tau, I, J} = \Fext(1,[\tau^\ell_I, \tau^r_J])$ be the $\sigma$-algebra generated by the second and all lower curves on $\R$, and the top curve on $[\tau_I^\ell, \tau_J^r]$. Let $n$ be such that $x_n = \inf J$, so that $x_n\in[\tau^\ell_I, \tau^r_J]$ on the event that $S_I\geq 0, S_J\geq 0$. Consider the event 
$$A_{I,J}(\cL_1) = \Bigl\{\cL_1(x_n) \geq -2K|x_n|\Bigr\};$$
notice that the coefficient of $|x_n|$ is $-2K$ and not $-K$. Then we see that, since $\{S_I\geq 0, S_J\geq 0\} = \{\tau^\ell_I < \sup I +1, \tau^r_J > -\inf J-1\}$,
\begin{align*}
\P\bigl(S_I \geq 0, S_J \geq 0, A_{I,J}(\cL_1)\bigr)
&= \P\left(\tau^\ell_I < \sup I +1, \tau^r_J > -\inf J-1, A_{I,J}(\cL_1)\right)\\
&= \E\left[\P_{\F_{\tau,I,J}}\left(A_{I,J}(\cL_1)\right)\one_{\tau^\ell_I < \sup I +1, \tau^r_J > -\inf J-1}\right].
\end{align*}
Now, $A_{I,J}(\cL_1)$ is an increasing event. Further, on the event $\{\tau^\ell_I < \sup I +1, \tau^r_J > -\inf J-1\}$, $\cL_1$ is distributed as a Brownian bridge from $(\tau^\ell_I, \cL_1(\tau^\ell_I))$ to $(\tau^r_J, \cL_1(\tau^r_J))$, conditioned on avoiding the second curve, and $\cL_1(x) \geq -K|x|$ for $x\in\{\tau^\ell_I, \tau^r_J\}$. Let $B$ be a rate two Brownian bridge between the points $(\tau^\ell_I, -K|\tau^\ell_I|)$ and $(\tau^r_I, -K|\tau^r_I|)$, but with no lower boundary conditioning. So, by monotonicity (Lemma~\ref{l.monotonicity}),
\begin{align*}
\E\left[\P_{\F_{\tau,I,J}}\left(A_{I,J}(\cL_1)\right)\one_{\tau^\ell_I < \sup I +1, \tau^r_J > -\inf J-1}\right] \geq \E\left[\P_{\F_{\tau,I,J}}\left(A_{I,J}(B)\right)\one_{\tau^\ell_I < \sup I +1, \tau^r_J > -\inf J-1}\right].
\end{align*}
Notice that $\E[B(x_n)] = -K|x_n|$. Thus, by symmetry of the normal distribution,
$$\P_{\F_{\tau,I,J}}\bigl(A_{I,J}(B)\bigr) = \P_{\F_{\tau,I,J}}\Bigl(B(x_n) \geq -2K|x_n|\Bigr) \geq \frac{1}{2}.$$
This yields, again since $\{S_I \geq 0, S_J \geq 0\} = \{\tau^\ell_I < \sup I +1, \tau^r_J > -\inf J-1\}$,
$$\P\bigl(S_I \geq 0, S_J \geq 0\bigr) \leq 2\cdot\P\bigl(S_I \geq 0, S_J \geq 0, A_{I,J}(\cL_1)\bigr).$$
Now, clearly $\P(S_I \geq 0, S_J \geq 0, A_{I,J}(\cL_1)) \leq \P(A_{I,J}(\cL_1)) = \P(\cL_1(x_n) \geq -2K|x_n|)$. Note that, since $x_n=\inf J$, the $n$ corresponding to distinct $J$ are distinct. So, by our choice of $\{x_n\}_{n\in\Z}$ such that \eqref{e.x_n summability} holds, we obtain~\eqref{e.condition for pairs of intervals}.
\end{proof}

Recall from Section~\ref{s.intro.proof ideas.extremal}, and as in the proof of Theorem~\ref{t.upper tail upper bound}, that to prove an upper bound on the upper tail of $\cL_1(0)$, we first need to have control over the lower curve and know that the top curve is not too high at the boundary of an interval containing zero; then we will be able to resample on that interval and use these pieces of information to get a tail bound on the value at zero. In Theorem~\ref{t.upper tail upper bound} these two pieces of boundary information were provided by Theorem~\ref{t.limit shape}; here it is the next proposition.

\begin{proposition}\label{p.nearby pinning for extremal}
Suppose $\cL$ satisfies Assumptions~\ref{as.bg}--\ref{as.mono in cond}. Then there exist $R>0$ and $\theta_0$ such that, for $\theta>\theta_0$,
$$\P\Bigl(\cL_1(\pm \theta) \leq \tfrac{1}{2}\theta,\  \sup_{x\in \R} (\cL_2(x) + 2|x|) \leq R  \ \Big | \   \cL_1(0) = \theta\Bigr) \geq \frac{1}{2}.$$

\end{proposition}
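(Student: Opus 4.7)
The plan is to decompose the complement of the good event $D = \{\cL_1(\pm\theta) \leq \tfrac{1}{2}\theta\} \cap \{\sup_{x\in\R} \cL_2(x) + 2|x| \leq R\}$ into the three increasing events $\{\cL_1(\theta) > \tfrac{1}{2}\theta\}$, $\{\cL_1(-\theta) > \tfrac{1}{2}\theta\}$, and $\{\sup_{x\in\R} \cL_2(x) + 2|x| > R\}$, and to bound each of their conditional probabilities under $\{\cL_1(0) = \theta\}$ by $1/6$. For any increasing event $F$, Lemma~\ref{l.bound point conditioning by tail conditioning} (with $E = [\theta,\infty)$) gives $\P(F \mid \cL_1(0) = \theta) \leq \P(F \mid \cL_1(0) \geq \theta)$, so it suffices to work throughout with the more convenient tail-type conditioning $\{\cL_1(0)\geq \theta\}$, on which the correlation inequalities of Assumption~\ref{as.corr} are directly applicable.

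For the second-curve event, the BK inequality bounds $\P(\sup_{x\in\R}\cL_2(x) + 2|x| > R \mid \cL_1(0)\geq\theta)$ by $\P(\sup_{x\in\R}\cL_1(x) + 2|x| > R)$, and this can be made at most $1/6$ by choosing $R$ large using Proposition~\ref{p.top curve below tent} with $K=2$. For the $\cL_1(\pm\theta)$ events we argue only for $\cL_1(\theta)$ by symmetry, combining a weak pinning with a Brownian Gibbs resampling. For the weak pinning, we use stationarity of $\cL_1(x)+x^2$ and the a.s.\ finiteness of $\cL_1(0)$ to choose $x_\theta > \theta$---possibly depending on $\theta$ in a completely uncontrolled way---such that
\[
\P\bigl(\cL_1(\pm x_\theta) > \theta - x_\theta \mid \cL_1(0) \geq \theta\bigr)
\leq \frac{\P(\cL_1(0) > x_\theta^2 - x_\theta + \theta)}{\P(\cL_1(0) \geq \theta)}
\leq \tfrac{1}{12};
\]
the denominator is strictly positive for each $\theta$, and the numerator can be driven below any positive multiple of it by taking $x_\theta$ large.

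On the resulting good event and conditional on $\cL_1(0) = \theta$ (handled via the bridge-decomposition / regular conditional distribution that appeared in Proposition~\ref{p.density control}), apply the Brownian Gibbs property on $[0, x_\theta]$. Monotonicity (Lemma~\ref{l.monotonicity}) then stochastically dominates $\cL_1$ on this interval by a rate-$2$ Brownian bridge $B$ from $(0, \theta)$ to $(x_\theta, \theta - x_\theta)$ conditioned to stay above the tent $R - 2|x|$. Directly, $B(\theta)$ has mean $0$ (by slope-$(-1)$ linear interpolation between the endpoints) and variance at most $2\theta$, so $\P(B(\theta) > \tfrac{1}{2}\theta) \leq \exp(-\theta/16)$ by Lemma~\ref{l.normal bounds}. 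For the tent-avoidance probability, writing $B(x) = (\theta - x) + W(x)$ with $W$ a centered rate-$2$ bridge from $0$ to $0$ on $[0, x_\theta]$, the condition $B(x) > R - 2|x|$ for all $x\in[0,x_\theta]$ becomes $V(x) > R - \theta$ for $V(x) := W(x) + x$; crucially $V$ is itself a rate-$2$ Brownian bridge, now from $(0,0)$ to $(x_\theta, x_\theta)$. The classical reflection formula for the infimum of a Brownian bridge gives $\P(\inf_{[0,x_\theta]} V \leq c) = \exp(-(-c)(x_\theta - c)/x_\theta) \leq \exp(c)$ for any $c \leq 0$, \emph{uniformly in $x_\theta$}. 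Therefore $\P(B \text{ avoids the tent}) \geq 1 - \exp(-(\theta - R)) \geq 1/2$ once $\theta \geq R + \log 2$, and $\P(B(\theta) > \tfrac{1}{2}\theta \mid B \text{ avoids the tent}) \leq 2\exp(-\theta/16) \leq 1/12$ for all large $\theta$. Combining the three bounds yields $\P(D^c \mid \cL_1(0) = \theta) \leq 1/2$, as required. The main subtlety in the argument is precisely this $x_\theta$-uniform lower bound on the tent-avoidance probability, which realizes the ``bridge is unlikely to hit a line of steeper slope than that of its endpoints'' heuristic from Figure~\ref{f.stay below line} and allows the arbitrarily large weak-pinning scale $x_\theta$ to be absorbed into a single constant.
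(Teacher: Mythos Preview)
Your argument follows the same route as the paper's: control $\cL_2$ via the BK inequality and Proposition~\ref{p.top curve below tent}, find a weak pinning point $x_\theta$ using stationarity and parabolic decay, resample on $[0,x_\theta]$ with the conditioning $\cL_1(0)=\theta$ as left boundary data, and dominate by a Brownian bridge conditioned to avoid a tent of steeper slope. The only genuine difference is that for the tent-avoidance probability you invoke the exact reflection formula for the infimum of a Brownian bridge, whereas the paper packages this step as Lemma~\ref{l.bridge above line}; your route is slightly more direct and yields the same $x_\theta$-uniform positive lower bound.

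There is, however, a bookkeeping slip. You claim to bound each of the three events $\{\cL_1(\theta)>\tfrac12\theta\}$, $\{\cL_1(-\theta)>\tfrac12\theta\}$, $\{\sup_x\cL_2(x)+2|x|>R\}$ by $1/6$. But to run the resampling step for $\{\cL_1(\theta)>\tfrac12\theta\}$ you must also intersect with $\{\sup_x\cL_2(x)+2|x|\leq R\}$ (otherwise you cannot replace $\cL_2$ by the tent via monotonicity), so that bound is really
\[
\P\bigl(\cL_1(\theta)>\tfrac12\theta\mid\cL_1(0)=\theta\bigr)\ \le\ \tfrac{1}{12}+\tfrac{1}{12}+\P\bigl(\textstyle\sup_x\cL_2(x)+2|x|>R\mid\cL_1(0)=\theta\bigr),
\]
which with your constants is $1/3$, not $1/6$, and the union bound then gives $5/6$. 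The paper avoids this by bounding $\P(\cL_1(\pm\theta)>\tfrac12\theta,\ \sup_x\cL_2(x)+2|x|\le R\mid\cdot)$ directly (so the second-curve term is only counted once) and working with a single small parameter $\delta$; you can fix things the same way, or simply shrink all three constants. Also, the appeal to the ``bridge-decomposition'' of Proposition~\ref{p.density control} is unnecessary here: since $0$ is the left endpoint of $[0,x_\theta]$, the conditioning $\cL_1(0)=\theta$ is just a specification of boundary data and the ordinary Brownian Gibbs property applies directly.
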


\begin{remark}\label{r.why no uniformity in extremal argument}
The arguments of Propositions~\ref{p.top curve below tent} and \ref{p.nearby pinning for extremal} both apply essentially unchanged to the positive temperature analogue of extremal ensembles as well, if one has Assumption~\ref{as.corr}(b). However, the tail of the almost surely finite constant $K$ in Proposition~\ref{p.top curve below tent} may not be tight as $t$ varies, i.e. over $t\geq t_0$ for some $t_0>0$, and this would result in the constants $R$ and $\theta_0$ in Proposition~\ref{p.nearby pinning for extremal} depending on $t$ as well. So the overall argument we are giving for zero temperature extremal ensembles would not yield Theorems~\ref{mt.one point density asymptotics}--\ref{mt.two point tail} in the positive temperature case \emph{with the uniformity in $t$} mentioned in Remark~\ref{r.uniformity}---though they will yield those results if the constants are allowed to depend on $t$.
\end{remark}

As we outlined in Section~\ref{s.intro.proof ideas.extremal}, to prove Proposition~\ref{p.nearby pinning for extremal} we will use stationarity, parabolic curvature, and one-point tightness to first find a faraway point $x_\theta$ so that $\cL_1(x_\theta) \leq -|x_\theta|$, i.e., below a line of slope $-1$. 
Next, we know from Proposition~\ref{p.top curve below tent} that there exists a random constant $\tilde R$ such that $\cL_2$ a.s.\ lies below $-2|x| + \tilde R$ on \emph{all} of $\R$ (as usual, using the BK inequality from Assumption~\ref{as.corr}(b) to say that $\cL_2$ conditioned on any event of $\cL_1$ is dominated by an unconditioned copy of $\cL_1$). 

Using the Gibbs property and monotonicity, we can dominate $\cL_1$ on $[0,x_\theta]$ (and similarly for $[-x_\theta,0]$) with a Brownian bridge $B$ conditioned to stay above $-2|x|$, and so we have to understand the tail of $B(\theta)$. The next lemma says that the probability of this conditioning event is uniformly positive, and we prove it in Appendix~\ref{app.brownian estimates}. The setup has been affinely transformed so that the lower line has slope zero.

\begin{lemma}\label{l.bridge above line}
For $K, r>0$, let $B^K$ be a rate two Brownian bridge between $(0,0)$ and $(r, Kr)$. Let $\eta>0$. Then there exists $C, c>0$ (independent of $K$ and $r$) such that, for $K\geq \frac{1}{2}\max(1,\eta^{-1})$,
$$\P\left(\inf_{x\in[0,r]} B^K(x) < -\eta K \right) \leq C\exp\left(-cK^2\right).$$
\end{lemma}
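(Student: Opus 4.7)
The plan is to reduce the problem to the classical reflection-principle formula for the tail of a Brownian bridge crossing a line. First I would recenter by setting $\tilde B(x) = B^K(x) - Kx$ on $[0,r]$, so that $\tilde B$ is a rate-two Brownian bridge with $\tilde B(0) = \tilde B(r) = 0$. The event $\{\inf_{x\in[0,r]} B^K(x) < -\eta K\}$ then becomes $\{\tilde B(x) < -Kx - \eta K \text{ for some } x\in[0,r]\}$.

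Next, using the reflection symmetry $-\tilde B \stackrel{d}{=} \tilde B$ of a centered bridge, this event has the same probability as $\{\sup_{x\in[0,r]}(\tilde B(x) - Kx) > \eta K\}$, i.e., the probability that $\tilde B$ crosses the positive-slope line $y = Kx + \eta K$. For a standard Brownian bridge $W$ on $[0,1]$ from $0$ to $0$ and reals $a,b$ with $b>0$ and $a+b>0$, the reflection principle yields
\begin{equation*}
\P\left(\sup_{t\in[0,1]}(W(t) - at) > b\right) = \exp\bigl(-2b(a+b)\bigr),
\end{equation*}
which comes from rewriting $W(t) - at$ as a bridge from $0$ to $-a$ and applying the standard bridge-supremum formula $\P(\sup \mathrm{BB}(0\to y) > b) = \exp(-2b(b-y))$ (a direct reflection computation).

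Rescaling $\tilde B(x) = \sqrt{2r}\,W(x/r)$ and substituting $a = K\sqrt{r/2}$, $b = \eta K/\sqrt{2r}$, a straightforward computation (using $2ab = \eta K^2$ and $2b^2 = \eta^2 K^2/r$) produces
\begin{equation*}
\P\!\left(\inf_{x\in[0,r]} B^K(x) < -\eta K\right) = \exp\!\left(-\eta K^2\Bigl(1+\tfrac{\eta}{r}\Bigr)\right) \leq \exp(-\eta K^2),
\end{equation*}
which is of the required form $C\exp(-cK^2)$ with $C=1$ and $c=\eta$, both independent of $K$ and $r$. The hypothesis $K \geq \tfrac12 \max(1,\eta^{-1})$ is not actually needed for the identity; it serves only to guarantee that the decay factor $\exp(-cK^2)$ is a genuine (non-trivial) smallness.

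There is no substantial obstacle beyond recalling the bridge-crossing identity and verifying the scaling constants. If one prefers an argument internal to the paper that avoids invoking the explicit formula, one can split $[0,r]$ into $[0,\eta]$ and $[\eta,r]$ and apply Lemma~\ref{l.brownian bridge restricted sup tail} on each piece: on the short interval $[0,\eta]$ the maximum variance of $\tilde B$ is at most $2\eta$ while the demanded deviation is $\eta K$, producing a Gaussian tail of order $\exp(-cK^2\eta)$; on $[\eta,r]$ one uses monotonicity in the boundary data to compare with an unconditioned bridge whose mean at $x=\eta$ already exceeds $\eta K/2$, and concludes similarly. This yields the same $\exp(-cK^2)$ rate with slightly worse constants.
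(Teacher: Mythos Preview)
Your proof is correct and takes a genuinely different route from the paper. The paper reduces to $\eta=1$ via stochastic domination, then writes $B^K(x)=B(x)+Kx$ and performs a union bound over unit intervals $[j-1,j]$ (using the symmetry $x\mapsto r-x$ to restrict to $j\le\lceil r/2\rceil$), applying Lemma~\ref{l.brownian bridge restricted sup tail} on each piece and summing the resulting bounds $3\exp(-K^2 j/8)$. You instead invoke the exact reflection-principle identity for a bridge crossing a line, obtaining the closed-form equality $\exp\bigl(-\eta K^2(1+\eta/r)\bigr)$. Your computation checks out: the scaling $\tilde B(x)=\sqrt{2r}\,W(x/r)$ is correct for a rate-two bridge, and the substitution $a=K\sqrt{r/2}$, $b=\eta K/\sqrt{2r}$ into $\exp(-2b(a+b))$ indeed gives $\eta K^2(1+\eta/r)$.

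Your approach is more elegant and yields a sharper exact result, at the cost of importing a classical formula not stated in the paper. The paper's approach stays internal to its toolkit (only Lemma~\ref{l.brownian bridge restricted sup tail}) and illustrates the union-bound-over-scales technique used elsewhere, but pays for this with a weaker constant. Note that in both proofs the constant $c$ depends on $\eta$ (yours has $c=\eta$; the paper's reduction replaces $K$ by $\eta K$ when $\eta<1$, giving $c=\eta^2/8$); this is permitted since the statement only requires independence from $K$ and $r$. Your remark that the hypothesis $K\ge\tfrac12\max(1,\eta^{-1})$ is not needed for the identity is also correct.
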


The form of the assumed lower bound on $K$ is for technical convenience and as it suffices for the applications; but one could also assume $K\geq \delta\max(1, \eta^{-1})$ for some $\delta>0$.

With this overview, we turn to the details of the proof of Proposition~\ref{p.nearby pinning for extremal}. It may help to recall the depiction of the setting from Figure~\ref{f.stay below line}.

\begin{proof}[Proof of Proposition~\ref{p.nearby pinning for extremal}]
Let $\delta>0$ be a constant to be fixed later. We first observe that, by Assumption~\ref{as.corr}(b) (BK inequality) and Lemma~\ref{l.bound point conditioning by tail conditioning} (monotonicity in conditioning), for any $R>0$,
\begin{align*}
\P\left(\sup_{x\in\R} (\cL_2(x) + 2|x|) > R  \ \Big|\  \cL_1(0) = \theta\right) &\leq \P\left(\sup_{x\in\R} (\cL_2(x) + 2|x|) > R  \ \Big|\  \cL_1(0) \geq \theta\right)\\
&\leq \P\left(\sup_{x\in \R} (\cL_1(x) + 2|x|) > R\right).
\end{align*}
By Proposition~\ref{p.top curve below tent} with $K=2$, this probability is less than $\delta$ for all sufficiently large $R$. We fix such an $R$ for the remainder of the argument.

We now show that, for $x\in\{-\theta, \theta\}$,
$$\P\Bigl(\cL_1(x) > \tfrac{1}{2}\theta,\, \sup_{y\in \R} (\cL_2(y) + 2|y|) \leq R  \ \Big|\  \cL_1(0) = \theta\Bigr) \leq 2\delta.$$
This will clearly suffice as we may set $\delta$ a sufficiently small constant later. We focus on the $x=\theta$ case as the other is analogous.

We start by picking $x_\theta > \theta$ such that 
\begin{equation}\label{e.x_theta condition}
\P\left(\cL_1(x_\theta) > -x_\theta + \theta\right) \leq \delta \cdot\P(\cL_1(0) \geq \theta);
\end{equation}
this is possible as the left-hand side is $\P(\cL_1(0) > x_\theta^2-x_\theta + \theta)$ by stationarity and $x^2-x \to \infty$ as $x\to\infty$.

Next, by Lemma~\ref{l.bound point conditioning by tail conditioning} (monotonicity in conditioning),
$$\P\Bigl(\cL_1(x_\theta) > -x_\theta + \theta \ \Big|\  \cL_1(0) = \theta\Bigr) \leq \P\Bigl(\cL_1(x_\theta) > -x_\theta + \theta \ \Big|\  \cL_1(0) \geq \theta\Bigr) \leq \delta.$$
Using this yields that
\begin{align*}
\MoveEqLeft[6]
\P\Bigl(\cL_1(\theta) > \tfrac{1}{2}\theta,\  \sup_{x\in \R} (\cL_2(x) + 2|x|) \leq R \ \Big | \  \cL_1(0) = \theta\Bigr)\\
&\leq \P\Bigl(\cL_1(\theta) > \tfrac{1}{2}\theta,\  \cL_1(x_\theta) \leq -x_\theta+\theta,\  \sup_{x\in \R} (\cL_2(x) + 2|x|) \leq R  \ \Big |\  \cL_1(0) = \theta\Bigr) + \delta.
\end{align*}
Let $\F=\Fext(1,[0,x_\theta])$ be the $\sigma$-algebra generated by the top curve outside $[0,x_\theta]$ and all the lower curves on $\R$. The first term on the RHS in the previous display is equal to
$$\E\left[\PF\left(\cL_1(\theta) > \tfrac{1}{2}\theta \ \Big|\  \cL_1(0) = \theta\right)\one_{\cL_1(x_\theta) \leq -x_\theta + \theta, \sup_{x\in \R} (\cL_2(x) + 2|x|) \leq R}\right].$$
Under $\F$, $\cL_1$ is distributed as a Brownian bridge from $(0,\theta)$ to $(x_\theta, -\cL_1(x_\theta))$ conditioned to stay above $\cL_2$. Let $B$ be a Brownian bridge from $(0,\theta)$ to $(x_\theta, -x_\theta + \theta)$ without the lower boundary conditioning. Then, by monotonicity Lemma~\ref{l.monotonicity}, the previous display is upper bounded by
$$\P\Bigl(B(\theta) > \tfrac{1}{2}\theta \ \Big|\  B(x) > -2|x|+R \ \forall x\in[0,x_\theta]\Bigr).$$
We want to uniformly lower bound the probability of the conditioning event using Lemma~\ref{l.bridge above line}. For this we note that $B$ is a Brownian bridge whose endpoints lie on a line of slope $-1$, while the conditioning is to stay above a line of slope $-2$; so the difference of slopes is 1. Further, by setting $\theta_0$ large enough depending on $R$, the minimum distance between these lines is at least $\frac{1}{2}\theta$. Thus by Lemma~\ref{l.bridge above line} (with $K=1$ and $\eta=1/2$) the probability of the conditioning event is uniformly positive in $\theta$, so the previous display is upper bounded by
$$C\cdot \P\left(B(\theta) > \tfrac{1}{2}\theta\right)$$
for some $C<\infty$ independent of $\theta$. Now $B(\theta)$ is a normal random variable with mean $\theta - \theta = 0$ and variance $\smash{\sigma^2=2\cdot\frac{\theta\times (x_\theta-\theta)}{x_\theta} \leq 2\theta}$ (recall $B$ is of rate two). Thus using Gaussian tail bounds (Lemma~\ref{l.normal bounds}) the previous display is upper bounded by
$$C\cdot\exp\left(-\frac{\frac{1}{4}\theta^2}{2\sigma^2}\right) \leq C\cdot \exp\left(-\tfrac{1}{16}\theta\right).$$
We set $\theta_0$ such that this is less than $\delta$ for all $\theta>\theta_0$.
This completes the proof after setting $\delta<1/10$.
\end{proof}

With these statements in hand, we can now prove Theorem~\ref{t.extremal initial tail}.

\begin{proof}[Proof of Theorem~\ref{t.extremal initial tail}]
As in the proof of the optimal one-point upper bound Theorem~\ref{t.upper tail upper bound}, we will in fact first bound $\P(\cL_1(0)\in[\theta-1,\theta])$.

Let $R$ be as in Proposition~\ref{p.nearby pinning for extremal}, and let $A_{R,\theta}$ be defined by
$$A_{R,\theta} = \left\{\cL_1(\pm\theta)\leq \tfrac{1}{2}\theta, \sup_{x\in\R} \cL_2(x) + |x| \leq R\right\}.$$
 Note that the event in the statement of Proposition~\ref{p.nearby pinning for extremal} is the same with $2|x|$ in place of $|x|$, so the above event contains the event in Proposition~\ref{p.nearby pinning for extremal}. So, by Proposition~\ref{p.nearby pinning for extremal}, Lemma~\ref{l.bound point conditioning by tail conditioning} (monotonicity in conditioning), and since $A_{R,\theta}$ is a decreasing event,
 $$\P(A_{R,\theta} \mid \cL_1(0) \in [\theta-1,\theta]) \geq \P\left(A_{R,\theta}\mid\cL_1(0)= \theta\right)\geq \frac{1}{2},$$
 for $\theta>\theta_0$ with $\theta_0$ as given in Proposition~\ref{p.nearby pinning for extremal}. We raise the value of $\theta_0$ if needed so that $\theta_0> 4R$, which we will need shortly. We observe that
\begin{align*}
\tfrac{1}{2}\cdot\P\left(\cL_1(0) \in [\theta-1,\theta]\right) \leq \P\left(\cL_1(0) \in [\theta-1,\theta], A_{R,\theta}\right).
\end{align*}
Let $\F$ be the $\sigma$-algebra generated by the top curve outside $[-\theta,\theta]$ and all the lower curves on $\R$. Then the previous display is equal to
$$\E\left[\PF\left(\cL_1(0) \in [\theta-1,\theta]\right)\one_{A_{R,\theta}}\right] \leq \E\left[\PF\left(\cL_1(0) \geq \theta-1\right)\one_{A_{R,\theta}}\right];$$
we performed this bound so as to make use of the increasing nature of the event $\{\cL_1(0)>\theta\}$ on the right-hand side.

By the Brownian Gibbs property, conditionally on $\F$, $\cL_1$ is a Brownian bridge from $(-\theta,\cL_1(-\theta))$ to $(\theta,\cL_1(\theta))$ conditioned on staying above $\cL_2$. On $A_{R,\theta}$, by monotonicity Lemma~\ref{l.monotonicity}, this Brownian bridge is stochastically dominated by the Brownian bridge $B$ from $(-\theta, \frac{1}{2}\theta)$ to $(\theta,\frac{1}{2}\theta)$ conditioned to stay above $-|x| +R$. So, since $\{\cL_1(0)>\theta-1\}$ is an increasing event, the right-hand side of the previous display is upper bounded by
$$\frac{\P(B(0) \geq \theta-1)}{\P\left(B(x) > -|x|+R \ \forall x\in[-\theta,\theta]\right)}.$$
Now, $B$ has maximum deviation less than $\theta^{1/2}$ with positive probability independent of $\theta$. Recall that the endpoints of $B$ are at height $\frac{1}{2}\theta$. So, since $\theta>\theta_0>4R$ implies that $\frac{1}{2}\theta - \theta^{1/2} \geq R$, the denominator of the previous display is lower bounded by a constant. Now $B(0)$ is a normal random variable with mean $\theta/2$ and variance $2\times\frac{\theta\times\theta}{2\theta}= \theta$, so, by standard normal tail bounds from Lemma~\ref{l.normal bounds}, the numerator is upper bounded by
$$\exp\left(-\frac{(\theta-\frac{1}{2}\theta)^2}{2\theta}\right) = \exp\left(-\frac{1}{8}\theta\right).$$
Thus we have shown that $\P(\cL_1(0)\in[s-1,s])\leq\exp(-cs)$ for some $c>0$ and all $s>\theta_0$. Summing this $s=\theta+1$ to $\infty$ gives that $\P(\cL_1(0)>\theta)\leq\exp(-c\theta)$ for any $\theta>\theta_0$. This completes the proof.
\end{proof}

\section{General initial data}\label{s.general data}

In this section we prove the one-point upper tail bounds for general initial data that was stated in Section~\ref{s.intro} as Theorem~\ref{mt.general initial data}. We separate the upper and lower bounds into the following two theorems to clarify which hypotheses on the initial data are needed for each. Recall the definition of the class of initial data $\Hyp(K,L,M,\delta)$ from Definition~\ref{d.gen initial condition hypotheses}, that $\h$ refers to the KPZ line ensemble, and that $\hf$ (as defined in \eqref{e.hf definition}) is the scaled solution to the KPZ equation started from initial condition $f^{(t)}$.

In Theorem~\ref{t.general initial data upper bound} (upper bound), we assume that the initial condition $f^{(t)}$ satisfies certain growth conditions, but do not quantify any lower bounds. For Theorem~\ref{t.general initial data lower bound} (lower bound), we need to assume both growth conditions as well as a lower bound on $f^{(t)}$ on some non-trivial set.

\begin{theorem}\label{t.general initial data upper bound}
Let $T_0 \in (t_0,\infty]$ and $f^{(t)}\in \Hyp(K, L, M = \infty, \delta = 0)$ for some fixed $K$ and $L>0$ for all $t\in [t_0, T_0)$. There exist $\theta_0>0$ and $C<\infty$ such that, for $t\in [t_0, T_0)$ and $\theta>\theta_0$,
$$\P\left(\hf(0) \geq \theta\right) \leq \exp\left(-\frac{4}{3}\theta^{3/2} + C\theta^{3/4}\right).$$
\end{theorem}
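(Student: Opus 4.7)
The plan is to reduce the one-point upper tail for $\hf(0)$ to a tail estimate for a supremum functional of the narrow-wedge solution $\h_1$, which can be handled by the sharp one-point bound already proven in Theorem~\ref{t.upper tail upper bound} together with Proposition~\ref{p.sharp sup over interval tail}. The key vehicle is the (scaled) Hopf--Cole convolution formula, which in our normalisations takes the form
\begin{equation*}
\hf(0) \;=\; t^{-1/3}\log\int_\R \exp\!\Bigl(t^{1/3}\bigl(\h_1(v)+g^{(t)}(v)\bigr)\Bigr)\,\dif v,
\end{equation*}
where $g^{(t)}(v)=f^{(t)}(-v)$. Since the white noise driving the narrow-wedge equation has the same law under reflection, $g^{(t)}$ again belongs to $\Hyp(K,L,\infty,0)$, so we may equivalently analyse this expression with $f^{(t)}$ in place of $g^{(t)}$.

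The growth hypothesis $f^{(t)}(v)\le v^2-L|v|+K$ gives
\begin{equation*}
\h_1(v)+f^{(t)}(v)\;\le\;\bigl(\h_1(v)+v^2-\tfrac{L}{2}|v|\bigr)\;-\;\tfrac{L}{2}|v|+K,
\end{equation*}
so, setting $U:=\sup_{v\in\R}\bigl(\h_1(v)+v^2-\tfrac{L}{2}|v|\bigr)$ (finite a.s.\ by the parabolic decay of $\h_1$), the elementary Laplace bound
\begin{equation*}
t^{-1/3}\log\int_\R \exp\!\Bigl(t^{1/3}\bigl(U-\tfrac{L}{2}|v|+K\bigr)\Bigr)\dif v
\;=\;U+K+t^{-1/3}\log\!\Bigl(\tfrac{4}{t^{1/3}L}\Bigr)
\end{equation*}
yields $\hf(0)\le U+K+C(t_0,L)$ uniformly for $t\in[t_0,T_0)$. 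Hence the theorem reduces to the sharp upper tail estimate $\P(U\ge\theta)\le\exp\bigl(-\tfrac{4}{3}\theta^{3/2}+C\theta^{3/4}\bigr)$, after absorbing the additive constant $K+C$ into $C\theta^{3/4}$ via $(\theta-K-C)^{3/2}=\theta^{3/2}-O(\theta^{1/2})$.

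To control $U$, I decompose $\R=\bigcup_{k\in\Z}[k,k+1]$ and note that on $[k,k+1]$ we have $U\le \sup_{v\in[k,k+1]}(\h_1(v)+v^2)-\tfrac{L}{2}|k|$. Stationarity of $x\mapsto\h_1(x)+x^2$ (Assumption~\ref{as.bg}) lets us shift each unit interval to $[-1,1]$, so Proposition~\ref{p.sharp sup over interval tail} combined with the sharp one-point upper bound from Theorem~\ref{t.upper tail upper bound} gives
\begin{equation*}
\P\!\left(\sup_{v\in[k,k+1]}(\h_1(v)+v^2)\ge \theta+\tfrac{L}{2}|k|\right)
\;\le\;\exp\!\Bigl(-\tfrac{4}{3}(\theta+\tfrac{L}{2}|k|)^{3/2}+C(\theta+\tfrac{L}{2}|k|)^{3/4}\Bigr).
\end{equation*}
A union bound and the concavity inequality $(\theta+s)^{3/2}\ge \theta^{3/2}+\tfrac{3}{2}\theta^{1/2}s$ reduce the sum to a geometric series with ratio $\exp(-\Omega(L\theta^{1/2}))$, so the $k=0$ term dominates and $\P(U\ge\theta)\le\exp(-\tfrac{4}{3}\theta^{3/2}+C\theta^{3/4})$, as required.

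The main technical obstacle is simply ensuring the sharp constant $\tfrac{4}{3}$ is preserved throughout: the Laplace bound is lossless to first order only because the prefactor contributes at most $O(\log t)$, which is absorbed into the $\theta^{3/4}$ error; the union bound over $k$ is lossless only because the $-L|v|$ drift enforces geometric decay at rate $e^{-cL\theta^{1/2}}$ rather than something weaker; and Proposition~\ref{p.sharp sup over interval tail} is sharp precisely because it transfers the one-point constant to the supremum up to polynomial corrections. One also needs to verify the convolution formula is valid almost surely, which follows from the standard Feynman--Kac/Hopf--Cole representation together with the growth bound on $f^{(t)}$ which guarantees integrability against the stochastic heat kernel.
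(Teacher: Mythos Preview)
Your proof is correct and follows essentially the same route as the paper: the paper invokes the convolution identity (Lemma~\ref{l.kpz convolution}), bounds $f^{(t)}$ by its growth hypothesis, extracts the $L/2$ drift to reduce to $\sup_{x\in\R}\bigl(\h_1(x)+x^2-\tfrac{L}{2}|x|\bigr)$ plus a $t$-bounded constant, and then (as Proposition~\ref{p.sup over real line tail}) proves the supremum tail by the same unit-interval union bound via stationarity, Proposition~\ref{p.sharp sup over interval tail}, and Theorem~\ref{t.upper tail upper bound}. The only cosmetic difference is that your reflection step $g^{(t)}(v)=f^{(t)}(-v)$ is unnecessary---the paper's Lemma~\ref{l.kpz convolution} already has $f^{(t)}(y)$ with no sign flip, and in any case the hypothesis class is even in $v$.
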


\begin{theorem}\label{t.general initial data lower bound}
Let $T_0 \in (t_0,\infty]$ and $f^{(t)}\in \Hyp(K, L, M, \delta)$ for some fixed positive $K$, $L$, $M$, and $\delta$ for all $t\in [t_0, T_0)$.
 There exist $\theta_0>0$ and $C<\infty$ such that, for $t\in[t_0, T_0)$ and $\theta>\theta_0$,
$$\P\left(\hf(0) \geq \theta\right) \geq \exp\left(-\frac{4}{3}\theta^{3/2} - C\theta^{3/4}\right).$$

\end{theorem}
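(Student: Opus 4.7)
The plan is to combine the convolution formula for the KPZ equation with general initial data with the one-point limit shape (Theorem~\ref{t.limit shape}) and the density lower bound (Theorem~\ref{t.density}). Representing $\mc Z^f$ as a convolution of the fundamental narrow-wedge solution against $e^{\mc H^f(0,\cdot)}$ and invoking translation invariance of the driving white noise gives, in distribution,
\begin{equation*}
\hf(0) \stackrel{d}{=} \frac{1}{t^{1/3}} \log \int_\R \exp\bigl(t^{1/3}[\h_1(z) + g^{(t)}(z)]\bigr)\,\dif z,
\end{equation*}
where $g^{(t)}(z) := f^{(t)}(-z)$ still lies in $\Hyp(K,L,M,\delta)$. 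Writing $T \subseteq [-M,M]$ for a measurable set of measure at least $\delta$ on which $g^{(t)} \geq -K$ and restricting the integral to $T$, it is enough to show that the event $\{\int_T e^{t^{1/3}\h_1(z)}\dif z \geq e^{t^{1/3}(\theta+K)}\}$ has probability at least $\exp(-\tfrac43\theta^{3/2} - C\theta^{3/4})$.

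The central difficulty is that $T$ is constrained only by its measure: it may have no uniform lower bound on its Lebesgue density at any prescribed point, and yet conditioning $\h_1(y^*)$ to be large at any fixed $y^* \in [-M,M]$ produces via the limit shape a triangular profile of slope $\theta^{1/2}$ around $y^*$, concentrating the integrand on a window of width $\asymp 1/(t^{1/3}\theta^{1/2})$; if $T$ avoids that window one pays a factor $\exp(-c\theta)$, vastly larger than the permissible $O(\theta^{3/4})$ error. To resolve this I will carry out an iterative bisection pigeonhole: starting from $[-M,M]$ and at each step bisecting and retaining whichever half carries at least half the parent's $T$-measure, I preserve the uniform $T$-density bound $\delta/(2M)$ while halving the length. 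After $k \asymp \log_2(M t^{1/3}\theta^{1/2})$ steps this furnishes a deterministic subinterval $I^{**} \subseteq [-M,M]$ of length $\ell^* \asymp 1/(t^{1/3}\theta^{1/2})$ with $|T \cap I^{**}| \geq (\delta/(2M))\ell^*$; let $y^*$ denote its midpoint, so $|y^*| \leq M$.

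Set $\theta_* := \theta + C_1 \theta^{1/4}$ for a large constant $C_1 = C_1(K, L, M, \delta, t_0, M_0)$ to be fixed below, and consider the favorable event $\fav$ comprising $\{\h_1(y^*) \in [\theta_* - (y^*)^2, \theta_* - (y^*)^2 + 1]\}$ together with the limit-shape event for $\h_1$ centered at $y^*$, which by stationarity of $\h_1(y) + y^2$ is equivalent to the event of Theorem~\ref{t.limit shape} for the shifted process. On $\fav$, for every $z \in I^{**}$ the estimate $\h_1(z) \geq \theta + (C_1 - M_0)\theta^{1/4} - O(1)$ holds, the $O(1)$ absorbing the triangle drop $\theta_*^{1/2}\ell^* = O(1/t^{1/3})$ across $I^{**}$, the stationarity correction $|2y^*(z - y^*)| \leq M\ell^* = o(1)$, and the term $(y^*)^2 \leq M^2$. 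Integrating over $T \cap I^{**}$ yields
\begin{equation*}
\hf(0) \geq \theta + (C_1 - M_0)\theta^{1/4} - K - O(1) + t^{-1/3}\log\bigl(\tfrac{\delta\ell^*}{2M}\bigr),
\end{equation*}
and the last correction is at most $O(\log\theta)$ uniformly for $t \in [t_0, T_0)$, so taking $C_1 > M_0$ large enough forces the right-hand side to exceed $\theta$. Finally, stationarity and Theorem~\ref{t.limit shape} (giving conditional probability at least $\tfrac12$ once $M_0$ is fixed large) reduce $\P(\fav)$ to at least $\tfrac12 \cdot \P(\h_1(0) \in [\theta_*, \theta_* + 1])$, and the density lower bound of Theorem~\ref{t.density} together with the expansion $\theta_*^{3/2} \leq \theta^{3/2} + \tfrac{3C_1}{2}\theta^{3/4} + O(1)$ give the claimed $\exp(-\tfrac43\theta^{3/2} - C\theta^{3/4})$. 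The main technical step will be the bisection, which is exactly what decouples the sharp constant $\tfrac43$ from the possibly irregular set $T$ by forcing the scale $\ell^*$ at which the triangular profile contributes only an $O(1)$ drop across $I^{**}$.
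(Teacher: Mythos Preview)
Your proof is correct, but it takes a different and more elaborate route than the paper's. The ``central difficulty'' you identify --- that $T$ may have no uniform density lower bound at any prescribed point --- is a phantom: the paper sidesteps it entirely by conditioning not on $\h_1$ being large at a single point (which would indeed force you to contend with the triangular decay), but on $\min_{x\in[-M,M]}\h_1(x)\geq \theta'$ for $\theta' = \theta + K + O(1)$. On that event the convolution integral over the \emph{whole} set $T$ is trivially at least $\delta\,e^{t^{1/3}\theta}$, so the structure of $T$ beyond its total Lebesgue measure never matters and the bisection is unnecessary. The requisite bound $\P(\min_{[-M,M]}\h_1\geq\theta')\geq\exp(-\tfrac43\theta^{3/2}-C\theta^{1/2}\log\theta)$ is Proposition~\ref{p.minimum over interval}, proved via the two-point upper tail asymptotics of Theorem~\ref{t.two point asymptotics} by conditioning on $\h_1(\pm M)\geq\theta'+R$ and letting the Brownian bridge in between stay above $\theta'$.

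What your approach buys is that it avoids the two-point machinery, using only the one-point limit shape and density bound; you could even drop the density bound by conditioning on the increasing event $\{\h_1(y^*)+(y^*)^2\geq\theta_*\}$, invoking monotonicity in conditioning for the limit-shape lower bound, and then Theorem~\ref{t.upper tail lower bound}. What the paper's approach buys is brevity --- the reduction from Proposition~\ref{p.minimum over interval} to the theorem is five lines --- and a sharper error term of $\theta^{1/2}\log\theta$ rather than $\theta^{3/4}$.
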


Both the upper and lower bounds' proofs relies on relating the upper tail for $\hf(0)$ to the upper tails of spatial maximums and minimums of $\h_1$. This is done via the following distributional identity.

\begin{lemma}[Lemma~4.3 of \cite{corwin2016kpz}]\label{l.kpz convolution}
Let $\mc H(t,x)$ be the solution to the KPZ equation started from general initial data $\mc H(0,\cdot)$. For fixed time $t>0$, the following distributional equality holds:
\begin{align*}
\mc H(t,0)\stackrel{d}{=} \log\left(\int_{-\infty}^\infty \exp\left\{t^{1/3}\bigl(\h_1(y) + t^{-1/3}\mc H(0, t^{2/3}y)\bigr)\right\}\, \dif y\right) -\frac{t}{12}+\frac{2}{3}\log t,
\end{align*}
so that, if $\mc H(0, y) = t^{1/3}f^{(t)}(t^{-2/3}y)$ for a function $f^{(t)}$,
$$\hf(0) \stackrel{d}{=} t^{-1/3}\log\left(\int_{-\infty}^\infty \exp\left\{t^{1/3}\bigl(\h_1(y) + f^{(t)}(y)\bigr)\right\}\, \dif y\right).$$

\end{lemma}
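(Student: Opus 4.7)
The plan is to start from the Cole--Hopf transform and exploit the linearity of the SHE, followed by a distributional symmetry of its fundamental solution and finally a change of variables matching the KPZ $1$:$2$:$3$ scaling. By definition, $\mc H(t,x) = \log \mc Z(t,x)$, where $\mc Z$ solves the multiplicative stochastic heat equation with initial condition $\mc Z(0,y) = \exp(\mc H(0,y))$. Since the SHE is linear, its solution admits a propagator representation
\begin{equation*}
\mc Z(t,0) = \int_{-\infty}^\infty p(0,y;t,0)\, \mc Z(0,y)\, \mathrm dy,
\end{equation*}
where $p(0,y;t,0)$ is the random fundamental solution, i.e., the SHE solution at $(t,0)$ with initial condition $\delta_y$ at time $0$, driven by the same noise $\xi$ as $\mc Z$.

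The key probabilistic input is the distributional identity
\begin{equation*}
\bigl\{y \mapsto p(0,y;t,0)\bigr\} \;\stackrel{d}{=}\; \bigl\{y \mapsto \mc Z_{\mathrm{nw}}(t,y)\bigr\},
\end{equation*}
\emph{jointly in the spatial variable $y$}, where $\mc Z_{\mathrm{nw}}(t,\cdot)$ is the narrow-wedge SHE with initial condition $\delta_0$. I would establish this by combining two symmetries of the space-time white noise: spatial translation invariance $\xi(\cdot,\cdot+y) \stackrel{d}{=} \xi$, and time reversal $\xi(t-\cdot,\cdot) \stackrel{d}{=} \xi$. Under time reversal the forward propagator from $(0,y)$ to $(t,0)$ is mapped to the forward propagator from $(0,0)$ to $(t,y)$ driven by the reversed noise, which in distribution equals $\mc Z_{\mathrm{nw}}(t,y)$. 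The cleanest way to make this rigorous is via the Wiener chaos expansion of $p$ in terms of iterated integrals against $\xi$, whose kernels are symmetric products of Gaussian heat kernels and hence manifestly invariant under the relevant space-time reflection.

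Granting this identity, substituting it into the propagator representation gives
\begin{equation*}
\mc Z(t,0) \;\stackrel{d}{=}\; \int_{-\infty}^\infty \exp\!\bigl(\mc H_{\mathrm{nw}}(t,y) + \mc H(0,y)\bigr)\, \mathrm dy.
\end{equation*}
Now I would plug in the assumed initial condition $\mc H(0,y) = t^{1/3} f^{(t)}(t^{-2/3}y)$, perform the change of variables $y = t^{2/3} u$ (producing a Jacobian $t^{2/3}$, i.e., $+\tfrac{2}{3}\log t$ after taking logs), and use the definitions $\mc H_{\mathrm{nw}}(t,t^{2/3}u) = t^{1/3}\mf h^t_1(u) - t/12$ and $\hf(0) = t^{-1/3}(\mc H(t,0) + t/12 - \tfrac{2}{3}\log t)$. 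Taking logarithms and rearranging collapses all the constant terms and yields exactly
\begin{equation*}
\hf(0) \stackrel{d}{=} t^{-1/3}\log\int_{-\infty}^\infty \exp\!\bigl(t^{1/3}(\mf h_1^t(y) + f^{(t)}(y))\bigr)\, \mathrm dy,
\end{equation*}
as claimed.

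The main obstacle is the joint distributional identity for the propagator, since it is stronger than a single-point equality and cannot be obtained by a bare space shift alone. The most transparent justification is the chaos expansion, but one must also verify that the series representations converge in $L^2$ for the Dirac initial condition (this is where one uses that strict positivity and existence of $\mc Z_{\mathrm{nw}}(t,y)$ for all $y$ almost surely, from \cite{flores2014strict}, is available). Once that technical point is handled, the algebraic manipulation in the last paragraph is routine.
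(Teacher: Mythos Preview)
The paper does not supply its own proof of this lemma; it is quoted verbatim as Lemma~4.3 of \cite{corwin2016kpz} and used as a black box in the proofs of Theorems~\ref{t.general initial data upper bound} and \ref{t.general initial data lower bound}. Your sketch is the standard derivation and is correct: the linearity of the SHE gives the propagator representation, the time-reversal symmetry of the white noise yields the joint-in-$y$ distributional identity $\{p(0,y;t,0)\}_y \stackrel{d}{=} \{\mc Z_{\mathrm{nw}}(t,y)\}_y$ (you rightly emphasize that spatial translation alone only gives a one-point identity), and the remaining substitution of $\mc H_{\mathrm{nw}}(t,t^{2/3}u) = t^{1/3}\h_1(u) - t/12$ together with the change of variables $y = t^{2/3}u$ checks out algebraically. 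One small remark: the convergence of the chaos series for the fundamental solution is classical and does not require the strict-positivity result of \cite{flores2014strict}; that result is relevant only for taking logarithms, which here is done after integrating against the (nonnegative, not identically zero) initial data, so positivity of the integral is what matters.
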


\begin{remark}\label{r.general density}
Using the above convolution formula and the sharp upper tail bounds, one can try to mimic the proof of Proposition~\ref{p.density control} to obtain a degree of regularity on the density of $\hf(0)$, and thereby obtain a sharp bound on this density as in the proof of Theorem~\ref{t.density}. 

To do this one would again condition on $\Fext([-\theta^{1/2}, \theta^{1/2}], 1)$ as well as on the bridges of $\h_1$ on $\smash{[-\theta^{1/2}, 0]}$ and $\smash{[0,\theta^{1/2}]}$. One would then consider the map $G$ (itself a function of the conditioned data) which, under the coupling of $\h_1$ and $\hf$ given by the convolution formula, takes $\h_1(0)$ to $\hf(0)$. To make use of this representation, one would then need to understand properties of this map: in particular, it would suffice to know that it and its inverse are Lipschitz, and that $G^{-1}(\theta)$ equals $\theta$ up to some error term. (It is easy to see that $G$ is increasing and so has an inverse, and in fact also that $G$ is 1-Lipschitz, from the convolution formula and the formula for $\h_1$ in terms of $\h_1(0)$ and the bridges on either side.)

However, to obtain that the inverse is Lipschitz and that $G^{-1}(\theta)\approx \theta$, one needs to control the entire profile of $\h_1$ conditional on $\hf(0)=\theta$, unlike in the narrow-wedge case, as the entire profile is involved in the convolution formula. This requires more work which we refrain from pursuing here, but we expect it should not be too difficult given the ideas and techniques developed in this paper.
\end{remark}

\subsection{Upper bound for general initial data}

For the upper bound we will need to control the supremum of $\h_1$ over $\R$. More precisely we need the following, formulated for a general line ensemble $\cL$.

\begin{proposition}\label{p.sup over real line tail}
Suppose $\cL$ satisfies Assumptions~\ref{as.bg}--\ref{as.tails}. Let $L>0$. There exist $\theta_0$ and $C<\infty$ such that, for all $\theta>\theta_0$,
$$\P\left(\sup_{x\in\R}\, (\cL_1(x) + x^2 - L|x|) > \theta\right) \leq \exp\left(-\frac{4}{3}\theta^{3/2} + C\theta^{3/4}\right).$$
\end{proposition}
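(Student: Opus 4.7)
The plan is to reduce the supremum over $\R$ to a union of suprema over unit intervals, which can each be handled via the sharp one-point upper tail estimate and the ``sup over interval'' estimate already developed, with the extra $-L|x|$ term providing the summability across intervals.

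First, I would decompose $\R = \bigcup_{n \in \Z} [n, n+1]$, so that by a union bound
$$\P\!\left(\sup_{x\in\R}\, \h_1(x) + x^2 - L|x| > \theta\right) \leq \sum_{n\in\Z} \P\!\left(\sup_{x\in[n, n+1]} \h_1(x) + x^2 > \theta + L|n|\right),$$
where I used that $-L|x| \leq -L|n|$ for $x \in [n, n+1]$ when $n \geq 0$ (and similarly for $n < 0$, using $|n|$ instead of $|n+1|$ at the cost of an absolute constant shift in $\theta$ that can be absorbed later). By stationarity of $x \mapsto \h_1(x) + x^2$ (Assumption~\ref{as.bg}), each summand has the same distribution as $\P(\sup_{x\in[0,1]} \h_1(x) + x^2 > \theta + L|n|)$, which is in turn bounded by the quantity in Proposition~\ref{p.sharp sup over interval tail}.

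Next, for each $n$, combining Proposition~\ref{p.sharp sup over interval tail} with the sharp one-point tail upper bound (Theorem~\ref{t.upper tail upper bound}) gives
$$\P\!\left(\sup_{x\in[n,n+1]} \h_1(x) + x^2 > \theta + L|n|\right) \leq \exp\!\left(-\tfrac{4}{3}(\theta + L|n|)^{3/2} + C(\theta + L|n|)^{3/4}\right)$$
for $\theta + L|n|$ larger than some threshold, where the polynomial prefactor $4(\theta + L|n|)$ has been absorbed into the $C(\theta + L|n|)^{3/4}$ error. The key elementary inequalities to use are $(\theta + L|n|)^{3/2} \geq \theta^{3/2} + \frac{3L}{2}|n|\theta^{1/2}$ (from convexity of $x^{3/2}$) and the subadditivity $(\theta + L|n|)^{3/4} \leq \theta^{3/4} + (L|n|)^{3/4}$. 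These yield
$$\sum_{n\in\Z} \exp\!\left(-\tfrac{4}{3}(\theta + L|n|)^{3/2} + C(\theta + L|n|)^{3/4}\right) \leq \exp\!\left(-\tfrac{4}{3}\theta^{3/2} + C\theta^{3/4}\right) \sum_{n\in\Z} \exp\!\left(-2L|n|\theta^{1/2} + C(L|n|)^{3/4}\right).$$

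The final step is to observe that the remaining sum is $O(1)$ uniformly in large $\theta$, since for $\theta$ large enough the linear-in-$|n|$ term $2L|n|\theta^{1/2}$ dominates the sublinear $C(L|n|)^{3/4}$ term, giving a geometric-type bound. Absorbing this $O(1)$ factor into an adjustment of the constant in front of $\theta^{3/4}$ yields the claimed bound. The main minor subtlety (not really an obstacle) is ensuring that the threshold for applying Proposition~\ref{p.sharp sup over interval tail} and Theorem~\ref{t.upper tail upper bound} on each summand is met uniformly across $n$, which follows automatically since $\theta + L|n| \geq \theta \geq \theta_0$; a separate handling of the few small-$|n|$ terms where $L|n|$ is a bounded constant is not actually needed because the bound applied is already uniform. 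The entire argument works identically for $t \in [t_0, \infty]$ since both Proposition~\ref{p.sharp sup over interval tail} and Theorem~\ref{t.upper tail upper bound} hold uniformly in $t$ in this range.
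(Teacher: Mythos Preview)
Your proposal is correct and follows essentially the same approach as the paper: decompose $\R$ into unit intervals, apply a union bound, use stationarity plus Proposition~\ref{p.sharp sup over interval tail} and Theorem~\ref{t.upper tail upper bound} on each piece, and sum. The only cosmetic difference is in the elementary inequality used to extract the summable factor: the paper uses superadditivity $(\theta + L(|k|-1))^{3/2} \geq \theta^{3/2} + L^{3/2}(|k|-1)^{3/2}$, while you use the tangent-line bound $(\theta + L|n|)^{3/2} \geq \theta^{3/2} + \tfrac{3}{2}L|n|\theta^{1/2}$; both yield a series that is $O(1)$ uniformly in large $\theta$.
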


We include the growth of $x^2$ in the supremum because we allow the initial data to grow like $x^2$; however, because $\h_1$ decays like $-x^2$, an extra lower order decay needs to be included with $+x^2$ to ensure that the solution does not immediately blow up, which is the role of $-L|x|$.

With Proposition~\ref{p.sup over real line tail} the proof of Theorem~\ref{t.general initial data upper bound} is straightforward.

\begin{proof}[Proof of Theorem~\ref{t.general initial data upper bound}]
Using the distributional identity from Lemma~\ref{l.kpz convolution} and the bound on $f^{(t)}$ for $t\in[t_0, T_0)$ coming from Definition~\ref{d.gen initial condition hypotheses} and the hypotheses,
\begin{align*}
\hf(0) &\stackrel{d}{=} t^{-1/3}\log\int_{-\infty}^\infty \exp\left\{t^{1/3} \left(\h_1(y) + f^{(t)}(y)\right)\right\}\, \dif y\\
&\leq t^{-1/3}\log\int_{-\infty}^\infty \exp\left\{t^{1/3} \left(\h_1(y) + y^2 - L|y| + K\right)\right\}\, \dif y\\
&= t^{-1/3}\log\int_{-\infty}^\infty \exp\left\{t^{1/3} \left(\h_1(y) + y^2 - \tfrac{1}{2}L|y|\right)\right\}\cdot e^{-t^{1/3}L |y|/2}\, \dif y + K\\
&\leq \sup_{x\in\R} \left[\h_1(x) + x^2 - \tfrac{1}{2}L|x|\right] + t^{-1/3}\log\int_{-\infty}^\infty e^{-t^{1/3}L |y|/2}\, \dif y + K\\
&= \sup_{x\in\R} \left[\h_1(x) + x^2 - \tfrac{1}{2}L|x|\right] + t^{-1/3} \log[4L^{-1}t^{-1/3}] + K.
\end{align*}
Since $t\geq t_0$, the second term in the final line is bounded uniformly in $t$. With this the proof is complete by invoking Proposition~\ref{p.sup over real line tail} (and using that $\h$ satisfies Assumptions~\ref{as.bg}, \ref{as.corr}(a), \ref{as.weak bk}(b\ensuremath{'}), \ref{as.mono in cond}, and \ref{as.tails} by Theorem~\ref{t.assumptions hold}).
\end{proof}

Next we turn to proving Proposition~\ref{p.sup over real line tail}. It relies on splitting up the supremum over $\R$ as a countable number of supremums over unit intervals and performing a union bound. For this we make use of Proposition~\ref{p.sharp sup over interval tail}, which gives a sharp upper tail estimate on the supremum of $\cL_1(x) + x^2$ over a unit interval; we will prove this latter statement in Section~\ref{s.gen data.no big max}.

\begin{proof}[Proof of Proposition~\ref{p.sup over real line tail}]
By a union bound,
$$\P\left(\sup_{x\in\R} (\cL_1(x) + x^2 - L|x|) > \theta\right) \leq \sum_{k=-\infty}^\infty \P\left(\sup_{x\in[k, k+1]} (\cL_1(x) + x^2 - L|x|) > \theta\right).$$
Next we see that, by stationarity of $\cL_1(x) + x^2$, for $k\in\Z$,
\begin{align*}
\sup_{x\in[k, k+1]} (\cL_1(x) + x^2 - L|x|) &\leq \sup_{x\in[k, k+1]} (\cL_1(x) + x^2) - L (|k|-1)\\
&\stackrel{d}{=} \sup_{x\in[0, 1]} (\cL_1(x) + x^2) - L(|k|-1).
\end{align*}
Thus, by Proposition~\ref{p.sharp sup over interval tail} and Theorem~\ref{t.upper tail upper bound}, for $k\in \Z$ and $\theta>\theta_0$
\begin{align*}
\P\left(\sup_{x\in[k, k+1]} (\cL_1(x) + x^2) - L|x| > \theta\right)
&\leq \P\left(\sup_{x\in[0, 1]} (\cL_1(x) + x^2)  > \theta + L(|k|-1)\right)\\
&\leq \exp\left(-\frac{4}{3}\theta^{3/2} -\frac{4}{3}L^{3/2}(|k|-1)^{3/2} + C\theta^{3/4}\right).
\end{align*}
This is clearly summable in $k$ to give $c(L)\exp(-\frac{4}{3}\theta^{3/2} + C\theta^{3/4})$, completing the proof by modifying the value of $C$ to absorb $c(L)$.
\end{proof}

\subsection{Lower bound for general initial data}

The estimate on $\h_1$ that we need to obtain a lower bound on the upper tail for $\hf(0)$ is the following, again formulated for a general line ensemble $\cL$.

\begin{proposition}\label{p.minimum over interval}
Suppose $\cL$ satisfies Assumptions~\ref{as.bg}--\ref{as.tails}. Let $M>0$. There exist $\theta_0$ and $C<\infty$ such that, for all $\theta>\theta_0$,
$$\P\left(\min_{x\in[-M,M]} \cL_1(x) \geq \theta\right) \geq \exp\left(-\frac{4}{3}\theta^{3/2} - C\theta^{1/2}\log\theta\right).$$
\end{proposition}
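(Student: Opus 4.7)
The plan is to adapt the argument used to prove Theorem~\ref{t.upper tail lower bound} (the sharp lower bound on the one-point upper tail). First I would set up the favourable event $\msf{Fav} = \{\h_1(\pm\tfrac{1}{2}\theta^{1/2}) \geq 0\}$. By stationarity of $\h_1(\cdot) + (\cdot)^2$, the FKG inequality (Assumption~\ref{as.corr}), and the lower bound of Theorem~\ref{t.upper tail lower bound} applied at $\theta/4$,
\begin{equation*}
\P(\msf{Fav}) \geq \P\bigl(\h_1(0) \geq \tfrac{1}{4}\theta\bigr)^2 \geq \exp\bigl(-\tfrac{1}{3}\theta^{3/2} - C\theta^{1/2}\log\theta\bigr).
\end{equation*}
Then, conditionally on $\msf{Fav}$, apply the $H_t$-Brownian Gibbs property (Assumption~\ref{as.bg}) on $I=[-\tfrac{1}{2}\theta^{1/2}, \tfrac{1}{2}\theta^{1/2}]$. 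Invoking monotonicity (Lemma~\ref{l.monotonicity}) to push the boundary values down to $0$ and to remove the lower boundary curve, one obtains the stochastic domination of $\h_1$ on $I$ by a free rate-two Brownian bridge $B$ from $(-\tfrac{1}{2}\theta^{1/2},0)$ to $(\tfrac{1}{2}\theta^{1/2},0)$. The remaining task is then to lower bound $\P(\min_{[-M,M]} B \geq \theta)$.

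To handle this Brownian bridge probability, I would further decompose into two sub-events: first, that $B(\pm M)$ both lie in a unit-length window just above $\theta$, say $[\theta+1,\theta+2]$; and second, that conditionally on these boundary values, $B$ restricted to $[-M,M]$ stays above $\theta$. The second sub-event has probability bounded below by a positive $M$-dependent constant via the exact reflection-principle formula of Lemma~\ref{l.brownian bridge sup tail exact} applied to the length-$2M$ Brownian bridge between two heights in $[\theta+1,\theta+2]$. For the first sub-event, $(B(-M),B(M))$ is a centered bivariate Gaussian whose variances and covariance are each close to $\tfrac{1}{2}\theta^{1/2}$; passing to the independent sum $S = B(-M)+B(M)$ and difference $D = B(-M)-B(M)$ (with $\mathrm{Var}(S)=\theta^{1/2}-2M$ and $\mathrm{Var}(D)\approx 4M$), the event becomes $\{S\in[2\theta+2,2\theta+4],\ |D|\leq 1\}$, and one-dimensional Gaussian tail bounds (Lemma~\ref{l.normal bounds}) supply the lower bound.

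Multiplying the bounds on the three ingredients via the tower property of conditional expectations gives a bound of the form $\exp(-\tfrac{4}{3}\theta^{3/2}-E)$, with $E$ the sum of error contributions. The main technical point requiring care is the bivariate Gaussian lower bound in the first sub-step: one must exploit the strong positive correlation of $B(-M)$ and $B(M)$ by the sum--difference decomposition, since a naive FKG bound on Brownian bridge would deliver a factor of $\P(B(M)\geq\theta)^2$ and yield the incorrect constant $\tfrac{8}{3}$ in front of $\theta^{3/2}$. This step, together with a careful expansion of $(\theta+O(1))^2/(\theta^{1/2}-2M)$, is the place where the subleading correction of order $\theta^{1/2}\log\theta$ must be extracted.
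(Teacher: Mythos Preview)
Your approach is structurally sound and genuinely different from the paper's, but there are two issues with the details.

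First, a factor-of-two slip: for the rate-two bridge $B$ on $[-\tfrac{1}{2}\theta^{1/2},\tfrac{1}{2}\theta^{1/2}]$ with zero endpoints one has $\Var(B(\pm M))=\tfrac{1}{2}\theta^{1/2}-2M^2\theta^{-1/2}$ and $\Cov(B(-M),B(M))=2(\tfrac{1}{2}\theta^{1/2}-M)^2/\theta^{1/2}$, hence $\Var(S)=2\theta^{1/2}-4M$, not $\theta^{1/2}-2M$. With your stated value the Gaussian cost for $S\approx 2\theta$ would be $\exp(-2\theta^{3/2}+\cdots)$ and the final coefficient would come out as $\tfrac{7}{3}$ rather than $\tfrac{4}{3}$; with the correct variance you do recover $\tfrac{4}{3}$.

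Second, and more substantively, your claimed subleading term is too optimistic. With the correct variance,
\[
\frac{(2\theta)^2}{2\Var(S)}=\frac{\theta^2}{\theta^{1/2}-2M}=\theta^{3/2}+2M\theta+O(\theta^{1/2}),
\]
so the Brownian-bridge step contributes $\exp(-\theta^{3/2}-2M\theta+O(\theta^{1/2}))$ and the overall bound is $\exp(-\tfrac{4}{3}\theta^{3/2}-2M\theta-C\theta^{1/2}\log\theta)$, i.e., error of order $\theta$ rather than $\theta^{1/2}\log\theta$. The $-4M$ correction to $\Var(S)$ is precisely what forces this; no ``careful expansion'' removes it.

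The paper argues differently: it resamples on the \emph{short} interval $[-M,M]$ (so the bridge step costs only an $M$-dependent constant) and invokes the two-point asymptotic Theorem~\ref{t.two point asymptotics} with parameters $\theta\mapsto M^2$, $a=b\mapsto(\theta+R)/M^2$ for a constant $R$ to lower bound $\P(\h_1(\pm M)\geq\theta+R)$. The paper asserts this yields error $C\theta^{1/2}\log\theta$; however, substituting $a=b$ into the first case of Theorem~\ref{t.two point asymptotics} gives leading exponent $2M\theta+\tfrac{4}{3}(M^2+\theta)^{3/2}+\tfrac{4}{3}M^3$, which also carries a $2M\theta$ correction. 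So both routes appear to land at the same place, and the $C\theta^{1/2}\log\theta$ stated in the proposition looks optimistic regardless of the method; the achievable error is $C_M\theta$. This does not affect the leading order.
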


Similar to the upper bound, given Proposition~\ref{p.minimum over interval}, the proof of Theorem~\ref{t.general initial data lower bound} is almost immediate.

\begin{proof}[Proof of Theorem~\ref{t.general initial data lower bound}]
As in the upper bound, by the distributional identity from Lemma~\ref{l.kpz convolution},
\begin{align*}
\hf(0) &\stackrel{d}{=} t^{-1/3}\log\int_{-\infty}^\infty \exp\left\{t^{1/3} \left(\h_1(y) + f^{(t)}(y)\right)\right\}\, \dif y\\
&\geq \min_{x\in[-M,M]} \h_1(x) + t^{-1/3}\log\int_{\{y\in[-M,M]:f^{(t)}(y)\geq -K\}}\exp(t^{1/3} f^{(t)}(y)) \,\dif y\\
&\geq \min_{x\in[-M,M]} \h_1(x) - K - t^{-1/3}\log \delta^{-1},
\end{align*}
the last inequality since $f^{(t)}\in\Hyp(K,L,M,\delta)$ for all $t\in[t_0,T_0)$ implies that $\mrm{Leb}\{x\in[-M,M]: f^{(t)}(x)\geq -K\} \geq \delta$. With this, and noting that the last term is bounded since $t\geq t_0$, the proof of Theorem~\ref{t.general initial data lower bound} is complete by invoking Proposition~\ref{p.minimum over interval} and Theorem~\ref{t.assumptions hold}.
\end{proof}

It only remains to prove Proposition~\ref{p.minimum over interval}. One approach to proving it is to observe that, by Theorem~\ref{t.limit shape}, on the event that $\cL_1(0) >\theta +3M\theta^{1/2}$, it holds with probability at least $\frac{1}{2}$ that 
$$\min_{x\in[-M,M]} \cL_1(x) \geq (\theta+3M\theta^{1/2})-2M(\theta+3M\theta^{1/2})^{1/2}-K\theta^{1/4}\geq \theta$$
by choosing $K$ appropriately. The probability that $\cL_1(0) > \theta+3M\theta^{1/2}$ is at least $\exp(-\frac{4}{3}(\theta+3M\theta^{1/2})^{3/2}) \geq \exp(-\frac{4}{3}\theta^{3/2}-C\theta)$, which gives our claim with a slightly worse lower order error term.

We instead give a different argument making use of the explicit two-point upper tail asymptotic from Theorem~\ref{t.two point asymptotics} to lower bound the probability that $\cL_1(\pm M) \geq \theta + R$ for a large constant $R$; on this event the fluctuation of $\cL_1$ on $[-M,M]$ can be easily controlled via the Gibbs property. This approach gives a smaller error term as $R$ is a constant.

\begin{proof}[Proof of Proposition~\ref{p.minimum over interval}]
Let $R$ be a large constant to be fixed later. Trivially,
$$\P\left(\min_{x\in[-M,M]} \cL_1(x) \geq \theta\right) \geq \P\left(\cL_1(\pm M) \geq \theta+R, \min_{x\in[-M,M]} \cL_1(x) \geq \theta\right).$$
We apply the $H_t$-Brownian Gibbs property to $[-M,M]$, letting $\F$ be the $\sigma$-algebra generated by $\cL_1$ on $[-M,M]^c$ and the lower curves on $\R$, to see that
\begin{align*}
\P\left(\cL_1(\pm M) \geq \theta+R, \min_{x\in[-M,M]} \cL_1(x) \geq \theta\right) = \E\left[\PF\left(\min_{x\in[-M,M]} \cL_1(x) \geq \theta\right)\one_{\cL_1(\pm M) \geq \theta+R}\right]
\end{align*}
Let $B$ be a Brownian bridge from $(-M, \theta+R)$ to $(M, \theta+R)$. By monotonicity (Lemma~\ref{l.monotonicity}), the inner conditional probability in the previous display is lower bounded by the same probability with $B$ in place of $\cL_1$, so we obtain
\begin{align*}
\P\left(\min_{x\in[-M,M]} \cL_1(x) \geq \theta\right) \geq \P\left(\min_{x\in[-M,M]} B(x) \geq \theta\right)\cdot\P\left(\cL_1(-M) > \theta+R, \cL_1(M) > \theta+R\right).
\end{align*}
By Lemma~\ref{l.brownian bridge sup tail exact} (on the tail of the supremum of Brownian bridge), $\P\left(\min_{x\in[-M,M]} B(x) \geq \theta\right) = 1-\exp(-R^2/2)$, and we may set $R$ a large enough constant, depending only on $M$, so that this term is at least $\frac{1}{2}$.

So we have obtained that 
$$\P\left(\min_{x\in[-M,M]} \cL_1(x) \geq \theta\right) \geq \tfrac{1}{2}\P\Bigl(\cL_1(-M) > \theta+R, \cL_1(M) > \theta+R\Bigr)$$
We lower bound the final term using the second case of Theorem~\ref{t.two point asymptotics}. The parameters in that theorem are set as $\theta\mapsto M^2$, $a=b\mapsto (\theta+R)/M^2$. This yields that
\begin{align*}
\P\left(\cL_1(-M) > \theta+R, \cL_1(M) > \theta+R\right) \geq \exp\left(-\frac{4}{3}\theta^{3/2} - C\theta^{1/2}\log\theta\right),
\end{align*}
as can be checked by substituting the mentioned values of $\theta$, $a$, and $b$ into the asymptotic expression in Theorem~\ref{t.two point asymptotics}; in particular, the error term of $-((1 + a)^{1/2} + (1 + b)^{1/2})\theta^{1/2} \log[(1 + a)(1 + b)\theta]$ (in the notation of Theorem~\ref{t.two point asymptotics}) is of order $-((\theta+R)^{1/2}M^{-1})M \log[(\theta+R)^2M^{-4}\cdot M^2]$, which in turn is indeed of order $-\theta^{1/2}\log \theta$.
\end{proof}

\bibliographystyle{alpha}
\bibliography{../../airy-ld-limit-shape}

\appendix

\section{Monotonicity proofs \& assumption verification}\label{app.monotonicity proofs}

In this appendix we prove Theorem~\ref{t.assumptions hold}, i.e., we verify that that the KPZ, parabolic Airy, and extremal stationary ensembles all satisfy Assumptions~\ref{as.bg}, \ref{as.corr}(a), \ref{as.mono in cond}, and \ref{as.tails}; that the former satisfies Assumption~\ref{as.weak bk}(b\ensuremath{'}); and that the latter two satisfy Assumption~\ref{as.corr}(b). We also prove Lemma~\ref{l.monotonicity} in Section~\ref{app.mono.proof of mono in cond lemma}. 

In proving the validity of the assumptions, we will first prove an analogous statement for collections of Brownian bridges or motions which are reweighted by an appropriate Radon-Nikodym derivative, and then obtain the results by limiting arguments; 
 for the KPZ line ensemble, this goes through the O'Connell-Yor free energy line ensemble, which we introduce in Section~\ref{app.mono.o'connell-yor}. The Brownian monotonicity statements are given in Section~\ref{app.brownian mono statements}.
The assumptions are then formally verified in Section~\ref{app.hamiltonian.verifying assumptions}, and the Brownian statements are proved in Section~\ref{app.Brownian mono proofs}.

\subsection{Proof of Lemma~\ref{l.bound point conditioning by tail conditioning}}\label{app.mono.proof of mono in cond lemma}

\begin{proof}[Proof of Lemma~\ref{l.bound point conditioning by tail conditioning}]
We prove the first inequality, in which $\inf E_i = y_i$ and $F$ is increasing; the other is proved analogously.

Observe that
\begin{align}
\MoveEqLeft[9]
\E\left[F \mid \h_1(-\theta^{1/2}) \in E_1, \h_1(\theta^{1/2}) \in E_2\right]\nonumber\\
&= \frac{\E\left[F\cdot\one_{\h_1(-\theta^{1/2}) \in E_1, \h_1(\theta^{1/2}) \in E_2}\right]}{\P\left(\h_1(-\theta^{1/2}) \in E_1, \h_1(\theta^{1/2}) \in E_2\right)}\nonumber\\
&= \E\left[\E\Bigl[F\mid \h_1(-\theta^{1/2}), \h_1(\theta^{1/2})\Bigr]\frac{\one_{\h_1(-\theta^{1/2}) \in E_1, \h_1(\theta^{1/2}) \in E_2}}{\P\left(\h_1(-\theta^{1/2}) \in E_1, \h_1(\theta^{1/2}) \in E_2\right)}\right]. \label{e.the average}
\end{align}
The expression on the last line is an average of the function
\begin{align*}
(z_1,z_2)\mapsto\E\left[F \mid \h_1(-\theta^{1/2}) = z_1, \h_1(\theta^{1/2}) = z_2\right].
\end{align*}
against a certain probability measure supported on a subset of $[y_1,\infty)\times[y_2,\infty)$ (recall that $\inf E_i = y_i$). By Assumption~\ref{as.mono in cond}, this function is increasing in $(z_1,z_2)$. Thus the right-hand side of \eqref{e.the average} is lower bounded by $\E[F \mid \h_1(-\theta^{1/2}) = y_1, \h_1(\theta^{1/2}) = y_2]$, which completes the proof.
\end{proof}

\subsection{O'Connell-Yor diffusion}\label{app.mono.o'connell-yor}

The O'Connell-Yor free energy line ensemble is a diffusion $X^N:\{1, \ldots, N\}\times[0,\infty)\to\R$ whose infinitesimal generator is given by
\begin{equation}\label{e.oy diffusion generator}
\frac{1}{2}\Delta + \nabla\log\psi_0\cdot\nabla,
\end{equation}
where $\Delta$ is the Laplacian on $\R^N$, $\nabla$ is the gradient on $\R^N$, and $\psi_0$ is the class one $\mf{gl}_N$-Whittaker function, and the process is started according to a certain explicit entrance law $\mu_s$ for entrance at time $s>0$. See \cite[Theorem~3.1 and Corollary~4.1]{o2012directed} for the precise statement and specification of $\mu_s$.

One can consider the same diffusion started from a fixed point $z\in\R^N$, and for the next discussion let $X^N$ denote that diffusion; we will refer to this as the O'Connell-Yor diffusion, in contrast to the O'Connell-Yor free energy line ensemble. It follows from \cite[Section~8.3]{corwin2016kpz} that $X^N|_{[0,K]}$ has an equivalent description, for any $K>0$, as a collection of $N$ independent rate one Brownian motions on $[0,K]$ reweighted by the Radon-Nikodym derivative (where $H_1(x) = 2\exp(x)$ is as defined in Proposition~\ref{p.scaled gibbs})
\begin{equation}\label{e.brownian motion rn derivative}
\begin{split}
\MoveEqLeft[10]
\frac{\psi_0(X^N(K))}{\psi_0(z)}\exp\left\{-\int_0^K \sum_{i=0}^{N} \exp\left(X^N_{i+1}(u) - X^N_i(u)\right)\,\dif u\right\}\\
&= \frac{\psi_0(X^N(K))}{\psi_0(z)}\exp\left\{-\int_0^K \sum_{i=0}^{N} \tfrac{1}{2}H_1\left(X^N_{i+1}(u) - X^N_i(u)\right)\,\dif u\right\},
\end{split}
\end{equation}
where $X_0$ and $X_{N+1}$ are interpreted as $+\infty$ and $-\infty$ respectively. Using this expression, one can also define more general processes with upper and lower boundary data functions. One can also give the Brownian motions an ordered drift $\lambda = (\lambda_1 > \lambda_2 > \ldots  > \lambda_N)$: in this situation, the diffusion has generator as in \eqref{e.oy diffusion generator} and Radon-Nikodym derivative as in \eqref{e.brownian motion rn derivative} with $\psi_\lambda$ , the class one $\mf{gl}_N$-WHittaker function of index $\lambda$, replacing $\psi_0$ (see \cite[Sections 3.2 and 8.3]{corwin2016kpz}). This process has no direct relation with the O'Connell-Yor free energy line ensemble. However, in the case of ordered drift, one has a Feynman-Kac representation for $\psi_\lambda$ \cite[Corollary 1]{o2014whittaker} (here under $\E^\lambda_z$, $X^N$ is distributed as a Brownian motion in $\R^N$ with drift $\lambda$ and starting point $z$):
\begin{align}\label{e.feynman-kac whittaker}
\psi_\lambda(z) = \prod_{i<j}\Gamma(\lambda_i-\lambda_j)e^{\lambda\cdot z}\E^\lambda_z\left[\exp\left\{-\int_0^\infty \sum_{i=0}^N \exp\left(X^N_{i+1}(u) - X^N_i(u)\right)\,\dif u\right\}\right];
\end{align}
the ordered drift condition ensures that the integral converges almost surely. For technical reasons that will be apparent soon (see Lemma~\ref{l.OY local limit}), this probabilistic representation (which is not available for $\psi_0$) will be useful to us.

Now we return to the case where $\lambda = 0^N$ and the diffusion has entrance law $\mu_s$. It is proven in \cite[Corollary~1.7]{nica2021intermediate} that, after appropriate scaling, $X^N$ converges in the sense of finite dimensional distributions to an ensemble whose lowest-indexed line is the narrow-wedge solution to the KPZ equation at time $t$, i.e., the KPZ line ensemble; the KPZ equation considered in \cite{nica2021intermediate} has slightly different coefficients than in this paper, but one can go between the two by simple scaling relations, as we discuss in Section~\ref{app.hamiltonian} ahead. The arguments for \cite[Theorem~3.10]{corwin2016kpz} then upgrades this convergence to hold on the level of processes. This yields the following theorem.

\begin{theorem}\label{t.OY to KPZ}
Fix $t>0$. Let $C(N,t,x) = \exp(N+\frac{\sqrt{Nt}+x}{2}+xt^{-1/2}N^{1/2})(t^{1/2}N^{-1/2})^N$. Then, in the topology of uniform convergence on compact sets, as $N\to\infty$ and for each $k\in\N$,
\begin{align*}
X^N_k(\sqrt{Nt}+x)- \log C(N,t,x) \stackrel{d}{\to} \widetilde{\mc H}_k(t,x),
\end{align*}
where $\widetilde{\mc H}_1(t,x) = \mc H(t/2, x/2)$ is the narrow-wedge solution to $\partial_t \widetilde{\mc H}_1 = \frac{1}{2}\partial_{x}^2 \widetilde{\mc H}_1+\frac{1}{2}(\partial_x \widetilde{\mc H}_1)^2+\xi$ (see Appendix~\ref{app.hamiltonian}) and $\widetilde{\mc H}_k$ is defined by the above for $k\geq 2$.
\end{theorem}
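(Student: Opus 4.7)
The plan is to combine two existing inputs: a finite-dimensional convergence statement due to Nica (Corollary~1.7 of \cite{nica2021intermediate}) and the process-level tightness machinery from \cite{corwin2016kpz}, while keeping careful track of the scaling conventions relating the version of the KPZ equation in \cite{nica2021intermediate} to the one we use in \eqref{e.KPZ}. First I would verify that the description of $X^N$ on an interval via the Radon–Nikodym derivative \eqref{e.brownian motion rn derivative} matches the normalization under which Nica's result is stated, noting in particular that the Hamiltonian appearing there is $e^x$ rather than our $H_1(x) = 2e^x$, which is absorbed into an overall scaling of time by a factor of two; this is precisely why the limit identified by \cite{nica2021intermediate} is the solution $\widetilde{\mc H}(t,x) = \mc H(t/2,x/2)$ of $\partial_t \widetilde{\mc H}= \tfrac12\partial_x^2\widetilde{\mc H} + \tfrac12(\partial_x\widetilde{\mc H})^2+\xi$ rather than of \eqref{e.KPZ} directly.

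The second step is to quote Corollary~1.7 of \cite{nica2021intermediate} to deduce convergence of the rescaled ensemble $X^N_i(\sqrt{Nt}+x)- i\cdot C(N,t,x) + \log c_{i,t}$ in the sense of finite-dimensional distributions to $\widetilde{\mc H}_i(t,x)$ for each fixed $i\in\N$. Here I would spell out carefully that the centering constants $C(N,t,x)$ and the tilts $\log c_{i,t}$ are exactly those produced by the asymptotics of the class-one $\mathfrak{gl}_N$-Whittaker measure, and that the correction by $\log c_{i,t}$ is needed to match the convention in which $\widetilde{\mc H}_i$ is normalized so that $\widetilde{\mc H}_1$ is the narrow-wedge solution.

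The third step, and the main content, is the upgrade from finite-dimensional to uniform-on-compacts convergence. For this I would follow the strategy of the proof of \cite[Theorem~3.10]{corwin2016kpz}: once one has the $H_t$-Brownian Gibbs property for $X^N$ (which, as recorded in \eqref{e.brownian motion rn derivative}, holds with Hamiltonian $\tfrac12 H_1$ acting on rate one Brownian bridges, and hence on any compact interval), one can use the Brownian Gibbs resampling together with one-point tightness of the top curve $X^N_1$ (inherited from the finite-dimensional convergence) to propagate tightness to every lower curve and to the full process on any compact set in $\R\times\intint{1,N}$. By the characterization of $H_t$-Brownian Gibbs line ensembles \cite{dimitrov2021characterizationH}, every subsequential limit must agree in law with the KPZ line ensemble, which together with the finite-dimensional convergence identifies the limit and yields uniform convergence on compact sets.

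The main obstacle will be the bookkeeping in the tightness step: the Brownian bridges appearing in the $H_t$-Brownian Gibbs property for $X^N$ are at rate one rather than rate two (as it is for the KPZ line ensemble in Proposition~\ref{p.scaled gibbs}), and the Hamiltonian is $\tfrac12H_1$ rather than $H_t$, so the comparison arguments used in \cite[Section~3]{corwin2016kpz} must be reparametrized so that in the limit one recovers rate two Brownian bridges reweighted by $H_t$. Once this time/space rescaling is done cleanly, the tightness arguments apply essentially verbatim, and combining them with the finite-dimensional convergence closes the proof.
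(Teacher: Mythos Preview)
Your proposal is correct and follows essentially the same approach as the paper: the paper does not give a self-contained proof of this theorem but simply records that it follows by combining the finite-dimensional convergence of \cite[Corollary~1.7]{nica2021intermediate} with the process-level upgrade provided by the arguments for \cite[Theorem~3.10]{corwin2016kpz}. Your write-up fleshes out this sketch with the relevant bookkeeping on Hamiltonians and Brownian bridge rates, and additionally invokes the characterization result \cite{dimitrov2021characterizationH} to identify subsequential limits; this last step is a convenient shortcut not present in the original \cite{corwin2016kpz} argument (which predates that characterization), but it is a legitimate and arguably cleaner way to close the tightness-plus-identification argument.
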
	

In \cite[Corollary 1.9]{nica2021intermediate}, there is an additional term of $\log(t^{-(k-1)}(k-1)!)$ on the lefthand side of the convergence, yielding a slightly shifted definition for $\widetilde{\mc H}_k$ for $k\geq 2$ compared to the above. We adopt the above definition as it yields a line ensemble with a Gibbs property which is the same for all curves (i.e., the Hamiltonian does not depend on the index of the curve), unlike the case for the shifted definition. Note that the definitions coincide for $k=1$, so $\widetilde{\mc H}_1$ (and thus ultimately $\h_1$) is unaffected by the choice.

As a result of Theorem~\ref{t.OY to KPZ}, for the convergence of $X^N$ to the KPZ line ensemble, it is enough to consider $X^N$ on the interval $[0,N]$, and by \eqref{e.brownian motion rn derivative}, this has a description as a reweighted collection of independent Brownian motions. In the next section we will first state these properties for such collections without the endpoint reweighting factor $\psi_0(X^N(N))/\psi_0(z)$, and then extend them to the correctly reweighted process (see Lemma~\ref{l.OY local limit} ahead).

\subsection{Reweighted Brownian bridge/motion ensembles}\label{app.brownian mono statements}
Next we state the results we will prove. Introducing some notation will be helpful. For $N\in\N$, let $\smash{\R^N_>}$ be the set of decreasing $N$-vectors, i.e., $\smash{\R^N_> = \{\vec x\in\R^N: x_1 > x_2 > \ldots > x_N\}}$. 

For $T>0$, $N, m\in\N$, vectors $\smash{\vec w, \vec z \in \R^N}$, $\vec x, \vec y \in \R^m$, and $f, g:[0,T]\to\R\cup \{-\infty\}$, let $\smash{\mu_{N,H, \inf}^{\vec x, \vec y, \vec w, \vec z, f, g}}$ be the law of $N$ independent Brownian bridges $B_1, \ldots, B_N$ on $[0,T]$ reweighted by the Radon-Nikodym derivative given by \eqref{e.gibbs rn derivative} with lower boundary data $g$, where $B_j(a) = w_j$ and $B_i(b) = z_i$ for $i\in\intint{1,N}$, and additionally we condition on $B_1(x_j)$ equaling $y_j$ for $j\in\intint{1,m}$ and on $\inf_{[0,T]} (B_1-f) \geq 0$. Let $\smash{\mu_{N,H, \sup}^{\vec x, \vec y, \vec w, \vec z, f, g}}$ be the same with $\inf$ in the final condition replaced by $\sup$.

Similarly, $\smash{\mu_{N,H}^{\vec w, \vec z, g}}$ will be the same without the final mentioned conditionings in the definition of $\smash{\mu_{N,H, \inf}^{\vec x, \vec y, \vec w, \vec z, f, g}}$.  

The next statement gives monotonicity in conditioning for Brownian bridge ensembles. It will be proved in Section~\ref{app.Brownian mono proofs}.

\begin{theorem}[Monotonicity in conditioning of reweighted Brownian bridges]\label{t.monotonicity under extra conditioning bb}
Let $H:\R\to[0,\infty)$ be convex. Let $[a,b]\subseteq \R$, $\smash{y_{\shortuparrow, j}} > y_{\shortdownarrow,j}$ for $j\in\intint{1,m}$ and $f, g:[a,b]\to\R\cup\{-\infty\}$ be upper semicontinuous. There is a coupling of $\mu^{\vec x,\vec y_{\shortuparrow}, \vec w,\vec z, f, g}_{N, H, \inf}$ and $\mu^{\vec x,\vec y_{\shortdownarrow}, \vec w,\vec z, f, g}_{N,H, \sup}$ under which $\smash{B^\shortuparrow_i(x) > B^\shortdownarrow_i(x)}$ almost surely for all $x\in[a,b]$ and $i\in\intint{1,N}$, where $\smash{B^\shortuparrow = (B^\shortuparrow_1, \ldots, B^\shortuparrow_N)}$ and $\smash{B^\shortdownarrow = (B^\shortdownarrow_1, \ldots, B^\shortdownarrow_N)}$ are distributed according to the respective measures.

Suppose $\lambda_N \leq  \ldots \leq \lambda_1$. Then the same holds for Brownian motions $(B_1, \ldots, B_N)$ with drift vector $\lambda$ on compact intervals $[a,b]$ reweighted by \eqref{e.gibbs rn derivative}, where initial data consists of ordered starting values at $a$, i.e., $B_i(a) = w_i$. 

The same also holds for the zero temperature (i.e., non-intersecting) cases of drifted Brownian motion and Brownian bridge if additionally $\vec w, \vec z \in \R^N_>$ and $f(a), g(a) < w_N$ and $f(b), g(b)< z_N$.
\end{theorem}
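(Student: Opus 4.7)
The plan is to construct a reversible Glauber-type Markov chain on $k$-tuples of continuous functions whose invariant measure is $\mu_{k,H_t}^{\vec x,\vec y, \vec w,\vec z, g}$, and to couple two copies of the chain — one started from data compatible with $\vec y_\shortuparrow$ and the other with $\vec y_\shortdownarrow$ — so that pointwise ordering of all $k$ curves is preserved at every step. Ergodicity will then transfer the ordering to the stationary distributions. The key structural observation is that conditioning $B_1$ on $B_1(x_1)=y_1, B_1(x_2)=y_2$ merely pins the top curve at two interior points: under $\mu_{k,H_t}^{\vec x,\vec y,\vec w,\vec z, g}$, the top curve $B_1$ is a concatenation of three bridges on $[a,x_1],[x_1,x_2],[x_2,b]$ with prescribed endpoint values, coupled through the Radon-Nikodym factor $W_{H_t}$ to the unsplit bridges $B_2,\ldots,B_k$ on $[a,b]$.

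First I would define the chain. Each step selects uniformly from a dense countable family: an index $j\in\{1,\ldots,k\}$ and an interval $[\ell,r]\subseteq[a,b]$, subject to the constraint that when $j=1$ the interval lies in one of $[a,x_1], [x_1,x_2], [x_2,b]$ (so the pinning at $\vec x$ is respected). The chain then resamples $B_j$ on $[\ell,r]$ from its $\mu_{k,H_t}$-conditional distribution given everything outside, which by the usual Gibbs-type reasoning is a rate-one Brownian bridge between $(\ell, B_j(\ell))$ and $(r, B_j(r))$ reweighted by the factor $W_{H_t}$ involving the two neighbouring curves $B_{j-1}$ and $B_{j+1}$ (with $B_0 \equiv +\infty$ and $B_{k+1} \equiv g$). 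By construction this dynamics is reversible for $\mu_{k,H_t}^{\vec x,\vec y,\vec w,\vec z, g}$.

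Next I would couple two copies $B^\shortuparrow$ and $B^\shortdownarrow$ of the chain, driving them with the same choice of $(j,\ell,r)$ at every step, and argue that if $B^\shortdownarrow_i \leq B^\shortuparrow_i$ pointwise for all $i$ at some time, then the inequality persists after the resampling. Lemma~\ref{l.monotonicity} supplies exactly this: the boundary data $B_j(\ell),B_j(r)$ for the two chains are ordered by the inductive hypothesis, the neighbouring curves that enter the $W_{H_t}$-tilt are likewise ordered, and the lemma delivers a monotone coupling of the two reweighted bridges that preserves $B^\shortdownarrow_j\leq B^\shortuparrow_j$ throughout $[\ell,r]$. Initial configurations satisfying both the ordering and the boundary and pinning compatibility are easy to produce (for instance via piecewise-affine interpolations of the relevant data, with lower curves shifted down if necessary, since $\mu_{k,H_t}$ has a strictly positive density relative to Brownian bridge).

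The main obstacle is establishing ergodicity on this continuous state space: that from any ordered initial pair, the joint law at time $n$ converges weakly to the product of the two stationary measures restricted to the ordered coupling. Off-the-shelf total-variation mixing is not available, so I would follow the strategy of \cite{dimitrov2021characterization,dimitrov2021characterizationH}: exploiting that on any compact region of path space the transition kernel admits a density bounded below (coming from the explicit Brownian bridge kernel composed with the bounded tilt $W_{H_t}$), establish a local Doeblin-type minorization, combine it with crude a priori tail bounds on excursions of the chain from compact sets, and conclude weak convergence. Taking $n\to\infty$ yields the desired monotone coupling of $\mu^{\vec x,\vec y_\shortuparrow,\vec w,\vec z, g}_{k,H_t}$ and $\mu^{\vec x,\vec y_\shortdownarrow,\vec w,\vec z, g}_{k,H_t}$. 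The Brownian motion case on $[a,b]$ and the zero-temperature case fall under the same scheme: in the former the right boundary is unconstrained and one simply includes $B_j(b)$ as part of the chain's state; in the latter, the $W_{H_t}$-tilt is replaced by a hard non-intersection indicator, for which the corresponding version of Lemma~\ref{l.monotonicity} and the non-intersecting bridge density play identical roles, with the strict orderings $\vec w,\vec z\in\R^k_>$ and $g(a)<w_k$, $g(b)<z_k$ guaranteeing that every conditional resampling has positive probability so the chain is well-defined.
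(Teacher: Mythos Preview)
Your high-level strategy matches the paper's: build a Glauber-type chain whose stationary law is $\mu_{k,H_t}^{\vec x,\vec y,\vec w,\vec z,g}$, couple the $\shortuparrow$ and $\shortdownarrow$ chains monotonically via Lemma~\ref{l.monotonicity}, and pass to equilibrium. The implementation, however, differs in two substantive ways.

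First, the paper does not work with exact pinning $B_1(x_i)=y_i$ inside the chain. Instead it relaxes to the positive-probability conditioning $B_1(x_i)\in[y_i-\varepsilon,y_i+\varepsilon]$, and additionally conditions every curve to have $L^\infty$ and H\"older-$\tfrac14$ norm at most $M$. The point of the $M$-truncation is to force the state space to be \emph{compact} (Arzel\`a--Ascoli), which makes the convergence-to-equilibrium argument a direct application of standard Meyn--Tweedie theory: strongly Feller plus $\psi$-irreducible plus compact state space gives recurrence and hence averaged weak convergence to the unique invariant measure. The monotone coupling is then only claimed on an event of probability $1-o(1)$ as $M\to\infty$ (since Lemma~\ref{l.monotonicity} does not respect the norm constraints), and the desired coupling of the untruncated measures is recovered by sending $M\to\infty$ and then $\varepsilon\to 0$.

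Second, and relatedly, your ergodicity sketch is where the real work would lie. On a non-compact path space, establishing a Doeblin minorization together with control on excursions is not routine; you cite \cite{dimitrov2021characterization,dimitrov2021characterizationH}, but those papers run the chain on \emph{discrete} random-walk paths and pass to the continuum via KMT-type embeddings, which is a different mechanism from what you describe. The paper explicitly avoids the discrete-to-continuous passage by working directly in the continuum, and the compactification via $M$ is precisely the device that makes this tractable. Your random sub-interval updates (versus the paper's systematic full-curve scan) and your exact pinning (versus the paper's $\varepsilon$-windows) are both workable choices, but without some analogue of the compactification step your convergence argument remains a genuine gap.
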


Theorem~\ref{t.monotonicity under extra conditioning bb} does not give an analogous result for the O'Connell-Yor diffusion as defined in \eqref{e.brownian motion rn derivative} with the entrance law $\mu_s$, since the Brownian motion processes in the statement do not have the endpoint reweighting factor $\psi_0(X^N(K))/\psi_0(z)$. To go from Theorem~\ref{t.monotonicity under extra conditioning bb} to a version for the O'Connell-Yor diffusion, an important step is the following.

\begin{lemma}[Local limit of O'Connell-Yor diffusion]\label{l.OY local limit}
Fix $K>0$ and $z\in\R^N$. Consider the process $X^N = (X^N_1, \ldots, X^N_N)$ started at $z$ defined by \eqref{e.brownian motion rn derivative}. For $M>K$ and $\lambda = (\lambda_1> \ldots >\lambda^N)$, let $Y^{N,M,\lambda} = (Y^{N,M,\lambda}_1, \ldots, Y^{N,M,\lambda}_N)$ be the restriction to $[0,K]$ of $N$ independent Brownian motions on $[0,M]$ with drift vector $\lambda$, started at $z$, and reweighted by the Radon-Nikodym derivative proportional to $\exp(-\int_0^M\sum_{i=0}^N \exp(Y^{N,M,\lambda}_{i+1}(y) - Y^{N,M,\lambda}_i(u))\, \dif u)$.

Then, as $M\to\infty$ and $\lambda\to 0^N$ in that order, $Y^{N,M,\lambda}$ converges weakly to $X^N|_{[0,K]}$.
\end{lemma}

\begin{corollary}\label{c.mono in cond OY}
Theorem~\ref{t.monotonicity under extra conditioning bb} holds with $B^{\shortuparrow}$ and $B^{\shortdownarrow}$ replaced by O'Connell-Yor diffusions with entrance law $\mu_s$, respectively conditioned on equaling $y_{\shortuparrow, j}$ and $y_{\shortdownarrow, j}$ at $x_j$ for $j\in\intint{1,m}$ and satisfying $\inf_{[a,b]} (B^{\shortuparrow} -f) \geq 0$ and $\sup_{[a,b]} (B^{\shortuparrow} -f) \geq 0$, respectively.
\end{corollary}

\begin{proof}
For $*\in\{\shortuparrow,\shortdownarrow\}$ and $\varepsilon>0$, let $Y^{N,M,\lambda, f, \varepsilon}_{*}$ be the process in Lemma~\ref{l.OY local limit} conditioned on $Y^{N,M,\lambda, f, \varepsilon}_{*, 1}(x_j)\in[y_{*,j}, y_{*,j}+\varepsilon]$ for each $j\in\intint{1,m}$ and $\inf_{[a,b]}(Y^{N,M,\lambda, f, \varepsilon}_{\shortuparrow, 1} - f) \geq 0$ and $\sup_{[a,b]}(Y^{N,M,\lambda, f, \varepsilon}_{\shortdownarrow, 1} - f) \geq 0$. Theorem~\ref{t.monotonicity under extra conditioning bb} yields that $Y^{N,M,\lambda, \varepsilon}_{\shortuparrow}$ is stochastically larger than $Y^{N,M,\lambda, \varepsilon}_{\shortdownarrow}$. Since $\varepsilon>0$, the conditioning is a positive probability event, so Lemma~\ref{l.OY local limit} yields that the same stochastic monotonicity holds for the O'Connell-Yor diffusion started at $z$. Taking $z=R\cdot(-\frac{1}{2}(N-1), \frac{1}{2}(N-1)-1, \ldots, \frac{1}{2}(N-1))$ and taking $R\to\infty$ yields the stochastic monotonicity for the O'Connell-Yor free energy line ensemble (see the discussion following \cite[Proposition 8.3]{o2012directed}). Taking $\varepsilon\to 0$ then completes the proof.
\end{proof}

\begin{proof}[Proof of Lemma~\ref{l.OY local limit}]
Let $B^N=(B^N_1, \ldots, B^N_N)$ be $N$ independent Brownian motions with drift vector $\lambda$ on $[0,M]$. We observe that the law of $Y^{N,M,\lambda}$ is given by that of $B^N$ reweighted by the Radon-Nikodym derivative
\begin{align*}
\frac{\psi^{M-K}_\lambda(B^N(K))}{\psi^M_\lambda(z)} \exp\left\{-\int_0^K \sum_{i=0}^N \exp\left(B^N_{i+1}(u) - B^N_i(u)\right)\,\dif u\right\},
\end{align*}
where, with $\E^\lambda_w$ being the expectation associated to the law $\P^\lambda_z$ of independent Brownian motions with drift $\lambda$ started at $w\in\R^N$,
\begin{align*}
\psi^{M}_\lambda(w) := \E^\lambda_w\left[\exp\left\{-\int_0^{M} \sum_{i=0}^N \exp\left(B^N_{i+1}(u) - B^N_i(u)\right)\,\dif u\right\}\right].
\end{align*}
We claim that, as $M\to\infty$, $\psi^M_\lambda(z)\to \psi^\infty_\lambda(z)$; in particular, this implies $\psi^{M-K}_\lambda(B^N(K))/\psi^M_\lambda(z) \to \psi_\lambda(B^N(K))/\psi^\lambda(z)$, where $\psi_\lambda$ is as defined in \eqref{e.feynman-kac whittaker}. The claim follows if 
$$\int_0^{\infty} \sum_{i=0}^N \exp\left(B^N_{i+1}(u) - B^N_i(u)\right)\,\dif u$$
converges almost surely under $\P^\lambda_z$. This in turn is an immediate consequence of the fact that $B^N_{i+1} - B^N_i$ has negative drift for each $i$ under $\P^\lambda_z$ since $\lambda_1> \ldots >\lambda_N$.

Thus we have shown that the weak limit of $Y^{N,M,\lambda}$ as $M\to\infty$ is given by $N$ independent $\lambda$-drifted Brownian motions on $[0,K]$ tilted by the Radon-Nikodym derivative given in \eqref{e.brownian motion rn derivative} with $\psi_\lambda$ replacing $\psi_0$. Now, it is known that $\psi_\lambda(x)\to \psi_0(x)$ as $\lambda\to 0^N$ for each $x$ (see, e.g., \cite[page 4]{o2012directed}). This completes the proof.
\end{proof}

For the next result, we let $H_{\infty}$ be the Hamiltonian associated to non-intersection. Recall the notation $\smash{\mu_{N, H_\infty}^{\vec w, \vec z, g}}$ from before Theorem~\ref{t.monotonicity under extra conditioning bb}.

\begin{theorem}[BK inequality for reweighted Brownian bridge ensemble in zero temperature]\label{t.BK inequality bb}
Let $[a,b]\subseteq \R$, $\vec w, \vec z\in\R^N_{>}$ and $g:[a,b]\to\R\cup \{-\infty\}$ upper semicontinuous with $w_N>g(a)$ and $z_N>g(b)$. Let $B=(B_1, \ldots, B_N)$ be a collection of $N$  Brownian bridges on $[a,b]$ distributed according to $\smash{\mu^{\vec w, \vec z, -\infty}_{N, H_\infty}}$. Let $A, E\subseteq \mc C([a,b], \R)$ with $A$ an increasing event and $\P(B_1\in E)>0$. Then
$$\P(B_2\in A \mid B_1\in E)\leq \P(B_1\in A).$$

\end{theorem}

This theorem is stated for the zero-temperature case of a Brownian bridge ensemble, and note that the boundary values $\vec w, \vec z$ are ordered. The same proof also works for Brownian bridges or motions reweighted by the Radon-Nikodym derivative \eqref{e.gibbs rn derivative}, i.e., the positive temperature interaction. But in the positive temperature case, i.e., the KPZ line ensemble and O'Connell-Yor diffusion, points need not be ordered and so the positive temperature version of Theorem~\ref{t.BK inequality bb} cannot be applied directly to obtain the BK inequality for the O'Connell-Yor diffusion.

Next we prove Theorem~\ref{t.BK inequality bb}.

\begin{proof}[Proof of Theorem~\ref{t.BK inequality bb}]
Let $B = (B_1, \ldots,  B_N)$ be distributed according to $\smash{\mu_{N, H_\infty}^{\vec w, \vec z, g}}$. We have to prove that, for $A,E\subseteq \mc C([a,b], \R)$ with $A$ an increasing event,
\begin{equation}\label{e.prelimit bk}
\P\left(B_2\in A \mid B_1\in E\right) \leq \P\left(B_1\in A\right),
\end{equation}
To prove \eqref{e.prelimit bk}, we make use of the Brownian Gibbs property and monotonicity (Lemma~\ref{l.monotonicity}). We extend the notation to allow for an upper boundary condition. For $N\geq 1$, $\vec w, \vec z\in\R^k_>$, and $f,g:[a,b]\to \R$ measurable, let $\smash{\mu^{\vec w,\vec z, f, g}_{N, H_{\infty}}}$ be the law of $N$ Brownian bridges $(W_1, \ldots, W_N)$ on $[a, b]$, with $W_i(a) = w_i$ and $W_i(b) = z_i$ for $i\in\intint{1,N}$, conditioned to not intersect each other, the lower boundary curve $f$, or the upper boundary curve $g$. Thus $(B_1, \ldots, B_N)$ is distributed as $\smash{\mu_{N, H_\infty}^{\vec w, \vec z, \infty, g}}$.

Let $\F$ be the $\sigma$-algebra generated by $B_1$ on $[a,b]$.
Then we see that
\begin{align*}
\P\left(B_2 \in A, B_1 \in E\right) = \E\left[\P_{\F}\left(B_2 \in A\right)\one_{B_1\in E}\right]
&= \E\left[\mu^{\vec w', \vec z\,', B_1, g}_{N-1, H_{\infty}}\left(B'_1 \in A\right)\one_{B_1\in E}\right],
\end{align*}
where $\vec w'=(w'_1, \ldots, w'_{N-1})$ and $\vec z\,'=(z'_1, \ldots, z'_{N-1})$ are given by $w'_i = w_{i+1}$ and $z'_i = z_{i+1}$ for $i\in\intint{1,k-1}$, and $B'=(B_1', \ldots, B_{N-1}')$ is distributed as $\smash{\mu^{\vec w', \vec z\,', B_1, g}_{N-1, H_{\infty}}}$ conditionally on $\F$. By monotonicity (Lemma~\ref{l.monotonicity}) and since $A$ is an increasing event, by raising the upper boundary $B_1$ to $+\infty$, 
\begin{align*}
\mu^{\vec w', \vec z\,', B_1, g}_{N-1, H_{\infty}}\left(B'_1 \in A\right) \leq \mu^{\vec w', \vec z\,', \infty, g}_{N-1, H_{\infty}}\left(B'_1\in A\right).
\end{align*}
Thus
\begin{align}\label{e.bk proof monotonicity}
\P\left(B_2\in A, B_1 \in E\right)
&\leq \P\left(B_1 \in E\right)\cdot \mu^{\vec w', \vec z\,', \infty, g}_{N-1, H_{\infty}}\left(B'_1 \in A\right).
\end{align}
Now it is easy to see that $B_1'$ (the top curve of an ensemble distributed as $\mu^{\vec w', \vec z\,', \infty, g}_{N-1, H_{\infty}}$) is dominated by the top curve of an ensemble distributed as $\smash{\mu^{\vec w, \vec z, \infty, g}_{N, H_{\infty}}}$, as the latter ensemble is obtained from the first by increasing all the endpoint values and including an extra curve at the bottom (which is pointwise larger than $g$ and can be regarded as a lower boundary that, by Lemma~\ref{l.monotonicity}, pushes the top curve up). Thus, again since $A$ is an increasing event, and since the dominating ensemble is the same in law as the original ensemble $(B_1, \ldots, B_N)$,
$$\mu^{\vec w', \vec z\,', \infty, g}_{N-1, H_{\infty}}\left(B'_1 \in A\right) \leq \P\left(B_1\in A\right),$$
thus completing the proof of Theorem~\ref{t.BK inequality bb}.
\end{proof}

In the next section we give the proof of Theorem~\ref{t.assumptions hold}, using Theorems~\ref{t.monotonicity under extra conditioning bb} and \ref{t.BK inequality bb} and Corollary~\ref{c.mono in cond OY} for part of it. After that, the rest of Appendix~\ref{app.monotonicity proofs} will be devoted to proving Theorem~\ref{t.monotonicity under extra conditioning bb}.

\subsection{Verification of the  assumptions for the KPZ, parabolic Airy, and extremal stationary line ensembles}\label{app.hamiltonian.verifying assumptions}

To establish Theorem~\ref{t.assumptions hold} we start by verifying Assumptions~\ref{as.bg} (Brownian Gibbs and stationarity) and \ref{as.tails} (a priori upper tail bounds) for the KPZ and parabolic Airy line ensembles using references to the literature.

We begin with Assumption~\ref{as.bg}.

\begin{proposition}[Stationarity and Brownian Gibbs]\label{p.stationarity and bg}
For each $t>0$, there exists a line ensemble $\h$ with the $H_t$-Brownian Gibbs property such that $\h_1$ is distributed as the narrow-wedge solution to the KPZ equation \eqref{e.KPZ}. Additionally, for each $t>0$ and $k\in\N$, adding $x^2$ to $\h_k$ gives a stationary process. 

Similarly, the parabolic Airy line ensemble possesses the Brownian Gibbs property and each curve is stationary after the addition of $x^2$.
\end{proposition}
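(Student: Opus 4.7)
The plan is to handle the KPZ line ensemble and the parabolic Airy line ensemble separately, and within each case, to deal with existence/Brownian Gibbs first and then stationarity.

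For the existence of the line ensemble $\h$ whose first curve is the narrow-wedge KPZ solution and which satisfies \emph{some} Brownian Gibbs property with Hamiltonian of exponential type, I would quote the construction of the KPZ line ensemble in \cite{corwin2016kpz}, obtained as a diffusive scaling limit of the O'Connell--Yor free energy line ensemble recalled in Appendix~\ref{app.mono.o'connell-yor}. The main content of what remains to verify is then the precise form $H_t(x)=2t^{2/3}\exp(t^{1/3}x)$ of the Hamiltonian. This will be the main technical obstacle because, as remarked after the statement of Proposition~\ref{p.scaled gibbs}, the form of the Hamiltonian recorded in \cite[Theorem~2.15 (iii)]{corwin2016kpz} is missing the $2t^{2/3}$ prefactor due to an omitted Jacobian. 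I would therefore proceed directly: start from the $H_1$-Brownian Gibbs property of the unscaled KPZ line ensemble with respect to rate $1$ Brownian bridge (equivalently, read off from \eqref{e.brownian motion rn derivative} in the scaling limit), and then perform the change of variables $\widetilde{\mathcal H}_k(t,x)=t^{1/3}\h_k(t^{-2/3}x)-\text{(recentering)}$ that inverts the definition of $\h$. Under this affine rescaling, rate $1$ Brownian bridges in the unscaled variables correspond to rate $2$ Brownian bridges in the scaled variables (the factor $2$ coming from the coefficient $\tfrac{1}{4}\partial_x^2$ in \eqref{e.KPZ} combined with the space rescaling), and the integral $\int H_1(\widetilde{\mathcal H}_{i+1}-\widetilde{\mathcal H}_i)\, du$ picks up a $t^{2/3}$ factor from $du$ while $H_1=2\exp(\cdot)$ is composed with $t^{1/3}$ times the difference of scaled curves. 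Collecting yields exactly $H_t(x)=2t^{2/3}\exp(t^{1/3}x)$. The full calculation is routine but bookkeeping-heavy, so I would relegate it to Appendix~\ref{app.hamiltonian} as the paper indicates.

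For stationarity of $\h_k(x)+x^2$ for all $k$ and all $t>0$: the white noise $\xi$ in \eqref{e.KPZ} is translation-invariant in space, and under the spatial shift $x\mapsto x+y$ the narrow-wedge delta initial condition at $0$ becomes the narrow-wedge at $y$. I would combine this with the \emph{shear invariance} of the narrow-wedge KPZ solution, namely that $\mathcal H(t, x+y) + y x/(2) + \text{parabolic corrections}$ is distributionally equal to $\mathcal H(t,x)$ on any given space-time slice (after centering in $-t/12$ and restoring the $t^{1/3}$ scaling this is exactly the statement $\h_1(x+y) + (x+y)^2 \stackrel{d}{=} \h_1(x) + x^2$). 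This shift-and-parabola-shear property for the top curve is the content of the stationarity result of \cite{amir2011probability} cited already in the introduction. To upgrade this to \emph{every} curve $\h_k$, I would invoke the characterization result \cite{dimitrov2021characterizationH}: the top curve's law, together with the $H_t$-Brownian Gibbs property, determines the whole ensemble. Since the shift-plus-parabola map preserves both the law of the top curve (by \cite{amir2011probability}) and the $H_t$-Brownian Gibbs property (the Hamiltonian depends only on differences of curves, and the parabolic shift is applied uniformly), it must preserve the law of the whole ensemble, giving joint stationarity of $(\h_k(\cdot)+\cdot^2)_{k\geq 1}$.

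For the parabolic Airy line ensemble, the Brownian Gibbs property is Proposition~\ref{p.zero temp gibbs} (i.e.\ \cite[Theorem~3.1]{corwin2014brownian}), so nothing needs to be redone there. Stationarity of $\cP_k(\cdot)+\cdot^2$ for every $k$ is immediate from the definition: by Definition~\ref{d.parabolic Airy line ensemble}, the finite-dimensional law of $\mathcal A_k(x)=\cP_k(x)+x^2$ at any tuple $t_1<\ldots<t_m$ is governed by the extended Airy kernel $K_{\mathrm{Ai}}^{\mathrm{ext}}((x,t);(y,s))$, and this kernel depends only on the time difference $t-s$, which gives translation invariance of the finite-dimensional distributions and hence stationarity in law of the process $x\mapsto \mathcal A_k(x)$ for every $k$. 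I would also note that joint stationarity of the full family follows from the same observation applied to multi-curve cylinder events written in determinantal form.
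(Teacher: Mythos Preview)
Your proposal is correct and covers all the needed claims, but it differs from the paper's proof in two places that are worth noting.

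For stationarity of the lower KPZ curves $\h_k(\cdot)+\cdot^2$, the paper simply cites \cite[Corollary~1.13]{nica2021intermediate}, where this is proved directly at the level of the O'Connell--Yor ensemble. Your route instead combines the top-curve stationarity from \cite{amir2011probability} with the characterization theorem of \cite{dimitrov2021characterizationH}: the shifted-plus-parabola ensemble has the same $H_t$-Brownian Gibbs property (since the Hamiltonian sees only differences of curves and the common affine shift preserves the Brownian bridge structure) and the same top-curve law, hence the same law throughout. This is a genuinely different and self-contained argument that avoids reliance on a prelimiting model; its only cost is that it depends on a characterization result which is deeper than the direct computation in \cite{nica2021intermediate}.

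For stationarity of the lower Airy curves $\cP_k(\cdot)+\cdot^2$, the paper could not locate an explicit reference and instead transfers the result from the KPZ line ensemble via the weak convergence of \cite{quastel2022convergence,virag2020heat}. Your approach is more direct: by Definition~\ref{d.parabolic Airy line ensemble} the finite-dimensional distributions of $\cA_k(x)=\cP_k(x)+x^2$ form a determinantal point process with the extended Airy kernel, and since $K_{\mathrm{Ai}}^{\mathrm{ext}}((x,t);(y,s))$ depends on $(t,s)$ only through $t-s$, the process is time-homogeneous, giving stationarity of all curves jointly. This is cleaner and avoids the detour through positive temperature entirely.

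One minor organizational point: the Hamiltonian computation you sketch is the content of Proposition~\ref{p.scaled gibbs}, not of the present proposition, and the paper treats them separately (the proof here merely cites \cite[Theorem~2.15]{corwin2016kpz} and defers the prefactor correction to Appendix~\ref{app.hamiltonian}). You acknowledge this, but keeping the two proofs disjoint would match the paper's structure.
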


Note that in fact we need stationarity only of the top curve to meet Assumption~\ref{as.bg}.

\begin{proof}[Proof of Proposition~\ref{p.stationarity and bg}]
For $t>0$, the existence of the line ensemble and that it possesses the $H_t$-Brownian Gibbs property is stated in \cite[Theorem~2.15]{corwin2016kpz}. \cite{amir2011probability} gives that $x\mapsto\h_1(x)+x^2$ is stationary, but not the lower curves (or the whole ensemble). The latter assertion is proved in \cite[Corollary~1.13]{nica2021intermediate}.

For the parabolic Airy line ensemble $\cP$, that it possesses the Brownian Gibbs property is \cite[Theorem~3.1]{corwin2014brownian}, and the stationarity of $\cP_1(x)+x^2$ is \cite[Theorem~4.3]{prahofer2002PNG}. Stationarity of the entire ensemble $(i,x)\mapsto\cP_i(x)+x^2$ follows from the fact that it is determinantal and its correlation kernel as given in \eqref{e.extended airy kernel} is stationary, i.e., depends on only $t-s$; this suffices since the finite dimensional distributions form a separating class for continuous line ensembles (see e.g., \cite[Lemma 3.1]{dimitrov2021characterization}).
\end{proof}

Next we verify that the KPZ line ensemble and the parabolic Airy line ensemble satisfy Assumption~\ref{as.tails} with $\beta=\frac{3}{2}$.

\begin{theorem}[Theorem~1.11 of \cite{corwin2018kpz} and Theorem~1.3 of \cite{ramirez2011beta}]\label{t.one point asymptotics}
For any $t_0>0$, there exist $c_1 \geq c_2 > 0$ and $\theta_0$ such that, for any $t\in [t_0, \infty]$, $x\in\R$, and $\theta \geq \theta_0$,
\begin{align*}
\exp\bigl(-c_1\theta^{3/2}\bigr) \leq \P\bigl(\h_1(x) + x^2 > \theta\bigr) &\leq \exp\bigl(-c_2\theta^{3/2}\bigr).
\end{align*}
\end{theorem}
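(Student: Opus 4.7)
The plan is to observe that this theorem is essentially a compilation of two previously known results, together with the stationarity furnished by Proposition~\ref{p.stationarity and bg}. The proof will accordingly be short and mostly bookkeeping.

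First, I would use the stationarity of $x \mapsto \h_1(x) + x^2$ (Proposition~\ref{p.stationarity and bg}) to reduce the claim to the case $x = 0$, so that it suffices to show
\begin{equation*}
\exp(-c_1 \theta^{3/2}) \leq \P(\h_1(0) > \theta) \leq \exp(-c_2 \theta^{3/2})
\end{equation*}
uniformly in $t \in [t_0, \infty]$ for $\theta \geq \theta_0$. I would then split the range of $t$ into $[t_0, \infty)$ and $\{\infty\}$ and handle each case separately.

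For the positive-temperature case $t \in [t_0, \infty)$, the bounds follow directly from Theorem~1.11 of \cite{corwin2018kpz}, which furnishes exactly this type of uniform-in-$t$ estimate (for $t \geq t_0 > 0$) with the correct tail exponent $3/2$. The only mild care needed is to match our scaling conventions---in particular our choice of coefficients in \eqref{e.KPZ} and our definition of $\h^t$ as the centering by $-t/12$ divided by $t^{1/3}$---with those used in \cite{corwin2018kpz}; this is a routine translation producing at most a fixed multiplicative change to the constants $c_1, c_2$.

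For the zero-temperature case $t = \infty$, $\h_1(0) = \cP_1(0)$ has the GUE Tracy--Widom distribution, and the bounds are exactly the well-known upper tail asymptotics of that law, recorded in Theorem~1.3 of \cite{ramirez2011beta} and already recalled as \eqref{e.airy asymptotic} in the introduction (these may also be derived from the determinantal structure of the Airy point process together with the classical asymptotic $\mrm{Ai}(\theta) = \exp(-\tfrac{2}{3}\theta^{3/2}(1+o(1)))$). Combining the two cases, I would take $c_1$ to be the maximum and $c_2$ the minimum of the constants supplied by the two references, and $\theta_0$ large enough that both bounds are in force, to obtain the uniform statement. The only ``obstacle''---really just bookkeeping---is ensuring the constants can indeed be taken uniformly across $[t_0, \infty]$, which is already provided explicitly by the two cited theorems; no genuinely new argument is needed.
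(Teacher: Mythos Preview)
Your proposal is correct and matches the paper's treatment: the paper gives no proof at all, simply stating the result as a direct citation of the two references in the theorem label. Your added observation about reducing to $x=0$ via stationarity is the obvious way to reconcile the general-$x$ statement with the cited one-point results, and the remaining bookkeeping you describe is exactly what is needed.
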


With this we have verified Assumptions~\ref{as.bg} and \ref{as.tails} for the KPZ and parabolic Airy line ensembles $\h$ and $\cP$. Note that Assumption~\ref{as.bg} holds for extremal stationary line ensembles by definition, and Assumption~\ref{as.tails} is verified for them via Theorems~\ref{t.upper tail lower bound} and \ref{t.extremal initial tail} assuming Assumptions~\ref{as.corr} and \ref{as.mono in cond}. So it remains only to prove Assumptions~\ref{as.corr} and \ref{as.mono in cond stronger} hold for the three ensembles (replacing Assumption~\ref{as.corr}(b) by Assumption~\ref{as.weak bk}(b\ensuremath{'}) for $\h$). %

We first do Assumptions \ref{as.corr}(b) (for zero temperature) and \ref{as.mono in cond stronger} and then turn to Assumptions~\ref{as.weak bk}(b\ensuremath{'}) and \ref{as.corr}(a).

\begin{proof}[Proof of Theorem~\ref{t.assumptions hold}: Assumption~\ref{as.mono in cond stronger}, and Assumption~\ref{as.corr}(b) in zero temperature]
We must handle three cases: the KPZ line ensemble, the parabolic Airy line ensemble, and extremal stationary zero-temperature ensembles. For the first, unlike the others, we do not know that the $\sigma$-algebra at infinity is trivial, though we expect this to be the case; showing this would be an interesting result.

For this reason for the KPZ line ensemble we prove the needed statements for the prelimiting model of the O'Connell-Yor free energy line ensemble introduced in Appendix~\ref{app.mono.o'connell-yor}, and take the edge scaling limit to obtain the result for the KPZ line ensemble. For general extremal stationary ensembles, in contrast, we do not have access to any such prelimiting model, and so we must work with the ensemble directly; in place of the prelimiting model we use the extremality. This approach also works for the parabolic Airy line ensemble using its extremality as noted in Section~\ref{s.notions of extremality}.

We will first prove Assumption~\ref{as.mono in cond stronger} here; the proofs for Assumption~\ref{as.corr}(b) for the zero temperature cases are analogous with Theorem~\ref{t.BK inequality bb} used in place of Theorem~\ref{t.monotonicity under extra conditioning bb}.

We have to show that, for any interval $[a,b]\subseteq \R$ containing $x_1, \ldots, x_m$, any upper semicontinuous $f:[a,b]\to\R\cup\{-\infty\}$, any $k\in\N$ and any bounded increasing function $F:\mc C([a,b], \R)^k\to\R$, and any $y_j^\ast\in\R$ with $j=1, \ldots, m$ and $\ast\in\{\shortuparrow,\shortdownarrow\}$ with $y_j^{\shortuparrow} > y_j^{\shortdownarrow}$,
\begin{align*}
\E\left[F \midd \h_1(x_j) = y^\shortuparrow_j, j\in \intint{1, m}, \inf_{[a,b]}(\h_1-f)\geq 0\right]
&\geq \E\biggl[F \midd \h_1(x_j) = y^\shortdownarrow_j, j\in \intint{1, m}, \sup_{[a,b]}(\h_1-f)\geq 0\biggr],
\end{align*}
where $F$ is shorthand for $F(\cL_1|_{[a,b]}, \ldots, \cL_k|_{[a,b]})$. 
For the O'Connell-Yor line ensemble, this statement is provided by Corollary~\ref{c.mono in cond OY}. However, we cannot take $n\to\infty$ directly to obtain the previous display because we are conditioning on a zero probability event. 

\emph{KPZ line ensemble:} For the O'Connell-Yor free energy line ensemble, scaled such that it converges to $\h$, which we denote by $\mf h^{t,n} = (\mf h^{t,n}_1, \ldots, \mf h^{t,n}_n)$, it follows from Corollary~\ref{c.mono in cond OY} along with an averaging argument as in the proof of Lemma~\ref{l.bound point conditioning by tail conditioning} above that
\begin{equation}\label{e.monotonicity to show}
\begin{split}
\MoveEqLeft[10]
\E\left[F \midd \mf h^{t,n}_1(x_j) \in [y_j^\shortuparrow-\varepsilon, y_j^\shortuparrow], j\in \intint{1, m}, \inf_{[a,b]}(\mf h^{t,n}_1-f)\geq 0\right]\\
&\geq \E\left[F \midd \mf h^{t,n}_1(x_j) \in [y_j^\shortdownarrow-\varepsilon, y_j^\shortdownarrow], j\in \intint{1, m}, \inf_{[a,b]}(\mf h^{t,n}_1-f)\geq 0\right].
\end{split}
\end{equation}

We take $n\to\infty$ and use that the conditioning events are positive probability to conclude that \eqref{e.monotonicity to show} holds with $\h$ in place of $\mf h^{t,n}$. Taking $\varepsilon\to 0$ completes the proof in the cases of the KPZ line ensemble.

\emph{Extremal zero-temperature and parabolic Airy ensembles:} 
We need to prove \eqref{e.monotonicity to show} with $\cP$ or an extremal line ensemble $\cL$ in place of $\mf h^{t,n}_1$; we will stick with the $\cP$ notation for both cases. For an event $A$ and $\sigma$-algebra $\F$, we will use the notation $\P(\cdot\mid A, \F) = \P(\cdot\cap A\mid \F)/\P(A\mid \F)$. 

We condition on the $\sigma$-algebra $\F_n = \Fext(n, [-n,n])$. By commutativity of conditionings (see \cite[Theorem 8.15]{Kallenberg}), we have a collection of $n$ non-intersecting Brownian bridges with the top one conditioned to pass through $[y^\ast_j-\varepsilon,y^\ast_j]$ at $x_j$ for $j=1, \ldots, m$, one collection for each of $\ast\in\{\shortuparrow,\shortdownarrow\}$, and conditioned to stay above $\cP_{n+1}$. By the Brownian bridge and zero-temperature case of Theorem~\ref{t.monotonicity under extra conditioning bb}, we obtain that the $\shortuparrow$-ensemble stochastically dominates the $\shortdownarrow$ one, so that
\begin{align*}
\MoveEqLeft[14]
\E\left[F \midd \cP_1(x_j) \in [y^\shortuparrow_j-\varepsilon, y^\shortuparrow_j], j\in\intint{1,m}, \inf_{[a,b]}(\cP_1-f)\geq 0, \F_n\right]\\
&\geq \E\biggl[F \midd \cP_1(x_j) \in [y^\shortdownarrow_j-\varepsilon, y^\shortdownarrow_j], j\in\intint{1,m}, \sup_{[a,b]}(\cP_1-f)\geq 0, \F_n\biggr].
\end{align*}
Now we take $n\to\infty$ and use that the limiting $\sigma$-algebra is trivial. Since $\cP_1(x_j) \in [y^\ast_j-\varepsilon, y^\ast_j]$ for $j=1, \ldots, m$ is a positive probability event, we see that
\begin{align*}
\MoveEqLeft[12]
\E\left[F \midd \cP_1(x_j) \in [y^\shortuparrow_j-\varepsilon, y^\shortuparrow_j], j\in\intint{1,m}, \inf_{[a,b]}(\cP_1-f)\geq 0\right]\\
&=\lim_{n\to\infty}\E\left[F \midd \cP_1(x_j) \in [y^\shortuparrow_j-\varepsilon, y^\shortuparrow_j], j\in\intint{1,m}, \inf_{[a,b]}(\cP_1-f)\geq 0, \F_n\right]\\
&\geq \lim_{n\to\infty}\E\biggl[F \midd \cP_1(x_j) \in [y^\shortdownarrow_j-\varepsilon, y^\shortdownarrow_j], j\in\intint{1,m}, \sup_{[a,b]}(\cP_1-f)\geq 0, \F_n\biggr]\\
&=\E\biggl[F \midd \h_1(x_j) \in [y^\shortdownarrow_j-\varepsilon, y^\shortdownarrow_j], j\in\intint{1,m}, \sup_{[a,b]}(\cP_1-f)\geq 0\biggr].
\end{align*}
Taking $\varepsilon\to0$ completes the proof.

\emph{Alternate proof for $\cP$:} One can also use that $\cP$ is the edge scaling limit of $n$ non-intersecting Brownian bridges \cite[Theorem 3.1]{corwin2014brownian} and mimic the KPZ line ensemble proof above with the zero temperature Brownian bridge case of Theorem~\ref{t.monotonicity under extra conditioning bb}; this avoids knowledge of extremality of $\cP$.
\end{proof}

To prove that Assumption~\ref{as.weak bk}(b\ensuremath{'}) holds for the KPZ line ensemble we will need to invoke a result from the recent work \cite{ganguly2025van}, which we record here.

\begin{proposition}[{\cite[Theorem 1.3]{ganguly2025van}}]\label{p.bk proved for kpz}
There exist $\tilde C, c, \theta_0>0$ such that the following holds. Let $x_0\in\R$, $K\geq 0$, and $A\subseteq \mc C([0,K],\R)$ be an increasing Borel measurable set. For any $t>0$, $\theta\geq \theta_0(t^{-1/6}\vee 1)$, and $M > \tilde C(\theta+x_0^2)^{3/4}$,
\begin{align*}
\P\Bigl(\h_{2}|_{[x_0,x_0+K]} - \tilde Ct^{-1/3}\log M \in A \midd \h_{1}(x_0) \geq \theta \Bigr)
&\leq \P\left(\h_{1}|_{[x_0,x_0+K]} \in A\right) + 3(K+1)t^{2/3}\exp(-cM^2).
\end{align*}
The same also holds under both conditionings when the processes are considered on $[x_0-K, x_0]$ rather than $[x_0,x_0+K]$.

\end{proposition}

\begin{proof}[Proof of Theorem~\ref{t.assumptions hold}: Assumption~\ref{as.weak bk}(b\ensuremath{'}) in zero and positive temperature]
That $\cP$ and extremal ensembles satisfy Assumption~\ref{as.weak bk}(b\ensuremath{'}) follows from Lemma~\ref{l.derive weak bk} since we have already established that it satisfies Assumptions~\ref{as.bg}, \ref{as.corr}(b), and \ref{as.tails}.

Since $\h$ satisfies Assumption~\ref{as.bg}, it suffices to prove Assumption~\ref{as.weak bk}(b\ensuremath{'}) for $x_0=0$. We may also assume without loss of generality that $I = [-K,K]$ for some $K>0$. Fix $t_0>0$ and recall we are considering $t>t_0$.

Take 
$$A = \left\{f\in \mc C([0,K],\R): \sup_{x\in[0,K]}(f(x) + x^2) > (\log M)^C\right\}.$$
Then by Proposition~\ref{p.bk proved for kpz} (replacing $A$ by $A-\tilde C t^{-1/3}\log(Mt)$), if $M>\tilde C\theta^{3/4}$ and $\theta\geq \theta_0(t_0^{-1/6}\wedge 1)$,
\begin{align*}
\MoveEqLeft[3]
\P\Bigl(\sup_{x\in[0,K]}(\h_2(x) + x^2) > (\log M)^C \midd \h_{1}(0) \geq \theta \Bigr)\\
&\leq \P\Bigl(\sup_{x\in[0,K]}(\h_1(x) + x^2) > (\log M)^C - \tilde Ct^{-1/3}\log (Mt)\Bigr) + 3(K+1)t^{2/3}\exp(-cM^2t^2).
\end{align*}
Now since we are considering $t>t_0$, it follows that $t^{-1/3}$ and $t^{-1/3}\log t$ are upper bounded by an absolute constant. As a result, and since $M>C$ is a condition in Assumption~\ref{as.weak bk}(b\ensuremath{'}), for a large enough constant $C$, we may upper bound the probability on the righthand side of the previous display with the same with $(\log M)^C - \tilde Ct^{-1/3}\log (Mt)$ replaced by $(\log M)^{C/2}$ as long as $C>1$. 

Next recall that $\log |I| \leq (\log M)^{2}$ has been assumed, which implies $\log K \leq (\log M)^{2}$ as well. Thus for all large enough $M$ (depending on $t_0$), it holds that $3(K+1)\exp(-\frac{1}{2}cM^2t^2) \leq 1/4$. It is also immediate that, for all $t>t_0$, $t^{2/3}\exp(-\frac{1}{2}cM^2t^2)$ can be made smaller than $1/4$ for all large enough $M$.

Overall, we have thus far obtained that
\begin{align*}
\P\Bigl(\sup_{x\in[0,K]}(\h_2(x) + x^2) > (\log M)^C \midd \h_{1}(0) \geq \theta \Bigr) \leq \P\Bigl(\sup_{x\in[0,K]}(\h_1(x) + x^2) > (\log M)^{C/2}\Bigr) + \tfrac{1}{16}.
\end{align*}
By Proposition~\ref{p.sharp sup over interval tail}, stationarity from Assumption~\ref{as.bg}, the upper bound on the upper tail from Assumption~\ref{as.tails} (verified above with the tail exponent $3/2$), and a union bound, we obtain that the first term on the righthand side in the previous display is upper bounded by
\begin{align*}
(K+1) \exp(-c(\log M)^{3C/4}) = \exp(-c(\log M)^{3C/4} + \log(K+1)).
\end{align*}
Since $\log K \leq (\log M)^{2}$, it follow that the previous display is upper bounded by $\frac{1}{16}$ for all large enough $M$ as long as $C>8/3$, thus
\begin{align*}
\P\Bigl(\sup_{x\in[0,K]}(\h_2(x) + x^2) > (\log M)^C \midd \h_{1}(0) \geq \theta \Bigr) \leq \tfrac{1}{8}.
\end{align*}
Repeating the same argument for the interval $[-K,0]$ and doing a union bound over the decomposition of $I$ into $[-K,0]$ and $[0,K]$ yields that
\begin{align*}
\P\Bigl(\sup_{x\in I}\,(\h_2(x) + x^2) > C'\log M \midd \h_{1}(0) \geq \theta \Bigr) \leq \tfrac{1}{2},
\end{align*}
which completes the proof.
\end{proof}

Next we turn to Assumption~\ref{as.corr}(a) for $\h_1$, $\cP_1$, and extremal stationary ensembles. We in fact show that it is implied by the multi-point version of Assumption~\ref{as.mono in cond} which we just proved.

\begin{proof}[Proof of Theorem~\ref{t.assumptions hold}: Assumption~\ref{as.corr}(a)]
We give the proof for any $\cL$ satisfying Assumption~\ref{as.mono in cond}. It suffices to prove that, for any $N\in\N$, increasing events $A, B\subseteq \R^N$, and $x_1, \ldots, x_N\in [a,b]$, $\P((\cL_1(x_i))_{i=1}^N\in A\cap B) \geq \P((\cL_1(x_i))_{i=1}^N\in A)\cdot \P((\cL_1(x_i))_{i=1}^N\in B)$. Then the FKG inequality for general increasing events of $\mc C([a,b],\R)$ follows by an approximation argument (see \cite[Lemma 6]{barbato2005fkg}).

Now we argue by induction on $N$ using the tower property of conditional expectations and monotonicity in conditioning of $\cL_1$ (Assumption~\ref{as.mono in cond}). First, the FKG inequality for finite dimensional distributions is equivalent to $\E[F((\cL_1(x_i))_{i=1}^N)G((\cL_1(x_i))_{i=1}^N)] \geq \E[F((\cL_1(x_i))_{i=1}^N)]\E[G((\cL_1(x_i))_{i=1}^N)]$ for all increasing square-integrable functions $F,G:\R^N\to\R$. Letting $\F_{N-1} = \sigma(\cL_1(x_i):i\in\intint{1,N-1})$,
\begin{align*}
\P((\cL_1(x_i))_{i=1}^N\in A\cap B) &= \E\left[\P\left((\cL_1(x_i))_{i=1}^N\in A\cap B \mid \F_{N-1}\right)\right]\\
&\geq \E\left[\P\left((\cL_1(x_i))_{i=1}^N\in A \mid \F_{N-1}\right)\cdot \P\left((\cL_1(x_i))_{i=1}^N\in B \mid \F_{N-1}\right)\right]\\
&\geq
\E\left[\P\left((\cL_1(x_i))_{i=1}^N\in A \mid \F_{N-1}\right)\right]\cdot \E\left[\P\left((\cL_1(x_i))_{i=1}^N\in B \mid \F_{N-1}\right)\right]\\
&= \P((\cL_1(x_i))_{i=1}^N\in A)\cdot \P((\cL_1(x_i))_{i=1}^N\in B).
\end{align*}
Here in the second line we used that, conditionally on $\F_{N-1}$, $\one_{(\cL_1(x_i))_{i=1}^N\in A}$ is an increasing function of $\cL_1(x_N)$ (and similarly for $A$ replaced by $B$), and then invoked the Harris inequality for measures on $\R$ (this also establishes the $N=1$ case). In the third line we used the induction hypothesis and the fact that, by monotonicity in conditioning, $\P((\cL_1(x_i))_{i=1}^N\in A \mid \F_{N-1})$ is an increasing function of $(\cL_1(x_i))_{i=1}^{N-1}$ (and similarly for $B$ in place of $A$).
\end{proof}

In the rest of the appendix we will prove Theorem~\ref{t.monotonicity under extra conditioning bb}.

\subsection{Proof of monotonicity in conditioning}\label{app.Brownian mono proofs}

Here we prove Theorem~\ref{t.monotonicity under extra conditioning bb}. As in earlier works (e.g., \cite{corwin2014brownian,corwin2016kpz}) the proof proceeds by establishing an analogous statement in a discrete setting by a Markov chain Monte Carlo argument, and takes a diffusive scaling limit to obtain the result in the Brownian setting.

\begin{proof}[Proof of Theorem~\ref{t.monotonicity under extra conditioning bb}]
First we fix some parameters. Let $(p_1, \ldots, p_N)\in(0,1)^N$, $m,n,N\in\N$, $1\leq x_1 <\ldots < x_m\leq n-1$, $0\leq y_1\leq \ldots\leq y_m\leq n$, and $\vec w, \vec z\in\intint{0,n}^N$, subject to $w_i \leq z_i$ for $i\in\intint{1,N}$, $w_1\leq y_1$ and $z_1\geq y_m$. Let $f:\intint{1,n}\to\R\cup\{-\infty\}$ be any function.

Given these parameters, we will define a Markov chain on collections of $N$ paths $(\mc X_1, \ldots, \mc X_N):\intint{1,n}^N\to \Z$ which have increments lying in $\{0,1\}$; have $\mc X_1(x_j) = y_j$ for each $j\in\intint{1, m}$; and have $\mc X_i(1) = w_i, \mc X_i(n) = z_i$ for each $i\in\intint{1,N}$. We call the set of all such collections of paths by $\Omega_{N,n}^{\vec x, \vec y, \vec w, \vec z}$, and we will sometimes refer to paths $\mc X:\intint{1,n}\to\Z$ with increments in $\{0,1\}$ as Bernoulli paths. 

We call $\smash{\Omega_{N,n}^{\vec x, \vec y, \vec w}}$ the same set without the constraint that $\mc X_i(n) = z_i$ for each $i$. The sets $\Omega_{N,n, \geq}^{\vec x, \vec y, \vec w, \vec z}$ and $\Omega_{N,n, \geq}^{\vec x, \vec y, \vec w}$ will mean the sets $\smash{\Omega_{N,n}^{\vec x, \vec y, \vec w, \vec z}}$ and $\smash{\Omega_{N,n}^{\vec x, \vec y, \vec w}}$ respectively under the additional non-crossing constraint $\mc X_1(i) \geq \mc X_2(i)\geq \ldots \geq\mc X_N(i)$ for each $i\in\intint{1,n}$ (which require $\vec w$ and $\vec z$ to be ordered to be non-empty). 

Finally, define $\Omega_{N,n, \sup}^{\vec x, \vec y, \vec w, \vec z, f}$ and $\Omega_{N,n, \inf}^{\vec x, \vec y, \vec w, \vec z, f}$ by additionally imposing that $\max_{\intint{1,n}}(\mc X_1 - f) \geq 0$ and $\min_{\intint{1,n}}(\mc X_1 - f) \geq 0$, respectively. Analogously define $\Omega_{N,n, *}^{\vec x, \vec y, \vec w,  f}$, $\Omega_{N,n, \geq, *}^{\vec x, \vec y, \vec w, \vec z,  f}$, and $\Omega_{N,n, \geq, *}^{\vec x, \vec y, \vec w,  f}$ for $*\in\{\sup, \inf\}$.

For each $*\in\{\sup,\inf\}$, we will define a Markov chain on each of $\Omega_{N,n, \geq, *}^{\vec x, \vec y, \vec w, \vec z, f}$, $\Omega_{N,n, \geq, *}^{\vec x, \vec y, \vec w, f}$, $\Omega_{N,n, *}^{\vec x, \vec y, \vec w, \vec z, f}$, and $\Omega_{N,n, *}^{\vec x, \vec y, \vec w,f}$. In each case, for $*\in\{\sup, \inf\}$, we assume that $\Omega_{N,n, *}^{\vec x, \vec y, \vec w, f}$, $\Omega_{N,n, *}^{\vec x, \vec y, \vec w, \vec z, f}$, $\Omega_{N,n, \geq, *}^{\vec x, \vec y, \vec w, f}$, or $\Omega_{N,n, \geq, *}^{\vec x, \vec y, \vec w, \vec z, f}$ as applicable are non-empty.

Recall the Boltzmann weight $W_H$ from \eqref{e.gibbs rn derivative} (with lower boundary condition $g$ and upper boundary condition $f\equiv \infty$); this can also take as argument a collection of paths from $\Omega_{N,n, *}^{\vec x, \vec y, \vec w, \vec z, f}$ or $\Omega_{N,n, *}^{\vec x, \vec y, \vec w, f}$ for $*\in\{\sup,\inf\}$ by interpreting the path as a continuous function by linear interpolation. In the zero temperature case of non-intersection, by $W_H$ we will simply mean the indicator function of non-crossing.

Fix one state space from $\Omega_{N,n, \geq, *}^{\vec x, \vec y, \vec w, \vec z, f}$, $\Omega_{N,n, \geq, *}^{\vec x, \vec y, \vec w, f}$, $\Omega_{N,n, *}^{\vec x, \vec y, \vec w, \vec z, f}$, and $\Omega_{N,n, *}^{\vec x, \vec y, \vec w, f}$ where $*\in\{\sup,\inf\}$; the first two will be called the zero temperature case and the last two the positive temperature case. We denote the state of the Markov chain at time $t\in\N$ by $\mc X^t = (\mc X^t_1, \ldots, \mc X^t_N)$. The dynamics are as follows. At time $t+1$, a curve index $I\in\intint{1,N}$, location $X\in\intint{2,n}$, and random variables $U\in[0,1]$ and $\sigma\in\{0,1\}$ are chosen uniformly at random, independent of each other and of all earlier times. For each $i\in\intint{1,N}$, we also choose $\rho_{i}\in\{0,1\}$, with $\rho_{i} = 1$ with probability $p_i$ and $\rho_{i}=0$ with probability $1-p_i$, independently at each time.

\begin{enumerate}
	\item If $X\neq n$, $\mc X^t_I(X-1)= \mc X^t_I(X)$ and $\mc X^t_I(X+1)= \mc X^t_I(X)+1$, and either (a) $I\in \intint{2,N}$ or (b) $I=1$ and $X\not\in\{x_1, \ldots, x_m\}$, then the following change is attempted. Define $(\tilde{\mc X}^{t+1}_1, \ldots, \tilde{\mc X}^{t+1}_N)$ as follows. If $\sigma=1$, define $\tilde{\mc X}^{t+1}_I(X)=\mc X^t_I(X)+1$, and if $\sigma=0$, $\tilde{\mc X}^{t+1}_I(X)=\mc X^t_I(X)$. Also define $\tilde{\mc X}^{t+1}_i(x)=\tilde{\mc X}^{t}_i(x)$ for all $(i,x)\neq (I,X)$. We set $({\mc X}^{t+1}_1, \ldots, {\mc X}^{t+1}_N) = (\tilde{\mc X}^{t+1}_1, \ldots, \tilde{\mc X}^{t+1}_N)$ if $(\tilde{\mc X}^{t+1}_1, \ldots, \tilde{\mc X}^{t+1}_N)$ lies in the selected state space and if
	\begin{align}\label{e.glauber ratio}
	\frac{W_H(\tilde{\mc X}^{t+1}_1, \ldots, \tilde{\mc X}^{t+1}_N)}{W_H({\mc X}^{t}_1, \ldots, {\mc X}^{t}_N)} \geq U
	\end{align}
	 (in the zero temperature case, in fact, the condition that \eqref{e.glauber ratio} holds is implied by membership in the relevant space). Otherwise, $\mc X^{t+1}_i = \mc X^t_i$ and for all $i\in\intint{1,N}$. This corresponds to flipping a local minimum to a local maximum.

	\item If $X\neq n$, $\mc X^t_I(X)= \mc X^t_I(X-1)+1$ and $\mc X^t_I(X+1)= \mc X^t_I(X)$, and either (a) $I\in \intint{2,N}$ or (b) $I=1$ and $X\not\in\{x_1, \ldots, x_m\}$, then the following change is attempted. Define $(\tilde{\mc X}^{t+1}_1, \ldots, \tilde{\mc X}^{t+1}_N)$ as follows. If $\sigma=1$, define $\tilde{\mc X}^{t+1}_I(X)=\mc X^t_I(X)-1$, and if $\sigma=0$, $\tilde{\mc X}^{t+1}_I(X)=\mc X^t_I(X)$. Also define $\tilde{\mc X}^{t+1}_i(x)=\tilde{\mc X}^{t}_i(x)$ for all $(i,x)\neq (I,X)$. We set $({\mc X}^{t+1}_1, \ldots, {\mc X}^{t+1}_N) = (\tilde{\mc X}^{t+1}_1, \ldots, \tilde{\mc X}^{t+1}_N)$ if $(\tilde{\mc X}^{t+1}_1, \ldots, \tilde{\mc X}^{t+1}_N)$ lies in the selected state space and if \eqref{e.glauber ratio} holds. Otherwise, $\mc X^{t+1}_i = \mc X^t_i$ and for all $i\in\intint{1,N}$.  This corresponds to flipping a local maximum to a local minimum.

	\item If $X=n$ and the state space is $\Omega_{N,n, \geq}^{\vec x, \vec y, \vec w}$ or $\Omega_{N,n}^{\vec x, \vec y, \vec w}$ (i.e., no right boundary values), define $(\tilde{\mc X}^{t+1}_1, \ldots, \tilde{\mc X}^{t+1}_N)$ by setting $\tilde{\mc X}^{t+1}_I(n) = \mc X^{t}_I(n-1) + \rho_I$, and $\tilde{\mc X}^{t+1}_i(x) = \mc X^t_i(x)$ for all $(i,x)\neq (I,n)$. Again, we set $({\mc X}^{t+1}_1, \ldots, {\mc X}^{t+1}_N) = (\tilde{\mc X}^{t+1}_1, \ldots, \tilde{\mc X}^{t+1}_N)$ if $(\tilde{\mc X}^{t+1}_1, \ldots, \tilde{\mc X}^{t+1}_N)$ lies in the selected state space and if \eqref{e.glauber ratio} holds, and, if not, $\mc X^{t+1}_i = \mc X^t_i$ and for all $i\in\intint{1,N}$.
\end{enumerate}

It is easy to check that the Markov chain so defined is irreducible in all cases of the state space. When the state space is $\Omega_{N,n, \geq, *}^{\vec x, \vec y, \vec w, \vec z, f}$ or $\Omega_{N,n, *}^{\vec x, \vec y, \vec w, \vec z, f}$ the chain has stationary distribution given by the uniform measure tilted by the Radon-Nikodym derivative proportional to $W_H$, while when the state space is $\Omega_{N,n, \geq, *}^{\vec x, \vec y, \vec w, f}$ or $\Omega_{N,n, *}^{\vec x, \vec y, \vec w, f}$, the stationary distribution is that of Bernoulli random walks with drift vector $(p_1, \ldots, p_N)$, conditioned on lying in the state space and tilted by the Radon-Nikodym derivative $W_H$. The chains are also aperiodic since with positive probability they do not change their state in a single step. Thus, since the state spaces are all finite, the law of the chains at time $t$ converge weakly to the their respective stationary distributions as $t\to\infty$.

\medskip

\emph{The monotone coupling. }
We define a monotone coupling between the above Markov chains associated to two pairs of values $\vec y$ that the top path is conditioned to pass through, such that one set of values is pointwise higher than the other; for the higher values, we also impose the condition $\inf(\mc X_1 - f)\geq 0$, and for the lower values, the condition $\sup(\mc X_1 - f)\geq 0$. More precisely, fix $m,n,N\in\N$, $1\leq x_1 <\ldots < x_m\leq n$, $0\leq y_1\leq \ldots\leq y_m\leq n-1$, $0\leq y'_1\leq \ldots\leq y'_m\leq n-1$ with $y_i'\geq y_i$ for each $i\in\intint{1,m}$, and $\vec w, \vec z\in\intint{0,n}^N$ as above, subject to the same conditions (so that, for example, $w_1\leq \min(y_1, y'_1) = y_1$  and $z_1 \geq \max(y_m, y_m') = y_m'$).

We will use the same boundary values $\vec w, \vec z$ as well as conditioned locations $\vec x$ for both chains. The chains are coupled by using the same random variables $I$, $X$, $U$, and $\sigma$ at each time step $t$, and initial conditions which are ordered across the two chains. That it is possible to choose such initial conditions follows from the ordering of $\vec y$ and $\vec y\,'$ and the assumption that respective state spaces are non-empty. 

It is straightforward to check, using an analysis as in \cite[Section 6]{corwin2014brownian} or \cite[Appendix 8.2]{corwin2016kpz} respectively for the zero and positive temperature cases (see also \cite[Section 5.3]{dimitrov2021characterization,dimitrov2021characterizationH} respectively for more detailed treatments), that, under this coupling, the pointwise ordering of the initial conditions are preserved by the dynamics forever; in the positive temperature case this also uses the convexity of $H$. Compared to the earlier analyses, here we have additionally imposed the condition of $\mc X_1^{\shortuparrow}$ satisfying $\inf(\mc X^{\shortuparrow}_1 -f)\geq 0$ ($\shortuparrow$ indicating the larger chain) and $\sup(\mc X_1^{\shortdownarrow} -f)\geq 0$ ($\shortdownarrow$ indicating the smaller chain). The earlier arguments nevertheless apply without any change after nothing that, by induction on $t$, if $\mc X^{t,\shortuparrow}_{1} \geq \mc X^{t,\shortdownarrow}_1$ and $\mc X^{t,\shortuparrow}_1(x)$ attempts a downward flip which succeeds, then the same will occur for $\mc X^{t,\shortdownarrow}_1$; in other words, the additional $\sup$/$\inf$ condition cannot make $\mc X^{t,\shortuparrow}_1$ go down while keeping $\mc X^{t, \shortdownarrow}_1$ unchanged. The case where an upward flip is attempted has no difference with the earlier analyses as the $\sup$/$\inf$ conditions are not relevant then.

By using the marginal convergence of each chain to their respective stationary distributions, we obtain a monotone coupling of these stationary distributions. 

\medskip

\emph{The diffusive scaling limit. }
Next we wish to take a diffusive scaling limit of the monotone coupling of the discrete Markov chains from above and obtain a monotone coupling of the analogous Brownian measures. First we note that, by an averaging argument as in the proof of Lemma~\ref{l.bound point conditioning by tail conditioning} in Section~\ref{app.mono.proof of mono in cond lemma}, the following holds. Let $E^\shortuparrow_1, E^\shortdownarrow_1, \ldots,  E^{\shortuparrow}_m, E^{\shortdownarrow}_m\subseteq \Z$ be finite intervals such that $\max E^{\shortdownarrow}_i \leq \min E^{\shortuparrow}_i$ for each $i\in\intint{1,m}$, $1\leq x_1 < \ldots  < x_m \leq n-1$, $\vec w\in\intint{0,n}^N$, and $0 < p_N \leq  \ldots \leq p_1 < 1$. Let $E^{\shortuparrow} = \prod_{i=1}^m E^{\shortuparrow}_i$ and $E^{\shortdownarrow} = \prod_{i=1}^m E^{\shortdownarrow}_i$. For $*\in\{\shortuparrow,\shortdownarrow\}$, consider the measure $\smash{\mu^{\vec x, E^{*}, \vec w,  \vec p, f}_{N, n, H, *}}$ which is the law of $N$ Bernoulli random walks $(\mc X^*_1, \ldots, \mc X^*_N)$ with drift vector $\vec p$ on $\intint{1,n}$ with starting values $\vec w$ which is tilted according to $W_H$ and conditioned on $\mc X^*_1(x_j)\in E_j^{*}$ for each $j\in\intint{1,m}$ and on $\inf(\mc X_1-f)\geq 0$ if $*=\shortuparrow$ and on $\sup(\mc X_1-f) \geq 0$ if $*=\shortdownarrow$; in the zero temperature case, tilting by $W_H$ simply means conditioning on non-crossing. Then, since these are the stationary distributions described above, the coupling produced above yields the two laws can be coupled such that $\mc X^{\shortdownarrow}_i(x) \leq \mc X^{\shortuparrow}_i(x)$ for all $i\in\intint{1,N}$ and $x\in\intint{0,n}$.

Next for given $T>0$, $\varepsilon>0$, $0< x_1 < \ldots <x_m < T$ and $\vec w, \vec y\in \R^m$ (note that the latter are not necessarily ordered), for each $j\in\intint{1,m}$ we define $x^{n}_j = nx_j$, $w^n_j = n^{1/2} w_j$, $E^{n,\varepsilon}_j = [p_1x^n_j + n^{1/2}y_j, p_1x^n_j + n^{1/2}(y_j+\varepsilon)]$, and $E^n = \prod_{j=1}^m E^{n,\varepsilon}_j$. For given real numbers $\lambda_1 > \ldots >\lambda_N$, define $p_i^n = \frac{1}{2}+ n^{-1/2}\lambda_i$. Finally define $f^n:[0,nT]\to\R\cup\{-\infty\}$ by $f^n(x) = p^n_1 x + n^{1/2}f(x/n)$.

If $(\mc X_1, \ldots ,\mc X_N)$ is distributed according to $\smash{\mu^{\vec x^n, \Z^m, \vec w^n, \vec p^n, -\infty}_{N,nT, H, *}}$, where $* = \sup$ or $\inf$, then it follows by an invariance principle (see \cite[Lemma 5.5]{dimitrov2021characterization} and \cite[Lemma 5.6]{dimitrov2021characterizationH} respectively for the zero and positive temperature settings for Brownian bridges; the Brownian motion case follows from an analogous argument) that $(\mc X_1, \ldots ,\mc X_N)$ suitably rescaled converges weakly to $N$ Brownian motions $(B_1, \ldots, B_N)$ on $[0,T]$ started from $\vec w$, with drift vector $\vec\lambda$, and tilted by $W_H$. The latter has positive probability of satisfying $B_1(x_j)\in [y_j, y_j + \varepsilon]$ for each $j\in\intint{1,m}$. Thus it follows that if $(\mc X_1, \ldots ,\mc X_N)$ is distributed according to $\mu^{\vec x^n, E^{n,\varepsilon}, \vec w^n, \vec p, f^n}_{N,nT, H, *}$, then after rescaling it converges weakly to $N$ Brownian motions $(B_1, \ldots, B_N)$ on $[0,T]$ started from $\vec w$, with drift vector $\vec\lambda$, tilted by $W_H$, and conditioned on satisfying $B_1(x_j) \in [y_j, y_j+\varepsilon]$ for each $j\in\intint{1,m}$ and on $\inf(B_1-f)\geq 0$ if $*=\shortuparrow$ and on $\sup(B_1-f)\geq 0$ if $*=\shortdownarrow$; this used that the latter conditioning events have positive limiting probability.

By taking the weak limit as $\varepsilon\to 0$, one obtains the law of $N$ Brownian motions $(B_1, \ldots, B_N)$ on $[0,T]$ started from $\vec w$, with drift vector $\vec\lambda$, tilted by $W_H$, and conditioned on $B_1(x_j) = y_j$ for each $j\in\intint{1,m}$ and on $\inf(B_1-f)\geq 0$ if $*=\shortuparrow$ and on $\sup(B_1-f)\geq 0$ if $*=\shortdownarrow$. This follows for Lebesgue almost every $\vec y\in\R^m$ by \cite{pfanzagl1979conditional}, and can be upgraded to all $\vec y\in\R^m$ by invoking the continuity in $\vec y$ of the conditional distributions. By taking the same weak limits described above for the monotone coupling of the stationary distributions of the above Markov chains, we complete the proof of Theorem~\ref{t.monotonicity under extra conditioning bb}.
\end{proof}

\section{Calculations involving $\h_1$}\label{app.hamiltonian}

In this Section~\ref{app.hamiltonian 2} we give the proofs of Proposition~\ref{p.scaled gibbs} (the form of the Hamiltonian $H_t$ for the Brownian Gibbs property enjoyed by the scaled narrow-wedge solution $\h$), and in Section~\ref{s.gen data.no big max} we give the proof of of Proposition~\ref{p.sharp sup over interval tail} (upper bound on the tail of $\sup_{[-1,1]}\h_1(x)+x^2$ in terms of the one-point tail).

\subsection{Calculation of the Hamiltonian}\label{app.hamiltonian 2}

Let the height function $\mc{\widetilde H} = \mc{\widetilde H}(t,x)$ be the Cole-Hopf solution to the KPZ equation
$$\partial_t \mc{\widetilde H} = \frac{1}{2}\partial_x^2 \mc{\widetilde H} + \frac{1}{2}(\partial_x \mc{\widetilde H})^2 + \xi,$$
where $\xi:(0,\infty)\times\R\to\R$ is rate 1 space-time white noise. It is known \cite[Theorem~2.15(ii)]{corwin2016kpz} that $\smash{\mc{\widetilde H}(t,\cdot)}$ can be embedded as the lowest-indexed curve of a line ensemble satisfies the $H$-Brownian-Gibbs property with respect to rate \emph{one} Brownian bridges where $H(x) = \exp(x)$. This is a consequence of the (non-trivial) fact that the O'Connell-Yor polymer free energy line ensemble has the same resampling property and converges to the KPZ line ensemble corresponding to $\smash{\mc{\widetilde H}}(t,\cdot)$ \cite{corwin2016kpz,nica2021intermediate}.

The process $\mc H$ we consider from \eqref{e.KPZ} can be related to $\mc{\widetilde H}$ by
$$\mc H(t,x) = \mc{\widetilde H}(2t,2x).$$
Next recall that the rescaled height function $\h_1(x)=\mf h_1(t,x)$ is given by
$\h_1(x) = t^{-1/3} \mc H(t, t^{2/3}x).$
We can now provide the proof of Proposition~\ref{p.scaled gibbs} that $\h$ satisfies the $H_t$-Brownian Gibbs property with $H_t(x) = 2t^{2/3}\exp(t^{1/3}x)$.

\begin{proof}[Proof of Proposition~\ref{p.scaled gibbs}]
Our starting point is the Gibbs property enjoyed by the line ensemble $\mc{\widetilde H}(t,\cdot)$ recalled above embeds into. 
We consider the following more abstract setup. Suppose $(\Omega_1,\F_1)$ and $(\Omega_2, \F_2)$ are measure spaces, and let $\mu,\nu$ be probability measures on $\Omega_1$ with $\mu\ll\nu$ (i.e., $\mu$ is absolutely continuous to $\nu$). Suppose $T:\Omega_1\to\Omega_2$ is measurable, and let $\mu^*$, $\nu^*$ be the pushforwards under $T$ of the respective measures. Then it is easy to show that $\mu^*\ll\nu^*$ and, for all $x\in\Omega_2$,
$$\frac{\dif\mu^*}{\dif\nu^*}(x) = \frac{\dif\mu}{\dif\nu}(T^{-1}(x)).$$
In our setting, $\Omega_1 = \mc C(\intint{1,k}\times [\ell ,r])$; $\Omega_2 = \mc C(\intint{1,k}\times [2^{-1}t^{-2/3}\ell, 2^{-1}t^{-2/3}r])$; $(T(g_i))(x) = t^{-1/3}g_i(2t^{2/3}x)$ for all $i\in\intint{1,k}$, and $x\in[\ell,r]$ and for all $(g_1, \ldots, g_k)\in\Omega_1$; $\mu = \P_H^{k,a,b,\vec x,\vec y, f}$ with $H(x) = e^x$ is the conditional law of $(\mc{\widetilde H}_1(t,\cdot), \ldots, \mc{\widetilde H}_k(t,\cdot))$ given $\Fext(k,\ell,r)$, where $t>0$ is fixed; and $\nu = \Pfree^{k,a,b,\vec x,\vec y}$ is the law of $k$ independent rate one Brownian bridges on $[\ell,r]$.

By Brownian scaling, $\nu^*$ is the law of $k$ independent rate \emph{two} Brownian bridges with appropriately transformed boundary values. By definition $\mu^*$ is the law of $(\h_1(\cdot), \ldots, \h_k(\cdot))$ for the same $t$ and conditionally on the analogously $T$-transformed boundary data.

We have to show that $\dif\mu^*/\dif\nu^*$ is given by the Radon-Nikodym derivative described by the $H_t$-Brownian Gibbs property. By Definition~\ref{d.bg} and the above fact, for any $(g_1, \ldots, g_k)\in\Omega_2$,
\begin{align*}
\frac{\dif \mu^*}{\dif \nu^*}(g_1, \ldots, g_k) &\propto \exp\left\{-\sum_{i=0}^k \int_\ell^r H\Bigl(\bigl(T^{-1}(g_{i+1})\bigr)(x) - \bigl(T^{-1}(g_i)\bigr)(x)\Bigr)\,\dif x\right\}\\
&= \exp\left\{-\sum_{i=0}^k \int_\ell^r H\Bigl(t^{1/3}g_{i+1}(2^{-1}t^{-2/3}x) - t^{1/3}g_i(2^{-1}t^{-2/3}x)\Bigr)\,\dif x\right\}\\
&= \exp\left\{-\sum_{i=0}^k \int_{2^{-1}t^{-2/3}\ell}^{2^{-1}t^{-2/3}r} 2t^{2/3}\cdot H\Bigl(t^{1/3}(g_{i+1}(y) - g_i(y))\Bigr)\,\dif y\right\}
\end{align*}
by making the transformation $y=2^{-1}t^{-2/3}x$ in the last line. Now $2t^{2/3}H(t^{1/3}x)$ is exactly $H_t(x)$ and the final line is exactly the form of the $H_t$-Brownian Gibbs property on $\Omega_2$ so the proof is complete.
\end{proof}

\subsection{The proof of Proposition~\ref{p.sharp sup over interval tail} via the no big max argument}\label{s.gen data.no big max}

Here we prove Proposition~\ref{p.sharp sup over interval tail}. As mentioned earlier, the proof is a refinement of that of the ``no big max'' argument given in \cite[Proposition~2.27]{hammond2016brownian} and \cite[Proposition~4.4]{corwin2014brownian}.

\begin{proof}[Proof of Proposition~\ref{p.sharp sup over interval tail}]
Since $x^2\leq 1$ on $[-1,1]$, we may bound $\P(\sup_{x\in[-1,1]}\cL_1(x) \geq \theta)$ and replace $\theta$ by $\theta-1$ at the end. 

Let $\chi = \sup\{x\in[-1,1]: \cL_1(x) \geq \theta\}$, with $\sup\emptyset = -\infty$. Then we are trying to bound $\P(\chi \in [-1,1])$; this formulation will help in applying the strong Gibbs property.

Let $x_0, x_2, \ldots, x_N$ be given by $x_i = -1 + \frac{i}{2}\lfloor\theta^{-1}\rfloor$ with $N = 4(\lfloor \theta^{-1} \rfloor)^{-1}$. Consider the event
\begin{equation}\label{e.no big max A defn}
A=\bigcup_{i=0}^{N-1} \bigl\{\cL_1(x_i) \geq \theta - 1\bigr\}
\end{equation}
and let $E = \{\cL_1(-2), \cL_1(2) \geq -\theta/2\}$. We set $\theta_0$ depending on $K$ from the statement of Proposition~\ref{p.sharp sup over interval tail} such that $\P(E^c)\leq \frac{1}{2}$ for all $\theta>\theta_0$.

We can bound $\P(A)$ in terms of the one-point tail of $\cL_1(0)$ using stationarity; so if we can show that $\P(A\mid \chi\in[-1,1])$ is not small, we can use this to show that $\P(\chi\in[-1,1])$ must be small. So we start by lower bounding the probability (for the final term we write $[x_{N-1}, x_N)$ for notational ease, but understand it to be $[x_{N-1}, x_N]$)
\begin{align*}
\P\left(A \ \Big|\  \chi\in[-1,1], E\right)
&\geq \sum_{i=0}^{N-1}\P\left(\cL_1(x_i) \geq \theta-1, \chi\in[x_i, x_{i+1}) \ \Big|\   \chi\in[-1,1], E\right)\\
&= \sum_{i=0}^{N-1}\P\left(\cL_1(x_i) \geq \theta-1 \ \Big|\   \chi\in[x_i, x_{i+1}), E\right)\\
&\qquad\qquad\times \P\left(\chi\in[x_i, x_{i+1}) \ \big|\   \chi\in[-1,1], E\right).
\end{align*}

We claim that the right-hand side is lower bounded by the constant $\frac{1}{2}$ for large enough $\theta$. Since $\sum_{i=0}^{N-1}\P\left(\chi\in[x_i, x_{i+1}] \ \big|\   \chi\in[-1,1], E\right) = 1$, it is sufficient to show that, for all $\theta>\theta_0$ and $i=0, \ldots, N$,
\begin{equation}\label{e.likely to be very high}
\P\left(\cL_1(x_i) < \theta-1 \ \Big|\   \chi\in[x_i, x_{i+1}), E\right) \leq \frac{1}{2}.
\end{equation}
To prove this, we first note that $[-2,\chi]$ is a stopping domain (recall from Definition~\ref{d.strong bg}), so we may apply the strong $H_t$-Brownian Gibbs property to it. By the strong $H_t$-Brownian Gibbs and monotonicity (Lemma~\ref{l.monotonicity}), we see that the previous probability is upper bounded by
\begin{align*}
\P\left(B(x_i) < \theta-1\right),
\end{align*}
where $B$ is a Brownian bridge on $[-2,\chi]$ with boundary values $-\theta/2$ and $\theta$. (Note that by monotonicity we have ignored the interaction with the lower curve $\cL_2$, and brought down the boundary values as far as possible.) Since $\chi-x_i \leq \frac{1}{2}\theta^{-1}$ on the event that $\chi\in[x_i, x_{i+1}]$ and $\chi\geq -1$ on the same event, we see that
\begin{align*}
\E[B(x_i)] &= \frac{\chi-x_i}{\chi+2}\left(-\theta/2\right) + \frac{x_i+2}{\chi+2}\theta
= \theta - \frac{\chi-x_i}{\chi+2}\cdot\frac{3\theta}{2}
\geq \theta - \frac{3}{4}.
\end{align*}
Letting $\sigma^2 = \mathrm{Var}(B(x_i))$, we see that
\begin{align*}
\P\left(B(x_i) < \theta-1\right) \leq \P\left(N\bigl(\theta - \tfrac{3}{4}, \sigma^2\bigr)< \theta-1\right) \leq \frac{1}{2}.
\end{align*}
What we have shown above is that, with $A$ as in \eqref{e.no big max A defn},
\begin{align*}
\P\Bigl(A\mid \chi\in[-1,1], E\Bigr) \geq \frac{1}{2}.
\end{align*}
Recall our ultimate goal is to show $\P(\chi\in[-1,1])$ is small. We will break this probability up based on whether $E$ occurs or not. We have an a priori bound on $\P(A)$ in terms of the one-point upper tail (along with a union bound), and the previous display says $\P(A\mid \chi\in[-1,1], E)$ is large, which can be combined to control $\P(\chi\in[-1,1], E)$ using the inequality
$$\P(A) \geq \P(A,\chi\in[-1,1],E) = \P(\chi\in[-1,1], E)\cdot \P(A\mid\chi\in[-1,1], E) \geq \frac{1}{2}\P(\chi\in[-1,1], E).$$
Thus we see that
\begin{align*}
\P\left(\chi\in[-1,1]\right) \leq \P(\chi\in[-1,1], E) + \P(\chi\in[-1,1], E^c) \leq 2\cdot\P(A) + \P(\chi\in[-1,1])\cdot\P(E^c),
\end{align*}
the second term bounded in the last inequality by the FKG inequality from Assumption~\ref{as.corr}(a) since both $E$ and $\{\chi\in[-1,1]\} = \{\sup_{x\in[-1,1]}\cL_1(x) \geq \theta\}$ are increasing events; note that this is an important step, since we do not have access to quantitative upper bounds on the lower tail to usefully upper bound $\P(\chi\in[-1,1], E^c)$ by $\P(E^c)$ directly. The above argument via FKG allows us to rely on tightness instead.

Recall $\theta_0$ was chosen so that $\P(E^c)\leq \frac{1}{2}$ for all $\theta>\theta_0$. Substituting this into the previous display shows that, for $\theta>\theta_0$,
$$\P(\chi\in[-1,1]) \leq 4\cdot \P(A) \leq 4\theta\cdot\P(\cL_1(0)\geq \theta-1).$$
the final inequality by a union bound. Replacing $\theta$ by $\theta-1$ as mentioned in the beginning of the proof completes the argument.
\end{proof}

\section{Proofs of Brownian estimates}\label{app.brownian estimates}

In this appendix we provide the proofs of a number of Brownian estimates from the main paper. 
We prove them in the following sections:
\begin{itemize}
	\item In Section~\ref{app.Br.restricted supremum tail proof}, Lemma~\ref{l.brownian bridge restricted sup tail} (the tail of the supremum of Brownian bridge over a subinterval)

	\item In Section~\ref{app.Br.parabola avoidance proof}, Proposition~\ref{p.parabola avoidance probability} (lower bound on the probability of a Brownian bridge avoiding a parabola with endpoints near the parabola)

	\item In Section~\ref{app.Br.parabola avoidance on tangent}, Proposition~\ref{p.brownian versatile tangent estimate} (same as previous with endpoints along a tangent) and

	\item In Section~\ref{app.Br.avoiding steeper line}, Lemma~\ref{l.bridge above line} (the probability of a Brownian bridge avoiding a line of lower slope, uniformly in the length of the interval).
\end{itemize}

\subsection{The tail of the supremum of Brownian bridge over a subinterval}\label{app.Br.restricted supremum tail proof}

\begin{proof}[Proof of Lemma~\ref{l.brownian bridge restricted sup tail}]
We condition upon $B(\inf J)$ and $B(\sup J)$ to see that
$$\P\left(\sup_{x\in J} B(x) \geq M\sigma_J\right) = \E\left[\P\left(\sup_{x\in J} B(x) \geq M\sigma_J \ \Big|\  B(\inf J), B(\sup J)\right)\right].$$
For $x\in J$, let $X(x) = \E[B(x)\mid B(\inf J), B(\sup J)]$ and $\overline X = \max(B(\inf J), B(\sup J))$; note that $X(x) \leq \overline X$ for all $x\in J$. Then the right-hand side of the previous display is upper bounded by
\begin{align*}
\E\left[\P\left(\sup_{x\in J} B(x)-X(x) \geq M\sigma_J - \overline X \ \Big|\  B(\inf J), B(\sup J)\right)\right].
\end{align*}
We note that, conditionally on $B(\inf J)$ and $B(\sup J)$, $B(x) - X(x)$ is a Brownian bridge on $J$ with boundary values zero. Thus by Lemma~\ref{l.brownian bridge sup tail exact}, we have the bound
\begin{align}
\P\left(\sup_{x\in J} B(x) \geq M\sigma_J\right)
&\leq \E\left[\exp\left\{-\frac{(M\sigma_J - \overline X)^2}{2\cdot|J|/4}\right\}\one_{\overline X \leq \tfrac{1}{2}M\sigma_J}\right] + \P(\overline X\geq \tfrac{1}{2}M\sigma_J)\nonumber\\
&\leq  \exp\left\{-\frac{2(\frac{1}{2}M\sigma_J)^2}{|J|}\right\} + \P(\overline X\geq \tfrac{1}{2}M\sigma_J) \label{e.restricted bridge sup bound}
\end{align}
For the second term, at least one of $B(\inf J)$ and $B(\sup J)$ must exceed $\frac{1}{2}M\sigma_J$; these are mean zero normal variables, and each have variance at most $\sigma_J^2$ by definition of $\sigma_J$. Thus by a union bound and Lemma~\ref{l.normal bounds} the second term is at most $2\exp(-M^2/8)$. Next we bound the first term, which amounts to lower bounding $\sigma_J^2/|J|$. We break into two cases for this.

In the first case, the midpoint $m_I$ of $I$ lies inside $J$. In this case a computation shows that $\sigma_J^2 = \Var(B(m_I)) = |I|/4 \geq |J|/4$, so we obtain a lower bound on $\sigma_J^2/|J|$ of $1/4$.

The second case is when $m_I\not\in J$. We may assume $\sup J < m_I$, as the other case of $\inf J > m_I$ is symmetric. Here a computation shows that $\sigma_J^2 = \Var B(\sup J) = (\sup J-\inf I)(\sup I - \sup J)/|I| \geq |J|\times (\sup I - m_I)/|I| = \frac{1}{2}|J|$.

Substituting $\sigma_J^2/|J|\geq \frac{1}{4}$ into the right-hand side of \eqref{e.restricted bridge sup bound} yields that it is upper bounded by $3\exp(-M^2/8)$. This completes the proof of Lemma~\ref{l.brownian bridge restricted sup tail}.
\end{proof}

\subsection{Lower bound on parabolic avoidance probability}\label{app.Br.parabola avoidance proof}
Here we prove Proposition~\ref{p.parabola avoidance probability}. The proof is straightforward but somewhat long. Essentially, we define a fine mesh and consider the probability that the Brownian bridge $B$ remains at least distance $1$ above the parabola at all these points; this is calculated using the covariance formulas for Brownian bridge and Gaussian tail bounds. The mesh is chosen fine enough that with high probability $B$ has fluctuations less than $\frac{1}{2}$ on all the intervals between mesh points, and thus avoids the parabola throughout.

\begin{proof}[Proof of Proposition~\ref{p.parabola avoidance probability}]
First we note that, by monotonicity (Lemma~\ref{l.monotonicity}) it is enough to prove the statement for the case that endpoints are equal to $(z_1,-z_1^2+1)$ and $(z_2,-z_2^2+1)$ instead of higher than them.

For an interval $[a,b]\subseteq [z_1, z_2]$, we define $B^{[a,b]}:[a,b]\to \R$ by
\begin{equation}\label{e.bridgified formula}
B^{[a,b]}(x) = B(x) - \frac{x-a}{b-a}B(b) - \frac{b-x}{b-a}B(a),
\end{equation}
which is distributed as a rate two standard Brownian bridge on $[a,b]$.

For $\varepsilon>0$ to be specified later (for simplicity we assume $(z_2-z_1)\varepsilon^{-1}$ is an integer), and for $j=0, \ldots, (z_2-z_1)\varepsilon^{-1}$, let $x_j = z_1+\varepsilon j$. Consider the event
\begin{equation}\label{e.easier event}
\bigcap_{j=1}^{(z_2-z_1)\varepsilon^{-1}-1}\left\{B(x_j) > -x_j^2 + 1\right\}\cap \left\{\inf B^{[x_j, x_{j+1}]} \geq -\frac{1}{2}\right\}.
\end{equation}
By Lemma~\ref{l.inclusion} ahead, we see that this event is contained in $\{B(x) > -x^2 \ \text{ for all } x\in[z_1, z_2]\}$ if $\varepsilon < 2^{1/2}$, and we will indeed ultimately set $\varepsilon$ to satisfy this constraint.

Thus we need to lower bound the probability of \eqref{e.easier event}. By properties of Brownian bridge, $B^{[x_j, x_{j+1}]}$ are independent across $j$ and are independent of $B(x_j)$ for all $j$, and are also identically distributed as rate two standard Brownian bridges on an interval of size $\varepsilon$. Letting
$$p := \P\left(\inf B^{[x_j, x_{j+1}]} \geq -\frac{1}{2}\right) > 1-\exp\left(-c\varepsilon^{-1}\right)$$
(using Lemma~\ref{l.brownian bridge sup tail exact} with $\sigma_I^2 = \varepsilon/2$), we see that
\begin{align}
\MoveEqLeft[14]
\P\left(\bigcap_{j=1}^{(z_2-z_1)\varepsilon^{-1}-1}\left\{B(x_j) > -x_j^2 + 1\right\}\cap \left\{\inf B^{[x_j, x_{j+1}]} \geq -\frac{1}{2}\right\}\right)\\
&= p^{(z_2-z_1)\varepsilon^{-1}-1}\cdot\P\left(\bigcap_{j=1}^{(z_2-z_1)\varepsilon^{-1}}\left\{B(x_j) > -x_j^2 + 1\right\}\right).\label{e.independence break up}
\end{align}
Next we lower bound the second factor. We use the property of Brownian bridges that, conditional on $B(x_0)$ for any given $x_0$, the distribution of $B$ on $[x_0, z_2]$ is a Brownian bridge from $(x_0, B(x_0))$ to $(z_2, B(z_2))$. This, along with monotonicity of the probabilities of increasing events in the endpoint values, implies that
\begin{align}
\MoveEqLeft[8]
\P\left(\bigcap_{j=1}^{(z_2-z_1)\varepsilon^{-1}-1}\left\{B(x_j) > -x_j^2 + 1\right\}\right)\nonumber\\
&\geq \prod_{j=1}^{(z_2-z_1)\varepsilon^{-1}-1} \P\left(B(x_j) > -x_j^2 + 1 \ \Big|\  B(x_{j-1}) = -x_{j-1}^2 + 1\right).\label{e.product lower bound}
\end{align}
Now, the earlier mentioned property of Brownian bridges implies that the distribution of $B(x_j)$ conditional on $B(x_{j-1})$ is a normal distribution with mean $\mu$ and variance $\sigma^2$, whose values are given by
\begin{align*}
\mu &:= \frac{z_2-x_{j}}{z_2-x_{j-1}} B(x_{j-1}) + \frac{x_j-x_{j-1}}{z_2-x_{j-1}} B(z_2) = (1-\lambda_j)B(x_{j-1}) + \lambda_jB(z_2).\\
\sigma^2 &:= 2\times\frac{(x_j-x_{j-1})\times (z_2-x_j)}{z_2-x_{j-1}} = 2\varepsilon\cdot\left(1-\lambda_j\right) \leq 2\varepsilon,
\end{align*}
where $\lambda_j = \frac{x_j-x_{j-1}}{z_2-x_{j-1}} = \frac{\varepsilon}{z_2-z_1-\varepsilon(j-1)}$, and recalling that we are working with rate two Brownian bridges.

Let $\tilde \mu$ be $\mu$ with $-x_{j-1}^2+1$ in place of $B(x_{j-1})$. Thus we see
\begin{align}
\MoveEqLeft[8]
\P\left(B(x_j) > -x_j^2 + 1 \ \Big|\  B(x_{j-1}) = -x_{j-1}^2 + 1\right)\nonumber\\
&\geq \P\left(\mc N(\tilde \mu, \sigma^2) \geq -x_j^2 + 1\right)\nonumber\\
&\geq \frac{1}{4\sqrt{\pi}}\cdot\frac{\varepsilon^{1/2}(1-\lambda_j)^{1/2}}{|-x_j^2+1-\tilde\mu|}\exp\left(-\frac{1}{4\varepsilon(1-\lambda_j)}(-x_j^2+1 - \tilde\mu)^2\right)\label{e.single point lower bound},
\end{align}
where we used Lemma~\ref{l.normal bounds} for the final inequality, assuming $-x_j^2+1 - \tilde \mu > (4/3)^{1/2} \sigma$ for now, which we will soon verify. First, we can simplify $\smash{-x_j^2+1-\tilde\mu}$ as
\begin{align*}
-x_j^2+1-\tilde\mu &= -\left[(x_{j-1}+\varepsilon)^2 - x_{j-1}^2 + \lambda_j(x_{j-1}^2-z_2^2)\right]\\
&= -\left[2\varepsilon x_{j-1} + \varepsilon^2 + \lambda_j(x_{j-1}^2-z_2^2)\right]\\
&= -\left[2\varepsilon(z_1 + \varepsilon(j-1)) -  \varepsilon(z_2+z_1) - \varepsilon^2(j-1) + \varepsilon^2\right]\\
&=\varepsilon\left[z_2-z_1 - \varepsilon j\right].
\end{align*}
Substituting this final expression for $-x_j^2+1-\tilde \mu$ into \eqref{e.single point lower bound} shows that
\begin{align*}
\MoveEqLeft[10]
\P\left(B(x_j) > -x_j^2 + 1 \ \Big|\  B(x_{j-1}) = -x_{j-1}^2 + 1\right)\\
&\geq \frac{1}{4\sqrt{\pi}}\cdot \frac{\varepsilon^{-1/2}(1-\lambda_j)^{1/2}}{z_2-z_1-\varepsilon j}\exp\left(-\frac{\varepsilon}{4(1-\lambda_j)}(z_2-z_1-\varepsilon j)^2\right)\\
&= \frac{1}{4\sqrt{\pi}}\cdot \frac{\varepsilon^{-1/2}(1-\lambda_j)^{1/2}}{z_2-z_1-\varepsilon j}\exp\left(-\frac{\varepsilon}{4}(z_2-z_1-\varepsilon j)(z_2-z_1-\varepsilon (j-1))\right),
\end{align*}
where for the first inequality we assumed that $\varepsilon(z_2-z_1-\varepsilon j) \geq (4/3)^{1/2}\sigma = (8/3)^{1/2}\varepsilon^{1/2}(1-\lambda_j)^{1/2}$.

We have to verify this inequality holds. Recalling that $1-\lambda_j = (z_2-z_1-\varepsilon j)/(z_2-z_1-\varepsilon (j-1))$ and squaring both sides reduces it to showing that $\varepsilon(z_2-z_1-\varepsilon j)(z_2-z_1-\varepsilon(j-1))^2 \geq 8/3$. Substituting $j\leq (z_2-z_1)\varepsilon^{-1}-1$ on the left side, it reduces to $2\varepsilon^3 \geq 8/3$, which is equivalent to $\varepsilon\geq(4/3)^{1/3}$. We will now set $\varepsilon$ to satisfy this inequality.
Recall we previously also assumed $\varepsilon < 2^{1/2}$, and note that $(4/3)^{1/3}  < 2^{1/2}$. We now set $\varepsilon$ to be such that both the upper and lower bounds are satisfied (e.g., $\varepsilon=6/5$).

By substituting in the same expression for $1-\lambda_j$ and using $j\geq 1$ we also see that the factor in front of the exponential in the previous display is bounded below by $c\varepsilon^{-1/2}(z_2-z_1)^{-1}$ some $c>0$.

Substituting the previous bound into \eqref{e.product lower bound} and then using \eqref{e.independence break up} shows that
\begin{align*}
\MoveEqLeft[0]
\P\Bigl(B(x) > - x^2\quad \forall x\in[z_1, z_2]\Bigr)\\
&\geq p^{(z_2-z_1)\varepsilon^{-1}}\\
&\quad\times\exp\left\{-\frac{\varepsilon}{4}\sum_{j=1}^{(z_2-z_1)\varepsilon^{-1}-1}(z_2-z_1-\varepsilon j)(z_2-z_1-\varepsilon (j-1)) - 2(z_2-z_1)\varepsilon^{-1}\log[\varepsilon(z_2-z_1)]\right\},
\end{align*}
implicitly absorbing the term $-(z_2-z_1)\varepsilon^{-1}\log c^{-1}$ into the $(z_2-z_1)\varepsilon^{-1}\log(\varepsilon(z_2-z_1))$ term for all large enough $z_2-z_1$ by raising the latter's coefficient. Expanding yields sums of powers of $j$, and using standard formulas shows that 
$$\varepsilon\sum_{j=1}^{(z_2-z_1)\varepsilon^{-1}-1}(z_2-z_1-\varepsilon j)(z_2-z_1-\varepsilon (j-1)) = \frac{(z_2-z_1)^3}{3} - \varepsilon^2\frac{(z_2-z_1)}{3} \leq \frac{(z_2-z_1)^3}{3}.$$
Thus, since $\varepsilon = \frac{6}{5}$ and $p>0$ uniformly, we obtain
\begin{align*}
\P\left(B(x) > -x^2\quad \forall x\in[z_1, z_2]\right)
&\geq \exp\left\{-\frac{(z_2-z_1)^3}{12}  - (z_2-z_1)\varepsilon^{-1}\left(2\log[\varepsilon(z_2-z_1)]+\log p^{-1}\right)\right\}\\
&\geq \exp\left\{-\frac{(z_2-z_1)^3}{12} - 2(z_2-z_1)\log(z_2-z_1)\right\}.
\end{align*}
for $z_2-z_1$ sufficiently large. This completes the proof.
\end{proof}

\begin{lemma}\label{l.inclusion}
If $\varepsilon < 2^{1/2}$, then \eqref{e.easier event} is contained in $\{B(x) > -x^2 \ \text{ for all } x\in[z_1, z_2]\}$.
\end{lemma}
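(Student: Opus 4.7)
The plan is to combine the bridge decomposition formula \eqref{e.bridgified formula} with the constraints defining the event \eqref{e.easier event} pointwise in $x$. I would fix $x \in [z_1, z_2]$ and take $j$ such that $x \in [x_j, x_{j+1}]$. Rearranging \eqref{e.bridgified formula} on $[x_j, x_{j+1}]$ gives
\begin{equation*}
B(x) \;=\; B^{[x_j, x_{j+1}]}(x) \;+\; \frac{x_{j+1}-x}{\varepsilon}\,B(x_j) \;+\; \frac{x-x_j}{\varepsilon}\,B(x_{j+1}),
\end{equation*}
and on the event \eqref{e.easier event} I can bound each factor from below: the bridge term by $-\tfrac{1}{2}$, and each of $B(x_j), B(x_{j+1})$ by $-x_j^2+1$, $-x_{j+1}^2+1$ respectively (with equality at the boundary indices $0$ and $N:=(z_2-z_1)\varepsilon^{-1}$ coming from the prescribed endpoint values of $B$, and strict inequality at interior mesh points from the pointwise events in \eqref{e.easier event}).

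Substituting these lower bounds yields $B(x) \geq \tfrac{1}{2} - L_j(x)$, where $L_j$ is the affine interpolant with $L_j(x_j) = x_j^2$ and $L_j(x_{j+1}) = x_{j+1}^2$. The key algebraic identity I will use is
\begin{equation*}
L_j(x) - x^2 \;=\; (x-x_j)(x_{j+1}-x),
\end{equation*}
which one verifies directly by noting both sides vanish at $x_j$ and $x_{j+1}$, have leading coefficient $-1$, and agree at one further point. The maximum of the right-hand side over $[x_j, x_{j+1}]$ is $\varepsilon^2/4$, attained at the midpoint. Hence $B(x) \geq \tfrac{1}{2} - x^2 - \varepsilon^2/4$, which is strictly greater than $-x^2$ as soon as $\varepsilon^2 < 2$, exactly the hypothesis of the lemma.

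The argument is essentially elementary and I do not anticipate any real obstacle; the only bookkeeping point is checking that the inequality $B(x) > -x^2$ is strict. For $x$ in the open interval $(x_j, x_{j+1})$ strictness follows from the positive gap $\tfrac{1}{2} - \varepsilon^2/4 > 0$; at the mesh points $x_j$ themselves, strictness comes either from the pointwise event $B(x_j) > -x_j^2 + 1$ (for interior $j$) or from the deterministic boundary value $B(x_j) = -x_j^2 + 1 > -x_j^2$ (for $j \in \{0, N\}$). Thus on the event \eqref{e.easier event} the inequality $B(x) > -x^2$ holds for every $x \in [z_1, z_2]$, giving the desired inclusion.
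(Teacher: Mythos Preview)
Your proof is correct and follows essentially the same route as the paper's. Both arguments substitute the lower bounds from \eqref{e.easier event} into the bridge decomposition \eqref{e.bridgified formula} and reduce to the algebraic inequality $(x-x_j)(x_{j+1}-x)\leq \varepsilon^2/4 < \tfrac12$; the paper reaches this via the substitution $x=x_j+\varepsilon y$ and the bound $y(1-y)\leq \tfrac14$, which is your factorization in a different notation. Your treatment of strictness at the mesh points, distinguishing interior $j$ from the deterministic boundary values at $j\in\{0,N\}$, is slightly more careful than the paper's.
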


\begin{proof}
From \eqref{e.bridgified formula}, it is sufficient to verify that, for every $j=1, \ldots, (z_2-z_1)\varepsilon^{-1}-1$ and $x\in[x_j,x_{j+1}]$,
$$-\frac{1}{2} + \frac{x-x_j}{x_{j+1}-x_j}(-x_{j+1}^2) + \frac{x_{j+1}-x}{x_{j+1}-x_j}(-x_{j}^2) + 1> -x^2,$$
which we do now. Letting $x=x_j+ \varepsilon y$ (so that $y\in[0,1]$), using that $x_{j+1}-x_j = \varepsilon$, and multiplying throughout by $-1$, the inequality can be simplified to
$$-\frac{1}{2} + yx_{j+1}^2 + (1-y)x_j^2 < x_j^2 + \varepsilon^2y^2 + 2\varepsilon x_jy.$$
Cancelling $x_j^2$ gives
$$-\frac{1}{2} +y(x_{j+1}^2 - x_j^2) < \varepsilon^2y^2 + 2\varepsilon x_jy.$$
Since $x_{j+1}^2-x_j^2 = (x_{j+1}-x_j)(x_{j+1}+x_j) = \varepsilon(2x_j+\varepsilon)$, we can further simplify to
$\varepsilon^2y(1-y) < \frac{1}{2}.$
Noting that $y(1-y) \leq \frac{1}{4}$, the previous inequality is satisfied under our hypothesis on~$\varepsilon$.
\end{proof}

\subsection{Parabola avoidance probability along a tangent}\label{app.Br.parabola avoidance on tangent}

\begin{proof}[Proof of Proposition~\ref{p.brownian versatile tangent estimate}]
Recall $I=[a,b]$. We assume without loss of generality that $x_\tan \leq \frac{a +b}{2}$.
Let $p(x) = -x^2$ and let 
$$\nonint = \Bigl\{B(x) > p(x) + \varepsilon_0 M\sigma_\tan\ \forall x\in I\Bigr\}.$$
Observe that the probability in \eqref{e.brownian versatile tangent to bound} is lower bounded by $\P(\nonint)$, and that $p(x)\leq \ellt(x)$ for all $x\in\R$. 

We see that
$
\P\bigl(\nonint\bigr) = \P\left(\inf_{x\in I}B(x) - p(x) \geq \varepsilon_0 M\sigma_\tan\right).
$
Now, if we define $\tilde B:[0,1]\to\R$ by
$$\tilde B(x) = |I|^{-1/2}\Bigl[B(a + |I|x) - \ellt(a + |I|x)\Bigr],$$
then $\tilde B$ is a rate two Brownian bridge on $[0,1]$ with $\tilde B(0) = \tilde B(1) = 0$. This means that
\begin{align*}
\P\bigl(\nonint\bigr)
&= \P\left(\inf_{x\in[0,1]}|I|^{1/2}\tilde B(x) + \ellt(a+|I|x) - p(a+|I|x) \geq \varepsilon_0 M\sigma_\tan\right).%
\end{align*}
For notational convenience, define $\tilde x_\tan$ via $x_\tan= a+ |I|\tilde x_\tan$ and $\tilde\sigma_\tan= |I|^{-1/2}\sigma_\tan$; note that $\tilde x_\tan \in[0,1/2]$ by the assumption on $x_\tan$ we made at the beginning of the proof. 

Since $\ellt$ is tangent to $p(x)$ at $x_\tan$, it is an easy calculation that $\ellt(w) - p(w)  = (w-x_\tan)^2$ for any $w$, so the previous displayed probability is equal to
\begin{align*}
\MoveEqLeft[10]
\P\left(\inf_{x\in[0,1]}\tilde B(x) + |I|^{-1/2}(a +|I|x-x_\tan)^2 \geq \varepsilon_0 M\tilde\sigma_\tan\right)\\
&= \P\left(\inf_{x\in[0,1]}\tilde B(x) + |I|^{3/2}(x-\tilde x_\tan)^2 \geq \varepsilon_0 M\tilde\sigma_\tan\right)
\end{align*}
To lower bound this probability we break up $[0,1]$ into three intervals given by $I_1 = [0,\frac{1}{2}\tilde x_\tan]$, $I_2 = [\frac{1}{2}\tilde x_\tan, 2\tilde x_\tan]$, and $I_3 = [2\tilde x_\tan, 1]$,  and use the positive association property of Brownian bridges. The previous display is lower bounded by
\begin{align}\label{e.tilde B interval breakup}
\MoveEqLeft[25]
\prod_{i=1}^3\P\left(\inf_{x\in I_i}\tilde B(x) + |I|^{3/2}(x-\tilde x_\tan)^2 \geq \varepsilon_0 M\tilde\sigma_\tan\right).
\end{align}
We will show that the $i=2$ factor is lower bounded by $\exp(-c\varepsilon_0^2M^2)$, and the $i=1$ and $3$ factors by a constant factor independent of $M$ and $|I|$.

For $i=1$, we will reduce the calculation to that of lower bounding the probability that a Brownian bridge on an interval of size $\delta$, with starting and ending points at height at least a constant times $\delta^{1/2}$, stays above $-c\delta^{1/2}$, where $c>0$ is a constant and $\delta = \smash{\frac{1}{2}\tilde x_\tan}$; this probability is of course uniformly positive. Indeed, we can lower bound $(x-\tilde x_\tan)^2$ on $I_1$ by $\tilde x_\tan^2/4$, and we note that
\begin{equation}\label{e.sigma tilde bound by x_tan tilde}
\varepsilon_0 M\tilde \sigma_\tan < \frac{1}{4}|I|^{3/2}\tilde x_\tan^2
\end{equation}
by our assumption that $M\leq C^{-1}(x_\tan-a)^{3/2}\leq C^{-1}|I|^{3/2}$ for a $C$ to be specified, since $\smash{\tilde \sigma_\tan \leq \tilde x_\tan^{1/2}}$ and
$$\frac{|I|^{3/2}\tilde x_\tan^{2}}{\tilde x_\tan^{1/2}} = |I|^{3/2} \frac{(x_\tan-a)^{3/2}}{|I|^{3/2}} = (x_\tan-a)^{3/2},$$
using the definition of $\tilde x_\tan$ (to be explicit, we take $C^{-1}=\smash{\frac{1}{4}}\varepsilon_0^{-1}$, where $\varepsilon_0>0$ is a constant still to be set). So by Lemma~\ref{l.brownian bridge restricted sup tail} for the second inequality,
\begin{align*}
\P\left(\inf_{x\in I_1}\tilde B(x) + |I|^{3/2}(x-\tilde x_\tan)^2 \geq \varepsilon_0 M\tilde\sigma_\tan\right)
&\geq \P\left(\inf_{x\in I_1}\tilde B(x) \geq -\tfrac{1}{4}|I|^{3/2}\tilde x_\tan^2\right)\\
&\geq 1-\exp\left(-c'\frac{|I|^3\tilde x_\tan^4}{\tilde x_\tan}\right),
\end{align*}
the second inequality also using that $\Var(\tilde B(x)) \leq x$ for all $x\in[0,1]$. The final expression is strictly positive independent of $|I|$ as by hypothesis $\tilde x_\tan \geq |I|^{-1}$.

For $i=2$ in \eqref{e.tilde B interval breakup} we lower bound the parabolic term by 0 and consider the event that the values of $\tilde B$ at the endpoints of $I_2$ are at height $2\varepsilon_0 M\tilde\sigma_\tan$; so the $i=2$ term is lower bounded by
$$\P\left(\inf_{I_2} \tilde B > \varepsilon_0 M\tilde\sigma_\tan \ \Big|\  \tilde B(\tfrac{1}{2}\tilde x_\tan), \tilde B(2\tilde x_\tan)\geq 2\varepsilon_0 M\tilde\sigma_\tan\right)\cdot \P\left(\tilde B(\tfrac{1}{2}\tilde x_\tan),\tilde B(2\tilde x_\tan)\geq 2\varepsilon_0 M\tilde\sigma_\tan\right).$$
The first probability is at least that of a Brownian bridge on an interval of size $|I_2| = \tfrac{3}{2}\tilde x_\tan$ with zero endpoints staying above $-\varepsilon_0 M\tilde\sigma_\tan$, which is $1-\exp(-2\times\varepsilon_0^2 M^2\tilde\sigma_\tan^2/(\tfrac{3}{2}\tilde x_\tan))$ by Lemma~\ref{l.brownian bridge sup tail exact}. Since $\tilde\sigma_\tan^2 \geq \tilde x_\tan/2$ (by our assumption that $\tilde x_\tan \leq\frac{1}{2}$), the expression in the previous sentence is at least $1-\exp(-2\varepsilon_0^2M^2/3)$. 

Turning to the second probability in the previous display, using the positive association of $\tilde B(\frac{1}{2}\tilde x_\tan)$ and $\tilde B(2\tilde x_\tan)$, the fact that they have variances $2\times\frac{1}{2}\tilde x_\tan(1-\frac{1}{2}\tilde x_\tan)\leq \tilde x_\tan$ and $2\times2\tilde x_\tan(1-2\tilde x_\tan)\leq 4\tilde x_\tan$ respectively, and the lower bound on Gaussian tails from Lemma~\ref{l.normal bounds}, we obtain that
\begin{align*}
\P\left(\tilde B\left(\tfrac{1}{2}\tilde x_\tan\right), \tilde B\left(2\tilde x_\tan\right)\geq 2\varepsilon_0 M\tilde\sigma_\tan\right)
&\geq \left[\frac{1}{\sqrt{8\pi\tilde x_\tan}}\exp\left(-\frac{(2\varepsilon_0 M\tilde\sigma_\tan)^2}{2\tilde x_\tan(1-\frac{1}{2}\tilde x_\tan)}\right)\right]^2\\
&\geq (8\pi)^{-1} \exp\left(-8\varepsilon_0^2 M^2\right),
\end{align*}
the last inequality since $\tilde x_\tan\leq 1$ and $\tilde \sigma_\tan^2 \leq \tilde x_\tan$. 

Finally we handle the $i=3$ term in the product in \eqref{e.tilde B interval breakup}. Here we will analyse the fluctuations on dyadic scales, which is needed because the interval is of unit order and the increasing fluctuations need to be balanced against the increasing decay of the parabola.

Let $k_0$ be the smallest $k$ such that $2^{k+1}\tilde x_\tan\geq 1$. For simplicity of notation let us interpret $2^{k_0+1}\tilde x_\tan$ as 1 in the following. Making use of the positive association of $\tilde B$, we see that
\begin{align}
\MoveEqLeft[8]
\P\left(\inf_{x\in I_3}\tilde B(x)+|I|^{3/2} (x-\tilde x_\tan)^2 \geq \varepsilon_0 M\tilde\sigma_\tan\right)\nonumber\\
&\geq \prod_{k=1}^{k_0}\P\left(\inf_{x\in [2^k\tilde x_\tan, 2^{k+1}\tilde x_\tan]} \tilde B(x) + |I|^{3/2}(x-\tilde x_\tan)^2 > \varepsilon_0 M\tilde \sigma_\tan\right)\nonumber\\
&\geq \prod_{k=1}^{k_0}\P\left(\inf_{x\in [2^k\tilde x_\tan, 2^{k+1}\tilde x_\tan]} \tilde B(x) > -|I|^{3/2}(2^k-1)^2\tilde x_\tan^2 + \varepsilon_0 M\tilde \sigma_\tan\right).\label{e.B_3 inf prob}
\end{align}
Now since $\varepsilon_0 M\tilde \sigma_\tan < \tfrac{1}{4}|I|^{3/2}\tilde x_\tan^2$ by \eqref{e.sigma tilde bound by x_tan tilde}, 
$$\varepsilon_0 M\tilde \sigma_\tan - |I|^{3/2}(2^k-1)^2\tilde x_\tan^2 \leq -\frac{1}{2}|I|^{3/2}(2^k-1)^2\tilde x_\tan^2,$$
so the $k$\textsuperscript{th} term in the product in \eqref{e.B_3 inf prob} is lower bounded by
\begin{align*}
\P\left(\inf_{x\in [2^k\tilde x_\tan, 2^{k+1}\tilde x_\tan]} \tilde B(x) > - \tfrac{1}{2}|I|^{3/2}(2^k-1)^2\tilde x_\tan^2\right)
&\geq 1-\exp\left(-c'\frac{|I|^3(2^k-1)^4\tilde x_\tan^4}{2^k\tilde x_\tan}\right)\\
&\geq 1-\exp\left(-c'2^{3k-4}\right)
\end{align*}
for an absolute constant $c'>0$ by Lemma~\ref{l.brownian bridge restricted sup tail}, using that $\Var(\tilde B(x)) \leq x$ for all $x\in[0,1]$. Substituting the previous display into \eqref{e.B_3 inf prob} shows that the latter expression is lower bounded by a constant.

Overall we have shown that, for some $c>0$,
$$\P(\nonint) \geq c\cdot\exp(-8\varepsilon_0^2 M^2).$$
Thus, setting $\varepsilon_0= 1$, we obtain that, for $0 < M < C^{-1}(x_\tan-a)^{3/2}$, 
$$\P\left(\nonint\right) \geq c \exp(-8M^2).$$
This completes the proof of the first (zero-temperature) part of Proposition~\ref{p.brownian versatile tangent estimate}.

To obtain a similar lower bound on $Z_{H_t}$, we make use of Lemma~\ref{l.pos temp Z  lower bound via non-avoid prob} with $[z_1,z_2]= I$, $x=y_{a}$, $y=y_{b}$, $p(x) = -x^2 + \varepsilon_0 M\sigma_\tan$, and $g(x) = \ellt(x) + x^2 = (x-x_\tan)^2$, which is non-negative. This yields, with the above lower bound on $\P(\nonint)$,
\begin{align*}
Z_{H_t}
&\geq \exp\left[-2t^{2/3}e^{-t^{1/6}}\int_{I}\exp\left(-\frac{t^{1/3}}{2}(u-x_\tan)^2\right)\,\mathrm du\right]\cdot e^{-c'\varepsilon_0^2 M^2}\\
&\geq \exp\left[-2t^{2/3}e^{-t^{1/6}}\int_{-\infty}^{\infty}\exp\left(-\frac{t^{1/3}}{2}(u-x_\tan)^2\right)\,\mathrm du\right]\cdot e^{-c'\varepsilon_0^2 M^2}\\
&=\exp\left[-2t^{2/3}e^{-t^{1/6}}\cdot \sqrt{2\pi}t^{-1/6}\right]\cdot e^{-c'\varepsilon_0^2 M^2}.
\end{align*}
The proof is complete by noting that $t^{1/2}\exp(-t^{1/6})$ is upper bounded by a uniform constant for all $t>0$, and by observing that we can set $\varepsilon_0$ independent of $t$ so that $c'\varepsilon_0^2=1$.
\end{proof}

\subsection{Avoiding a line of more extreme slope}\label{app.Br.avoiding steeper line}
\begin{proof}[Proof of Lemma~\ref{l.bridge above line}]
We may assume $\eta=1$ without loss of generality. Indeed, if $\eta>1$, then 
$$\P\left(\inf_{x\in[0,r]} B^K(x) < -\eta K\right) \leq \P\left(\inf_{x\in[0,r]} B^K(x) < -K\right).$$
If $\eta<1$, then $B^K$ stochastically dominates $B^{\eta K}$, and so 
$$\P\left(\inf_{x\in[0,r]} B^K(x) < -\eta K\right) \leq \P\left(\inf_{x\in[0,r]} B^{\eta K}(x) < -\eta K\right).$$

So we are in the case $\eta = 1$ and $K\geq \frac{1}{2}$. Let $B$ be a rate two Brownian bridge between $(0,0)$ and $(r,0)$, where we will assume $r$ is an integer for notational convenience. We also assume $r\geq 2$ as the claim is trivial for $r\in[0,2]$ (indeed, for any fixed bounded interval). We may define $B^K$ as $B^K(x) = B(x) + Kx$. Now, the right-hand side of the previous display can be upper bounded by
\begin{align}
\sum_{j=1}^r \P\left(\inf_{x\in[j-1,j]} B(x) + Kx < -K \right) \nonumber
&\leq \sum_{j=1}^r \P\left(\inf_{x\in[j-1,j]} B(x) < -Kj \right)\nonumber\\
&\leq 2\cdot\sum_{j=1}^{\lceil r/2\rceil} \P\left(\inf_{x\in[j-1,j]} B(x) < -Kj \right), \label{e.bridge above line breakup}
\end{align}
the last line by the equality in distribution between $x\mapsto B(x)$ and $x\mapsto B(r-x)$. Let $\sigma_j = \max_{x\in[j-1,j]}\Var(B(x)) = \Var(B(j)) = 2j(r-j)/r$, the second equality as we have assumed $j\leq \lceil r/2\rceil$ (this may fail for $j=\lceil r/2\rceil$, but it is easy to see that this case can be handled separately, as we will have $\sigma_{\lceil r/2\rceil} = O(r)$). Now by Lemma~\ref{l.brownian bridge restricted sup tail},
\begin{align*}
\P\left(\inf_{x\in[j-1,j]} B(x) < -Kj \right) \leq 3\exp\left(-\frac{K^2j^2}{8\sigma_j^2}\right) \leq 3\exp\left(-\frac{K^2 jr}{8(r-j)}\right) \leq 3\exp\left(-\frac{1}{8}K^2 j\right),
\end{align*}
as $r/(r-j) \geq r/(r-1)\geq 1$ since $j\geq 1$. Putting this bound back in \eqref{e.bridge above line breakup} yields that, for $K\geq \frac{1}{2}$,
\begin{equation*}
\P\left(\inf_{x\in[0,r]} B^K(x) < -K \right) \leq C\exp\left(-\frac{1}{8}K^2\right).\qedhere
\end{equation*}
\end{proof}

\end{document}